\newcommand{\abknotation}[1]{\left(#1\right)}
\newcommand{\setof}[2]{\left\{ #1 \;\middle|\; #2 \right\}}
\newcommand{\n}{n}
\newcommand{\lt}{\operatorname{rt}}
\newcommand{\rt}{\operatorname{rt}}
\newcommand{\cok}{\operatorname{coker}}
\newcommand{\open}{\cellcolor{black}{\textcolor{white}{?}}}
\newcommand{\ee}{{\mathbf e}}
\newcommand{\BB}{\mathsf B}
\newcommand{\DD}{\mathsf D}
\newcommand{\XX}[1]{\mathsf X_{#1}}
\newcommand{\XXp}[1]{\mathsf X^+_{#1}}
\newenvironment{discu}{{\sc Discussion:}}{}
\renewcommand{\phi}{\varphi}
\renewcommand{\theta}{\vartheta}
\newtheorem{theor}{Theorem}[section]
\newtheorem{propo}[theor]{Proposition}
\newtheorem{lemma}[theor]{Lemma}
\newtheorem{corol}[theor]{Corollary}
\theoremstyle{defin}
\newtheorem{defin}[theor]{Definition}
\newtheorem{conje}[theor]{Conjecture}
\theoremstyle{remark}
\newtheorem{remar}[theor]{Remark}
\newtheorem{examp}[theor]{Example}
\newtheorem{openq}[theor]{Open question}
\numberwithin{equation}{chapter}
\numberwithin{theor}{section}
\renewcommand{\thetable}{\arabic{chapter}.\arabic{table}}
\newenvironment{proofoffirstpart}[1]{{\sc Proof (first part).}}{\hfill$\square$}
\newenvironment{proofofsecondpart}[1]{{\sc Proof of #1 (second part).}}{\hfill$\square$}
\newcommand{\Ff}{\mathcal{F}}
\newcommand{\Kk}{\mathbb{K}}
\newcommand{\Oo}{\mathcal{O}}
\newcommand{\Dd}{\mathcal{D}}
\newcommand{\kk}{\underline{k}}
\newcommand{\mylll}{\underline{L}}
\newcommand{\myll}{\underline{\ell}}
\newcommand{\kkk}{\underline{K}}
\newcommand{\nn}{\underline{n}}
\newcommand{\mm}{\underline{m}}
\newcommand{\K}{\mathbb{K}}
\newcommand{\CC}{\mathbb{C}}
\newcommand{\NN}{\mathbb{N}}
\newcommand{\QQ}{\mathbb{Q}}
\newcommand{\TT}{\mathbb{T}}
\newcommand{\ZZ}{\mathbb{Z}}
\newcommand{\spaz}{\operatorname{sp}^\times}
\newcommand{\BF}{\mathsf{BF}}
\newcommand{\Asf}{\mathsf{A}}
\newcommand{\Bsf}{\mathsf{B}}
\newcommand{\xyz}[1]{\mbox{$\mathsf{#1}$}}
\newcommand{\xyzrel}[1]{\mbox{$\overline{\underline{\mathsf{ #1}}}$}}
\newcommand{\xyzLPArel}[1]{\mbox{$\overline{\overline{\underline{\underline{\mathsf{ #1}}}}}$}}
\newcommand{\III}{\mbox{\texttt{\textup{(I)}}}}
\newcommand{\IIIm}{\mbox{\texttt{\textup{(I-)}}}}
\newcommand{\IIIp}{\mbox{\texttt{\textup{(I+)}}}}
\newcommand{\TTT}{\mbox{\texttt{\textup{(T)}}}}
\newcommand{\OOO}{\mbox{\texttt{\textup{(O)}}}}
\newcommand{\CCC}{\mbox{\texttt{\textup{(C)}}}}
\newcommand{\CCCp}{\mbox{\texttt{\textup{(C+)}}}}
\newcommand{\RRR}{\mbox{\texttt{\textup{(R)}}}}
\newcommand{\RRRp}{\mbox{\texttt{\textup{(R+)}}}}
\newcommand{\PPP}{\mbox{\texttt{\textup{(P)}}}}
\newcommand{\PPPp}{\mbox{\texttt{\textup{(P+)}}}}
\newcommand{\KKKp}{\mbox{\texttt{\textup{(K+)}}}}
\newcommand{\KKKm}{\mbox{\texttt{\textup{(K-)}}}}
\newcommand{\KKK}{\mbox{\texttt{\textup{(K)}}}}
\newcommand{\SSS}{\mbox{\texttt{\textup{(S)}}}}
\newcommand{\SSSs}{\mbox{\texttt{\textup{(S*)}}}}
\newcommand{\SSSsn}{\mbox{\texttt{\textup{(S=n)}}}}
\newcommand{\SSSsnm}{\mbox{\texttt{\textup{(S<n)}}}}
\newcommand{\AAArr}{\mbox{\texttt{\textup{(Arr)}}}}
\newcommand{\AAArrp}{\mbox{\texttt{\textup{(Arr+)}}}}
\newcommand{\singgen}{\mathcal G_{\circ\star}}
\newcommand{\DT}{\mathcal{DT}}
\newcommand{\DQ}{\mathcal{DQ}}
\newcommand{\DQp}{\mathcal{DQ}_1}
\renewcommand{\emptyset}{\varnothing}
\newcommand{\id}{\operatorname{id}}
\newcommand{\idmatrix}{\mathsf{I}}
\newcommand{\zmatrix}{\mathsf{0}}
 \newcommand{\standard}{\mathsf{std}}
 \newcommand{\standardp}{\mathsf{std}^+}
\newcommand{\GF}{{\mathsf F}}
\newcommand{\GC}{{\mathsf E}}
\newcommand{\GO}{{\mathsf G}}
\newcommand{\mybox}[1]{\fbox{\xyz{#1}}}
\newcommand{\GLrel}{\mathsf{GL}}
\newcommand{\GLprel}{\mathsf{GL}^+}
\newcommand{\SLrel}{\mathsf{SL}}
\newcommand{\FKX}{\operatorname{FK}_X}
\newcommand{\FKXp}{\operatorname{FK}_X^+}
\newcommand{\FKXpu}{\operatorname{FK}_X^{+,1}}
\definecolor{lgray}{rgb}{0.9,0.9,0.9}
\newcommand{\nsp}{\cellcolor{lgray}{ }}
\newcommand{\step}[1]{\mbox{}\\\noindent{\textsc{Step} #1:}}
\newcommand{\substep}[2]{\mbox{}\\\noindent{\textsc{Step} #1(#2):}}
\newcommand{\mystep}[1]{{\textsc{Step}} #1}
\newcommand{\mysubstep}[2]{{\textsc{Step}} #1(#2)}
\newcommand{\isxyz}[3]{\xymatrix{{#1}\ar@{<~>}[rr]^-{\mathsf{#3}}&&{#2}}}
\newcommand{\ismove}[3]{\xymatrix{{#1}\ar@{<~>}[rr]^-{\langle \scriptsize #3\rangle}&&{#2}}}
\newcommand{\WDG}{weighted, decorated graph}
\newcommand{\DG}{decorated graph}
\newcommand{\DGs}{decorated graphs}
\newcommand{\SL}{{\sf SL}}
\newcommand{\SLp}{\mbox{${\sf SL}_+$}}
\newcommand{\GL}{{\sf GL}}
\newcommand{\GLp}{\mbox{${\sf GL}_+$}}
\newcommand{\RRc}{\mbox{``$\bullet\bullet$''}}
\newcommand{\RSc}{\mbox{``$\bullet\circ$''}}
\newcommand{\SRc}{\mbox{``$\circ\bullet$''}}
\newcommand{\SSc}{\mbox{``$\circ\circ$''}}
\newcommand{\RRi}{{\bullet\bullet}}
\newcommand{\RSi}{{\bullet\circ}}
\newcommand{\SRi}{{\circ\bullet}}
\newcommand{\SSi}{{\circ\circ}}
\newcommand{\nsX}{\Sigma\ }
\newcommand{\osX}{\Pi^+}
\newcommand{\tsX}{\Pi\ }
\newcommand{\oXe}[2]{\left[#1\left\|{#2}\right.\right]}
\newcommand{\regX}{{\bullet}}
\newcommand{\singX}{{\circ}}
\newcommand{\ww}{\mathbf{w}}
\newcommand{\yy}{\mathbf{y}}
\newcommand{\zz}{\mathbf{z}}
\newcommand{\xx}{\mathbf{x}}
\newcommand{\vv}{\mathbf{v}}
\newcommand{\uu}{\mathbf{1}}
\newcommand{\uuu}{\underline{\mathbf{1}}}
\newcommand{\www}{\underline{\mathbf w}}
\newcommand{\eee}{\underline{\mathbf e}}
\newcommand{\oo}{{\mathbf 0}}
\newcommand{\ooo}{\underline{\mathbf 0}}
\newcommand{\nos}{{n_\singX}}
\newcommand{\nor}{{n_\regX}}
\newcommand{\pK}[1]{\pi_{\mathcal R}(#1)}
\newcommand{\alal}{\pmb{\alpha}}
\newcommand{\bebebe}{\underline{\pmb{\beta}}}
\newcommand{\bebe}{{\pmb{\beta}}}
\newcommand{\adjRR}{\mathsf{A}^{\bullet\bullet}}
\newcommand{\adjRS}{\mathsf{A}^{\bullet\circ}}
\newcommand{\adjSR}{\mathsf{A}^{\circ\bullet }}
\newcommand{\adjSS}{\mathsf{A}^{\circ\circ}}
\newcommand{\piK}[1]{\pi_{\mathcal K}(#1)}
\newcommand{\nDT}{\Omega}
\title[Refined moves for structure-preserving isomorphism]{Refined moves for structure-preserving isomorphism of graph $C^*$-algebras}
	\author{S{\o}ren Eilers}
        \address{Department of Mathematical Sciences \\
        University of Copenhagen\\
        Universitetsparken~5 \\
        DK-2100 Copenhagen, Denmark}
        \email{eilers@math.ku.dk }
	\author{Efren Ruiz}
        \address{Department of Mathematics\\University of Hawaii,
Hilo\\200 W. Kawili St.\\
Hilo, Hawaii\\
96720-4091 USA}
        \email{ruize@hawaii.edu}
\date{\today}
\begin{document}

\begin{abstract}
We formalize eight different notions of isomorphism among (unital) graph $C^*$-algebras, and initiate the study of which of these notions may be described geometrically as generated by moves. We propose a list of eight types of moves that we conjecture has the property that the collection of moves respecting each of these notions of isomorphism indeed generate that notion, in the sense that two graphs are equivalent in that sense if and only if one may transform one into another using only the invariant kinds of moves.

We completely resolve  invariance properties of all moves on our list except one, and collect generation results supporting our conjectures. In particular, we prove the conjecture \emph{in toto} for the cases of
all amplified and all 
 acyclic graphs, and for all monocyclic graphs defining $C^*$-algebras with at most one non-trivial gauge invariant ideal. We also present new and very precise results for large classes of simple $C^*$-algebras.
\end{abstract}
\maketitle

\newcommand{\companion}[1]{\cite[#1]{seaseer:gciugc}}

\tableofcontents

\chapter{Preliminaries}


\section{Introduction}

The \emph{geometric classification} of unital graph $C^*$-algebras obtained by the authors with Restorff and S\o{}rensen  gives a description of the equivalence relation induced on all directed graphs with finitely many vertices (but possibly countably infinitely many edges) by stable isomorphism of their associated graph $C^*$-algebras, as the coarsest equivalence relation containing a number of moves on the graphs. Thus, in a way resembling the role played by Reidemeister moves for homotopy of knots, two graphs define the same $C^*$-algebra up to stable isomorphism if and only if there is a finite number of such moves which leads  from one graph to another. Those moves are all local in nature -- affecting only the graph in a small neighborhood of the vertex to which it is applied -- and the basic moves have their origin in symbolic dynamics, viz.\ \emph{in-splitting} and \emph{out-splitting} as defined and studied by Williams (\cite{rfw:csft}) as a way of characterizing conjugacy among shifts of finite type.

The emphasis on moves in \cite{segrerapws:ccuggs} originally served as a vehicle of providing $K$-theoretical classification of this class of non-simple 	$C^*$-algebras, but we have found that such a characterization carries substantial weight in itself, by explaining many known phenomena and providing a convenient  tool for establishing new insights.

In parallel with these efforts, it has lately been discovered in a movement pioneered by Kengo Matsumoto (cf.~\cite{km:sdsc}) that the subclass of graph algebras associated in this way to finite essential graphs -- the so-called Cuntz-Krieger algebras -- have profound rigidity properties when one considers them not as $C^*$-algebras alone, but as $C^*$-algebras equipped with natural extra structure. More precisely, the symbolic dynamical systems defined as shifts of finite type associated to such graphs are remembered by the $C^*$-algebras at varying level of precision depending upon how much structure is considered. Key objects are the diagonal and the gauge action  either associated to the Cuntz-Krieger algebra $\Oo_A$ itself (denoted $\Dd_A$ and $\gamma^A$, respectively), or associated to the stabilization	$\Oo_A\otimes\K$  ($\Dd_A\otimes c_0 $ and $\gamma^A\otimes \id$, respectively). Table \ref{varres} 
lists a collection of such results presently known, showing that standard dynamical notions such as conjugacy and flow equivalence are indeed rigidly remembered by the operator algebras, and providing motivation for the study of new concepts of sameness of such dynamical systems.
\begin{table}[t]
\begin{center}
\begin{tabular}{r|r|r|r}
\emph{Notion}&\emph{Data}&\emph{Which SFTs?}&\emph{Ref.}\\
\hline\hline
Flow equivalence&$(\Oo_A\otimes\K,\Dd_A\otimes c_0)$&Irreducible&\cite{kmhm:coetmscka}\\\cline{3-4}
&&All&\cite{ceor:coe}\\\hline
Conjugacy&$(\Oo_A\otimes\K,\Dd_A\otimes c_0,\gamma^A\otimes\id)$&All&\cite{tmcjr:dgigc}\\\hline
Shift equivalence&$(\Oo_A\otimes\K,\gamma^A\otimes\id)$&Primitive&\cite{obak:tsacsa}\\\cline{3-4}
&&Irreducible&\cite{segs}\\\hline
Continuous orbit&$(\Oo_A,\Dd_A)$&Irreducible&\cite{km:oetmscka}\\\cline{3-4}
 equivalence&&No isolated points&\cite{nbtmcmfw:gaoe}\\\cline{3-4}
&&All&\cite{seaseer:dcdpgc}\\
&&&\cite{tmcmlw:oegigg}\\\hline
Eventual conjugacy&$(\Oo_A,\Dd_A,\gamma^A)$&Irreducible&\cite{km:ucoemsgacka}\\\cline{3-4}
&&All&\cite{tmcjr:dgigc}
\end{tabular}
\end{center}
\caption{Assorted rigidity results}\label{varres}
\end{table}

To systematically address questions of this nature, we propose here a nomenclature to express isomorphisms of eight different type by means of 3-bit words that we always denote \xyz{xyz} with $\xyz{x},\xyz{y},\xyz{z}\in\{\xyz{0},\xyz{1}\}$. The first bit determines whether the isomorphism is exact ($\xyz{x}=\xyz{1}$) or just stable ($\xyz{x}=\xyz{0}$), the second whether it must commute with the gauge action (appropriately extended to the stabilization when $\xyz{x}=\xyz{0}$), and the third whether it must send diagonals (again suitably extended when $\xyz{x}=\xyz{0}$) to diagonals.

The eight cases are best considered as the corners of a cube
\[
\xymatrix@=0.3cm{
&&\mybox{110}\ar@{-}[rr]&&\mybox{111}\\
&\mybox{010}\ar@{-}[rr]\ar@{-}[ur]&&\mybox{011}\ar@{-}[ur]&&\\
&&\mybox{100}\ar@{-}'[r][rr]\ar@{-}'[u][uu]&&\mybox{101}\ar@{-}[uu]&\\
&\mybox{000}\ar@{-}[rr]\ar@{-}[uu]\ar@{-}[ur]&&\mybox{001}\ar@{-}[ur]\ar@{-}[uu]&&
}
\]
where the data considered by the indicated types of isomorphisms is
\[
\xymatrix@=0.3cm{
&&\fbox{$\scriptstyle (C^*(E),\gamma^E)$}\ar@{-}[rr]&&\fbox{$\scriptstyle (C^*(E),\gamma^E,\Dd_E)$}\\
&\fbox{$\scriptstyle (C^*(E)\otimes \K,\gamma^E\otimes\id)$}\ar@{-}[rr]\ar@{-}[ur]&&\fbox{$\scriptstyle (C^*(E)\otimes\K,\gamma^E\otimes\id,\Dd_E\otimes c_0)$}\ar@{-}[ur]&&\\
&&\fbox{$C^*(E)$}\ar@{-}'[r][rr]\ar@{-}'[u][uu]&&\fbox{$\scriptstyle (C^*(E),\Dd_E)$}\ar@{-}[uu]&\\
&\fbox{$C^*(E)\otimes \K$}\ar@{-}[rr]\ar@{-}[uu]\ar@{-}[ur]&&\fbox{$\scriptstyle (C^*(E)\otimes \K,\Dd_E\otimes c_0)$}\ar@{-}[ur]\ar@{-}[uu]&&
}
\]
It is essential to note from the outset that since our requirement is that there is a single isomorphism preserving all structure, it is potentially stronger to satisfy a relation with two bits set than two relations each with one bit set, and so on. 

Among the results in Table \ref{varres}, the result of Carlsen and Rout (\cite{tmcjr:dgigc}) that conjugacy of two-sided shifts of finite type is exactly reflected by the stabilized Cuntz-Krieger algebra equipped with the stabilized diagonal and the gauge action trivially extended to the stabilization is the foremost source of motivation for the present work, as it
contains a corollary of direct relevance to geometric classification. Indeed, Williams established already in 1973 (\cite{rfw:csft}) that  when two shift of finite types are conjugate -- i.e. when the two essential finite graphs are \xyz{011}-equivalent -- one may transform one to the other by a finite number of in- or out-splittings, or their inverses. Since such splitting moves are already on the list of moves considered in \cite{segrerapws:ccuggs}, and may be seen to be \xyz{011}-invariant, this begs the question of whether it is the case that the various kinds of sameness defined at the level of operator algebras to reflect the needs of rigidity results are generated as the smallest equivalence relations containing those moves that preserve the relevant notions.

It turns out that any result of this nature fails for the collection of moves previously considered, so the question must be recast as: Does there exist a list of moves generating stable isomorphism among unital graph $C^*$-algebras with the property that those moves that preserve further structure also generate the relevant refined equivalence relations? Obtaining such a list of moves would be a key step towards a unified approach to understanding and comparing the various concepts of sameness, where open questions persist even restricted to the types of graphs considered in  the purely dynamical cases. We are hopeful that it may even cast light on the Williams conjecture in symbolic dynamics, and the corresponding question of decidability of strong shift equivalence. 

We present here a  collection of moves
\begin{equation}\label{movelist}
\OOO,\IIIp,\IIIm,\RRRp,\SSS,\CCCp,\PPPp,\KKKp
\end{equation}
 that we strongly believe has this generating property.

We will systematically prove that the invariance properties of each type of move are as indicated in Table \ref{sevenprops}, and establish -- at varying level of  generality -- a portfolio of theorems of generation which to us serves as evidence for the conjecture that indeed the family of moves we present here have the desired properties. In one of the seven new cases we are considering  we are able to prove so in full generality, and the proof of this fact was given in \cite{seaseer:gciugc} jointly with Arklint, which is to be considered a companion paper to the present one. The remaining six we can only establish under varying added assumptions on the graphs, and in the last part of this work we will present a large collection of partial results that support our conjectures, and outline the outstanding technical issues.

We have focused our efforts on two important subclasses, in both of which we are able to establish the conjectures fully. The  class of \emph{finite graphs defining gauge simple graph $C^*$-algebras}
contains the graphs defining simple Cuntz-Krieger algebras, which has been the object of intense scrutiny over several decades. This has established a clear connection to symbolic dynamics which allows us to prove our conjectures by a careful reduction to that subcase. We also complete the analysis of the class of \emph{acyclic graphs}, which are exactly those that define unital AF algebras, and for some new cases of \emph{type I} graph $C^*$-algebras. Such results follow easily from our earlier work, or by ad hoc methods, whenever the gauge action is not considered, but requires new methods when it is.


\begin{table}
\begin{center}
\begin{tabular}{|c|c|c|c|r|r|r|}\hline&\xyz{x}&\xyz{y}&\xyz{z}&Definition&Invariance&Non-invariance\\\hline\hline
\OOO&\xyz{1}&\xyz{1}&\xyz{1}&\ref{OOO}&\ref{thm:outsplitting}&$-$\\\hline
\IIIp&\xyz{1}&\xyz{1}&\xyz{1}&\ref{IIIp}&\ref{thm:insplittingplus}&$-$\\\hline
\IIIm&\xyz{0}&\xyz{1}&\xyz{1}&\ref{IIIm}&\ref{thm:insplitting2}&\ref{OIIniIOO}\\\hline
\RRRp&\xyz{1}&\xyz{0}&\xyz{1}&\ref{RRRp}&\ref{thm:reduction-plus}&\ref{IOIniOIO}\\\hline
\SSS&\xyz{0}&\xyz{0}&\xyz{1}&\ref{SSS}&\ref{thm:source}&\ref{OIIniIOO}, \ref{IOIniOIO}\\\hline
\CCCp&\xyz{1}&\xyz{0}&\xyz{0}&\ref{CCCp}&\ref{thm:iso-cuntzsplice+}&\ref{psexplus}\\\hline
\PPPp&\xyz{1}&\xyz{0}&\xyz{0}&\ref{PPPp}&\ref{thm:iso-pulelehua+}&\ref{psexplus}\\\hline
\KKKp&\xyz{1}&\xyz{1}&\xyz{0}&\ref{KKKp}&(open)&\ref{kimroushplus}\\\hline\end{tabular}\end{center}
\caption{Invariance properties of the eight moves}\label{sevenprops}
\end{table}

The first seven moves in \eqref{movelist} are honestly geometric, and in all cases but two rather small variations of moves that are already in the literature, changed slightly by an addition of sources to ensure that the moves leaving the graph $C^*$-algebra rather than its stabilization invariant. The one true innovation in this collection of moves is a complete rethinking of the concept of \emph{in-splitting}. The lack of symmetry between out- and insplitting in the setting of graph $C^*$-algebras was understood already in Bates and Pask's trailblazing paper \cite{tbdp:fega}, where they showed that whereas Williams' out-splitting can be transplanted to operator algebras  in a way which does not change the graph $C^*$-algebra itself (or indeed, as we shall see, any other relevant structure), that is impossible for in-splitting which is only an invariant of the stabilization.
But whereas this did not carry any further weight in  \cite{tbdp:fega} or \cite{segrerapws:ccuggs}, as soon as one attempts to keep track of the $C^*$-algebras ``on the nose'', or of information stored in the gauge action, it becomes clear that the definition of in-splitting in \cite{tbdp:fega}  is insufficient.

The solution we present here is to replace the traditional in-splitting move by two new types of moves, called \IIIm\ and \IIIp. Here \IIIm\ is a rather innocuous variation of Williams' in-split allowing empty sets in the partition defining the in-splitting, thus introducing sources in a way that we shall see is \xyz{011}-invariant.  The more restrictive \IIIp\ move specializes this to allow the passage between two graphs that result from two different in-splittings of the same graph \emph{having the same number of sets in their partition}. This, as we shall prove, preserves all relevant structure of the graph $C^*$-algebra and is hence \xyz{111}-invariant.

As an example, consider the graph
\begin{equation}\label{original}
\xymatrix{\bullet\ar@/^/@<-1.5mm>[r]\ar@/^/@<-0.5mm>[r]\ar@/^/@<0.5mm>[r]\ar@/^/@<1.5mm>[r]&\bullet\ar@/^1em/[l]}
\end{equation}
and in-split the edges having the rightmost vertex as their ranges into two sets evenly and unevenly to obtain
\begin{equation}\label{bc}
\xymatrix{&\bullet\ar[dr]\ar[dl]\ar@<-1mm>[dl]\ar@<1mm>[dl]\\\bullet\ar@/^2em/[ur]&&\bullet\ar@/_2em/[ul]}\qquad
\xymatrix{&\bullet\ar@<-0.5mm>[dr]\ar@<0.5mm>[dr]\ar@<-0.5mm>[dl]\ar@<0.5mm>[dl]\\\bullet\ar@/^2em/[ur]&&\bullet\ar@/_2em/[ul]}
\end{equation}
where as usual, the incoming edges are distributed according to the partition and the outgoing edges are duplicated. We will say one of  these two graphs arises from the other by an \IIIp\ move, and will see that their associated $C^*$-algebras are the same in our strongest  (\xyz{111}) sense. They are also \IIIp- and \xyz{111}-equivalent to 
\begin{equation}\label{bcalt}
\xymatrix{&\bullet\ar@<0.5mm>[dr]\ar@<-0.5mm>[dr]\ar@<1.5mm>[dr]\ar@<-1.5mm>[dr]\\\bullet\ar@/^2em/[ur]&&\bullet\ar@/_2em/[ul]}
\end{equation}
which is obtained by partitioning the four incoming vertices into two sets, one of which is empty. These three graphs are \textbf{not} \xyz{111}-equivalent to that in \eqref{original}.

We were led to \IIIp\ by Example 3.6 of \cite{kabtmc:ckaocsftg} which exactly compares the two graphs in \eqref{bc}, but found that the idea of considering only pairs of partitions with the same number of nonempty set was too restrictive to be generating in any useful sense. Allowing empty sets seems to provide exactly the right level of generality. A good way of thinking of the \IIIp\ move is that whenever two or more vertices have exactly the same future, one may \emph{redistribute their pasts among them freely}. It is natural to consider the \IIIp\ move as an operation on the class of one-sided shift spaces associated to finite graphs without sinks, and  work by Kevin Aguyar Brix (\cite{kab:bssebiec}) shows that the \IIIp\ move in conjunction with Williams' original out-split generates a notion of sameness of such shift spaces in the same way that Williams' original moves generate conjugacy of two-sided shift spaces.

It turns out, however, that these seven moves are not sufficient to generate the two types of isomorphism that we denote \xyz{110} and \xyz{010}, and we do not have any obvious candidates for local and honestly geometric moves to fill this gap.
In order to address all eight cases in a uniform manner, we hence introduce here a ``Krieger move'' \KKKp\ which is of a dramatically different nature than the others, being arithmetic and global rather than geometric and local in nature with reference to the dimension triples introduced in \cite{wk:dftmc} which we augment to \emph{dimension quadruples} necessary at our level of generality. As opposed to situation for the collection of geometric moves, it follows easily that this move generates \xyz{110}-equivalence, whereas establishing invariance is a prominent open problem known from graded algebras as the \emph{Hazrat conjecture}. We show the weaker result that the move is  \xyz{100}-invariant, and analyze the structure of the dimension quadruples carefully. This leads us to confirm Hazrat's conjecture in some new cases, and we hope that our approach may help shed light on this extremely important outstanding problem in general, but we must leave several key questions in conjectural form.

Apart from the Hazrat conjecture, our undertaking  is motivated by three other, probably more distant, goals which we will conclude by listing.
\begin{itemize}
\item
The question of \emph{decidability} of each of the notions of isomorphisms amongst graphs with finitely many vertices is extremely interesting and contains the outstanding key issue of whether or not Willliams' strong shift equivalence is decidable. Outside of the \xyz{x00} case which has been known to be decidable since  \cite{segrerapws:ccuggs}, we  have no general evidence,  but  we can present assorted results in Theorems  \ref{classifyamplified}, \ref{singxOz},\ref{oneidealstable}, \ref{oneidealexact}, Proposition \ref{monoxOz} and Corollary \ref{acyclicxIz} that may allow for a recasting of the problem in a constructive way.
\item  The \emph{Abrams-Tomforde conjectures} can be generalized to ask the question of whether or not the eight types of isomorphisms we are studying here coincide with algebraic isomorphisms of the Leavitt path algebras associated to the same pair of graphs, and indeed such questions were answered affirmatively in all \xyz{xy1} cases in sweeping work by Carlsen and Rout (\cite{tmcjr:dpgisa}). Combining these results with ours leads to several new results for some \xyz{xy0} settings, as we will detail in Appendix \ref{LPA}, and may conceivably be of use in answering this subtle question.
\item  It is natural to use our framework to extend manifestly dynamical notions on the graphs defining symbolic dynamical systems to general graphs and study them as such, and with generalized tools. In fact in the case of \xyz{101}-equivalence this is already well  developed in existing work through groupoid theory and topological full groups, and we see no reason for the journey to stop there.
\end{itemize}
\subsection{Acknowledgments}

We would like to thank Sara E.~Arklint, Becky Armstrong, Kevin Aguyar Brix, Toke Meier Carlsen, James Gabe, Roozbeh Hazrat, Kengo Matsumoto, David Pask, Gunnar Restorff, Aidan Sims, and Lia Va\v s for sharing insights, references and ideas with us at crucial points in this project.

This research is part of the EU Staff Exchange project 101086394 ``Operator Algebras That
One Can See''. Also, the first named author was supported by the Independent Research Fund Denmark 
grants no.\ 7014-00145B and 1026-00371B.  He also acknowledges support from the Rise network H2020-MSCA-RISE-2015-691246-QUANTUM DYNAMICS in the early phases of this work. The second named author was supported by
a Simons Foundation Collaboration Grant, \# 567380. 

\section{Fundamentals}

\subsection{Notation and conventions}

We use the definition of graph $C^*$-algebras in \cite{njfmlir:cig} in which sinks and infinite emitters are singular vertices, and always consider $C^*(E)$ as universal $C^*$-algebras generated by Cuntz-Krieger\index{Cuntz-Krieger families}\index{Cuntz-Krieger relations}\index{CK@(CK1)--(CK3)} families $\{s_e,p_v\}$ with $e$ ranging over edges and $v$ ranging over vertices in $E$.  We expect the reader to be familiar with fundamental graph $C^*$-algebra theory, but for easy reference we list the Cuntz-Krieger relations below:
\begin{enumerate}[(CK1)]
\item $s_e^* s_e = p_{r(e)}$ for every $e\in E^1$
\item $s_e s_e^* \leq p_{s(e)}$ for every $e\in E^1$
\item $ p_v = \sum_{s(e)=v} s_e s_e^*$ for every $v\in E^0_{\text{reg}}$.
\end{enumerate}
In drawings we always use $\circ$ for singular vertices and $\bullet$ for regular vertices, and we say that a graph is regular, resp.~singular, if all of its vertices are regular, resp.~singular. \index{regular!vertex} \index{regular!graph}  \index{singular!graph}\index{singular!vertex} 

{\bf Unless stated otherwise, graphs $E$, $F$ will always be considered as having finitely many vertices and finitely or countably infinitely many edges.}

We will need to appeal to the full range of standard results on the structure of graph $C^*$-algebras, and establish some terminology and notation that will be used throughout the paper.

\subsection{\xyz{xyz}-isomorphism}\label{xIztools}

As usual, the \emph{diagonal}\index{diagonal} of any graph $C^*$-algebra $C^*(E)$ is defined as
\[
\Dd_E=\overline{\operatorname{span}}\{s_\mu s_\mu^*\mid \mu \text{ a finite path in $E$}\}
\]
and the \emph{gauge action} defines  $\gamma^E_z\in \operatorname{Aut}(C^*(E))$ for each $z\in \TT$ by
\[
\gamma^E_z(s_e)=zs_e\qquad \gamma^E_z(p_v)=p_v.
\]
We will suppress the superscript $E$ when $E$ is clear from the context.

We are ready to formally state our basic notion of \xyz{xyz}-equivalence.\index{xyz@$\xyz{xyz}$-equivalence}\index{xyz@$\xyzrel{xyz}$}

\begin{defin}\label{xyzdef}
With $\xyz{x}, \xyz{y}, \xyz{z} \in \{ \xyz{0}, \xyz{1}\}$ we 
say that $E$ and $F$ are \xyz{xyz}-equivalent when there exists a $*$-isomorphism $\phi \colon C^*(E) \otimes \K \to C^*(F) \otimes \K$ which additionally satisfies 
\begin{itemize}
\item $\phi ( 1_{ C^*(E) } \otimes e_{11} ) = 1_{ C^*(F) } \otimes e_{11}$ when $\xyz{x} = \xyz{1}$

\item $\phi \circ ( \gamma_z^E \otimes \id_{ \K } ) = ( \gamma_z^F \otimes \id_{\K} ) \circ  \phi$ when $\xyz{y} = \xyz{1}$

\item $\phi ( \mathcal{D}_E \otimes c_0 ) = \mathcal{D}_F \otimes c_0$ when $\xyz{z} = \xyz{1}$.
\end{itemize}
\end{defin}

Here, $c_0$ is the canonical diagonal of $\K$ (the compacts on a separable Hilbert space).\index{c0@$c_0$}\index{K@$\K$}

We use the notation $\xyzrel{xyz}$ to refer to the equivalence relation among graphs with finitely many vertices defined this way, considered as a subset of $\mathcal G\times \mathcal G$, with $\mathcal G$ denoting the set of all graphs with finitely many vertices. We use $\langle \cdot \rangle$ to denote the smallest equivalence relation generated by a collection of relations, allowing use to state succinctly by 
\begin{equation}\label{xyzbracket}
\xyzrel{xyz}=\langle X_1,\dots, X_n \rangle
\end{equation}
that some collection of moves generates some notion of sameness. We will often need to pass to some subclass $\widetilde{\mathcal G}\subseteq \mathcal G$, and whenever 
\[
\xyzrel{xyz}\cap (\widetilde{\mathcal G}\times \widetilde{\mathcal G})=\langle X_1,\dots, X_n \rangle\cap (\widetilde{\mathcal G}\times \widetilde{\mathcal G})
\]
we will consistently use the phrasing that \eqref{xyzbracket} \emph{holds in the class $\widetilde{\mathcal G}$}. We will also state with $\subseteq$ and $\supseteq$ in a similar way that one relation is finer or coarser than another, in general or in specialized situations. We always tacitly identify isomorphic graphs, and denote the relation of graph isomorphism by $\langle\rangle$.

\begin{remar}
Note that all the \xyz{1yz}-relations may be formulated directly, as properties for $\phi:C^*(E)\to C^*(F)$.
Note also that there is more than one reasonable way to extend $\gamma^E$ to $C^*(E)\otimes \K$. Our choice is motivated by \cite{tmcjr:dgigc} and differs from the standard choice in Leavitt path algebras (cf.\ \cite{rh:ggclpa}). 
\end{remar}

\subsection{Admissible pairs}\label{adpairs}

  Let $E$ be a graph and let $H \subseteq E^0$.   The set $H$ is said to be \emph{hereditary}\index{hereditary vertex sets} provided that for all $e \in E^1$, $s(e) \in H$ implies $r(e) \in H$ and the set $H$ is  said to be \emph{saturated}\index{saturated vertex sets} provided that for all regular vertices $v$, $r(s^{-1}(v)) \subseteq H$ implies $v \in H$.  For a hereditary and saturated subset $H$ of $E^0$, set 
$$
B_H := \{ v \in E^0 : | s^{-1}(v)|=\infty \text{ and } 0 < | s^{-1}(v) \cap r^{-1}(E^0 \setminus H)| < \infty\}.
$$
An \emph{admissible pair}\index{admissible} is an ordered pair $(H,S)$, where $H$ is a hereditary and saturated subset of $E^0$ and $S$ is a subset of $B_H$.  The set of all admissible pairs becomes a lattice when given the following ordering 
$$
(H, S) \leq (H',S') \quad \iff \quad H \subseteq H' \text{ and } S \subseteq H' \cup S'.
$$
For an admissible pair $(H,S)$, $I_{(H,S)}$, will denote the ideal of $C^*(E)$ generated by 
$$
\{ p_v: v \in H\} \cup \left\{ p_w^H := p_w - \sum_{ \substack{ s(e)=w \\ r(e) \notin H }} s_e s_e^* : w \in S\right\}.
$$
By \cite[Theorem~3.6 and Corollary~3.10]{bhrs:iccig}, the map $(H,S) \mapsto I_{(H,S)}$ is a lattice isomorphism from the lattice of admissible pairs of $E$ to the lattice of gauge-invariant ideals of $C^*(E)$.

\subsection{Classification}\label{classummary}

For extensive use in the paper, we will summarize and streamline notation from our earlier work 
 \cite{segrerapws:ccuggs,segrerapws:gcgcfg}. 
 
We denote adjacency matrices by  $\mathsf{A}_E$, allowing entries ``$\infty$'' at infinite emitters. When studying graphs by their matrices, we often impose the following three modest assumptions:
\begin{enumerate}[(i)]
\item There is at most one regular source;
\item Any regular vertex which is not a source supports a path back to itself;
\item Any infinite emitter emits with zero or infinite multiplicity to any vertex.\fxnote{Does $\GL$ still imply $\xyzrel{000}$ if we replace (iii) by ``no breaking vertices''?}
 \end{enumerate}
 These conditions allow for a convenient reorganization of the data in $\mathsf{A}_E$. First, we subdivide $\mathsf{A}_E$ into\index{AE@$\mathsf{A}^\bullet_E$}\index{AE@$\mathsf{A}^\circ_E$}
\[
\left[ \begin{array}{c|ccc} 0&a_{sv_1}&\cdots &a_{sv_n}\\\hline
 0&\\
 \vdots&&\mathsf{A}^\bullet_E&\\
 0&\\\hline
 0&\\
 \vdots&&\mathsf{A}^\circ_E&\\
 0&
 \end{array}\right]
  \]
  where the vertices have been reordered with the regular source $s$ first (if it exists), then other regular vertices, and then singular vertices. We define, for use with $K$-theory,\index{BE@$\mathsf{B}^\bullet_E$}\
  \[
  \mathsf{B}^\bullet_E=(\mathsf{A}^\bullet_E-\idmatrix)^T.
  \]
  (often dropping the ``$\bullet$'' from the notation when the graph is regular), and 
  collect the multiplicities of edges emitting from $s$ in a column vector
  \[
  \mathsf{D}_E=\begin{bmatrix}a_{sv_1}+1&\cdots &a_{sv_n}+1\end{bmatrix}^T.
  \]
When there is no source, we set
  \[
  \mathsf{D}_E=\begin{bmatrix}1&\cdots &1\end{bmatrix}^T.
  \]

We may (by (ii)) partition the remaining elements of $E^0$ into sets of \emph{generalized components} \index{generalized components} which are either maximal strongly connected subsets (with a non-trivial path between any two vertices -- including from any vertex back to itself) or singletons with singular vertices. We note further that (by (iii)) any hereditary set $H$ is automatically saturated, and that always $B_H=\emptyset$. Consequently 
the admissible pairs of the previous section are exactly given by the hereditary  subsets, and such sets are unions of generalized components.

This makes it very easy to read off the structure of the lattice of gauge invariant ideal as the hereditary subsets of the set $\mathcal P$ of generalized components, ordered so that $c_1 \geq c_2$ when there is a path from one (hence all) vertex in $c_1$ to one (hence all) in $c_2$.
Reordering the vertices as needed, we may arrange the adjacency matrices $\mathsf{A}_E$ and  $\mathsf{A}^\bullet_E$ in lower triangular block matrices where all blocks corresponding to index pairs $(i,j)\in\mathcal P\times \mathcal P$ for which $i\not\leq j$ vanish.
Using the same ordering as for   $\mathsf{A}_E$ and  $\mathsf{A}^\bullet_E$, the matrices $\mathsf{B}_E$ and  $\mathsf{B}^\bullet_E$ are then upper block triangular.

Note that whenever $(E,F)\in\xyzrel{000}$, the associated  sets $\mathcal P_E$ and $\mathcal P_F$ are isomorphic, and in this case we often tacitly identify to one set $\mathcal P$. Note that very often, there are several choices of identification -- we will assume one has been fixed in the notation, but of course any of the finitely many solutions are relevant.
In the other direction, we say that a pair of graphs $(E,F)$ is \emph{matched}\index{matched graphs} when both $E$ and $F$  satisfy (i)--(ii) above, they have the same $\mathcal P$,  and when the number of regular as well as singular vertices agree within all of the corresponding generalized components agree. This means that all of the matrices $\mathsf{A},\mathsf{A}^\bullet,\mathsf{B},\mathsf{B}^{\bullet}$ can be arranged to agree in size blockwise.

We say that two matched graphs are 
 \emph{\GL-equivalent}\index{GL-equivalence@\GL-equivalence} when they both satisfy (i)--(iii) above, their $\mathcal P$ sets agree, and there exist block triangular matrices $U,V$ with vanishing blocks whenever $i\not\leq j$, and such that all diagonal blocks are always invertible, and equal to $[1]$ when they are $1\times 1$, satisfying
\begin{equation}\label{GLdef}
U\mathsf{B}^\bullet_E=\mathsf{B}^\bullet_FV
\end{equation}
We write $(E,F)\in\GL$ in this case, but note that $\GL$-equivalence is only a reflexive relation when restricted to the set of graphs satisfying (i)--(iii).

We say that a graph satisfying (i)--(iii) is in \emph{canonical form}\index{canonical form} when further
\begin{enumerate}[(i)]\addtocounter{enumi}{3}
\item Whenever there is a path from $v$ to $w$, there is an edge from $v$ to $w$;
\item Whenever there is a path from $v$ to $w$, and $v$ is an infinite emitter, there are infinitely many edges from $v$ to $w$;
\item 
If there are two different paths from $v$ back to itself (neither visiting $v$ along the way), then 
\begin{enumerate}
\item  $v$ supports  two loops;
\item There is at least 3 regular vertices in the generalized component containing $v$; 
 \item The Smith form of the restriction of $\BB^\bullet$ to the generalized component containing $v$ has at least two ones in its diagonal;
\end{enumerate} 
\item 
Every regular vertex supports a loop.
\end{enumerate}
Finally, we say that a pair of graphs are in \emph{standard form}\index{standard form} if the pair is matched, and the graphs are both in canonical form.

The reader may have noticed that (vii) renders the mentioning of sources in (i) void, and that (iv) implies (iii). The weaker condition for a matched pair is sometimes convenient, and we allow sources in preparation for a generalization which we present in Section \ref{GLandfriends}.

This discussion sets the stage for statements (i) and (iii) of the main result of \cite{segrerapws:ccuggs}, which we give in the form:

\begin{theor}[{\cite[Theorems 3.1, 14.2]{segrerapws:ccuggs}}]\label{ERRSmain}
For any two graphs $E$ and $F$ the statements
\begin{enumerate}[(i)]
\item $(E,F)\in \xyzrel{000}$
\item $\FKXp(C^*(E))\simeq \FKXp(C^*(F))$
\end{enumerate}
are equivalent, and when $(E,F)$ is matched, they are implied by
\begin{enumerate}[(i)]\addtocounter{enumi}{2}
\item $(E,F)\in \GLrel$.
\end{enumerate}
When $(E,F)$ are in standard form, all statements are equivalent.
\end{theor}

We will elaborate in Section \ref{GLandfriends} below  on how it is always possible to arrange for standard form in the appropriate sense, fully and algorithmically relating  \xyz{000}-equivalence to \GL-equivalence.

The invariant $\FKXp(-)$ used in (ii) is the \emph{ordered, (gauge) filtered $K$-theory} obtained by computing the six term exact sequence for any extension of gauge-invariant subquotients of the $C^*$-algebras as we will now detail. We use the language of $C^*$-algebras \emph{over a topological space}\index{Cs-algebra over X@$C^*$-algebra over $X$} $X$, which associates an ideal $A[U]$ to any open subset $U\subset X$ and a subquotient $A[V\backslash U]$ to any pair $U\subset V\subset X$ of open sets, defined uniquely as\index{FKx@$\FKX(-)$}\index{FKxp@$\FKXp(-)$}
\[
A[U\backslash V]:=A[V]/A[U],
\]
so that
\begin{equation}\label{UVdefs}
\xymatrix
{0\ar[r]&A[U]\ar[r]^-{\iota _U^V}&A[V]\ar[r]^-{\kappa_U^V}&A[U\backslash V]\ar[r]&0}
\end{equation}
is exact. We define $\FKX(A)$ as the collection of all the data in the six term exact sequence in $K$-theory applied to all the $C^*$-extensions in \eqref{UVdefs}, identified whenever the defining subsets of $X$ agree. Consequently, two $C^*$-algebras over $X$ have isomorphic $\FKX(-)$ when the $K$-groups of the distinguished ideals and subquotients are isomorphic with the isomorphisms commuting with the maps
\[
(\iota_U^V)_*,(\kappa_U^V)_*,\partial_U^V,
\]
the latter denoting the index maps. We define $\FKXp(-)$ using the canonical order on $K_0(-)$, requiring the $K_0$-part of the isomorphisms to preserve the order.,
When $C^*(E)$ is a graph $C^*$-algebra with finitely many gauge-invariant ideals we always take the set  $X = \mathrm{Prim}_\gamma (C^*(E))$, denoting the set of all proper ideals that are prime within the set of proper gauge invariant ideals.  By \cite[Lemma~3.7]{segrerapws:ics}, $C^*(E)$ is a $C^*$-algebra with distinguished ideals which are exactly the gauge-invariant ideals of $C^*(E)$.  Let $U$ be an open subset of $X$. 
It is straightforward to define $X$ as an Alexandroff space given by $\mathcal P$ above, and the lattice of open sets is exactly the the same lattice that we presented using $\mathcal P$.
Just as in that discussion, (ii) of Theorem \ref{ERRSmain} is only true when these spaces are isomorphic, and when they are, an explicit identification is chosen.

We also prove in \cite{segrerapws:ccuggs}:

\begin{theor}\label{ERRSunital}
For any two graphs $E$ and $F$ the statements
\begin{enumerate}[(i)]
\item $(E,F)\in \xyzrel{100}$
\item $\FKXpu(C^*(E))\simeq \FKXpu(C^*(F)) $
\end{enumerate}
are equivalent, and when $(E,F)$ are matched, they are implied by
\begin{enumerate}[(i)]\addtocounter{enumi}{2}
\item $(E,F)\in \GLprel$.
\end{enumerate}
\end{theor}

The invariant $\FKXpu(A)$\index{FKxpu@$\FKXpu(-)$} in (ii) is obtained from $\FKXp(A)$  when $A$ is unital
by keeping track of  the class $[1_A]$  in all groups of the form $K_0(A[X\backslash V])$, and requiring that they are preserved by any isomorphism. 
Finally, a $\GL$-equivalent pair $(E,F)$ lies in $\GLp$ if $(U,V)$ of \eqref{GLdef} can be chosen so that 
\[
U\mathsf{D}_E-\mathsf{D}_F\in \operatorname{im} \mathsf{B}_F^\bullet.
\]
This notation has been chosen so that this condition exactly says that $U$ induces a map on $K$-theory which sends the unit of $K_0(C^*(E))$ to the unit of  $K_0(C^*(F))$.\index{GLp-equivalence@$\GLp$-equivalence}

It is in fact true that all statements are equivalent for pairs of graphs in standard form, but obtaining this form is not generally possible. Following \cite{seaseer:gciugc}, we will provide a better result in Section \ref{GLandfriends}.

\part{Invariance of moves}\label{invpart}

\chapter{Refined geometric moves}


In this section we define seven types of geometric moves and establish their invariance properties.

\section{Outsplitting}

Our first move is a standard outsplitting move, which is exactly as in \cite{tbdp:fega}. We include the definition for completeness, and check that the moves preserve all relevant structure via the canonical $*$-isomorphism studied in that paper.

\begin{defin}[Move \OOO: Outsplit at a non-sink]\label{OOO}\index{O@\OOO\ move}
Let $E = (E^0 , E^1 , r, s)$ be a graph, and let $w \in E^0$ be a vertex that is not a sink. 
Partition $s^{-1} (w)$ as a disjoint union of a finite number of nonempty sets
$$s^{-1}(w) = \mathcal{E}_1\sqcup \mathcal{E}_2\sqcup \cdots \sqcup\mathcal{E}_n$$
with the property that at most one of the $\mathcal{E}_i$ is infinite. 
Let $E_O$ denote the graph $(E_O^0, E_O^1, r_O , s_O )$ defined by 
\begin{align*}
E_O^0&:= \setof{v^1}{v \in E^0\text{ and }v \neq w}\cup\{w^1, \ldots, w^n\} \\
E_O^1&:= \setof{e^1}{e \in E^1\text{ and }r(e) \neq w}\cup \setof{e^1, \ldots , e^n}{e \in E^1\text{ and }r(e) = w} \\
r_{O} (e^i ) &:= 
\begin{cases}
r(e)^1 & \text{if }e \in E^1\text{ and }r(e) \neq w\\
w^i & \text{if }e \in E^1\text{ and }r(e) = w
\end{cases} \\
s_{O} (e^i ) &:= 
\begin{cases}
s(e)^1 & \text{if }e \in E^1\text{ and }s(e) \neq w \\
s(e)^j & \text{if }e \in E^1\text{ and }s(e) = w\text{ with }e \in \mathcal{E}_j.
\end{cases}
\end{align*}
We  say $E_O$ is formed by
performing move \OOO\ to $E$.
\end{defin}

\begin{theor}[$\OOO\subseteq \xyzrel{111}$]\label{thm:outsplitting}
Let $E = ( E^0, E^1, r , s )$ be a graph and let $w \in E^0$ be a vertex that is not a sink.  Partition $s^{-1} (w)$ as a finite disjoint union of  subsets
\[
s^{-1} (w) = \mathcal{E}_1 \sqcup \mathcal{E}_2 \sqcup \cdots \sqcup \mathcal{E}_n
\]
with the property that at most one of the $\mathcal{E}_i$ is infinite.  Define $\psi \colon C^*(E) \to C^*(E_{O})$ by $\psi ( p_w ) = \sum_{i=1}^np_{ w^i}$ and $\psi(p_v)=p_v$ for all $v \in E^0\backslash\{w\}$ and 
\[
\psi ( s_{e} ) =
\begin{cases}
s_{e^1} &\text{if $r (e) \neq w$} \\
\sum_{i=1}^ns_{e^i} &\text{if $r (e) = w$.}
\end{cases}
\]
 Then $\psi$ is a $*$-isomorphism such that $\gamma_z^{E_{O}} \circ \psi = \psi \circ \gamma_z^E$ and $\psi ( \mathcal{D}_E ) =  \mathcal{D}_{E_{O}}$.
\end{theor}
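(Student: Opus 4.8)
The plan is to verify directly that the elements $\{P_v := \psi(p_v), S_e := \psi(s_e)\}$ form a Cuntz–Krieger $E$-family in $C^*(E_O)$, invoke the universal property of $C^*(E)$ to obtain the $*$-homomorphism $\psi$, and then construct an explicit inverse to prove it is a $*$-isomorphism; equivariance and preservation of the diagonal are then checked on generators. Concretely, first I would define the candidate family: $P_v = p_v$ for $v\neq w$, $P_w = \sum_{i=1}^n p_{w^i}$, and $S_e = s_{e^1}$ when $r_E(e)\neq w$ while $S_e = \sum_{i=1}^n s_{e^i}$ when $r_E(e)=w$. One checks that the $P_v$ are mutually orthogonal projections (immediate, since the $p_{w^i}$ are orthogonal), that $S_e^* S_e = P_{r_E(e)}$ (using that in $E_O$ we have $r_{E_O}(e^i)=w^i$ when $r_E(e)=w$ and the $s_{e^i}$ have orthogonal ranges summing to $P_w$ correctly), that $S_e S_e^* \leq P_{s_E(e)}$, and finally the Cuntz–Krieger relation at each regular vertex $v$: $\sum_{e\in s_E^{-1}(v)} S_e S_e^*  = P_v$. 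This last relation is where the outsplit combinatorics enter: when $v=w$, the edges $e^j$ in $E_O$ emitted by $w^j$ are exactly the ``copies'' of edges in $\mathcal E_j$, and summing $s_{e^j}s_{e^j}^*$ over $e\in\mathcal E_j$ gives $p_{w^j}$, so summing over $j$ recovers $\sum_j p_{w^j}=P_w$; the hypothesis that $w$ is not a sink guarantees each $w^j$ is a regular vertex of $E_O$ so its CK relation is available, and the ``at most one $\mathcal E_i$ infinite'' condition is what makes each $w^j$ either a regular vertex or an infinite emitter consistently.

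Next, to see $\psi$ is injective I would construct a left inverse directly: define a Cuntz–Krieger $E_O$-family inside $C^*(E)$ by sending $p_{v^1}\mapsto p_v$ ($v\neq w$), $p_{w^i}\mapsto q_i$ where $q_i := \sum_{e\in\mathcal E_i} s_e s_e^*$ (a projection dominated by $p_w$, and $\sum_i q_i = p_w$ since $w$ is not a sink), $s_{e^1}\mapsto s_e$ for $r_E(e)\neq w$, and $s_{e^i}\mapsto s_e q_i$ for $r_E(e)=w$. Checking this is a genuine CK $E_O$-family is routine; it yields a homomorphism $\rho\colon C^*(E_O)\to C^*(E)$, and one verifies $\rho\circ\psi = \id$ and $\psi\circ\rho = \id$ on generators. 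Since all relevant vertex projections are nonzero, this establishes that $\psi$ is a $*$-isomorphism. (Alternatively one could cite the gauge-invariant uniqueness theorem, but since we need to track the diagonal and the gauge action anyway, the explicit inverse is cleaner.)

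Finally, the two structural claims are quick. Gauge equivariance: $\gamma_z^{E_O}(\psi(s_e))$ equals $z\cdot\psi(s_e)$ in both cases of the definition because $\psi(s_e)$ is a sum of degree-one generators $s_{e^i}$, and $\gamma_z^{E_O}(\psi(p_v))=\psi(p_v)=\psi(\gamma_z^E(p_v))$ since the $\psi(p_v)$ are sums of vertex projections; so $\gamma_z^{E_O}\circ\psi=\psi\circ\gamma_z^E$ on generators, hence everywhere. Diagonal preservation: by induction on path length one shows $\psi(s_\mu s_\mu^*)$ is a (finite sum of) projections of the form $s_\nu s_\nu^*$ for paths $\nu$ in $E_O$ — a path $\mu=e_1\cdots e_k$ maps to a sum over choices of ``copy indices'' of $s_{\nu}s_\nu^*$ where $\nu$ runs over the corresponding lifted paths in $E_O$ — so $\psi(\Dd_E)\subseteq\Dd_{E_O}$; the reverse inclusion follows symmetrically from the formula for $\rho$, which sends each $s_\nu s_\nu^*$ into $\Dd_E$. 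Passing to closed spans gives $\psi(\Dd_E)=\Dd_{E_O}$.

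I expect the main obstacle to be purely bookkeeping rather than conceptual: getting the index conventions for edge copies $e^i$ straight, and in particular handling the singular vertices correctly in the Cuntz–Krieger relation for $w^j$ — one must be careful that $w^j$ inherits regularity/singularity from $w$ in the way the definition of $E_O$ dictates, and that the ``at most one infinite $\mathcal E_i$'' hypothesis is exactly what is needed for $\rho$ to be well-defined (the sums $q_i$ being honest projections and $\sum_i q_i = p_w$ relying on the CK relation at $w$). Everything else is a routine, if lengthy, verification on generators, and since $C^*(E_O)$ is not yet known to be AF at this point we genuinely do use the explicit two-sided inverse rather than any uniqueness theorem.
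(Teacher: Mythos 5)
Your overall route --- verify that $\{P_v,S_e\}$ is a Cuntz--Krieger $E$-family, build an explicit inverse, and check equivariance and the diagonal on generators by induction on path length --- is essentially the paper's. The paper simply cites Bates and Pask for the fact that $\psi$ is a $*$-isomorphism and spends its effort on $\psi(\Dd_E)=\Dd_{E_O}$, which it proves by showing that $\psi(s_e)$ and $\psi^{-1}(s_f)$ normalize the respective diagonals and then inducting on path length; your forward computation (only the matching copy index survives, and the cross terms $s_{e^i}s_{e^j}^*$ with $i\neq j$ vanish because $p_{w^i}p_{w^j}=0$) is the same argument in different clothing.

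The one genuine gap is your treatment of the case where $w$ is an infinite emitter, which the theorem allows (only sinks are excluded, and the move permits one infinite block $\mathcal{E}_i$). Your left inverse $\rho$ sends $p_{w^i}$ to $q_i=\sum_{e\in\mathcal{E}_i}s_es_e^*$; when $\mathcal{E}_i$ is the infinite block this series does not converge in norm, so $q_i$ is not an element of $C^*(E)$, and the identity $\sum_i q_i=p_w$ that you invoke is simply not available: there is no Cuntz--Krieger summation relation at an infinite emitter, and ``$w$ is not a sink'' does not supply one. So the parenthetical justification you give for why the ``at most one infinite $\mathcal{E}_i$'' hypothesis saves you is based on a false premise. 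The repair --- which is exactly what the paper does when it computes $\psi^{-1}$ on generators --- is to index the partition so that $\mathcal{E}_1$ is the infinite block, define $q_i=\sum_{e\in\mathcal{E}_i}s_es_e^*$ only for $i\geq 2$ (those blocks are finite), and set $q_1=p_w-\sum_{i\geq 2}q_i$, with correspondingly $\rho(s_{e^1})=s_e-\sum_{i\geq 2}s_eq_i$ for $e\in r_E^{-1}(w)$; one must then redo the normalizer/diagonal computation for this expression, which is the longest case distinction in the paper's proof. With that modification your argument goes through; everything else in your proposal is correct.
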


\begin{proof}
Throughout the proof, if $w$ is an infinite emitter, then $\mathcal{E}_1$ will be the element in the partition that contains infinitely many edges.  By this choice, if $w$ is an infinite emitter in $E$, then $w^1$ is an infinite emitter in $E_O$ and each $w^i$, $2 \leq i \leq n$ is regular.  Bates and Pask proved in \cite[Theorem~3.2]{tbdp:fega} that $\psi$ is an $*$-isomorphism, and   it is clear that $\gamma_z^{E_{O}} \circ \psi = \psi \circ \gamma_z^E$.  It remains to show that $\psi ( \mathcal{D}_E ) =  \mathcal{D}_{E_{O}}$.

To show $\psi ( \mathcal{D}_E ) \subseteq  \mathcal{D}_{E_{O}}$, we first show that for all $e \in E^1$, $\psi ( s_e) \mathcal{D}_{E_O} \psi ( s_e)^* \subseteq \mathcal{D}_{E_O}$.  Let $\mu$ be a path in $E_O$ and let $e \in E^1$.  When $r (e) \neq w$, we get that $\psi ( s_e ) s_{\mu} s_\mu^* \psi ( s_e )^*= s_{ e^1} s_{\mu} s_{\mu}^* s_{e^1}^* \in \mathcal{D}_{E_O}$.  When $r(e) = w$, we get that $\psi ( s_e ) s_{\mu} s_\mu^* \psi ( s_e )^* = \sum_{ i, j = 1}^n s_{e^i } s_{\mu} s_{\mu}^* s_{e^j }^*$.  Note that if $s_{e^i} s_\mu s_\mu^* s_{e^j}^* \neq 0$, then 
\[
w^i = r_{O}(e^i) = s_{O}(\mu)=r_{O}(e^j) = w^j.
\] 
Thus, if $s_{e^i} s_\mu s_\mu^* s_{e^j} \neq 0$, then $i=j$.  Consequently,
\[
\psi ( s_e ) s_{\mu} s_\mu^* \psi ( s_e )^* = \sum_{ i, j = 1}^n s_{e^i } s_{\mu} s_{\mu}^* s_{e^j }^* =   \sum_{ i= 1}^n s_{e^i } s_{\mu} s_{\mu}^* s_{e^i }^* \in \mathcal{D}_{E_O}.
\]
Since $\mathcal{D}_{E_O}$ is the closed linear span of elements of the form $s_\mu s_\mu^*$, $\mu$ a path in $E_O$, we have that $\psi ( s_e)$ normalize $\mathcal{D}_{E_O}$ for all $e \in E^1$.

Using the fact that $\psi ( s_e) \mathcal{D}_{E_O} \psi ( s_e)^* \subseteq \mathcal{D}_{E_O}$ for all $e \in E^1$, we have that $\psi ( s_\mu s_\mu^* ) \in \mathcal{D}_{E_O}$ provided that $\psi ( s_{\mu'} s_{\mu'}^* ) \in \mathcal{D}_{E_O}$ where $\mu = e \mu'$.  One can then show that $\psi ( s_\mu s_\mu^* ) \in \mathcal{D}_{E_O}$ for all paths $\mu$ in $E$ by induction on the length of the path and by noting that $\psi ( s_\mu s_\mu^*) \in \mathcal{D}_{E_O}$ for all paths in $E$ of length zero.

The proof that $\psi^{-1} ( \mathcal{D}_{E_O} ) \subseteq \mathcal{D}_E$ goes in a similar way, but is more complicated.  To show that for all $f \in E_O^1$, $\psi^{-1} (s_f) \mathcal{D}_E \psi^{-1} (s_f) \subseteq \mathcal{D}_E$, we will need an explicit description of $\psi^{-1}$ on the generators of $C^*(E_O)$.  For $v \in E^0 \setminus \{w\}$ and $e \in E^1 \setminus r^{-1}(w)$, then $\psi^{-1} ( p_{v^1} )= p_{v}$ and $\psi^{-1} ( s_{e^1} )= s_e$.  Suppose $w^j$ is regular.  Since 
\[
s_{O}^{-1} ( w^j ) = \left( \bigsqcup_{ \substack{ e \in \mathcal{E}_j \\ r(e) = w} } \{ e^i : 1 \leq i \leq n \}  \right) \sqcup \left( \bigsqcup_{ \substack{ e \in \mathcal{E}_j \\ r(e) \neq w} } \{ e^1 \} \right),
\]
the Cuntz-Krieger relations on $C^*(E_O)$ imply
\begin{align*}
p_{w^j} &= \sum_{ \substack{ e \in \mathcal{E}_j  \\ r(e)=w } } \sum_{ i = 1}^n s_{e^i} s_{e^{i}}^* + \sum_{ \substack{ e \in \mathcal{E}_j \\ r(e)\neq w } } s_{e^1} s_{e^{1}}^* \\
	&=  \sum_{ \substack{ e \in \mathcal{E}_j  \\ r(e)=w } } \sum_{ i ,k= 1}^n s_{e^i} s_{e^k}^*+ \sum_{ \substack{ e \in \mathcal{E}_j \\ r(e)\neq w } } s_{e^1} s_{e^{1}}^* \\
	&= \sum_{ \substack{ e \in \mathcal{E}_j  \\ r(e)=w } } \psi(s_e s_e^*) +  \sum_{ \substack{ e \in \mathcal{E}_j \\ r(e)\neq w } } \psi(s_e s_e^*) \\
	&= \psi\left(  \sum_{ e \in \mathcal{E}_j } s_e s_e^* \right).
\end{align*}
Consequently, $\psi^{-1} ( p_{w^j} ) =\sum_{e \in \mathcal{E}_j}  s_e s_e^*$.  Suppose $w^j$ is an infinite emitter.  Then by the choice of indexing of the partition, $j = 1$ and $w^2, \ldots, w^n$ are regular.  Therefore, 
\[
\psi^{-1} ( p_{w^1} ) = \psi^{-1} \left( \psi ( p_w ) - \sum_{i =2}^n p_{ w^i } \right) = p_w - \sum_{ i = 2}^n \sum_{e \in \mathcal{E}_i}  s_e s_e^* = p_w - \sum_{ e \in s_{E}^{-1}(w) \setminus \mathcal{E}_1} s_{e}s_e^*.
\]
For $e_0 \in E^1$ with $r( e_0) = w$, 
\begin{align*}
s_{e_0^j} &= s_{e_0^j} p_{w^j}  \\
&= \left( \sum_{ i =1}^n e_0^i \right) p_{w^j} \\
&=
\begin{cases}
\psi( s_{e_0 }) \psi \left( \sum_{ e \in \mathcal{E}_j	 }  s_e s_e^* 	\right) &w^j=\bullet\\
\psi( s_{e_0} )  \psi\left( p_w- \sum_{ e \in s^{-1}(w) \setminus \mathcal{E}_1 }  s_{e} s_{e}^*\right) &w^j=\circ 
\end{cases} \\
&=\begin{cases}
 \psi \left( \sum_{ e \in \mathcal{E}_j	 }  s_{e_0} s_e s_e^* 	\right) &w^j=\bullet\\
 \psi\left( s_{e_0} - \sum_{ e \in s^{-1}(w) \setminus \mathcal{E}_1 } s_{e_0} s_{e} s_{e}^*\right) 
\psi( s_{e_0} )  \psi\left( p_w- \sum_{ e \in s^{-1}(w) \setminus \mathcal{E}_1 }  s_{e} s_{e}^*\right) &w^j=\circ 
\end{cases}
\end{align*}
where we use $w^j=\bullet$ to indicate that $w^j$ is regular, and  $w^j=\circ$ to indicate that it is not. Therefore,
\begin{align*}
\psi^{-1} ( s_{e_0^j} ) &= \psi^{-1}  (s_{ e_0^j} p_{w^j} ) \\
				 &= 
				 \begin{cases}
				 \displaystyle \sum_{e \in \mathcal{E}_j} s_{e_0}  s_e s_e^* &w^j=\bullet\\ 
				 s_{e_0} - \sum_{ e \in s^{-1}(w) \setminus \mathcal{E}_1} s_{e_0} s_{e}s_e^* &w^j=\circ 
				 \end{cases}
\end{align*}

We are now ready to show that for all $f \in E_O^1$, $\psi^{-1} (s_f)$ normalize $\mathcal{D}_E$.  Let $f \in E_O^1$ and let $\mu$ be a path in $E$.  Without loss of generality, we may assume that $\psi^{-1} ( s_f ) s_\mu s_\mu^* \psi^{-1} ( s_f )^*$ is a nonzero element.  Suppose $r_{O} ( f ) \neq w^i$ for any $i$.  Then $f = e^1$ for some $e \in E^1$ such that $r(e) \neq w$.  Consequently,  $\psi^{-1} ( s_f ) s_\mu s_\mu^* \psi^{-1} ( s_f )^* = s_{ e }  s_\mu s_\mu^* s_e^* \in \mathcal{D}_E$ by \cite[Lemma~4.1]{nbtmcmfw:gaoe}.  Suppose $r_{O} ( f ) = w^j$ for some $j$.  Then $f = e_0^j$ for some $e_0 \in E^1$ with $r ( e_0 ) = w$.  Suppose $w^j$ is regular.  Then
\[
\psi^{-1} ( s_{ e_0^j } ) s_\mu s_\mu^* \psi^{-1} ( s_{ e_0^j } )^* = \sum_{e, e' \in \mathcal{E}_j} s_{e_0}  s_e s_e^* s_\mu s_\mu^* s_{e'} s_{e'}^* s_{e_0}^*.
\]
Since $\psi^{-1} ( s_{ e_0^j } ) s_\mu s_\mu^* \psi^{-1} ( s_{ e_0^j } )^*  \neq 0$, $\mu = e_1 \mu'$ for some $e_1 \in \mathcal{E}_j$ or $\mu \in E^0$.  Consequently, $e_0 \mu = e_0 e_1 \mu'$ is a path in $E$ (when $|\mu| \geq 1$) and 
\[
\psi^{-1} ( s_{ e_0^j } ) s_\mu s_\mu^* \psi^{-1} ( s_{ e_0^j } )^* = \sum_{e, e' \in \mathcal{E}_j} s_{e_0}  s_e s_e^* s_\mu s_\mu^* s_{e'} s_{e'}^* s_{e_0}^* =
 \begin{cases}
s_{e_0 \mu } s_{ e_0 \mu }^* &\text{if $|\mu| \geq 1$} \\
\displaystyle \sum_{  e\in \mathcal{E}_j \cap r_{E}^{-1} ( s_E(\mu)) } s_{e_0 e} s_{ e_0 e }^* &\text{if $|\mu| =0$.} 
\end{cases}
\]
In both cases, $\psi^{-1} ( s_{ e_0^j } ) s_\mu s_\mu^* \psi^{-1} ( s_{ e_0^j } )^* \in \mathcal{D}_E$.  Suppose $w^j$ is an infinite emitter.  By the choice of indexing of the partition, $j =1$.  Since $\psi^{-1} ( s_{ e_0^j } ) s_\mu s_\mu^* \psi^{-1} ( s_{e_0^j} )^* \neq 0$ and
\begin{align*}
&\psi^{-1} ( s_{ e_0^j } ) s_\mu s_\mu^* \psi^{-1} ( s_{e_0^j} )^* \\
&= \left( s_{e_0} - \sum_{ e \in s^{-1}(w) \setminus \mathcal{E}_1} s_{e_0} s_{e}s_e^* \right) s_\mu s_\mu^*  \left( s_{e_0}^* - \sum_{ e \in s^{-1}(w) \setminus \mathcal{E}_1} s_{e} s_{e}^*s_{e_0}^* \right) \\
&= s_{e_0} s_\mu s_\mu^* s_{e_0}^* -  \sum_{ e \in s^{-1}(w) \setminus \mathcal{E}_1} s_{e_0} s_{e}s_e^* s_\mu s_\mu^* s_{e_0}^* \\
&\qquad  - s_{e_0}s_\mu s_\mu^* \sum_{ e \in s^{-1}(w) \setminus \mathcal{E}_1} s_{e} s_{e}^*s_{e_0}^*  + \sum_{ e, e'  \in s^{-1}(w) \setminus \mathcal{E}_1 } s_{e_0} s_{e} s_{e}^* s_\mu s_\mu^* s_{e'} s_{e'}^* s_{e_0}^*,
\end{align*}
we have that $s(\mu ) = w$.  Set $\mu = e_1 \mu'$ where $e_1 \in s^{-1} (w)$ if $|\mu| \geq 1$.  A computation shows that 
\[
\psi^{-1} ( s_{ e_0^j } ) s_\mu s_\mu^* \psi^{-1} ( s_{e_0^j} )^* =
\begin{cases}
\displaystyle s_{e_0} s_{e_0}^* - \sum_{ e \in s^{-1} (w) \setminus \mathcal{E}_1 }s_{ e_0}s_{e } s_{e }^*s_{e_0}^* &\text{if } |\mu| = 0 \\
s_{e_0}  s_{\mu} s_{ \mu}^* s_{e_0}^*  &\text{if } | \mu | \geq 1 \text{ and } e_1 \notin \mathcal{E}_1 \\
0 &\text{if } | \mu | \geq 1 \text{ and } e_1 \in \mathcal{E}_1.
\end{cases}
\]
Consequently, $\psi^{-1} ( s_{ e_0^j } ) s_\mu s_\mu^* \psi^{-1} ( s_{e_0^j} )^* \in \mathcal{D}_E$.  Since $\mathcal{D}_E$ is the closed linear span of elements of the form $s_\mu s_\mu^*$, $\mu$ a path in $E$, $\psi^{-1}( s_f ) \mathcal{D}_E \psi^{-1}( s_f ) \subseteq \mathcal{D}_E$ for all $f \in E_O^1$.

Since $\psi^{-1} ( s_\mu s_\mu^*) \in \mathcal{D}_E$ for all $\mu$ of length zero, we can use the fact that for all $f \in E_O^1$, $\psi^{-1}( s_f ) \mathcal{D}_E \psi^{-1}( s_f ) \subseteq \mathcal{D}_E$ and induction on the length of the paths in $E_{O}$ to prove that $\psi^{-1} ( s_\mu s_\mu^*) \in \mathcal{D}_E$ for all paths in $E_O$.  Consequently, $\psi^{-1} ( \mathcal{D}_{E_O} ) \subseteq \mathcal{D}_E$ which implies that $\mathcal{D}_{E_O} \subseteq \psi ( \mathcal{D}_E )$.  

We now can conclude that $\psi ( \mathcal{D}_E ) = \mathcal{D}_{E_O}$.
\end{proof}

\section{Insplitting}
We now move on to insplitting. The insplitting move introduced in \cite{tbdp:fega} can be seen to respect both diagonal and gauge action, but will usually not provide a $*$-isomorphism. We will replace it with two different moves called \IIIm\ and  \IIIp\ so that \IIIm\ is a generalization of the move in \cite{tbdp:fega}, and \IIIp\ is a specialization of \IIIm. We shall see that \IIIp\ respects all structure under study -- i.e. $\IIIp\subseteq\xyzrel{111}$ -- and that $\IIIm\subseteq\xyzrel{011}$. The added flexibility in $\IIIm$  is to allow some of the sets in the insplitting partition to be empty, and we need this in order to introduce sources in a gauge-invariant way.

\begin{defin}[Move \IIIm:  Insplitting] \label{IIIm}\index{Im@\IIIm}
Let $E = ( E^0, E^1, r , s )$ be a graph and let $w \in E^0$ be a regular vertex.  Partition $r^{-1} (w)$ as a finite disjoint union of (possibly empty) subsets,
\[
r^{-1} (w) = \mathcal{E}_1 \sqcup \mathcal{E}_2 \sqcup \cdots \sqcup \mathcal{E}_n.
\]

Let $E_I = ( E_I^0 , E_I^1, r_{I} , s_{I} )$ be the graph defined by 
\begin{align*}
E^0_I &= \{ v^1 \ : \ v \in E^0 \setminus \{ w \} \} \cup \{ w^1, w^2, \ldots, w^n \} \\
E^1_I &= \{ e^1 \ : \ e \in E^1, s (e) \neq w \} \cup \{ e^1 , e^2 , \ldots, e^n \ : \ e \in E , s (e)  = w \} \\
s_{I}(e^i ) &= \begin{cases} s(e)^1 &\text{if $e \in E^1, s (e) \neq w$} \\ w^i &\text{if $e \in E^1, s (e) = w$} \end{cases} \\
r_{I}(e^i ) &= \begin{cases} r(e)^1 &\text{if $e \in E^1, r (e) \neq w$} \\ w^j &\text{if $e \in E^1, r (e) = w, e \in \mathcal{E}_j$} \end{cases}
\end{align*}
We  say $E_I$ is formed by
performing move \IIIm\ to $E$.
\end{defin}

\begin{defin}[Move \IIIp:  Unital insplitting] \label{IIIp}\index{Ip@\IIIp}
The graphs $F$ and $G$ are said to be \emph{move $\mbox{\texttt{\textup{(I+)}}}$ equivalent} if there exists a graph $E=(E^0,E^1,r,s)$ and a regular vertex $w\in E^0$ such that $F$ is the result of an \IIIm\ move applied to  $E$ via a partition of $r^{-1}(w)$ using $n$ sets and $G$ is the result of an \IIIm\ move applied to  $E$ via a partition of $r^{-1}(w)$ using $n$ sets.
\end{defin}

\begin{theor}[$\IIIp\subseteq \xyzrel{111}$]\label{thm:insplittingplus}
Let $E = ( E^0, E^1, r , s )$ be a graph and let $w \in E^0$ be a regular vertex.  Partition $r^{-1} (w)$ twice as a finite disjoint union of (possibly empty) subsets,
\begin{eqnarray*}
r^{-1} (w) &=& \mathcal{F}_1 \sqcup \mathcal{F}_2 \sqcup \cdots \sqcup \mathcal{F}_n\\
&=& \mathcal{G}_1 \sqcup \mathcal{G}_2 \sqcup \cdots \sqcup \mathcal{G}_n  
\end{eqnarray*}
Let $F$ be the insplitting graph of $E$ with respect to the first partition and let $G$ be the insplitting graph of $E$ with respect to the second partition.  Then there exists a $*$-isomorphism $\phi \colon C^*(F) \to C^*(G)$ such that $\phi ( \mathcal{D}_F ) = \mathcal{D}_G$ and $\gamma_z^G \circ \phi = \phi \circ \gamma_z^F$. 
\end{theor}

\begin{proof}
We first establish some notation that we will use throughout the proof.  Define $m \colon E^0 \to \NN$ by $m(v) = 1$ if $v \neq w$ and $m(w) = n$.  Set $\mathcal{F}_i^w = \mathcal{F}_i$ and $\mathcal{G}_i^w = \mathcal{G}_i$ for $1 \leq i \leq m(w)$.  For $v \in E^0$ with $v \neq w$, set $\mathcal{F}^v_1 = \mathcal{G}_1^v = r^{-1} (v)$.  Then $F$ is the graph with vertex set $\setof{ v^i }{ v \in E^0 , 1 \leq i \leq m(v) }$ and edge set $\setof{ e^i }{ e \in E^1 , 1 \leq i \leq m ( s(e) ) }$ such that 
\[
s_F (e^i) = s(e)^i \quad \text{and} \quad r_F (e^i ) = r(e)^j \quad \text{where } e \in \mathcal{F}_j^{r(e)}.
\]
We can similarly describe $G$ as the graph with vertex set $\setof{ v^i }{ v \in E^0 , 1 \leq i \leq m(v) }$ and edge set $\setof{ e^i }{ e \in E^1 , 1 \leq i \leq m ( s(e) ) }$ such that 
\[
s_G (e^i) = s(e)^i \quad \text{and} \quad r_G(e^i ) = r(e)^j \quad \text{where } e \in \mathcal{G}_j^{r(e)}.
\]
For each $e \in r^{-1}(w)$, let $j(e)$ and $k(e)$ be the unique elements of $\{1, 2, \ldots, n \}$ such that $e \in \mathcal{F}^w_{j(e)} \cap \mathcal{G}_{k(e)}^w$.  If we also set $j(e)= k(e)=1$ for $e \notin r^{-1}(w)$, then for all $e \in E$, $r_{F}(e^c) = r(e)^{j(e)}$ and $r_{G}(e^c)= r(e)^{k(e)}$.

We claim that there exists a $*$-isomorphism $\phi \colon C^*(F) \to C^*(G)$ such that
\begin{align*}
\phi ( p_{v^{i}} ) &= p_{v^i} \quad \text{for all $v \in E^0$ and for all $1 \leq i \leq m(v)$}\\
\phi ( s_{e^{i}}) &= 
\begin{cases}
s_{e^i} &\text{if } r(e) \neq w \\
\displaystyle\sum_{ g \in s^{-1} (w) } s_{e^i} s_{g^{k(e)}} s_{ g^{j(e)}}^* &\text{if } r(e)=w
\end{cases}
\end{align*}
for all $e \in E^1$ and for all $1 \leq i \leq m ( s (e) )$.  Set $P_{v^i} = p_{v^i} \in C^*(G)$ for $v \in E^0$, $1 \leq i \leq m(v)$ and define $S_{e^i} \in C^*(G)$ by
\[
S_{e^i} = 
\begin{cases}
s_{e^i} &\text{if } r(e) \neq w \\
\displaystyle \sum_{ g \in s^{-1} (w) } s_{e^i} s_{g^{k(e)}} s_{ g^{j(e)}}^* &\text{if } r(e)= w
\end{cases}
\]
for all $e \in E^1$ and $1 \leq i \leq m ( s(e) )$.  We claim that $\{ P_{v^i} , S_{e^i} \}$ is a Cuntz-Krieger $F$-family in $C^*(G)$.  

Note that 
\begin{align*}
S_{e^c}^* S_{f^d} &= 
\begin{cases}
s_{e^c}^* s_{f^d} &\text{if } e, f \notin r^{-1} (w)  \\
\displaystyle \sum_{ g \in s^{-1} (w) }  s_{g^{j(e)}} s_{g^{k(e)} }^* s_{e^c}^*s_{f^d} &\text{if } r(e) = w \text{ and } r(f)\neq w  \\
\displaystyle \sum_{ g \in s^{-1} (w) } s_{e^c}^*  s_{f^d} s_{ g^{k(f)} } s_{g^{j(f)}}^* &\text{if } r(e) \neq w \text{ and } r(f)=w \\ 
\displaystyle \sum_{ g, h \in s_{E}^{-1} (w) } s_{g^{j(e)}} s_{g^{k(e)} }^* s_{e^c}^* s_{f^d} s_{ h^{k(f)} } s_{h^{j(f)}}^* &\text{if  } r(e)= r(f)=w 
\end{cases} \\
&=\begin{cases}
P_{r(e)^1} &\text{if } e^c =f^d  \text{ and } e, f \notin r^{-1} (w) \\
\displaystyle \sum_{ g \in s^{-1} (w) }  s_{g^{j(e)}} s_{g^{j(e)}}^* &\text{if  } e^c = f^d \text{ and } r(e)=r(f)=w \\
0&\text{otherwise}
\end{cases}\\
&=\begin{cases}
p_{r(e)^1} &\text{if } e^c =f^d  \text{ and } e, f \notin r^{-1} (w) \\
p_{w^{j(e)}}  &\text{if  } e^c = f^d \text{ and } r(e)=r(f)=w \\
0&\text{otherwise}
\end{cases}\\
&=\delta_{e^c,f^d}P_{r(e)^{j(e)}} = \delta_{e^c,f^d}P_{r_F(e^c)}
\end{align*}
for all $e, f\in E^1$, $1 \leq c \leq m( s(e) )$, and $1 \leq d \leq m (s(f))$, and further
\begin{align*}
S_{e^c} S_{e^c}^*&=
\begin{cases}
s_{e^c} s_{e^c}^* &\text{if } r(e) \neq w \\
\displaystyle \sum_{f, g \in  s^{-1} ( w ) } s_{e^c} s_{f^{k(e)}} s_{f^{j(e)}}^* s_{g^{j(e)}} s_{g^{k(e)}}^* s_{e^c}^* &\text{if } r(e)=w
\end{cases} \\
&= 
\begin{cases}
s_{e^c} s_{e^c}^* &\text{if } r(e) \neq w \\
\displaystyle \sum_{g \in  s^{-1} ( w ) } s_{e^c} s_{g^{k(e)}} s_{g^{k(e)}}^* s_{e^c}^* &\text{if } r(e)=w
\end{cases} \\
&= 
\begin{cases}
s_{e^c} s_{e^c}^* &\text{if } r(e) \neq w \\
s_{e^c} p_{ w^{k(e)} } s_{e^c}^* &\text{if } r(e)=w
\end{cases} = \begin{cases}
s_{e^c} s_{e^c}^* &\text{if } r(e) \neq w \\
s_{e^c} p_{ r_G(e^c)  } s_{e^c}^* &\text{if } r(e)=w
\end{cases} \\
&= s_{ e^c } s_{e^c}^* \leq p_{ s_G ( e^c) } = p_{ s(e)^c } = P_{ s_F( e^c) }
\end{align*}
for all $e \in E^1$ and $1 \leq c \leq m ( s(e) )$.  Let $v^c$ be a regular vertex in $F$.  Then $v$ is a regular vertex in $E$ and  $s^{-1}_{F}(v^c)=\{e^c \colon e\in s^{-1}(v)\} = s_G^{-1} ( v^c)$.  Hence, $v^c$ is a regular vertex in $G$ and 
\[
P_{v^c} = p_{v^c} =  \sum_{ e^c \in s_{G}^{-1} (v^c) } s_{e^c} s_{e^c}^*= \sum_{ e \in s^{-1} (v) } s_{e^c} s_{e^c}^*= \sum_{ e^c \in s_{F}^{-1} (v^c) } S_{e^c} S_{e^c}^*.
\]
This proves that $\{ P_v, S_e\}$ is a Cuntz-Krieger $F$-family in $C^*(G)$.  By the universal property of $C^*(F)$, there exists a $*$-homomorphism $\phi \colon C^*(F) \to C^*(G)$ such that  
\begin{align*}
\phi ( p_{v^{i}} ) &= p_{v^i} \quad \text{for all $v \in E^0$ and for all $1 \leq i \leq m(v)$}\\
\phi ( s_{e^{i}}) &= 
\begin{cases}
s_{e^i} &\text{if } r(e) \neq w \\
\displaystyle \sum_{ g \in s^{-1} (w) } s_{e^i} s_{g^{k(e)}} s_{ g^{j(e)}}^* &\text{if } r(e)=w
\end{cases}
\end{align*}
for all $e \in E^1$ and for all $1 \leq i \leq m ( s (e) )$.  One easily checks that  $\gamma_z^{G} \circ \phi = \phi \circ \gamma_z^F$ on $\{ p_{v^i}, s_{e^i} \}$, hence $\gamma_z^{G} \circ \phi = \phi \circ \gamma_z^F$.  Since $\psi( p_{v^i} ) = p_{v^i} \neq 0$ for all $v^i \in F^0$, by the gauge-invariant uniqueness theorem (\cite{bhrs:iccig}), $\phi$ is injective.  

By a symmetric argument, there exists an injective gauge-invariant $*$-homomorphism $\psi \colon C^*(G) \to C^*(F)$ such that 
\begin{align*}
\psi ( p_{v^{i}} ) &= p_{v^i} \quad \text{for all $v \in E^0$ and for all $1 \leq i \leq m(v)$} \\
\psi ( s_{e^{i}}) &= 
\begin{cases}
s_{e^i} &\text{if } r(e) \neq w \\
\displaystyle\sum_{ g \in s^{-1} (w) } s_{e^i} s_{g^{j(e)}} s_{ g^{k(e)}}^* &\text{if  } r(e)=w
\end{cases}
\end{align*}
for all $e \in E^1$ and for all $1 \leq i \leq m ( s (e) )$.  We claim that $\phi \circ \psi = \id_{C^*(G)}$, and hence $\phi$ is surjective.  Thus implying that $\phi$ is a $*$-isomorphism with inverse $\psi$.  Note that $( \phi \circ \psi )( p_{v^i } ) = p_{v^i}$ and $( \phi \circ \psi )( s_{e^i } ) = s_{e^i}$ for all $v \in E^0$ and $e \in E^1$ with $r(e) \neq w$.  Suppose $r(e)=w$.    Then 
\[
( \phi \circ \psi )( s_{ e^\ell } )= \sum_{ g \in s^{-1} (w) }  \phi ( s_{e^\ell} ) \phi ( s_{ g^{j(e)} } ) \phi ( s_{ g^{k(e)}}^* ).
\]
Note that 
\begin{align*}
\sum_{ \substack{ s(g)=w \\  r(g)\neq w}  }  \phi ( s_{e^\ell} ) \phi ( s_{ g^{j(e)} } ) \phi ( s_{ g^{k(e)}}^* )  &=\sum_{ \substack{ s(g)=w \\  r(g)\neq w}  } \left( \sum_{ h \in s^{-1} (w) } s_{e^\ell } s_{ h^{k(e)} } s_{h^{j(e)}}^* \right) s_{ g^{j(e)} } s_{ g^{k(e)} }^* \\
&= \sum_{ \substack{ s(g)=w \\  r(g)\neq w}  }  s_{e^\ell} s_{g^{k(e)}} s_{g^{k(e)}}^*
\end{align*}
and 
\begin{align*}
&\sum_{ \substack{ s(g)=w \\  r(g)= w}  }  \phi ( s_{e^\ell} ) \phi ( s_{ g^{j(e)} } ) \phi ( s_{ g^{k(e)}}^* ) \\
&= \sum_{ \substack{ s(g)=w  \\ r(g)= w }} \left( \sum_{ h \in s^{-1} (w) } s_{e^\ell } s_{ h^{k(e)} } s_{h^{j(e)}}^* \right)   \left( \sum_{ h \in s^{-1} (w) } s_{g^{j(e)} } s_{ h^{k(g)} } s_{h^{j(g)}}^* \right)  \left( \sum_{ h \in s^{-1} (w) } s_{h^{j(g)}}  s_{ h^{k(g)} }^* s_{g^{k(e)} }^*  \right)  \\
&=\sum_{ \substack{ s(g)=w \\  r(g)= w}  }s_{e^\ell } s_{ g^{k(e)} }   \left( \sum_{ h \in s^{-1} (w) }  s_{ h^{k(g)} } s_{ h^{k(g)} }^* s_{g^{k(e)} }^*  \right) \\
&= \sum_{ \substack{ s(g)=w \\  r(g)= w}  } s_{e^\ell } s_{ g^{k(e)} } p_{w^{k(g)} } s_{g^{k(e)}}^*  \\
&= \sum_{ \substack{ s(g)=w \\  r(g)= w}  } s_{e^\ell } s_{ g^{k(e)} } p_{r_G(g^{k(e)}) } s_{g^{k(e)}}^*  \\
&= \sum_{ \substack{ s(g)=w \\  r(g)= w}  } s_{e^\ell } s_{ g^{k(e)} } s_{g^{k(e)}}^*.
\end{align*}
Therefore, 
\begin{align*}
( \phi \circ \psi )( s_{ e^\ell } )&= \sum_{ g \in s^{-1} (w) }  \phi ( s_{e^\ell} ) \phi ( s_{ g^{j(e)} } ) \phi ( s_{ g^{k(e)}}^* )  \\
		&= \sum_{ \substack{ s(g)=w \\  r(g)\neq w}  }   \phi ( s_{e^\ell} ) \phi ( s_{ g^{j(e)} } ) \phi ( s_{ g^{k(e)}}^* ) + \sum_{ \substack{ s(g)=w \\  r(g)= w}  }   \phi ( s_{e^\ell} ) \phi ( s_{ g^{j(e)} } ) \phi ( s_{ g^{k(e)}}^* ) \\
		&=\sum_{ \substack{ s(g)=w \\  r(g)\neq w}  } s_{ e^\ell } s_{g^{k(e)}} s_{g^{k(e)}}^* + \sum_{ \substack{ s(g)=w \\  r(g)= w}  } s_{e^\ell } s_{ g^{k(e)} } s_{g^{k(e)}}^*   \\
&= \sum_{ g \in s^{-1} (w) } s_{ e^\ell } s_{g^{k(e)}} s_{g^{k(e)}}^* \\
&= s_{ e^\ell } p_{w^{k(e)}} = s_{e^{\ell}} p_{r_G(e^\ell)} = s_{e^{\ell}}.  
\end{align*}
We now have $\phi \circ \psi = \id_{C^*(G)}$ since $\phi \circ \psi$ and $\id_{C^*(G)}$ are equal on the generators of $C^*(G)$, hence proving the claim.

We are left with showing that $\phi ( \mathcal{D}_F ) = \mathcal{D}_G$.  Note that it is enough to show $\phi ( \mathcal{D}_F ) \subseteq \mathcal{D}_G$ since a symmetric argument implies that $\psi ( \mathcal{D}_G ) \subseteq \mathcal{D}_F$ and hence $\phi ( \mathcal{D}_F ) =  \mathcal{D}_G$.   Let $e_1^{j_1} e_{2}^{j_2} \cdots e_r^{j_r}$ be a path in $F$.  Then $e_i \in \mathcal{F}_{j_{i+1}}^{r(e_i)}$, $r( e_i ) = s( e_{i+1} )$ for all $1 \leq i \leq r-1$.  Set  $1 \leq k_{i+1} \leq m ( r( e_i ) )$ such that $e_i \in \mathcal{G}_{ k_{i+1} }^{r( e_i ) }$ for $1 \leq i \leq r - 1$.  Then $e_1^{ j_1 } e_2^{ k_2 } \cdots e_{r}^{k_r}$ is a path in $G$. 

We claim that 
\[
\phi ( s_{ e_1^{j_1} e_{2}^{j_2} \cdots e_s^{j_s}} ) = 
\begin{cases}
s_{ e_1^{ j_1 } e_2^{ k_2 } \cdots e_{s}^{k_s}} &\text{if } r( e_s ) \neq w \\
s_{ e_1^{ j_1 } e_2^{ k_2 } \cdots e_{s}^{k_s} } \displaystyle \sum_{g \in s^{-1} (w) } s_{ g^{ k_{s+1} } } s_{ g^{ j_{s+1} }}^* &\text{if } r(e_s)=w.
\end{cases}
\]
for all $1 \leq s \leq r-1$.  Note that $j(e_s) = j_{s+1}$ and $k(e_s)=k_{s+1}$ since $e_s \in \mathcal{F}^{r(e_s)}_{j_{i+1}} \cap \mathcal{G}^{r(e_s)}_{k_{i+1}}$.  Therefore, the claim is true for $s = 1$ by the definition of $\phi$.  Suppose the claim is true for $1 \leq s \leq r - 2$.  Suppose $r( e_{ s +1 } ) \neq w$.  Then 
\begin{align*}
\phi (  s_{ e_1^{j_1} e_{2}^{j_2} \cdots e_{s+1}^{j_{s+1}} } )  &= 
\begin{cases}
s_{ e_1^{ j_1 } e_2^{ k_2 } \cdots e_{s}^{k_s} e_{s+1}^{ j_{ s+1} } } &\text{if } r( e_s ) \neq w  \\ 
s_{ e_1^{ j_1 } e_2^{ k_2 } \cdots e_{s}^{k_{s}} }  \displaystyle \sum_{g \in s^{-1} (w) }  s_{ g^{ k_{s+1} } } s_{ g^{ j_{s+1} }}^*  s_{ e_{s+1}^{ j_{s+1} }}  &\text{if } r(e_s)=w \\
\end{cases} \\
&= s_{ e_1^{ j_1 } e_2^{ k_2 } \cdots e_{s}^{k_{s}} } s_{e_{s+1}^{k_{s+1}}}\\
&= s_{ e_1^{ j_1 } e_2^{ k_2 } \cdots e_{s+1}^{k_{s+1}}}.
\end{align*}  
Suppose $r(e_{s+1})=w$.  Then
\begin{align*}
&\phi (  s_{ e_1^{j_1} e_{2}^{j_2} \cdots e_{s+1}^{j_{s+1}} } )  \\
&= 
\begin{cases}
\displaystyle s_{ e_1^{ j_1 } e_2^{ k_2 } \cdots e_{s}^{k_s}} \sum_{ g \in s^{-1} (w) } s_{e_{s+1}^{j_{s+1}}}  s_{g^ { k(e_{s+1}) } } s_{ g^{ j(e_{s+1}) } }^* &\text{if } r( e_s ) \neq w  \\ 
\displaystyle s_{ e_1^{ j_1 } e_2^{ k_2 } \cdots e_{s}^{k_s} }  \left( \sum_{g \in s^{-1} (w) } s_{ g^{ k_{s+1} } } s_{ g^{ j_{s+1} }}^*\right)\left(  \sum_{ g \in s^{-1} (w) } s_{e_{s+1}^{j_{s+1}}}  s_{g^ {k(e_{s+1})} } s_{ g^{ s(e_{s+1}) } }^* \right)  &\text{if } r(e_s)=w
\end{cases}  \\
&= 
\begin{cases}
\displaystyle s_{ e_1^{ j_1 } e_2^{ k_2 } \cdots e_{s}^{k_s}} s_{e_{s+1}^{j_{s+1}}}  \sum_{ g \in s^{-1} (w) }  s_{g^ { k_{s+2} } } s_{ g^{ j_{s+2} } }^* &\text{if } r( e_s ) \neq w  \\ 
\displaystyle s_{ e_1^{ j_1 } e_2^{ k_2 } \cdots e_{s}^{k_s} } s_{e_{s+1}^{k_{s+1}}}    \sum_{ g \in s^{-1} (w) }  s_{g^ { k_{s+2} } } s_{ g^{ j_{s+2} } }^*  &\text{if  } r(e_s)=w.
\end{cases}  \\
&=\begin{cases}
\displaystyle s_{ e_1^{ j_1 } e_2^{ k_2 } \cdots e_{s}^{k_s} e_{s+1}^{j_{s+1}}}  \sum_{ g \in s^{-1} (w) }  s_{g^ { k_{s+2} } } s_{ g^{ j_{s+2} } }^* &\text{if } r( e_s ) \neq w  \\ 
\displaystyle s_{ e_1^{ j_1 } e_2^{ k_2 } \cdots e_{s}^{k_s} e_{s+1}^{k_{s+1}}}    \sum_{ g \in s^{-1} (w) }  s_{g^ { k_{s+2} } } s_{ g^{ j_{s+2} } }^*  &\text{if  } r(e_s)=w.
\end{cases}  
\end{align*}
Note that if $r( e_s ) \neq w$, then $s( e_{s+1} ) \neq w$ and $j_{s+1} = k_{s+1} = 1$.  Hence, 
\begin{align*}
\phi (  s_{ e_1^{j_1} e_{2}^{j_2} \cdots e_{s+1}^{j_{s+1}} } )   &= s_{ e_1^{ j_1 } e_2^{ k_2 } \cdots e_{s}^{k_s} e_{s+1}^{j_{s+1}}}    \sum_{ g \in s^{-1} (w) }  s_{g^ { k_{s+2} } } s_{ g^{ j_{s+2} } }^* \\
&= s_{ e_1^{ j_1 } e_2^{ k_2 } \cdots e_{s}^{k_s} e_{s+1}^{k_{s+1}}}   \sum_{ g \in s^{-1} (w) }  s_{g^ { k_{s+2} } } s_{ g^{ j_{s+2} } }^*,
\end{align*}
and thus proving the claim.  

The above claim implies that 
\begin{align*}
&\phi ( s_{ e_1^{j_1} e_{2}^{j_2} \cdots e_r^{j_r}}  s_{ e_1^{j_1} e_{2}^{j_2} \cdots e_r^{j_r}}^* ) \\
&= 
\begin{cases}
s_{ e_1^{ j_1 } e_2^{ k_2 } \cdots e_{r}^{k_r}} s_{ e_1^{ j_1 } e_2^{ k_2 } \cdots e_{r}^{k_r}}^*  &\text{if } r( e_r ) \neq w \\
s_{ e_1^{ j_1 } e_2^{ k_2 } \cdots e_{s}^{k_r} } \left( \sum_{g, h \in s^{-1} (w) } s_{ g^{ k_{r+1} } } s_{ g^{ j_{r+1} }}^* s_{ h^{ j_{r+1} }}s_{ h^{ k_{r+1} } }^* \right) s_{ e_1^{ j_1 } e_2^{ k_2 } \cdots e_{s}^{k_r} }^* 
&\text{if  } r(e_r)=w\\
\end{cases} \\
&= 
\begin{cases}
s_{ e_1^{ j_1 } e_2^{ k_2 } \cdots e_{r}^{k_r}} s_{ e_1^{ j_1 } e_2^{ k_2 } \cdots e_{r}^{k_r}}^*  &\text{if } r( e_r ) \neq w \\
s_{ e_1^{ j_1 } e_2^{ k_2 } \cdots e_{s}^{k_r} } \left( \sum_{g \in s^{-1} (w) } s_{ g^{ k_{r+1} } } s_{ g^{ k_{r+1} } }^* \right) s_{ e_1^{ j_1 } e_2^{ k_2 } \cdots e_{s}^{k_r} }^* 
&\text{if  } r(e_r)=w
\end{cases} \\
&= s_{ e_1^{ j_1 } e_2^{ k_2 } \cdots e_{r}^{k_r}}  p_{w^{k_{r+1}}} s_{ e_1^{ j_1 } e_2^{ k_2 } \cdots e_{r}^{k_r}}^* = \ s_{ e_1^{ j_1 } e_2^{ k_2 } \cdots e_{r}^{k_r}}  p_{r_G( e_r^{k_r} ) } s_{ e_1^{ j_1 } e_2^{ k_2 } \cdots e_{r}^{k_r}}^* \\
&= s_{ e_1^{ j_1 } e_2^{ k_2 } \cdots e_{r}^{k_r}} s_{ e_1^{ j_1 } e_2^{ k_2 } \cdots e_{r}^{k_r}}^*  \in \mathcal{D}_G.
\end{align*}
And since $\phi ( p_{v^i} ) = p_{v^i} \in \mathcal{D}_G$, we have $\phi ( \mathcal{D}_F ) \subseteq \mathcal{D}_G$.  This completes the proof. 
\end{proof}

\begin{theor}[$\IIIm\subseteq \xyzrel{011}$]\label{thm:insplitting2}
Let $E = ( E^0, E^1, r , s )$ be a graph and let $w \in E^0$ be a regular vertex.  Partition $r^{-1} (w)$ as a finite disjoint union of (possibly empty) subsets,
\[
r^{-1} (w) = \mathcal{E}_1 \sqcup \mathcal{E}_2 \sqcup \cdots \sqcup \mathcal{E}_n.
\]
There exists a $*$-isomorphism $\theta \colon C^*(E) \otimes \K \to C^*(E_I) \otimes \K$ such that $\theta ( \mathcal{D}_{E} \otimes c_0 ) = \mathcal{D}_{E_I} \otimes c_0 $ and $(\gamma_z^{E_I} \otimes \id_{ \K } ) \circ \theta = \theta \circ (\gamma_z^{E} \otimes \id_{ \K })$.
\end{theor}

\begin{proof}
Consider the partition $r^{-1} (w) =  \mathcal{F}_1 \sqcup \mathcal{F}_2 \sqcup \cdots \sqcup \mathcal{F}_n$ with $\mathcal{F}_1 = r^{-1} (w)$ and $\mathcal{F}_i = \emptyset$ for $1 \leq i \leq n$.  Let $F$ be the graph obtained by insplitting the graph $E$ with respect to this partition.  By Theorem~\ref{thm:insplittingplus}, there exists a $*$-isomorphism $\phi \colon C^*( F ) \to C^*( E_I )$ such that $\gamma_z^{E_I} \circ \phi = \phi \circ \gamma_z^F$ and $\phi ( \mathcal{D}_F ) = \mathcal{D}_{E_I}$.  Therefore $\phi \otimes \id_{\K}  \colon C^*( F ) \otimes \K \to C^*( E_I ) \otimes \K $ is a  $*$-isomorphism such that $( \gamma_z^{E_I} \otimes \id_{\K} )\circ ( \phi \otimes \id_\K) = ( \phi \otimes \id_\K) \circ ( \gamma_z^F \otimes \id_\K)$ and $( \phi \otimes \id_\K)  ( \mathcal{D}_F \otimes c_0 ) = \mathcal{D}_{E_I} \otimes c_0$.

To finish the proof, we will show that there exists a $*$-isomorphism $\Psi \colon C^*(E) \otimes \K \to C^*( F ) \otimes \K$ such that $( \gamma_z^{F} \otimes \id_{\K} )\circ \Psi = \Psi \circ ( \gamma_z^E \otimes \id_\K)$ and $\Psi  ( \mathcal{D}_E \otimes c_0 ) = \mathcal{D}_{F} \otimes c_0$.  Note that $E$ is isomorphic to the subgraph of $F$ given by $( \{ v^1 : v \in E^0\} , s_F^{-1} ( \{ v^1 : v \in E^0\} ) , s_F, r_F )$.  Note also that $w_2, w_3, \ldots, w_n$ are regular sources in $F$.  Set $p = \sum_{ v \in E^0 } p_{v^1}$.  Then $\psi \colon C^*( E ) \to p C^*( F ) p$ defined by $\psi ( p_{v} ) = p_{v^1}$ and $\psi ( s_{e} ) = s_{ e^1}$ for all $v \in E^0$ and $e \in E^1$ gives a $*$-isomorphism such that $\gamma_z^F \circ \psi = \psi \circ \gamma_z^E$ and $\psi ( \mathcal{D}_E ) = p \mathcal{D}_F$. 

We claim that $p$ is full in the fixed point-algebra $\mathcal{F}_F$.  Let $g \in s^{-1} ( w )$ and let $1 \leq k \leq n$.  Then $v_{g^k} = s_{g^1} s_{g^k}^* \in \mathcal{F}_F$ such that 
\[
v_{g^k}^* v_{g^k} = s_{g^k } s_{g^k }^* \quad \text{and} \quad v_{g^k} v_{g^k}^* = s_{g^1} s_{g_1}^* \leq p,
\]  
which implies that $s_{g^k } s_{g^k }^*$ is in the ideal of $\mathcal{F}_F$ generated by the projection $p$.  Therefore, $p_{w^k} = \sum_{ g\in s^{-1} ( w ) } s_{g^k } s_{g^k }$ is in the ideal of $\mathcal{F}_F$ generated by $p$ for all $k$.  Thus, $p$ is full in $\mathcal{F}_F$, hence proving the claim.  

By $(9) \implies (8)$ in \cite[Corollary~11.3]{tmcerasmt:rgcds}, there exists a $*$-isomorphism $\Psi \colon C^*(E) \otimes \K \to C^*( F ) \otimes \K$ such that $( \gamma_z^{F} \otimes \id_{\K} )\circ \Psi = \Psi \circ ( \gamma_z^E \otimes \id_\K)$ and $\Psi  ( \mathcal{D}_E \otimes c_0 ) = \mathcal{D}_{F} \otimes c_0$.  Then $\theta = ( \phi \otimes \id_{\K} ) \circ \Psi$ is the desired $*$-isomorphism.
\end{proof}

\begin{remar}\label{IIIphowtothink}
As touched upon in the introduction, we find it convenient to think of a $\IIIp$ move as the result of redistributing the past of vertices having the same future, as the pair of lower points in \eqref{bc} and \eqref{bcalt}. This even makes sense -- with care -- in the presence of loops; for instance we have
\[
\xymatrix{\bullet\ar@(dl,ul)[]&&\bullet\ar[ll]&&\bullet\ar@(dl,ul)[]&&\bullet\ar[ll]&&\bullet\ar@(dl,ul)[]&&\bullet\ar[ll]\\
&\bullet\ar[ur]\ar[ul]&\ar@{<~>}[r]^{\IIIp}&&&\bullet\ar@<-0.5mm>[ur]\ar@<0.5mm>[ur]&\ar@{<~>}[r]^{\IIIp}&&&\bullet\ar@<-0.5mm>[ul]\ar@<0.5mm>[ul]}
\]
since all graphs are in-splittings of $\quad\quad\xymatrix{\bullet\ar@(dl,ul)[]&\bullet\ar@<-0.5mm>[l]\ar@<0.5mm>[l]}$ with two sets in the partition.
\end{remar}

\section{Time-changing moves}

We now move on to moves related to symbol expansion from symbolic dynamics, characterized by allowing slower transitions between two vertices in the graph. Such moves are inherently not gauge-invariant, and the versions hitherto used have also failed to provide $*$-isomorphisms rather than stable isomorphisms. Inspired by the work in \cite{seaer:ccka} we introduced in \cite{segrerapws:ccuggs} a move which allows to remove any regular vertex not supporting a loop in a way preserving \xyzrel{101}. It is a variation of a generalization of the \RRR\ move considered earlier which seems to have the right generality. We will still need the move \SSS\ of removing a source anywhere considered already in \cite{apws:gcsga} and note that it preserves \xyzrel{001}.

\begin{defin}[Move $\RRRp$:  Unital Reduction]\label{RRRp}\index{Rp@\RRRp}
Let $E=(E^0,E^1,r,s)$ be a graph and let $w$ be a regular vertex which does not support a loop.  Let $E_{R}$ be the graph defined by 
\begin{align*}
E_{R}^0 &= ( E^0  \setminus \{ w \} ) \sqcup \{ \widetilde{w} \} \\
E_{R}^1 &= \left( E^1 \setminus (  r^{-1} (w) \cup s^{-1} (w)  ) \right) \sqcup \{ [ ef ] \ : \ e \in r^{-1}(w) , f \in s_{E}^{-1}(w) \} \sqcup \{ \widetilde{f} \ : \ f\in s^{-1}(w) \}
\end{align*}
where the source and range maps of $E_{R}$ extend those of $E$, and satisfy $s_{{R}} ( [ef] ) = s(e)$, $s_{{R}}( \widetilde{f} ) = \widetilde{w}$, $r_{{R}} ( [ef] ) = r_{E}(f)$ and $r_{{R}} ( \widetilde{f} ) = r(f)$.
\end{defin}

\begin{theor}[$\RRRp\subseteq \xyzrel{101}$]\label{thm:reduction-plus}
Let $E=(E^0,E^1,r,s)$ be a graph and let $w$ be a regular vertex  which does not support a loop, and let $E_{R}$ be the graph in Definition~\ref{RRRp}.  Then there exists a $*$-isomorphism $\psi \colon C^*(E_{R}) \to C^*(E)$ such that $\psi ( \mathcal{D}_{{E_R}}) = \mathcal{D}_E$.
\end{theor}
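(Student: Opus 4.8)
The plan is to realise $\psi$ as arising from a Cuntz--Krieger family and to construct its inverse explicitly; note that $\psi$ cannot be gauge-equivariant (it must send the ``length one'' generator $s_{[ef]}$ to the ``length two'' element $s_e s_f$), so the gauge-invariant uniqueness theorem is unavailable for injectivity and a two-sided inverse must be produced by hand. First I would define a family $\{P_v, S_g\}$ in $C^*(E)$ by $P_v = p_v$ for $v \in E^0 \setminus \{w\}$, $P_{\widetilde w} = p_w$, and on edges by $S_e = s_e$ when $s_E(e), r_E(e) \neq w$, $S_{[ef]} = s_e s_f$, and $S_{\widetilde f} = s_f$ for $f \in s_E^{-1}(w)$. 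Checking that this is a Cuntz--Krieger $E_{R+}$-family is routine: since $w$ supports no loop, $r_E^{-1}(w)$ and $s_E^{-1}(w)$ are disjoint and $r_E(f), s_E(e) \neq w$ for $f \in s_E^{-1}(w)$, $e \in r_E^{-1}(w)$, so all the relevant vertex projections behave as expected; and the regularity relation $\sum_{f \in s_E^{-1}(w)} s_f s_f^* = p_w$ is exactly what closes everything up, yielding $S_{[ef]} S_{[ef]}^* = s_e s_e^*$, the Cuntz--Krieger relation at each regular $v \neq w$ after summing over $s_E^{-1}(v)$, and the relation at $\widetilde w$ as $\sum_f S_{\widetilde f} S_{\widetilde f}^* = p_w$. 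Universality then gives a $*$-homomorphism $\psi \colon C^*(E_{R+}) \to C^*(E)$ with the required values.

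Symmetrically, I would define a Cuntz--Krieger $E$-family $\{Q_v, T_e\}$ in $C^*(E_{R+})$ by $Q_v = p_v$ for $v \neq w$, $Q_w = p_{\widetilde w}$, $T_e = s_e$ when $e$ avoids $w$, $T_f = s_{\widetilde f}$ for $f \in s_E^{-1}(w)$, and $T_e = \sum_{f \in s_E^{-1}(w)} s_{[ef]} s_{\widetilde f}^*$ for $e \in r_E^{-1}(w)$; the only checks with content are $T_e^* T_e = \sum_f s_{\widetilde f} s_{\widetilde f}^* = p_{\widetilde w} = Q_{r_E(e)}$ for such $e$ (using that $\widetilde w$ is regular with out-edges exactly the $\widetilde f$), $T_e T_e^* = \sum_f s_{[ef]} s_{[ef]}^* \leq p_{s_E(e)}$, and the Cuntz--Krieger relation at $w$ and at the regular vertices emitting into $w$. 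This produces $\phi \colon C^*(E) \to C^*(E_{R+})$, and evaluating on generators shows $\psi \circ \phi = \id$ (the only nontrivial case being $\psi(T_e) = \sum_f s_e s_f s_f^* = s_e$ for $e \in r_E^{-1}(w)$) and $\phi \circ \psi = \id$ (nontrivial case $\phi(s_e s_f) = (\sum_{f'} s_{[ef']} s_{\widetilde{f'}}^*) s_{\widetilde f} = s_{[ef]}$), so $\psi$ is a $*$-isomorphism with inverse $\phi$.

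For the diagonal I would prove the two inclusions separately. Since every $S_g$ is literally a path isometry $s_\nu$ in $C^*(E)$, for each path $\mu$ in $E_{R+}$ the element $\psi(s_\mu)$ is again a single path isometry $s_{\nu(\mu)}$, so $\psi(s_\mu s_\mu^*) = s_{\nu(\mu)} s_{\nu(\mu)}^* \in \mathcal{D}_E$ and, on closed spans, $\psi(\mathcal{D}_{E_{R+}}) \subseteq \mathcal{D}_E$. For the reverse it is enough to show $\phi(s_\nu s_\nu^*) \in \mathcal{D}_{E_{R+}}$ for every path $\nu$ of $E$ and apply $\psi$; the crucial point is that in a path of $E$ an edge with range $w$ is always immediately followed by an edge with source $w$, so the only non-path-like factors $\phi(s_e) = \sum_f s_{[ef]} s_{\widetilde f}^*$ in non-terminal position collapse with the next factor through $(\sum_f s_{[ef]} s_{\widetilde f}^*) s_{\widetilde{f'}} = s_{[ef']}$. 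Thus $\phi(s_\nu)$ is a single path isometry $s_\mu$ of $C^*(E_{R+})$ when $\nu$ does not end with an edge into $w$, whereas if $\nu = \nu' e$ with $r_E(e) = w$ then $\phi(s_\nu s_\nu^*) = \sum_f s_{\mu'[ef]} s_{\mu'[ef]}^*$ with $\mu'$ the path of $E_{R+}$ obtained from $\nu'$; in either case this lies in $\mathcal{D}_{E_{R+}}$.

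The part I expect to require genuine care is twofold: recognising that the inverse must be written down by hand (because $\psi$ breaks gauge-equivariance, the usual uniqueness theorem does not apply), and the bookkeeping in the second diagonal inclusion, where one must follow precisely how a ``delayed'' passage through $w$ is reassembled inside $E_{R+}$. The hypothesis that $w$ carries no loop --- so that $r_E^{-1}(w)$ and $s_E^{-1}(w)$ are disjoint and a path never passes through $w$ twice in a row --- is used essentially throughout.
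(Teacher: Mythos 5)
Your construction of the Cuntz--Krieger $E_{R+}$-family $\{P_v,S_g\}$ and the resulting $\psi$ is exactly the one used in the paper, and your treatment of the diagonal (paths of $E_{R+}$ map to single path isometries of $E$; paths of $E$ are reassembled by grouping each edge into $w$ with the edge that follows it) also matches. Where you genuinely diverge is in proving that $\psi$ is an isomorphism: the paper verifies injectivity by appealing to the general Cuntz--Krieger uniqueness theorem of Szyma\'nski \cite{ws:gckut} (checking that vertex-simple cycles without exits are sent to unitaries with full spectrum) and then proves surjectivity separately by exhibiting $s_e=\psi\bigl(\sum_{f}s_{[ef]}s_{\widetilde f}^*\bigr)$ for $e\in r_E^{-1}(w)$, whereas you build a second Cuntz--Krieger family $\{Q_v,T_e\}$ in $C^*(E_{R+})$ and check on generators that the induced map $\phi$ is a two-sided inverse. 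Both routes are valid; yours is more self-contained (no uniqueness theorem needed, and your $T_e$ for $e\in r_E^{-1}(w)$ is precisely the element the paper uses for surjectivity anyway), and having the explicit formula for $\psi^{-1}$ pays off in the reverse diagonal inclusion, where you cleanly dispose of the case of a path of $E$ terminating at $w$ --- for which $\psi^{-1}(s_\nu s_\nu^*)$ is a genuine sum $\sum_f s_{\mu'[ef]}s_{\mu'[ef]}^*$ rather than a single $s_\mu s_\mu^*$, a case the paper's grouping argument passes over rather quickly. Your observation that $\psi$ cannot be gauge-equivariant, so the gauge-invariant uniqueness theorem is unavailable, correctly identifies why some substitute (either the general uniqueness theorem or an explicit inverse) is forced here.
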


\begin{proof}
Set $P_v = p_v$ for 	$v\in E^0\backslash\{w\}$, $P_{\widetilde{w}} = p_w$, $S_e = s_e$ for $e\in E^1\backslash(r^{-1}_E(w)\cup s^{-1}_E(w)$, $S_{[ef]} = s_e s_f$ for $e\in r^{-1}_E(w)$, $f\in s^{-1}_E(w)$, and $S_{ \widetilde{f} } = s_f$ for $f\in s^{-1}_E(w)$.  A computation shows that $\{ P_v , P_{\widetilde{w}}, S_e, S_{[ef]}, S_{\widetilde{f}} \}$ is a Cuntz-Krieger $E_{R}$-family in $C^*(E)$.  Therefore, there exists a $*$-homomorphism $\psi \colon C^*(E_{R}) \to C^*(E)$ such that $\psi ( p_v ) = P_v$, $\psi ( p_{\widetilde{w}} ) = P_{\widetilde{w}}$, $\psi ( s_e ) = S_e$, $\psi ( s_{ [ef] } ) = S_{[ef]}$, and $\psi ( s_{\widetilde{f}} ) = S_{ \widetilde{f}}$.  Note that $\psi$ sends each vertex projection to a nonzero projection and if $e_1 e_2 \ldots e_n$ is a vertex-simple cycle in $E_{R}$ with no exits (\emph{vertex-simple} means $s_{{R}} ( e_i) \neq s_{{R}} ( e_j )$ for $i \neq j$) then $\psi ( s_{e_1 e_2\cdots e_n})$ is a unitary in a corner of $C^*(E)$ with full spectrum.  Hence, by \cite[Theorem~1.2]{ws:gckut}, $\psi$ is injective.

We now show that $\psi$ is surjective.  Note that the only generators of $C^*(E)$ that are not obviously in the image of $\psi$ are $s_e$ with $r(e) = w$.  Let $e \in r^{-1}(w)$.  Then 
\[
s_e = s_e \sum_{ f \in s^{-1} (w) } s_f s_f^* = \sum_{ f \in s^{-1} (w) } S_{[ef]} S_{\widetilde{f}}^* = \psi \left( \sum_{ f \in s^{-1} (w) } s_{[ef]} s_{\widetilde{f}}^* \right).
\]
Therefore, the image of $\psi$ contains every generator of $C^*(E)$ which implies that $\psi$ is surjective.

We are left with showing $\psi ( \mathcal{D}_{{E_R}}) = \mathcal{D}_E$.  For a path $\mu = e_1 e_2 \ldots e_n $ in $E_{R}$, set $\nu_\mu = \nu_1 \nu_2 \cdots \nu_n$ where 
\[
\nu_i = 
\begin{cases}
e_i &\text{if $e_i \in E^1 \setminus (  r^{-1} (w) \cup s^{-1} (w)  ) $} \\
ef &\text{if $e_i = [ef]$}\\
f &\text{if $e_i = \widetilde{f}$} \\
v &\text{if $\mu = v$} \\
w &\text{if $\mu = \widetilde{w}$.}
\end{cases}
\] 
Since $\widetilde{f}$ is a source in $E_{R}$ for all $f \in s^{-1} (w)$, $e_i = \widetilde{f}$ for some $f \in s^{-1}(w)$ implies $i = 1$.  With this observation, it is now clear that $\nu_\mu$ is a path in $E$.  Note that if $\mu$ is a path in $E_{R}$, then $\psi ( s_{\mu} ) = s_{ \nu_\mu}$.  Therefore, for all paths $\mu$ in $E_{R}$, $\psi ( s_\mu s_\mu^*) = s_{\nu_\mu} s_{\nu_\mu}^*$ is an element in $\mathcal{D}_E$.  Since $\mathcal{D}_{{R}}$ is generated by elements of the form $s_{\mu}s_\mu^*$, $\psi ( \mathcal{D}_{{E_R}} ) \subseteq \mathcal{D}_E$.  Let $\nu$ be a path in $E$.  If $\nu = v \in E^0$, then by definition of $\psi$, $p_\nu$ is an element of $\psi ( \mathcal{D}_{{E_R}} )$.  Let $\nu = f_1 f_2 \cdots f_n$ be a path of positive length in $E$.  By grouping the edges $f_i f_{i+1}$ where $f_i \in  r^{-1}(w)$ and $f_{i+1} \in s^{-1}(w)$, we may write $\nu = \nu_1 \nu_2 \cdots \nu_m$ where $\nu_i$ is an edge or a path of length two of the form $ef$ with $e \in r^{-1}(w)$ and $f \in s^{-1} (w)$ satisfying $| \nu_i | = 1$ and $s(\nu_i) = w$ implies $i = 1$.  Set $\mu = \mu_1 \cdots \mu_n$ with 
\[
\mu_1 = 
\begin{cases}
\mu_1 &\text{if $|\nu_1| = 1$ and $s( \nu_1) \neq w$} \\
\widetilde{\nu_1} &\text{if $| \nu_1| = 1$ and  $s( \nu_1) = w$} \\
[ \mu_1 ] &\text{if $|\mu_1| = 2$} 
\end{cases}
\]
and 
\[
\mu_i = 
\begin{cases}
\nu_i &\text{if $| \nu_i | = 1$} \\
[ \nu_i ] &\text{if $|\nu_i| = 2$}
\end{cases} \mathcal{D}_{{E_R}} 
\]
for $i \geq 2$.  By construction, $\mu$ is a path in $E_{R}$ and $\psi ( s_{\mu} ) = s_{\nu}$.  Thus, 
\[
s_\nu s_{\nu}^* = \psi ( s_\mu s_\mu^*).
\]
Since $\mathcal{D}_E$ is generated by elements of the form $s_\nu s_\nu^*$, $\mathcal{D}_E \subseteq \psi ( \mathcal{D}_{{E_R}} )$.  We conclude that $\psi ( \mathcal{D}_{{E_R}} ) = \mathcal{D}_E$.
\end{proof}

Using \OOO\ and \RRRp\ moves only, it is now possible to obtain matching forms for two graphs that are sufficiently alike to have a chance of giving isomorphic $C^*$-algebras.  We will note  much more precise results in  Section \ref{GLandfriends}, but for immediate use, we prove:

\begin{propo}\label{getmatched}
There is a procedure which for two graphs $E,F$ either provide a certificate that $(E,F)\not\in \xyzrel{000}$, or produces a matched pair $(E',F')$ so that
\[
(E,E'),(F,F')\in \langle\OOO,\RRRp\rangle
\]
\end{propo}
\begin{proof}
It is straightforward to obtain (i) and (iii) of Section \ref{classummary} by moves of type \OOO, and using \RRRp\ moves until all regular vertices support a loop leads to (ii). We may now compute $\mathcal P$ for $E$ and $F$, and if they are not isomorphic, we have $(E,F)\not\in\xyzrel{000}$. If they are, we divide the generalized components into three types which we denote Type 0, Type 1 and Type 2 defined by assigning Type 0 and 1 to the cases where there are 0 and 1 edges, respectively, in the generalized components, and Type 2 to the remainder. We now investigate how to identify the generalized components in an order-preserving way in a way preserving type. If this is not possible, as will be witnessed by the ordered $K_0$-groups and by $K_1$, we again have $(E,F)\not\in\xyzrel{000}$. If it is, we chose any identification and note that the sizes are matched automatically in the Type 0 and 1 cases. In the Type 2 case, we note that there must be at least one vertex inside the generalized component which emits more than one edge, and that an \OOO\ move may be performed on that to increase the number of vertices in that generalized component, without changing anything else. Consequently, if two generalized components chosen for identification are not of the same size, the smaller one may be expanded until they are.
\end{proof}



\begin{defin}[Move \SSS: Remove a regular source]\label{SSS}\index{S@\SSS}
Let $E = (E^0 , E^1 , r, s)$ be a graph, and let $w\in E^0$ be a source that is also a regular vertex. 
Let $E_S$ denote the graph $(E_S^0 , E_S^1 , r_S , s_S )$ defined by
$$E_S^0 := E^0 \setminus \{w\}\quad
E_S^1 := E^1 \setminus s^{-1} (w)\quad
r_S := r|_{E_S^1}\quad
s_S := s|_{E_S^1}.$$
We  say $E_S$ is formed by performing move \SSS\ to $E$.
\end{defin}

\begin{theor}[$\SSS\subseteq \xyzrel{001}$]\label{thm:source}
Let $E$ be a graph and let $w$ be a regular source. Let $E_S$ be the graph in Definition~\ref{SSS}.  Then there exists a $*$-isomorphism $\psi \colon C^*(E_S)\otimes\K \to C^*(E)\otimes \K$ such that $\psi ( \mathcal{D}_{E_S}\otimes c_0) = \mathcal{D}_E\otimes c_0$. 
\end{theor}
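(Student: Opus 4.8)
The plan is to realize $C^*(E)$ on the nose as a full corner of $C^*(E_S)\otimes\mathsf{M}_{m+1}(\CC)$, where $m=|s^{-1}(w)|$, by writing down an explicit Cuntz--Krieger $E$-family in that algebra (in the same spirit as the proofs of Theorems~\ref{thm:insplitting1} and~\ref{thm:insplitting2}), and then to pass to the stabilizations using the machinery that closed the proof of Theorem~\ref{thm:insplitting2}. Since $w$ is a regular source we have $r^{-1}(w)=\emptyset$ and $s^{-1}(w)=\{f_1,\ldots,f_m\}$ finite and nonempty; put $v_i:=r_E(f_i)\in E_S^0$ (not necessarily distinct). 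Then every edge of $E$ has range different from $w$, the edges of $E_S$ are exactly the edges of $E$ not emitted by $w$, and a vertex $v\neq w$ is regular in $E$ precisely when it is regular in $E_S$.

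First I would fix matrix units $\{e_{i,j}\}_{0\le i,j\le m}$ for $\mathsf{M}_{m+1}(\CC)$ and, writing $\{p^S_v,s^S_e\}$ for the canonical generators of $C^*(E_S)$, set $P_v:=p^S_v\otimes e_{0,0}$ for $v\in E_S^0$, $P_w:=\sum_{i=1}^m p^S_{v_i}\otimes e_{i,i}$, $S_e:=s^S_e\otimes e_{0,0}$ for $e\in E_S^1$, and $S_{f_i}:=p^S_{v_i}\otimes e_{i,0}$. A routine check of the Cuntz--Krieger relations --- the relation at $w$ becoming $\sum_{i=1}^m S_{f_i}S_{f_i}^*=P_w$ and the relation at a regular $v\neq w$ being inherited from $C^*(E_S)$ --- then produces a $*$-homomorphism $\rho\colon C^*(E)\to C^*(E_S)\otimes\mathsf{M}_{m+1}(\CC)$ with $\rho(p_v)=P_v$ and $\rho(s_e)=S_e$. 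To prove $\rho$ injective I would appeal to the gauge-invariant uniqueness theorem: since $\rho(p_v)=p^S_v\otimes e_{0,0}\neq0$ for all $v\in E^0$, it suffices to produce a strongly continuous $\TT$-action $\beta$ on the target with $\beta_z\circ\rho=\rho\circ\gamma^E_z$, and $\beta_z:=\gamma^{E_S}_z\otimes\operatorname{Ad}(\mathrm{diag}(1,z,\ldots,z))$ works, where the displayed unitary has $1$ in position $0$ and $z$ in each position $1,\ldots,m$. The twist in the matrix coordinates is forced because $S_{f_i}$ carries no $C^*(E_S)$-dynamics, and $\operatorname{Ad}(\mathrm{diag}(1,z,\ldots,z))$ scales each $e_{i,0}$ by $z$ while fixing $e_{0,0}$ and every $e_{i,i}$.

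Next I would identify the image. Its unit is $r:=1_{C^*(E_S)}\otimes e_{0,0}+\sum_{i=1}^m p^S_{v_i}\otimes e_{i,i}$, a projection that lies in the diagonal $\mathcal{D}_{E_S}\otimes c_0(\{0,\ldots,m\})$ and dominates the full projection $1_{C^*(E_S)}\otimes e_{0,0}$, hence is itself full, and a short manipulation of the generators shows $\rho(C^*(E))=r\bigl(C^*(E_S)\otimes\mathsf{M}_{m+1}(\CC)\bigr)r$, so $\rho$ is a $*$-isomorphism onto this corner. For the diagonals I would use that, because $w$ is a source, every finite path $\mu$ of $E$ either is a path of $E_S$ (precisely when $s_E(\mu)\neq w$), or has the form $f_i\mu'$ with $\mu'$ a path of $E_S$ emitted by $v_i$, or equals $w$; in these cases $\rho(s_\mu s_\mu^*)$ equals $s^S_\mu(s^S_\mu)^*\otimes e_{0,0}$, or $s^S_{\mu'}(s^S_{\mu'})^*\otimes e_{i,i}$, or $\sum_{i=1}^m p^S_{v_i}\otimes e_{i,i}$, respectively, and these elements densely span $r\bigl(\mathcal{D}_{E_S}\otimes c_0(\{0,\ldots,m\})\bigr)r$, giving $\rho(\mathcal{D}_E)=r\bigl(\mathcal{D}_{E_S}\otimes c_0(\{0,\ldots,m\})\bigr)r$.

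Finally I would invoke $(9)\implies(8)$ of \cite[Corollary~11.3]{tmcerasmt:rgcds}, applied to the diagonal-preserving isomorphism $\rho$ exactly as in the last paragraph of the proof of Theorem~\ref{thm:insplitting2} but with trivial gauge data, using that the closed ideal generated by $1_{C^*(E_S)}\otimes e_{0,0}$ in $C^*(E_S)\otimes\mathsf{M}_{m+1}(\CC)$ is the whole algebra; this yields a $*$-isomorphism $C^*(E)\otimes\K\to C^*(E_S)\otimes\mathsf{M}_{m+1}(\CC)\otimes\K$ sending $\mathcal{D}_E\otimes c_0$ to $\mathcal{D}_{E_S}\otimes c_0(\{0,\ldots,m\})\otimes c_0$, which composed with the evident diagonal-preserving isomorphism $C^*(E_S)\otimes\mathsf{M}_{m+1}(\CC)\otimes\K\cong C^*(E_S)\otimes\K$ is the required $\psi$. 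I expect the only genuinely delicate points to be the choice of the twisted action $\beta$ that makes the gauge-invariant uniqueness theorem applicable, and verifying that the hypotheses of \cite[Corollary~11.3]{tmcerasmt:rgcds} hold in the present (non-gauge) situation; the Cuntz--Krieger and diagonal computations themselves are routine analogues of those already carried out for Moves~\IIIm\ and~\RRRp.
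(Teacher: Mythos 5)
Your argument is correct, but it runs in the opposite direction from the paper's and is considerably longer. The paper simply defines $\phi\colon C^*(E_S)\to C^*(E)$ by $\phi(p_v)=p_v$ and $\phi(s_e)=s_e$ on the canonical generators, observes that (because $w$ is a source, so no path of $E$ passes through $w$ except by starting there) this maps $C^*(E_S)$ isomorphically onto the corner $qC^*(E)q$ with $q=\sum_{v\in E_S^0}p_v$ and carries $\mathcal{D}_{E_S}$ onto $q\mathcal{D}_E$, notes that $q$ is full precisely because $w$ is a \emph{regular} source (each $s_fs_f^*$ with $f\in s^{-1}(w)$ is equivalent to $p_{r(f)}\leq q$), and then invokes \cite[Theorem~3.2, Theorem~4.2, and Corollary~4.5]{tmceras:esigdsiga} to pass to a diagonal-preserving stable isomorphism. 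You instead realize $C^*(E)$ as the full corner $r\bigl(C^*(E_S)\otimes\mathsf{M}_{m+1}(\CC)\bigr)r$ by building a new Cuntz--Krieger family, which forces you to verify injectivity via the gauge-invariant uniqueness theorem with the twisted action $\gamma^{E_S}_z\otimes\operatorname{Ad}(\operatorname{diag}(1,z,\dots,z))$, and to identify image and diagonal by hand; all of these steps check out, and the final appeal to \cite[Corollary~11.3]{tmcerasmt:rgcds} is the same kind of closing move the paper makes at the end of Theorem~\ref{thm:insplitting2}. What your route buys is an explicit matrix model in the style of the insplitting proofs together with a circle action making the embedding equivariant; but since that action is not $\gamma^{E_S}$ tensored with anything gauge-trivial, it does not upgrade the conclusion (consistent with \SSS\ failing to be \xyz{011}-invariant, Example~\ref{finitecex}), so the extra machinery yields nothing beyond the stated theorem and the paper's two-line corner argument is the more economical choice.
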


\begin{proof}
Define $\phi \colon C^*(E_S) \to C^*(E)$ by $\phi ( p_v ) = p_v$ and $\phi ( s_e ) = s_e$ for all $v \in E_S^0$ and $e \in E_S^1$.  Let $q = \sum_{ v \in E_S } p_v$.  A computation shows that $\phi ( C^*(E_S) ) = q C^*(E) q$ and $\phi ( \mathcal{D}_{E_S} ) = q \mathcal{D}_E$.  Since $w$ is a regular source, $q$ is a full projection in $C^*(E)$.  The theorem now follows from \cite[Theorem~3.2, Theorem~4.2, and Corollary~4.5]{tmceras:esigdsiga}.
\end{proof}

\section{Advanced moves}

The Cuntz splice and Pulelehua moves are complicated moves designed (by Cuntz in \cite{jc:cpco} and the authors with Restorff and S\o{}rensen in \cite{segrerapws:ccuggs}) to change the graph essentially without changing the $K$-theory of the graph $C^*$-algebra. Proving that these moves do not change the graph $C^*$-algebra either is a key step for establishing classification (\cite{mr:ccka}, \cite{segrerapws:ics}, \cite{segrerapws:ccuggs}) and the stable isomorphisms thus obtained are not concrete and cannot be expected to preserve any additional structure.

It is, however, not hard to obtain versions which preserve $*$-isomorphism as corollaries to classification, and indeed this was first considered for the \CCC\ move in \cite{km:sncswcoe}. We choose a different solution, adding a number of sources to the standard construction which balances the situation enough to preserve the $K$-theoretical class of the unit. We may then prove that these moves are \xyzrel{100} by appealing to the classification results outlined in Section \ref{classummary}.

For a graph $F$ and $v\in F^0$, $\ee_v$ will denote the element in $\ZZ^{F^0}$ that is $1$ in the $v^{\text{th}}$ coordinate and zero elsewhere. By $[\ee_v]$ or $[\ee_v]_F$ we denote the image in $\cok \mathsf{B}_F^\bullet$.

\begin{defin}[Move \CCCp: Cuntz splicing] \label{CCCp}\index{Cp@\CCCp}
Let $E = (E^0 , E^1 , r , s )$ be a graph and let $u \in E^0$ be a regular vertex that supports at least two distinct return paths.
Let $E_{C}$ denote the graph $(E_{C}^0 , E_{C}^1 , r_{C} , s_{C})$ defined by 
\begin{align*}
E_{C}^0 &:= E^0\sqcup\{u_1 , u_2, u_3 \} \\
E_{C}^1 &:= E^1\sqcup\{e_1 , e_2 , f_1 , f_2 , h_1 , h_2 , g\},
\end{align*}
where $r_{C}$ and $s_{C}$ extend $r$ and $s$, respectively, and satisfy
$$s_{C} (e_1 ) = u,\quad s_{C} (e_2 ) = u_1 ,\quad s_{C} (f_i ) = u_1 ,\quad s_{C} (h_i ) = u_2 , \quad s_{C} ( g) = u_3$$
and
$$r_{C} (e_1 ) = u_1 ,\quad r_{C} (e_2 ) = u,\quad r_{C} (f_i ) = u_i ,\quad r_{C} (h_i ) = u_i , \quad r_{C} ( g) = u.$$
We say $E_{C}$ is formed by performing move \CCCp\ to $E$. 
\end{defin}

\begin{theor}[$\CCCp\subseteq \xyzrel{100}$]\label{thm:iso-cuntzsplice+}
Let $E$ be a graph and let $u$ be a regular vertex that supports at least two distinct return paths.  Then $C^*(E) \cong C^*( E_{C} )$.
\end{theor}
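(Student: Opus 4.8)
The plan is to reduce the statement to the classification of unital graph $C^*$-algebras, using the invariance properties of the auxiliary moves already at hand. Observe first that $u_3$ is a regular source of $E_{C+}$ -- it receives no edge and emits only $g$ -- and that deleting $u_3$ together with $g$ turns $E_{C+}$ into exactly the graph $E_C$ obtained from $E$ by the classical Cuntz splice $\CCC$ at $u$. Hence Theorem~\ref{thm:source} applied with $w=u_3$ produces a $*$-isomorphism $C^*(E_{C+})\otimes\K\to C^*(E_C)\otimes\K$; in particular $E_{C+}$ and $E_C$ are $\xyzrel{101}$-equivalent, so this isomorphism respects the primitive ideal space and induces an isomorphism of filtered ordered $K$-theory. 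On the other hand, since $u$ supports at least two distinct return paths, the classical invariance of the Cuntz splice (\cite{mr:ccka,segrerapws:ics,segrerapws:ccuggs}) supplies a $*$-isomorphism $C^*(E_C)\otimes\K\to C^*(E)\otimes\K$ that likewise respects the ideal structure and induces an isomorphism on $\FKplus$. Composing, we obtain a $*$-isomorphism $C^*(E_{C+})\otimes\K\to C^*(E)\otimes\K$ together with an induced isomorphism $\FKplus(C^*(E_{C+}))\to\FKplus(C^*(E))$.

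Next I would check that this composite isomorphism carries the class of the unit of $C^*(E_{C+})$ to that of $C^*(E)$. Under the standard identification $K_0(C^*(F))\cong\cok\mathsf{B}_F^\bullet$, with $[p_v]$ corresponding to $[e_v]_F$, the Cuntz--Krieger relations at the two new vertices $u_1,u_2$ of $E_C$ read $[e_{u_1}]=0$ and $[e_{u_2}]=-[e_u]$, while the relation at $u$ coincides with that of $E$; hence the canonical isomorphism $\cok\mathsf{B}_{E_C}^\bullet\to\cok\mathsf{B}_E^\bullet$ underlying the invariance of $\CCC$ fixes $[e_v]$ for each $v\in E^0$ and sends $[1_{C^*(E_C)}]=\sum_{v\in E_C^0}[e_v]$ to $[1_{C^*(E)}]-[e_u]$. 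Since $u_3$ is a source emitting the single edge $g$ with $r_{C+}(g)=u$, the projection $p_{u_3}$ is Murray--von Neumann equivalent to $p_u$ via $s_g$, so under the identification $K_0(C^*(E_{C+}))\cong K_0(C^*(E_C))$ coming from the full corner $C^*(E_C)=qC^*(E_{C+})q$ with $q=1_{C^*(E_{C+})}-p_{u_3}$ one has $[1_{C^*(E_{C+})}]=[1_{C^*(E_C)}]+[p_{u_3}]=[1_{C^*(E_C)}]+[e_u]$. Combining the two steps, $[1_{C^*(E_{C+})}]$ is carried to $([1_{C^*(E)}]-[e_u])+[e_u]=[1_{C^*(E)}]$, so the composite isomorphism on $\FKplus$ respects the distinguished classes of the units.

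Finally I would invoke the classification of unital graph $C^*$-algebras with finitely many vertices: two such algebras are $*$-isomorphic if and only if there is an isomorphism of $\FKplus$ taking the class of one unit to the class of the other (\cite{segrerapws:ccuggs,segrerapws:gcgcfg}). We have produced exactly such an isomorphism, so $C^*(E)\cong C^*(E_{C+})$, as claimed.

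The main obstacle, the rest being essentially bookkeeping with Cuntz--Krieger relations, is to ensure that the isomorphism on $\FKplus$ obtained by composing the two stable-level moves is precisely the one for which the unit computation above is valid -- that is, that the canonical cokernel isomorphisms appearing in the invariance proofs of $\CCC$ and of move $\SSS$ are the ones tracked in the $K_0$-calculation -- and to quote the classification input in the (possibly non-simple) generality needed here; for this one appeals to the appropriate forms of the results in \cite{segrerapws:ccuggs} and the companion paper \cite{seaseer:gciugc}.
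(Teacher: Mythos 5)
Your argument is correct in outline and the $K_0$-bookkeeping checks out ($[e_{u_1}]=0$, $[e_{u_2}]=-[e_u]$ in $\cok\mathsf{B}_{E_C}^\bullet$, and $[p_{u_3}]=[p_u]$ via $s_g$ since $u_3$ is regular with $s^{-1}(u_3)=\{g\}$), but your route is genuinely different from the paper's. The paper does not factor through the classical Cuntz splice at all: it first normalizes $E$ via \cite[Lemma~3.17]{segrerapws:gcgcfg} so that $\mathsf{B}_E^\bullet$ lies in the matrix class $\mathfrak{M}^\circ_{\mathcal P}$, then writes down one explicit pair $(U,V)$ with $U\mathsf{B}_{E_{C+}}^\bullet V=-\iota_{3\mathbf{e}_j}(-\mathsf{B}_E^\bullet)$, reads off from \cite[Section~4.4]{segrerapws:gcgcfg} that $(U,V)$ induces an isomorphism of $\FKRplus$ with $V^T$ giving the $K_0$-map, verifies $V^T(\sum_v[e_v])=\sum_v[e_v]$, and quotes \cite[Theorem~3.5]{segrerapws:ccuggs}. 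What your decomposition buys is conceptual economy (you reuse Theorem~\ref{thm:source} and the known invariance of \CCC, and you avoid the normalization step, since you never need the $(U,V)$-machinery); what the paper's approach buys is that the isomorphism of the invariant is pinned down completely explicitly, so there is nothing left to verify about ``which'' map is being used. That last point is exactly the obstacle you flag, and it is real but resolvable: the stable isomorphism produced by the invariance proofs of \CCC\ is obtained by \emph{lifting} precisely the canonical cokernel identification $[e_v]\mapsto[e_v]$ (strong classification), so one may either choose the lift accordingly, or -- more cleanly -- drop the stable isomorphisms altogether and simply exhibit the composite map on $\FKRplus$ directly (the corner identification for $u_3$ composed with the canonical Cuntz-splice isomorphism of invariants), check the unit class as you do, and feed that into \cite[Theorem~3.5]{segrerapws:ccuggs}. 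One small slip: removing the regular source $u_3$ via Theorem~\ref{thm:source} yields a \xyz{001}-equivalence (a stable, diagonal-preserving isomorphism), not a \xyz{101}-equivalence; this is harmless here since all you use is that a stable isomorphism induces an isomorphism of ordered reduced filtered $K$-theory.
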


\begin{proof}
By \cite[Lemma~3.17]{segrerapws:gcgcfg}, there exists a graph $E'$ with finitely many vertices such that $C^*(E) \cong C^*(E')$, each infinite emitter in $E'$ emits infinitely many edges to any vertex it emits an edge to, and every transition state has exactly one outgoing edge.  To obtain $E'$, one out-splits each infinite emitter and each transition state.  Since these graph moves do not involve $u$ (regular vertex that supports at least two distinct return paths), we can outsplit the same vertices in $E_{C}$ and get a graph $F'$ such that $C^*( E_{ C+} ) \cong C^*(F')$ and $F' \cong E'_{ C+ }$.  Thus, without loss of generality, we may assume that each infinite emitter in $E$ emits infinitely many edges to any vertex it emits an edge to, and every transition state has exactly one outgoing edge.  Consequently, $\mathsf{B}_E = \mathsf{A}_E - \mathsf{I}$ is an element of the class of matrices $\mathfrak{M}_{\mathcal{P}}^\circ ( \mathbf{m} \times \mathbf{n} , \ZZ )$ defined in \cite[Definition~4.15]{segrerapws:gcgcfg}, and $\mathsf{B}_{E_{C}} =  \mathsf{A}_E - \mathsf{I}$ is an element of $\mathfrak{M}_{\mathcal{P}}^\circ ( ( \mathbf{m} + 3 \mathbf{e}_j) \times (\mathbf{n} + 3 \mathbf{e}_j ) , \ZZ )$ where $u$ belongs to the block $j \in \mathcal{P}$. Here $\mathbf{e}_j$ is the vector with $1$ in the $j^{\text{th}}$ component.\fxnote{Can be shortened using above, mainly \ref{getmatched}}

Note that the diagonal block of $\mathsf{B}_{E_{C}}^\bullet\{j\}$ can be described as the matrix
\[
\left[\begin{array}{c|c}
 \mathsf{B}_E^\bullet\{j\}
 &\begin{smallmatrix}\vdots&\vdots&\vdots\\0&0&0\\1&0&0\\&&\end{smallmatrix}\\\hline
 \begin{smallmatrix}&&\\\cdots&0&1\\\cdots&0&0\\\cdots&0&1\end{smallmatrix}&
 \begin{smallmatrix}&&\\0&1&0\\1&0&0\\0&0&-1\end{smallmatrix} 
\end{array}\right].
\]
Let $U \in \mathrm{GL}_{\mathcal{P}} ( ( \mathbf{m} + 3 \mathbf{e}_j) \times (\mathbf{n} + 3 \mathbf{e}_j ) , \ZZ )$ and $V \in \mathrm{SL}_{\mathcal{P}} ( \mathbf{m} + 3 \mathbf{e}_j) \times (\mathbf{n} + 3 \mathbf{e}_j ) , \ZZ )$ be the identity matrix everywhere except the $j$'th diagonal block where they are given by
\[
\left[\begin{array}{c|c}
\idmatrix
 &\begin{smallmatrix}\vdots&\vdots&\vdots\\0&0&0\\0&-1&0\\&&\end{smallmatrix}\\\hline
 \begin{smallmatrix}&&\\\cdots&0\\\cdots&0\\\cdots&0\end{smallmatrix}&
 \begin{smallmatrix}&&\\0&-1&0\\-1&0&0\\0&0&1\end{smallmatrix} 
\end{array}\right]
\]
and
\[
\left[\begin{array}{c|c}
\idmatrix
 &\begin{smallmatrix}\vdots&\vdots&\vdots\\0&0&0\\&&\end{smallmatrix}\\\hline
 \begin{smallmatrix}&&\\\cdots&0&0\\\cdots&0&-1\\\cdots&0&-1\end{smallmatrix}&
 \begin{smallmatrix}&&\\1&0&0\\0&1&0\\0&0&1\end{smallmatrix} 
\end{array}\right]
\]
respectively.
Then $U \mathsf{B}_{E_{C}}^\bullet V = - \iota_{3\mathbf{e}_j } ( - \mathsf{B}_E^\bullet )$ where $\iota_{\mathbf r}$ is the embedding of a block matrix to a larger block matrix for any multiindex $\mathbf r$ (see \cite[Definition~4.1]{segrerapws:gcgcfg}).

As\fxnote{Update using notation from \ref{classummary}} explained in \cite[Section~4.4]{segrerapws:gcgcfg}, $(U,V)$ induces an isomorphism $\mathrm{FK}_{\mathcal{R}}(U,V)$ on the reduced filtered $K$-theory from $\mathrm{FK}_{ \mathcal{R} } ( \mathcal{P} , C^*( E_{ C+ } ) )$ to $\mathrm{FK}_{ \mathcal{R} } ( \mathcal{P} , C^*( E ) )$ where $V^T$ induces the isomorphism on the $K_0$-groups.  Note that $[p_{ u_1 } ] = 0$ and $[ p_{u_3} ] = [ p_u ]$ in $K_0 ( C^*(E_{C}) )$.  Note that 
\[
V^T(\ee_v) = \ee_v \quad\text{and}\quad V^T( \ee_{u_2} ) = -\ee_{ u }+\ee_{u_2}
\]
for all $v \in E^0$.  By identifying $K_0(C^*(E_{C}))$ with $\cok \mathsf{B}_{E_{C}}^\bullet$ via the identification $[p_v] \mapsto [\ee_v]$ and by identifying $K_0(C^*(E))$ with $\cok(-\iota _{3{\mathsf e}_j}(-\mathsf{B}_{E}^\bullet))$ in a similar way, we get $V^T([\ee_v]_{E_{C}})=[\ee_v]_E$ and $V^T([\ee_{u_2}]_{E_{C}})=-[\ee_u]_E$ since $\ee_{u_2}\in \operatorname{im}((-\iota _{3{\mathsf e}_j}(-\mathsf{B}_{E}^\bullet))$. Therefore,
\begin{align*}
V^T\left( \sum_{v \in E_{C}^0} [\ee_v] _{E_{C}}\right) &= V^T \left( \sum_{v \in E^0 } [\ee_v] _{E_{C}} + [\ee_{ u_2 }] _{E_{C}} + [\ee_{ u _3 }] _{E_{C}}  \right) \\
					&= \sum_{ v \in E^0} [\ee_v]_E -[ \ee_{u}]_E + V^T ([ \ee_{u}]_{E_{C}} )\\
					&= \sum_{ v \in E^0 } [\ee_v]_E. 
\end{align*}
Hence, $\mathrm{FK}_{\mathcal{R}}(U,V)$ sends $[ 1_{C^*(E_{C})} ]$ to $[ 1_{C^*(E) } ]$.  Moreover, since $V^T ( \ee_v ) = \ee_v$ for all $v \in E^0$ and the simple subquotient corresponding to the $j$th component is a purely infinite simple $C^*$-algebra, $\mathrm{FK}_{\mathcal{R}}(U,V)$ is an isomorphism on the ordered reduced filtered $K$-theory from $\mathrm{FK}_{ \mathcal{R} }^+ ( \mathcal{P} , C^*( E_{ C+ } ) )$ to $\mathrm{FK}_{ \mathcal{R} }^+ ( \mathcal{P} , C^*( E ) )$.  By \cite[Theorem~3.5]{segrerapws:ccuggs}, $C^*(E_{C}) \cong C^*(E)$.
\end{proof}


We denote the graph where we have applied \CCCp\ to each vertex $w$ in $S$ by \[E_{S,-}=(E_{S,-}^0,E_{S,-}^1,r_{S,-},s_{S,-}).\] For each $w\in S$, $v_1^w$, $v_2^w$, $v_3^w$ will denote the additional vertices from the \CCCp\ move at $w$ enumerated as in Definition \ref{CCCp}.

\begin{defin}[Move $\mbox{\texttt{\textup{(P+)}}}$: Eclosing a cyclic component] \label{PPPp}
Let $E=(E^0,E^1,r,s)$ be a graph and let $u$ be a regular vertex that supports a loop and no other return path, the loop based at $u$ has an exit, and if $w \in E^0 \setminus \{u\}$ and $s^{-1} (u) \cap r^{-1}(w) \neq \emptyset$, then $w$ is a regular vertex that supports at least two distinct return paths.  We construct $E_{u, P}$ as follows.  

Set $S = \{ w \in E^0 \setminus \{u\} : s^{-1}(u) \cap r^{-1}(w) \neq \emptyset \}$ (since the loop based at $u$ has an exit, $S$ is nonempty, and clearly $u \notin S$).  Set $E_{u, P}^0 = E_{S,-}^0$ and
\[
E_{u, P}^1 = E_{S,-}^1 \sqcup \{ \overline{e}_w, \tilde{e}_w  : \ w \in S, e \in s^{-1} (u) \cap r^{-1}(w) \}
\]
with $s_{{u, P}} \vert_{E_{S,-}^1} = s_{{S,-}}$, $r_{{u, P}} \vert_{E_{S,-}^1} = r_{{S,-}}$, $s_{{u, P}} ( \overline{e}_w ) = s_{{u, P}} ( \tilde{e}_w ) = u$, $r_{{u, P}} ( \overline{e}_w ) = r_{{u, P}} ( \tilde{e}_w ) = v_2^w$. 

We say that $E_{u,P}$ is formed by performing move $\mbox{\texttt{\textup{(P+)}}}$ to $E$.   
\end{defin}

\begin{theor}[$\PPPp\subseteq \xyzrel{100}$]\label{thm:iso-pulelehua+}\index{Pp@\PPPp}
Let $E=(E^0,E^1,r,s)$ be a graph  and let $u$ be a regular vertex that supports a loop and no other return path, the loop based at $u$ has an exit, and if $w \in E^0 \setminus \{u\}$ and $s^{-1} (u) \cap r^{-1}(w) \neq \emptyset$, then $w$ is a regular vertex that supports at least two distinct return paths.  Then $C^*(E_{u, P}) \cong C^*(E)$.
\end{theor}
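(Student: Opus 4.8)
The plan is to imitate the proof of Theorem~\ref{thm:iso-cuntzsplice+}. I would first record that $E_{u,P+}$ is built from $E$ in two steps: one performs Move~\CCCp\ at each $w\in S$ — which is permitted since, by hypothesis, every $w\in S$ is a regular vertex supporting at least two distinct return paths, and Move~\CCCp\ at one such vertex preserves the regularity and the return paths of the others — yielding the graph $E_{S,-}$, with $C^*(E)\cong C^*(E_{S,-})$ by iterating Theorem~\ref{thm:iso-cuntzsplice+}; and one then adjoins, for each $w\in S$, the $2n_w$ edges $\overline e_w,\widetilde e_w\colon u\to v_2^w$, where $n_w=|s_E^{-1}(u)\cap r_E^{-1}(w)|$. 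It therefore suffices to prove $C^*(E_{S,-})\cong C^*(E_{u,P+})$. As in the proof of Theorem~\ref{thm:iso-cuntzsplice+}, after applying \cite[Lemma~3.17]{segrerapws:gcgcfg} we may assume $E$ — and hence also $E_{S,-}$ and $E_{u,P+}$ — is in the normal form of that lemma; the out-splittings involved do not touch $u$ or the gadget vertices $v_i^w$, so they are carried out simultaneously on all three graphs. Then $\mathsf{B}^\bullet_{E_{S,-}}$ and $\mathsf{B}^\bullet_{E_{u,P+}}$ both lie in $\mathfrak{M}^\circ_{\mathcal P}$ over the same block data, and they differ \emph{only in the $u$-row}: that of $E_{u,P+}$ exceeds that of $E_{S,-}$ by $\sum_{w\in S}2n_w\,e_{v_2^w}^{\mathsf T}$.

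The heart of the proof is to exhibit a \GLP-equivalence $(U,V)$ with $U\,\mathsf{B}^\bullet_{E_{u,P+}}\,V=\mathsf{B}^\bullet_{E_{S,-}}$ whose induced map $\mathrm{FK}_{\mathcal R}(U,V)$ on reduced filtered $K$-theory carries $[1_{C^*(E_{u,P+})}]$ to $[1_{C^*(E_{S,-})}]$. The relevant structural facts are that $u$ reaches each of $w$, $v_1^w$ and $v_2^w$ in $E_{u,P+}$ (through the new edges and the gadget edges), that $\{w,v_1^w,v_2^w\}$ together with the original strongly connected component of $w$ forms a single block of $\calP$ lying strictly below the block $\{u\}$, and that inside that block the rows of $\mathsf{B}^\bullet$ for $v_2^w$ and $v_1^w$ are the nearly trivial vectors $e_{v_1^w}^{\mathsf T}$ and $e_w^{\mathsf T}+e_{v_2^w}^{\mathsf T}$. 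Using these one clears the surplus in the $u$-row by elementary operations touching the $u$-row only once — combining a column adjustment in the $v_2^w$-columns (allowed because those columns lie in a block reached by $u$) with compensating operations carried out \emph{entirely within and below the block of $w$} — so that the incomparable pair consisting of $\{u\}$ and the block of the source $v_3^w$ is never mixed; one then checks, exactly as in the proof of Theorem~\ref{thm:iso-cuntzsplice+}, that the accompanying change of basis fixes the classes of all original vertices and moves each gadget class only within its own block. The unit class is preserved because the gadget contributions cancel via $[p_{v_1^w}]=0$ and $[p_{v_2^w}]+[p_{v_3^w}]=-[p_w]+[p_w]=0$.

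It remains to check the order and conclude. Since $\mathrm{FK}_{\mathcal R}(U,V)$ restricts to the identity on every simple subquotient not produced by a gadget — in particular on the \emph{cyclic} simple subquotient at $u$, which is exactly the place where pure infiniteness is unavailable — and is automatically order preserving on the subquotients attached to the vertices $w\in S$, which are purely infinite simple by the hypothesis that each $w$ supports two distinct return paths, it is an isomorphism of ordered reduced filtered $K$-theory compatible with the units. Hence $C^*(E_{S,-})\cong C^*(E_{u,P+})$ by \cite[Theorem~3.5]{segrerapws:ccuggs}, and therefore $C^*(E)\cong C^*(E_{u,P+})$.

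The step I expect to be the true obstacle is the construction of the \GLP-equivalence in the second paragraph: one must produce an explicit sequence of elementary row and column operations that removes the extra $\sum_{w\in S}2n_w\,e_{v_2^w}^{\mathsf T}$ from the $u$-row, verify that every one of them is legal for the partial order $\calP$ — the naive row operation would illegitimately mix the block $\{u\}$ with the incomparable block of the source $v_3^w$ — handle all $w\in S$ simultaneously without unforeseen interaction between distinct gadgets, and confirm that the cumulative change of basis behaves correctly on the class of the unit and on the order, where the special role of the cyclic block at $u$ (absent in the Cuntz-splice situation) must be watched carefully. A minor additional point is to verify, as in Theorem~\ref{thm:iso-cuntzsplice+}, that the normal-form reduction of \cite[Lemma~3.17]{segrerapws:gcgcfg} leaves $u$ and Move~\PPPp\ intact.
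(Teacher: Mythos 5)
Your overall framework is right and your factorization---first $C^*(E)\cong C^*(E_{S,-})$ by iterating Theorem~\ref{thm:iso-cuntzsplice+} at each $w\in S$, then a comparison of $E_{S,-}$ with $E_{u,P+}$, whose matrices $\mathsf{B}^\bullet$ differ only in the $u$-row by $\sum_{w\in S}2n_w\,e_{v_2^w}^{\mathsf T}$---is a legitimate variant of the paper's route, which instead compares $\mathsf{B}^\bullet_{E_{u,P+}}$ directly with $-\iota_{\mathbf{r}}(-\mathsf{B}^\bullet_E)$ in one step. However, the heart of the proof is exactly the step you flag as ``the true obstacle'' and then leave undone: you never exhibit the pair $(U,V)$, and the sketch you give of how it would go (``a column adjustment in the $v_2^w$-columns \ldots with compensating operations within and below the block of $w$'') does not match what actually works. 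The paper's choice is: $V$ adds column $v_3^w$ to column $w$ and subtracts column $v_2^w$ from column $w$ for all $w\in S$; $U$ subtracts row $v_3^w$ from row $w$, subtracts $2n_w$ times row $v_1^w$ from row $u$, multiplies row $u$ and rows $v_1^w,v_2^w$ by $-1$, and interchanges rows $v_1^w$ and $v_2^w$. The surplus in the $u$-row is cleared by a \emph{row} operation, exploiting that row $v_1^w$ of $\mathsf{B}^\bullet$ is $e_w^{\mathsf T}+e_{v_2^w}^{\mathsf T}$ (so the subtraction simultaneously kills the $2n_w$ in column $v_2^w$ and flips the sign of the $(u,w)$-entry, which the subsequent sign changes repair); without writing this down and verifying $U\mathsf{B}^\bullet_{E_{u,P+}}V=-\iota_{\mathbf{r}}(-\mathsf{B}^\bullet_E)$, the proof is incomplete precisely where the content lies.

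Two further points. Your worry about ``illegitimately mixing the block $\{u\}$ with the incomparable block of the source $v_3^w$'' rests on a misreading of the block structure: exactly as in the proof of Theorem~\ref{thm:iso-cuntzsplice+}, all three gadget vertices $v_1^w,v_2^w,v_3^w$ are placed in the \emph{same} block $j_i$ as $w$, which lies strictly below the block of $u$, so every operation needed is between comparable blocks and no such obstruction exists. On the other hand, your verification of the unit class (via $[p_{v_1^w}]=0$, $V^{\mathsf T}e_{v_2^w}=-e_w+e_{v_2^w}$ with $e_{v_2^w}$ in the image, and the cancellation against $[e_{v_3^w}]$) and of order-preservation (pure infiniteness of the simple subquotients at the blocks $j_i$, together with $V^{\mathsf T}e_v=e_v$ for $v\in E^0$, which leaves the cyclic subquotient at $u$ untouched) does agree with the paper's argument, so once $(U,V)$ is actually constructed the remainder of your proof goes through.
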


\begin{proof}
We again use a similar argument as in the proof of Theorem~\ref{thm:iso-cuntzsplice+}, without loss of generality, we may assume that each infinite emitter in $E$ emits infinitely many edges to any vertex it emits an edge to, and every transition state has exactly one outgoing edge.  Thus, $\mathsf{B}_E = \mathsf{A}_E - \mathsf{I}$ is an element of $\mathfrak{M}_{\mathcal{P}}^\circ ( \mathbf{m} \times \mathbf{n} , \ZZ )$.  Let $j$ be the component of $u$ and let $j_1, j_2, \ldots, j_n$ be the components in $\mathcal{P}$ such that for each $i$, there exists $w$ in the component $j_i$ such that $\mathsf{B}_E ( u, w ) \neq 0$.  

Let $S = \{ w \in E^0 \setminus \{u\} : s^{-1}(u) \cap r^{-1}(w) \neq \emptyset \}$, let 
\[
\mathcal{E}_i = \{ w \in E^0 \ : \ \text{$w$ is in the $j_i$ component and $\mathsf{B}_E ( u, w ) \neq 0$} \},
\]
and let $k_i = |\mathcal{E}_i |$.  Let $V$ be the operation matrix which implements the matrix operations
\begin{itemize}
\item Add column $v_3^w$ to column $w$ for all $w \in S$
\item Subtract column $v_2^w$ from column $w$ for all $w \in S$
\end{itemize}
 by multiplication from the right.
Let $\mathbf{r} = \sum_{ i = 1 }^n 3 k_i\mathbf{e}_{j_i}$.  Then $V \in \mathrm{SL}_{\mathcal{P}} ( ( \mathbf{m} + \mathbf{r}) \times (\mathbf{n} + \mathbf{r} ) , \ZZ )$ since the column operations are done in each component.  Let $U$ be the operation matrix which implements the matrix operations
\begin{itemize}
\item Subtract row $v_3^w$ from row $w$ for all $w \in S$
\item Subtract $\sum_{w\in S}2|s^{-1}_E(u)\cap  r^{-1}_E(w)|$ times row $v^w_1$ from row $u$ for all $w \in S$
\item Multiply row $u$ by $-1$
\item Multiply rows $v^w_1$ and $v^w_2$ by $-1$ for all $w \in S$
\item Interchange rows $v_1^w$ and  $v_2^w$ for all $w \in S$
\end{itemize}
by multiplication from the left. Since the row operations are done in the components $j_i$, we have that $U \in \mathrm{GL}_{\mathcal{P}} (( \mathbf{m} + \mathbf{r}) \times (\mathbf{n} +  \mathbf{r} ) , \ZZ )$.  

A\fxnote{Update using notation from \ref{classummary}} computation shows that $U \mathsf{B}_{E_{u, P}}^\bullet V = - \iota_{\mathbf{r}} ( - \mathsf{B}_E^\bullet )$.  As explained in \cite[Section~4.4]{segrerapws:gcgcfg}, $(U,V)$ induces an isomorphism $\mathrm{FK}_{\mathcal{R}}(U,V)$ on the reduced filtered $K$-theory from $\mathrm{FK}_{ \mathcal{R} } ( \mathcal{P} , C^*( E_{ u, P } ) )$ to $\mathrm{FK}_{ \mathcal{R} } ( \mathcal{P} , C^*( E ) )$ where $V^T$ induces the isomorphism on the $K_0$-groups. 
Note that $V^T\ee_v=\ee_v$ for $v\in E^0$ and $V^T\ee_{v_2^w}=-\ee_w+\ee_{v_2^w}$ for all $w\in S$. Since $[p_{v^w_1}]=0$, $[p_{v^w_2}]=[p_w]$ in $K_0(C^*(E_{u,P}))$ and $e_{v^w_2}$ is in the image of $-\iota_{\mathbf r}(-\mathsf{B}_E^\bullet)$, identifying the $K_0$-groups with the cokernels of the associated matrices we get 
\begin{align*}
V^T\left( \sum_{v \in E_{u , P}^0} [\ee_v] \right) &= V^T \left( \sum_{v \in E } [\ee_v] \right) + V^T\left( \sum_{w \in S } [\ee_{v^w_2}] \right) + V^T \left( \sum_{w \in S } [\ee_{v^w_3}] \right) \\
					& = \sum_{ v \in E^0 \setminus S } [\ee_v] + \sum_{w \in S} [\ee_{w}] - \sum_{w \in S }  [\ee_{w}] + V^T \left(\sum_{ w \in S } [\ee_w] \right) \\
					&=  \sum_{ v \in E^0 \setminus S} [\ee_v] + \sum_{w \in S } [\ee_w] \\
					&= \sum_{v \in E^0} [\ee_v]. 
\end{align*}
Hence, $\mathrm{FK}_{\mathcal{R}}(U,V)$ sends $[ 1_{C^*(E_{u, P})} ]$ to $[ 1_{C^*(E) } ]$.  Moreover, since $V^T ( \ee_v ) = \ee_v$ for all $v \in E^0$ and the simple subquotient corresponding to the $j_i$th component is a purely infinite simple $C^*$-algebra, $\mathrm{FK}_{\mathcal{R}}(U,V)$ is an isomorphism on the ordered reduced filtered $K$-theory from $\mathrm{FK}_{ \mathcal{R} }^+ ( \mathcal{P} , C^*( E_{ u, P } ) )$ to $\mathrm{FK}_{ \mathcal{R} }^+ ( \mathcal{P} , C^*( E ) )$.  By \cite[Theorem~3.5]{segrerapws:ccuggs}, $C^*(E_{u,P}) \cong C^*(E)$.
\end{proof}

\chapter{The Krieger moves}

As we shall see below, the two types of isomorphism \xyz{010} and \xyz{110} cannot be generated by the moves we have introduced thus far. To remedy this situation, we introduce here two \emph{Krieger moves} \KKKp\ and  \KKKm\ which are of a dramatically different nature than the previously defined ones.

\section{The dimension quadruple}

It follows from Definition \ref{xyzdef} that any  \xyz{11z}-equivalence among $E$ and $F$ implies that $\Ff_E\simeq \Ff_F$ and that any 
 \xyz{01z}-equivalence implies that $\Ff_E\otimes \K\simeq \Ff_F\otimes \K$, with  $\Ff_E$ the fixed point algebra
 \[
 \Ff_E=\{a\in C^*(E)\mid \forall z\in \TT:\gamma^E_z(a)=a\}.
 \]
This $C^*$-algebra is extremely well understood, since it can be described using the \emph{skew product} graph\index{skew product} 
 $E\times_1\ZZ$ defined as  the graph with vertices $E^0 \times \ZZ$ and edges $E^1 \times \ZZ$ with range and source maps given by 
\[
s( e, n) = ( s(e), n-1) \quad \text{and} \quad r( e, n) = ( r(e), n ).
\]
Note that $E\times_1 \ZZ$ with infinitely many vertices is not directly amenable to analysis by moves. It is a classical result by Crisp that  $\Ff_E$ is a corner  of the graph $C^*$-algebra associated to the skew product:

\begin{theor}[\cite{tc:cga}]\label{tyrone}
Let $E$ be a graph with finitely many vertices. Then
\[
\Ff_E\simeq p_0^E C^*(E^1 \times_1 \ZZ)p_0^E
\]
where\index{pE0@$p_0^E$}
\[
p_0^E=\sum_{v\in E^0}p_{(v,0)}.
\]
\end{theor}

Note that since $E\times _1\ZZ$ is acyclic by construction, $C^*(E\times_1 \ZZ)$ is AF by  \cite[Corollary~2.13]{ddmt:cag}. Therefore $\Ff_E$ is always AF.  We now proceed to collect results which describe $\Ff_E$ dynamically. These results are definitely known to experts, but we have not been able to locate them in the literature at the needed level of precision for our purposes.

It is easy to see that when $\{p_{(v,n)},s_{(e,n)}\}$ is a Cuntz-Krieger $E\times_1\ZZ$-family, then so is $\{p_{(v,n+1)},s_{(e,n+1)}\}$, and consequently a canonical (right) translation map $\lt\in\operatorname{Aut}(C^*(E\times_1\ZZ))$ is defined.\index{rt@$\lt$}
The key refinement is to retain information carried by $\lt$, which is essentially a dual action $\widehat{\gamma^E}$.

For each $a \in C^*(E)$ and $f \in C(\TT)$, $f \otimes a$ will denote the continuous function $z \mapsto f(z) a\in C(\TT,C^*(E))$.  We will also consider $f \otimes a$ as an element of $C^*(E) \rtimes_{\gamma^E} \TT$ in the canonical way inside the regular representation. For each $n \in \ZZ$, $f_n$ will denote the function in $C(\TT)$ given by $f_n(z) = z^n$.  By \cite[Lemma~3.1]{irws:ckigm}, there exists an $*$-isomorphism $\phi_E \colon C^*(E \times_1 \ZZ) \to C^*(E) \rtimes_{\gamma^E} \TT$ such that 
$$
\phi_E( p_{(v,n)} ) = f_n \otimes p_v \quad \text{and} \quad \phi_E( s_{(e,n)} ) = f_n \otimes s_{e}
$$
for all $(v,n) \in (E \times_1\ZZ)^0$ and for all $(e,n) \in (E \times_1 \ZZ)^1$ such that $(\widehat{\gamma^E})_1 \circ \phi = \phi \circ \lt$, where $\widehat{\gamma^E}$ is the dual $\ZZ$-action on $C^*(E) \rtimes_{\gamma^E} \mathbb{T}$ and $1$ denotes the canonical generator of the Pontrjagin dual $\widehat{\TT} \cong \ZZ$.

\begin{theor}\label{raecor}
If $E$ and $F$ are \xyz{110}-equivalent, then there exists a $*$-isomorphism $\psi \colon C^*(E\times_1\ZZ) \to C^*(F\times_1\ZZ)$ such that $\psi \circ \lt = \lt \circ \psi$ and $\psi ( p^E_0 )= p_0^F$.
\end{theor}


\begin{proof}
Let $\phi$ be a \xyz{110}-isomorphism from $C^*(E)$ to $C^*(F)$.  By \cite[Corollary~2.4.8]{dpw:cpc}, there exist homomorphisms $ \phi \rtimes \id \colon C^*(E) \rtimes_{\gamma^E} \TT \to C^*(F) \rtimes_{ \gamma^F } \TT$ and $\phi^{-1} \rtimes \id \colon C^*(F) \rtimes_{\gamma^F} \TT \to C^*(E) \rtimes_{\gamma^E } \TT$ such that $\phi \rtimes \id ( f )(z) = \phi ( f(z) )$ and $\phi^{-1} \rtimes \id ( g)(z) = \phi^{-1} ( g(z) )$ for all $f \in C( \TT, C^*(E) )$, $g \in C(\TT, C^*(F))$, and $z \in \TT$.  Consequently, $(\phi^{-1} \rtimes \id) \circ ( \phi \rtimes \id )(f) (z) = f(z)$ and $(\phi \rtimes \id ) \circ ( \phi^{-1} \rtimes \id )( g)(z) = g(z)$ for all $f \in C( \TT, C^*(E) )$, $g \in C(\TT, C^*(F))$, and $z \in \TT$.  Since $C(\TT, C^*(E))$ is dense in $C^*(E) \rtimes_{\gamma^E} \TT$ and $C^*(\TT, C^*(F))$ is dense in $C^*(F) \rtimes_{\gamma^F} \TT$, we have that $(\phi \rtimes \id ) \circ ( \phi^{-1} \rtimes \id ) = \id$ and $( \phi^{-1} \rtimes \id ) \circ ( \phi \rtimes \id ) = \id$.  Hence, $\phi \rtimes \id$ is a $*$-isomorphism.

Note that 
\begin{align*}
( ( \phi \rtimes \id ) \circ (\widehat{\gamma^E})_\tau (f) )(z) &= \phi ( (\widehat{\gamma^E})_\tau ( f )(z)) \\
							&= \phi ( \tau (z)  f(z) ) = \tau (z) \phi ( f(z) ) \\
							&=  \tau (z) (\phi \rtimes \id)(f)(z) \\
							&= (\widehat{\gamma^F})_\tau \circ (\phi \rtimes \id )( f)(z),
\end{align*}
for all $f \in C( \TT , C^*(E))$, for all $z\in \TT$, and for all $\tau \in \widehat{\TT}$.  And since $C( \TT, C^*(E))$ is dense in $C^*(E) \rtimes_{\gamma^E} \TT$, we have that $\phi \rtimes \id$ is an equivariant $*$-isomorphism from $( C^*(E) \rtimes \TT, \widehat{\gamma^E} )$ to $(C^*(F) \rtimes \TT, \widehat{\gamma^F})$.

Let $\phi_E \colon C^*(E \times_1 \ZZ ) \to C^*(E)\rtimes_{\gamma^E} \TT$ and $\phi_F \colon C^*(F \times_1\ZZ) \to C^*(F) \rtimes_{\gamma^F} \TT$ be the $*$-isomorphisms given in \cite[Lemma~3.1]{irws:ckigm} for $E$ and $F$ respectively.  Note that 
\begin{align*}
(\phi \rtimes \id) \left( \sum_{v \in E^0 } f_0 \otimes p_v \right)(\zeta) &= \sum_{ v \in E^0 }  ( \phi \rtimes \id )( f_0 \otimes p_v )( \zeta ) \\
									&= \sum_{ v \in E^0 } \phi ( p_v ) \\\
									&= \phi ( 1_{C^*(E) } ) \\\
									&= 1_{C^*(F)} \\
									&= \sum_{w \in F^0 } p_w \\
									&= \sum_{ w \in F^0 } ( f_0 \otimes p_w )(\zeta)
\end{align*}
for all $\zeta \in \TT$.  Thus, $\phi \rtimes \id \left( \sum_{v \in E^0 } f_0 \otimes p_v \right) = \sum_{ w \in F^0 } f_0 \otimes p_w $.  Set $\psi = \phi_F^{-1} \circ (\phi \rtimes \id ) \circ \phi_E$.  Then $\psi$ is a $*$-isomorphism from $C^*(E\times_1\ZZ)$ to $C^*(F\times_1\ZZ)$, 
\[
\psi \left( p^E_0 \right) =  \phi_F^{-1} \circ ( \phi \rtimes \id ) \circ \left( \sum_{v \in E^0 } f_0 \otimes p_v  \right)  = \phi_F^{-1} \left( \sum_{w \in F^0 }  f_0 \otimes p_w \right) = p^F_0,
\]
and 
\begin{align*}
\lt \circ \psi &= \lt \circ \phi_F^{-1} \circ (\phi \rtimes \id ) \circ \phi_E  \\
			&= \phi_F^{-1} \circ ( \widehat{\gamma^F})_1 \circ (\phi \rtimes \id ) \circ \phi_E   \\
			&= \phi_F^{-1} \circ ( \phi \rtimes \id ) \circ (\widehat{\gamma^E})_1 \circ \phi_E \\
			&= \phi_F^{-1} \circ ( \phi \rtimes \id ) \circ  \phi_E \circ \lt \\
			&= \psi \circ \lt.\qedhere
\end{align*}
\end{proof}

Since the $C^*$-algebras we consider here are AF, there is no loss of information in passing to $K$-theory. We note explicitly:

\begin{corol}\label{xIOinvaK}
Suppose $E$ and $F$ are \xyz{x10}-equivalent.  Then there exists an order isomorphism $h \colon K_0(C^*(E\times_1\ZZ) )\to K_0(C^*(F\times_1\ZZ))$ such that $h \circ \lt_* = \lt_* \circ h$ and such that for some  $n\in \NN$,
\[
h ( [p^E_0 ])\leq n [p^F_0 ] \qquad [p^F_0 ]\leq nh( [p^E_0 ]).
\]
 When $\xyz{x}=\xyz{1}$, i.e. when  $E$ and $F$ are \xyz{110}-equivalent, we may further assume that
\[
h ( [p^E_0 ])= [p^F_0 ].
\]
\end{corol}
\begin{proof}
The claims for $\xyz{x}=\xyz{1}$ follow directly from Theorem \ref{raecor}. When $\xyz{x}=\xyz{0}$ we employ \cite[Lemma 2.75]{dpw:cpc} which shows that
\[
(C^*(E)\rtimes_{\gamma^E} \TT)\otimes\K\simeq (C^*(E)\otimes\K)\rtimes_{\gamma^E \otimes\id}\TT.
\]
In fact, it follows from the proof given there that the implementing isomorphism $\psi$ makes
\[
\xymatrix{C^*(E\times _1\ZZ)\otimes\K\ar^-{\phi\otimes\id}[r]\ar[d]_-{\lt\otimes\id}&(C^*(E)\rtimes_{\gamma^E} \TT)\otimes\K\ar[d]^-{\widehat{\gamma^E}\otimes\id}&\ar[l]_-{\psi} (C^*(E)\otimes\K)\rtimes_{\gamma^E \otimes\id}\TT\ar[d]_-{\widehat{\gamma^E \otimes\id}}\\
C^*(E\times _1\ZZ)\otimes\K\ar^-{\phi\otimes\id}[r]&(C^*(E)\rtimes_{\gamma^E} \TT)\otimes\K&\ar[l]_-{\psi} (C^*(E)\otimes\K)\rtimes_{\gamma^E\otimes\id}\TT.}
\]
commute, so since the isomorphism implementing the \xyz{x10}-equivalence induces an isomorphism intertwining $\widehat{\gamma^E\otimes\id}$ and $\widehat{\gamma^F\otimes\id}$ between $(C^*(E)\otimes\K)\rtimes_{\gamma^E\otimes\id}\TT$ and $(C^*(F)\otimes\K)\rtimes_{\gamma^F\otimes\id}\TT$ on the right, we obtain an an isomorphism intertwining $\lt\otimes\id$ on the left, sending $p^E_0$ inside the ideal generated by $p_0^F$. This easily translates to the given $K$-theoretical conditions.
\end{proof}

 As we will detail in Section \ref{twosided} below, the invariant consisting of $K_0(C^*(E\times_1\ZZ) )$ and $\lt_*$, with the former considered as an ordered group, was introduced by Krieger in \cite{wk:dftmc} as an invariant for Williams' shift equivalence in the foundational case when $E$ is finite and has no sinks or sources. We follow Krieger in calling this data the \emph{dimension triple} for all graphs considered here, but to retain all the information in  Corollary \ref{xIOinvaK} we additionally define dimension \emph{quadruples} as follows.
 
 When $(G,G_+)$ is a partially ordered group, and $x\in G_+$ we say that the \emph{order ideal}\index{order ideal} $I(x)$ generated by $x$ is defined as
\[
I(x)=\{y\in G\mid \exists n\in\NN: 0\leq y\leq nx\}.
\]

\begin{defin}\index{dimension quadruple}\index{DQ@$\DQ(-)$}\index{Kp@$\KKKp$}\index{DQp@$\DQp(-)$}\index{Km@$\KKKm$}\label{KKKp}
When $E$ is a graph, the \emph{dimension triple} is defined as
\[
\DT(E)=(K_0(C^*(E\times_1\ZZ)),K_0(C^*(E\times_1\ZZ))_+,\lt_*)
\]\index{DT@$\DT(-)$}
Two kinds of \emph{dimension quadruples} of $E$ are defined as
\[
\DQp(E)=(K_0(C^*(E\times_1\ZZ)),K_0(C^*(E\times_1\ZZ))_+,\lt_*,[p_0^E])=(\DT(E),[p_0^E])
\]
and
\[
\DQ(E)=(K_0(C^*(E\times_1\ZZ)),K_0(C^*(E\times_1\ZZ))_+,\lt_*,I([p_0^E]))=(\DT(E),I([p_0^E])).
\]
\end{defin}

We use the obvious notions of isomorphism for dimension triples and quadruples; group isomorphisms between $K_0(C^*(E\times_1\ZZ))$ and $K_0(C^*(F\times_1\ZZ))$ that intertwine the $\lt_*$ maps and preserve the remaining data.

We are now ready to introduce the two Krieger moves \KKKp\ and \KKKm\ as follows:

\begin{defin}
We say that $E$ is \KKKp-equivalent to $F$ when
\[
\DQp(E)\simeq \DQp(F)
\]
and that $E$ is \KKKm-equivalent to $F$ when
\[
\DQ(E)\simeq \DQ(F).
\]
\end{defin}

Using this notation, Corollary \ref{xIOinvaK} becomes:

\begin{propo}\label{KKpremembers}
 $\xyzrel{010}\subseteq\langle\KKKm\rangle$,  $\xyzrel{110}\subseteq\langle\KKKp\rangle$.
\end{propo}



\section{Partial invariance}\label{hazratdisc}

Because of the arithmetic nature
of the \KKKp\ and \KKKm\ moves,
establishing invariance for them takes the
form of classification, showing that
$\DQ(-)$ is a complete invariant for \xyz{010}-isomorphism
and $\DQp(-)$ for \xyz{110}. But this in turn is
an open question which has been
studied in key cases
since the late 1990's and was formalized
by Roozbeh Hazrat in \cite{rh:ggclpa}.

Consequently, the invariance problem for \KKKp\
is exactly the Hazrat conjecture for
graded isomorphism, whereas the invariance
problem for \KKKm\ is a variation of
the Hazrat conjecture for graded Morita
equivalence.
We strongly believe that the conjectures
hold for graph $C^*$-algebras, and our work contains new
evidence for it, but for the time being, we must leave the invariance of the \KKKp\ and \KKKm\ moves
in a conjectural state.

We can, however, fully establish partial
invariance which shows, as least, that when the invariants are the same, the graphs are equivalent in the weaker \xyz{100} or \xyz{000} form, respectively.
We will do so by following an approach developed in \cite{parhhl:gkfkcga}, in which 
\[
\DT(E)\simeq \DT(F)\Longrightarrow (E,F)\in \xyzrel{000}
\]
was
shown for all  regular graphs. We shall see below in easy examples that we need the more refined invariant $\DQ(-)$ to have a chance of $\langle \KKKm\rangle=\xyzrel{010}$ to hold for non-regular graphs, but to obtain \xyz{000}-equivalence the $\DT(-)$ part is sufficient in general, and we work just like in  \cite{parhhl:gkfkcga} by interpolating between the dimension triples and stable isomorphism with the filtered, ordered $K$-theory $\mathrm{FK}_X^+ (C^*(E))$  which is a complete invariant for  \xyz{000}-equivalence by the main  classification result of \cite{segrerapws:ccuggs}, Theorem \ref{ERRSmain}. To allow for exact isomorphism (\xyz{100}-equivalence) to be inferred from $\DQp(-)$ we simply keep track of the class of the unit in $K$-theory, since \cite{segrerapws:ccuggs} provides the relevant classification result as well, as summarized in Theorem \ref{ERRSunital}. 

To obtain these results, we find it advantageous to interpolate further with
an invariant introduced by Bentmann and Meyer \cite{rbrm:mgmcek},
which naturally connects to the
data given in $\DT(-)$ and $\DQp(-)$,
and was established to be a complete invariant for stable isomorphism of a
large class of purely infinite $C^*$-algebras
with finitely many ideals. This overlaps
significantly with the classification
results in  \cite{segrerapws:ccuggs}, and extends
substantially beyond it, but to classify
the $C^*$-algebras $C^*(E)$ that are not
purely infinite, and to obtain exact
rather than stable classification, the
Bentmann-Meyer invariant must be augmented
in a natural way. There are no great surprises
in doing so, but since this may be of interest for future work, say for the open problem of whether $\mathrm{FK}^+_X(-)$ is also a complete invariant for stable isomorphism of general graph $C^*$-algebras with finitely many ideals, we will do so in the ensuing section deviating temporarily from our standing assumption that all graphs have finitely many vertices. 

We prove this by establishing the downwards arrows in
\[
\xymatrix{
(E,F)\in\xyzrel{010}\ar@{=>}[r]&\DQ(E)\simeq\DQ(F)\ar@{=>}[d]\\
&\DT(E)\simeq\DT(F)\ar@{=>}[d]\\
&\mathrm{XK}\delta^+(C^*(E))\simeq \mathrm{XK}\delta^+(C^*(F))\ar@{=>}[d]\\
&\mathrm{FK}^+_X(C^*(E))\simeq\mathrm{FK}^+_X (C^*(F))\ar@{<=>}[r]&(E,F)\in\xyzrel{000}
}
\]
and in
\[
\xymatrix{
(E,F)\in\xyzrel{110}\ar@{=>}[r]&\DQp(E)\simeq\DQp(F)\ar@{=>}[d]\\
&\mathrm{XK}\delta^{+,1}(C^*(E))\simeq \mathrm{XK}\delta^{+,1}(C^*(F))\ar@{=>}[d]\\
&\mathrm{FK}^{+,1}_X (C^*(E))\simeq\mathrm{FK}^{+,1}_X(C^*(F))\ar@{<=>}[r]&(E,F)\in\xyzrel{100}
}
\]
As summarized in Section \ref{classummary}, invoking the $\mathrm{XK}\delta$ and $\mathrm{FK}$ invariants also requires control of the finite topological spaces $\mathrm{Prim}_\gamma ( C^*(E))$ and $\mathrm{Prim}_\gamma ( C^*(F))$ which we will show are homeomorphic when $\DT(E)\simeq \DT(F)$.

The key results for this are  Theorem~\ref{thm:XKdelta-filteredkthy} and Theorem~\ref{thm:graded-kthy-XKdelta}.  Using \cite{segrerapws:ccuggs} classification, we get the desired conclusion:


\begin{propo}\label{Kpartialinva}
 $\langle\KKKp\rangle \subseteq \xyzrel{100}$ and  $\langle\KKKm\rangle\subseteq \xyzrel{000}$.
\end{propo}

\section{Comparing invariants}

In this section, we depart from our standing assumption that all graphs have finitely many vertices.  
We start by introducing the invariant $\mathrm{XK}\delta(C^*(E))$ defined by  Bentmann and Meyer in \cite{rbrm:mgmcek}. 

The invariant $\mathrm{XK}_*(A)$\index{XKs@$\mathrm{XK}_*(-)$} was introduced by Bentmann in  \cite[Definition~3.1]{rb:csipiga} (also see \cite[Definition~4.7]{rb:kxrrzic}) and it is defined as follows.  Let $A$ be a $C^*$-algebra over $X$ (cf. Section \ref{classummary}), and recall from there  that for all open sets $U$ and $V$ with $U \subseteq V$, we have an ideal embedding $\iota_U^V \colon A[U] \to A[V]$.  Then $\mathrm{XK}_*(A)$ denotes the invariant consisting of the collection of $\ZZ_2$-graded abelian groups $K_*(A[U_x])$ for each $x \in X$ (where $U_x$ is the smallest open neighborhood containing $x$) together with the collection of graded group homomorphisms $( \iota_{U_x}^{U_y} )_*$ induced on the $K_*$-groups whenever $U_x \subseteq U_y$.  

As when defining $\FKX(-)$, when  $C^*(E)$ is a graph $C^*$-algebra with finitely many gauge-invariant ideals we always work with  $X = \mathrm{Prim}_\gamma (C^*(E))$, and get that the distinguished ideals of  $C^*(E)$  are exactly the gauge-invariant ideals of $C^*(E)$.  Let $U$ be an open subset of $X$.  Since $C^*(E)[U]$ is a gauge-invariant ideal of $C^*(E)$, by  the lattice isomorphism from Section \ref{adpairs}, there exists a unique admissible pair, $(H_U,S_U)$, such that $C^*(E)[U] = I_{ (H_U, S_U)}$.  Note that $\gamma^E$ restricts to an action of the circle on $C^*(E)[U]$ as $C^*(E)[U]$ is a gauge-invariant ideal.  Consequently, we form the crossed product, $I_{(H_U, S_U)} \rtimes_{\gamma^E} \mathbb{T}$, which we canonically identify as an ideal of $C^*(E) \rtimes_{\gamma^E} \mathbb{T}$.  By setting
\[
(C^*(E)\rtimes_{\gamma^E} \TT)[U]  := I_{(H_U, S_U)} \rtimes_{\gamma^E} \mathbb{T}
\]
for all open subsets $U$ of $X$, $C^*(E) \rtimes_{\gamma^E} \mathbb{T}$ becomes a $C^*$-algebra over $X$.   By \cite[Corollary~5.14]{rbrm:mgmcek}, we get a dual Pimsner-Voiculescu exact sequence 
\begin{equation}\label{eq-obstruct-class}
\scalebox{.75}{
\xymatrix{ 0  \ar[r] & \mathrm{XK}_1 ( C^*(E) )  \ar[r]  & \mathrm{XK}_0 ( C^*(E) \rtimes_{\gamma^E} \mathbb{T} )  \ar[rr]^-{\mathrm{id} - [\widehat{\gamma^E}]^{-1} } &  &  \mathrm{XK}_0 ( C^*(E) \rtimes_{\gamma^E} \mathbb{T} )  \ar[r] & \mathrm{XK}_0 ( C^*(E) )  \ar[r] & 0.}
}
\end{equation}
The class of the exact sequence \eqref{eq-obstruct-class} in $\mathrm{Ext}^2( \mathrm{XK}_*(C^*(E)), \mathrm{XK}_{*+1}(C^*(E))$ is called the \emph{obstruction class}\index{obstruction class} and is denoted by $\delta(C^*(E))$.  The invariant $(\mathrm{XK}_*(C^*(E)), \delta(C^*(E)))$ is denoted by $\mathrm{XK}\delta(C^*(E))$.\index{XKdelta@$\mathrm{XK}\delta(-)$}  An isomorphism from $\mathrm{XK}\delta(C^*(E))$ to $\mathrm{XK}\delta(C^*(F))$ is an isomorphism $\alpha \colon \mathrm{XK}_*(C^*(E)) \to \mathrm{XK}_*(C^*(F))$ such that 
$$
[\delta(C^*(F))] \alpha = \alpha [\delta(C^*(E))]
$$
in $\mathrm{Ext}^2( \mathrm{XK}_*(C^*(E)), \mathrm{XK}_{*+1}(C^*(F)))$ or equivalently, $\alpha \colon \mathrm{XK}_*(C^*(E)) \to \mathrm{XK}_*(C^*(F))$ is an isomorphism such that there are homomorphisms $\eta$ and $\theta$ from $\mathrm{XK}_0 ( C^*(E) \rtimes_{\gamma^E} \mathbb{T} )$ to $\mathrm{XK}_0 ( C^*(F) \rtimes_{\gamma^E} \mathbb{T} )$ making the diagram 
$$
\scalebox{.85}{
\xymatrix{ 0  \ar[r] & \mathrm{XK}_1 ( C^*(E) )  \ar[r]  \ar[d]_{\alpha_1}& \mathrm{XK}_0 ( C^*(E) \rtimes_{\gamma^E} \mathbb{T} )  \ar[rr]^-{\mathrm{id} - [\widehat{\gamma^E}]^{-1} }  \ar[d]_{\eta} & &  \mathrm{XK}_0 ( C^*(E) \rtimes_{\gamma^E} \mathbb{T} )  \ar[r] \ar[d]_{\theta} & \mathrm{XK}_0 ( C^*(E) )  \ar[r] \ar[d]_{\alpha_0}& 0 \\
0  \ar[r] & \mathrm{XK}_1 ( C^*(F) )  \ar[r]  & \mathrm{XK}_0 ( C^*(F) \rtimes_{\gamma^F} \mathbb{T} )  \ar[rr]^-{\mathrm{id} - [\widehat{\gamma^F}]^{-1} } &  &  \mathrm{XK}_0 ( C^*(F) \rtimes_{\gamma^F} \mathbb{T} )  \ar[r] & \mathrm{XK}_0 ( C^*(F) )  \ar[r] & 0}
}
$$
commutative.  By \cite[Theorem~2.18]{rbrm:mgmcek} and the discussion in \cite[Section~5.3]{rbrm:mgmcek}, if $\alpha \colon \mathrm{XK}\delta(C^*(E)) \to \mathrm{XK}\delta(C^*(F))$ is an isomorphism, then there exists an invertible element of $\mathrm{KK}_X ( C^*(E) , C^*(F))$ that induces the isomorphism $\alpha \colon \mathrm{XK}_*(C^*(E)) \to \mathrm{XK}_*(C^*(F))$.

Since we are interested in $C^*$-algebras that may have subquotients that are finite $C^*$-algebras, we will also need the positive cone in $K$-theory.  For a $C^*$-algebra $A$ over $X$, we write $\mathrm{XK}^+(A)$ for the invariant $\mathrm{XK}(A)$ together with $K_0(A[U_x])_+$ for all $x \in X$.  If $B$ is another $C^*$-algebra over $X$, then an isomorphism from $\mathrm{XK}^+(A)$ to $\mathrm{XK}^+(B)$ will be an isomorphism $\alpha \colon \mathrm{XK}(A) \to \mathrm{XK}(B)$ such that 
\[
\alpha ( K_0(A[U_x])_+ ) = K_0(B[U_x])_+
\]
for all $x \in X$.  We write $\mathrm{XK}\delta^+(C^*(E))$ for the invariant $(\mathrm{XK}^+(C^*(E)), \delta(C^*(E)))$, and isomorphisms from $\mathrm{XK}\delta^+(C^*(E))$ to $\mathrm{XK}\delta^+(C^*(F))$ are defined in the obvious way.\index{XKdeltap@$\mathrm{XK}\delta^+(-)$}

Suppose $C^*(E)$ is unital.  We recall from \cite{searbtk:rfkccka} that we may still use $\mathrm{XK}\delta^+( C^*(E))$ to determine the isomorphism class of $C^*(E)$, even though the unit may fail to be present in any $C^*(E)[U_x]$.  By \cite[Lemma~8.3]{searbtk:rfkccka}, an isomorphism $\alpha$ from $\mathrm{XK}\delta^+(C^*(E))$ to $\mathrm{XK}\delta^+(C^*(F))$ induces a unique isomorphism $\alpha_0$ from $K_0( C^*(E))$ to $K_0(C^*(F))$.  The isomorphism $\alpha_0$ is the induced isomorphism from the following commutative diagram 
\[
\scalebox{.8}{
\xymatrix{ \displaystyle{\bigoplus_{\substack{ x, x' \in X \\ y \in \inf\{x, x'\} } } K_0( C^*(E)[U_y])} \ar[d]^-{\alpha} \ar[rr]^-{\abknotation{( \iota_{U_y}^{U_x} - \iota_{U_y}^{U_{x'}})_*}}  & & \displaystyle{\bigoplus_{ x \in X } K_0( C^*(E)[U_x])}  \ar[rr]^-{ (\iota_{U_x}^X)_*} \ar[d]^{\alpha} & &K_0(C^*(E)) \ar@{-->}[d]^{\alpha_0} \ar[r] & 0 \\
\displaystyle{\bigoplus_{\substack{ x, x' \in X \\ y \in \inf\{x, x'\} } } K_0( C^*(F)[U_y]) } \ar[rr]^-{ \abknotation{( \iota_{U_y}^{U_x} - \iota_{U_y}^{U_{x'}})_*}}  & & \displaystyle{\bigoplus_{ x \in X } K_0( C^*(F)[U_x])}  \ar[rr]^-{ (\iota_{U_x}^X)_*} & &K_0(C^*(F)) \ar[r] & 0 
}
}
\]
with exact rows ($\inf M$ denotes the set of infima of $M$).  Note that here and below, we follow \cite{searbtk:rfkccka} in using matrix notation for maps between direct sums of $K$-groups.
If the unique isomorphism $\alpha_0$ satisfies $\alpha_0 ( [1_{C^*(E)} ]) = [1_{C^*(F)}]$, then we say that $\alpha$ is an isomorphism from $\mathrm{XK}\delta^{+,1}( C^*(E))$ to $\mathrm{XK}\delta^{+,1} (C^*(F))$.\index{XKdeltapu@$\mathrm{XK}\delta^{+,1}(-)$}

We now connect the invariants $\mathrm{XK}\delta( C^*(E))$, $\mathrm{XK}\delta^+( C^*(E))$, $\mathrm{XK}\delta^{+,1}( C^*(E))$ with the (ordered) filtered $K$-theory $\mathrm{FK}_X(C^*(E))$, $\mathrm{FK}_X^+(C^*(E))$, and $\mathrm{FK}_X^{+,1} (C^*(E))$ defined in \cite[Section~3.3]{segrerapws:gcgcfg}.  One key relationship between $\mathrm{XK}(C^*(E))$ and $\mathrm{FK}_X (C^*(E))$ is that the $K$-groups and maps involved in the filtered $K$-theory includes $K_*( C^*(E)[U_x])$ for all $x \in X$ and the maps $(\iota_{U_x}^{U_y})_*$.  Consequently, if $\beta$ is an isomorphism from $\mathrm{FK}_X(C^*(E))$ to $\mathrm{FK}_X(C^*(F))$, then $\beta$ restricts to an isomorphism from $\mathrm{XK}_*(C^*(E))$ to $\mathrm{XK}_*(C^*(F))$.  The next lemma allows us to recover $K_0(C^*(E)[U])_+$ from $K_0(C^*(E)[U_x])_+$.  We note that \cite[Lemma~8.3]{searbtk:rfkccka} already implies that that the map $\abknotation{(\iota_k)_*}$  in Lemma~\ref{lem-order-lifting} is a surjective homomorphism but it is not clear from the proof given there that $\eta$ sends $\bigoplus_{ k = 1}^n K_0(I_k)_+$ onto $K_0(I)_+$.   Hence, we give an alternative proof to include the positive cone.

\begin{lemma}\label{lem-order-lifting}
Let $E$ be a graph ($|E^0|=\infty$ allowed) and let $I_1, I_2, \ldots, I_n$ be gauge-invariant ideals of $C^*(E)$.  Set $I = \overline{ \sum_{k=1}^n I_k}$ and let $\iota_k \colon I_k \to I$ be the inclusion of ideals.  Then the homomorphism $\abknotation{(\iota_k)_*}  \colon \bigoplus_{ k = 1}^n K_0(I_k) \to K_0( I)$ is a surjection which sends $\bigoplus_{ k = 1}^n K_0(I_k)_+$ onto $K_0( I)_+$.
\end{lemma}

\begin{proof}
First note that surjectivity of $\abknotation{(\iota_k)_*}$ will follow once we prove that $\abknotation{(\iota_k)_*}$ sends $\bigoplus_{ k = 1}^n K_0(I_k)_+$ onto $K_0( I)_+$.

We claim that it is enough to prove the statement for a row-finite graph with no sinks.  Let $F$ be the Drinen-Tomforde desingularization. By \cite[Theorem~2.11]{ddmt:cag}, there exists a $*$-isomorphism $\varphi \colon C^*(E)\otimes \Kk \to C^*(F) \otimes \Kk$.  Since the mapping $J \mapsto J \otimes \Kk$ is a lattice isomorphism from the lattice of ideals of $A$ to the lattice of ideals of $A \otimes \Kk$, there are ideals $J_1, J_2, \ldots,J_n$ of $C^*(F)$ such that $\varphi( I_k \otimes \Kk) = J_k \otimes \Kk$.  Since each $I_k$ is a gauge-invariant ideal of $C^*(E)$, by \cite[Lemma~3.3]{segrerapws:gcgcfg}, each $J_k$ is a gauge-invariant ideal of $C^*(F)$.  Let $J = \overline{ \sum_{ k = 1}^n J_k }$.  Since $I = \overline{ \sum_{k=1}^n I_k}$, we have $J \otimes \Kk = \overline{\sum_{ k = 1}^n J_k \otimes \Kk}$.   Note that the diagram
\[
\xymatrix{
\displaystyle{\bigoplus_{ k = 1}^n K_0(I_k) } \ar[rr]^-{ (\iota_k)_*  }  \ar[d]_{ \abknotation{(\id \otimes e_{11})_*} } & & K_0(I)  \ar[d]^{(\id\otimes e_{11})_*}\\
\displaystyle{\bigoplus_{ k = 1}^n K_0(I_k \otimes \Kk ) }\ar[rr]^-{ \abknotation{(\iota_k \otimes \id)_*}  } \ar[d]_{ \varphi_*}  & & K_0(I \otimes \Kk) \ar[d]^{\varphi_*} \\
\displaystyle{ \bigoplus_{ k = 1}^n K_0(J_k \otimes \Kk ) } \ar[rr]^-{ \abknotation{(\iota_k \otimes \id)_*}  }  & & K_0(J \otimes \Kk) \\
\displaystyle{ \bigoplus_{ k = 1}^n K_0(J_k) }\ar[rr]^-{ (\iota_k )_* } \ar[u]^{ \abknotation{(\id \otimes e_{11})_*} }& & K_0(J)   \ar[u]_{(\id\otimes e_{11})_*}\\
}
\]
is commutative, where $\id \otimes e_{11}$ is the embedding of $A$ into the $(1,1)$ corner  of $A \otimes \Kk$.  Since the vertical maps are order isomorphisms, if the statement of the lemma is true for $F$, $J_1, J_2, \ldots, J_n$, then the statement is also true for $E$, $I_1, I_2, \ldots, I_n$, proving the claim.

Assume that $E$ is a row-finite graph with no sinks.  Since each of the $I_k$ is a gauge-invariant ideal, there exists a hereditary and saturated subset $H_k$ of $E^0$ such that $I_{H_k} = I_{ (H_k, \emptyset)} = I_k$.  Since $I = \overline{ \sum_{ k = 1}^n I_k}$ is also a gauge-invariant ideal, there exists a hereditary and saturated subset $H$ of $E^0$ such that $I_H = I$.  Let $\mathcal{H}$ be the smallest hereditary and saturated subset of $E^0$ containing $\bigcup_{ k = 1}^n H_k$.  Note that $\mathcal{H}$ is the join of $H_1 \vee H_2 \vee \ldots \vee H_n$ in the lattice of hereditary and saturated subset of $E^0$.  Thus, $I_\mathcal{H} = \overline{ \sum_{ k = 1}^n I_{H_k} }$ due to the lattice isomorphism given in Section \ref{adpairs}.  Consequently, \[I_{\mathcal{H}}  = \overline{ \sum_{ k = 1}^n I_{H_k} } = \overline{ \sum_{ k = 1}^n I_{k} }= I = I_H\] which implies $\mathcal{H} = H$.  

We now show that every element in $K_0(I)_+$ lifts to an element in $\bigoplus_{k=1}^n K_0(I_k)_+$ via the homomorphism $\abknotation{(\iota_k)_*} $.  We first show that we may reduce this lifting problem to lifting the elements $[p_v]$ for all $v \in H$.  Let $x \in K_0(I)_+ = K_0(I_H)_+$.  Since $E$ is a row-finite graph with no sinks, by \cite[Theorem~4.1]{tbdpirws:crg}, the graph $C^*$-algebra, $C^*(E_H)$, embeds into a full corner of $I_H$, where $E_H = ( H, s^{-1}(H), s, r)$ via $p_{v} \mapsto p_v, s_e \mapsto s_e$.  By \cite[Theorem~7.1]{pamamep:nkga}, $x = \sum_{ i =1}^n m_i [ p_{v_i} ]$, where $m_i \geq 0$ and $v_i \in H$.  Thus, to show that $x$ lifts to an element in $\bigoplus_{k=1}^n K_0(I_k)_+$, it is enough to show that for all $v \in H$, $[p_v]$ lifts to an element in $\bigoplus_{k=1}^n K_0(I_k)_+$.  

Let $v \in H$.  Since $\bigcup_{k=1}^n  H_k$ is a hereditary subset of $E^0$, by \cite[Remark~3.1]{bhrs:iccig}, $H = \mathcal{H}$ is constructed as the union of the sequence $\sum_m \left(\bigcup_{k = 1}^n H_k\right)$ of subsets of $E^0$ defined inductively by $\sum _0 \left( \bigcup_{k = 1}^n H_k\right) :=  \bigcup_{k = 1}^m H_k$ and $\sum _m\left( \bigcup_{k = 1}^n H_k\right)$ is the union of $\sum_{m-1} \left( \bigcup_{k = 1}^n H_k  \right)$ with the set of all regular vertices $w$ for which all edges emitting from $w$ have range in $\sum_{m-1} \left( \bigcup_{k = 1}^n H_k  \right)$.  We will inductively show that for all $m \in \ZZ_{\geq 0}$, if $v \in \sum _m\left( \bigcup_{k = 1}^n H_k\right)$, then there exists $y \in \bigoplus_{k=1}^n K_0(I_k)_+$ such that $\abknotation{(\iota_k)_*}$ sends $y$ to $[p_v]$.  Suppose $v \in \bigcup_{ k = 1}^n H_k$.  Then $v \in H_k$ for some $k$ which implies $y=( 0, \ldots, 0, [p_v], 0, \ldots , 0)$ is an element of $\bigoplus_{k=1}^n K_0(I_k)_+$ such that $\abknotation{(\iota_k)_*}$ sends $y$ to $[p_v]$.  Assume that for all $v \in H$, if $v \in \sum _m\left( \bigcup_{k = 1}^n H_k\right)$, then there exists $y \in \bigoplus_{k=1}^n K_0(I_k)_+$ such that $\abknotation{(\iota_k)_*}$ sends $y$ to $[p_v]$.   Assume $v \in \sum _{m+1}\left( \bigcup_{k = 1}^n H_k\right)$.  If $v \in \sum _{m}\left( \bigcup_{k = 1}^n H_k\right)$, then by the inductive hypothesis there exists $y \in \bigoplus_{k=1}^n K_0(I_k)_+$ such that $\abknotation{(\iota_k)_*}$ sends $y$ to $[p_v]$.  Assume $v \notin  \sum _{m}\left( \bigcup_{k = 1}^n H_k\right)$.  Then $v$ is a regular vertex such that $r(s^{-1}(v)) \subseteq  \sum _{m}\left( \bigcup_{k = 1}^n H_k \right)$.  Note that 
\[
[p_v] = \left[ \sum_{ s(e)=v } s_e s_e^* \right] = \sum_{ s(e)=v } [ s_e s_e^* ] =  \sum_{ s(e)=v } [ p_{r(e)} ].
\]
Since $r(e) \in \sum _{m}\left( \bigcup_{k = 1}^n H_k \right)$ for all $e$ with $s(e)=v$, by the inductive hypothesis, for all $e$ with $s(e)=v$, there exists $y_e \in \bigoplus_{ k = 1}^n K_0(I_k)_+$ such that $\abknotation{(\iota_k)_*}$ sends $y_e$ to $[p_{r(e)}]$.  Consequently, $\sum_{s(e) = v } y_e$ is an element of  $\bigoplus_{ k = 1}^n K_0(I_k)_+$ such that $\abknotation{(\iota_k)_*}$ sends $\sum_{s(e) =v } y_e$ to $[p_v]$.   Thus, concluding the proof of the claim for all $m \in \ZZ_{\geq 0}$, if $v \in \sum _m\left( \bigcup_{k = 1}^n H_k\right)$, there exists $y$ in $\bigoplus_{ k = 1}^n K_0(I_k)_+$  such that $\abknotation{(\iota_k)_*}$ sends $y$ to $[p_{v}]$ .  Since $H$ is equal to the union $\bigcup_{m =0}^\infty \sum_m \left( \bigcup_{k = 1}^n H_k\right)$, we have for all $v \in H$, there exists $y$ in $\bigoplus_{ k = 1}^n K_0(I_k)_+$ such that $\abknotation{(\iota_k)_*}$ sends $y$ to $[p_v]$.  This concludes the proof that $\abknotation{(\iota_k)_*}$ sends $\bigoplus_{ k = 1}^n K_0(I_k)_+$ onto $K_0(I)_+$.
\end{proof}

\begin{theor}\label{thm:XKdelta-filteredkthy}
Let $E$ and $F$ be graphs ($|E^0|+|F^0|=\infty$ allowed) such that $X =  \mathrm{Prim}_\gamma ( C^*(E)) \cong \mathrm{Prim}_\gamma ( C^*(F))$ and $X$ is finite.
\begin{enumerate}[(1)]
\item If $\alpha \colon \mathrm{XK}\delta ( C^*(E) ) \to \mathrm{XK}\delta (C^*(F))$ is an isomorphism, then there exists an isomorphism $\overline{\alpha} \colon \mathrm{FK}_X ( C^*(E)) \to \mathrm{FK}_X ( C^*(F))$ inducing $\alpha$.

\item If  $\alpha \colon \mathrm{XK}\delta^+ ( C^*(E) ) \to \mathrm{XK}\delta^+ (C^*(F))$ is an isomorphism, then there exists an isomorphism $\overline{\alpha} \colon \mathrm{FK}^+_X(C^*(E)) \to \mathrm{FK}^+_X(C^*(F))$ inducing $\alpha$.
\end{enumerate}
When  $|E^0|+|F^0|<\infty$, the finiteness condition on $X$ is automatic, and we further have 
\begin{enumerate}[(1)]\addtocounter{enumi}{2}
\item   If  $\alpha \colon \mathrm{XK}\delta^{+,1} ( C^*(E) ) \to \mathrm{XK}\delta^{+,1} (C^*(F))$ is an isomorphism, then there exists an isomorphism $\overline{\alpha} \colon \mathrm{FK}^{+,1}_X (C^*(E)) \to \mathrm{FK}^{+,1}_X (C^*(F))$ inducing $\alpha$.
\end{enumerate}
\end{theor}

\begin{proof}
For (1), assume $\alpha \colon \mathrm{XK}\delta ( C^*(E) ) \to \mathrm{XK}\delta (C^*(F))$ is an isomorphism.  By \cite[Theorem~2.18]{rbrm:mgmcek}, there exists an invertible element $\widetilde{\alpha} \in \mathrm{KK}_X(C^*(E), C^*(F))$ that induces $\alpha$.  As $\widetilde{\alpha}$ is an invertible element of $\mathrm{KK}_X(C^*(E), C^*(F))$, $\widetilde{\alpha}$ also induces an isomorphism $\overline{\alpha} \colon \mathrm{FK}_X(C^*(E)) \to \mathrm{FK}_X(C^*(F))$.  Since $\alpha$ and $\overline{\alpha}$ are induced by $\widetilde{\alpha}$, we have $\alpha = \overline{\alpha} \vert_{\mathrm{XK}(C^*(E))}$.   

We now prove (2).  Assume $\alpha \colon \mathrm{XK}\delta^+ ( C^*(E) ) \to \mathrm{XK}\delta^+ ( C^*(F) )$.  By Part~(1), there exists an isomorphism $\overline{\alpha} \colon \mathrm{FK}_X(C^*(E)) \to \mathrm{FK}_X(C^*(F))$ that induces $\alpha$.  To prove that $\overline{\alpha}$ is an isomorphism from $\mathrm{FK}^+_X(C^*(E))$ to $\mathrm{FK}^+_X(C^*(F))$ we must show that for all open subsets $U$ and $V$ of $X$ with $U \subseteq V$, $\overline{\alpha}$ is an order isomorphism from $K_0( C^*(E)[V\setminus U])$ to $K_0( C^*(F)[V\setminus U])$.  Since $\overline{\alpha} \vert_{\mathrm{XK}(C^*(E))} = \alpha$, we have that $\overline{\alpha}$ is an order isomorphism from $K_0( C^*(E)[U_x])$ to $K_0( C^*(F)[U_x])$ for all $x \in X$.  Using this fact together with Lemma~\ref{lem-order-lifting}, we get that $\overline{\alpha}$ is an order isomorphism from $K_0(C^*(E)[V])$ to $K_0(C^*(F)[V])$ for all open subsets $V$ of $X$.  Let $U$ and $V$ be open subsets of $X$ such that $\emptyset \subsetneq U \subseteq V$.  Using the description of a gauge-invariant ideal given in \cite[Theorem~5.1]{ermt:iga} and the description of the quotient of a graph $C^*$-algebra by a gauge-invariant ideal given in \cite[Corollary~3.5]{bhrs:iccig} as graph $C^*$-algebras, and using the fact that the positive cone of a graph $C^*$-algebra is generated by vertex projections and gap projections (see \cite[Corollary~3.5]{dhmlmmer:nklpa}), we get that the quotient map 
\[
\iota_{V}^{V\setminus U} \colon C^*(E)[V] \to C^*(E)[V \setminus U]
\]
induces a surjective homomorphism $(\iota_{V}^{V\setminus U})_* \colon K_0( C^*(E)[V]) \to K_0( C^*(E)[V \setminus U])$ such that $( \iota_{V}^{V\setminus U})_*$ sends $K_0(  C^*(E)[V]  )_+$ onto $K_0( C^*(E)[V\setminus U])_+$.  Similarly, for $C^*(F)$.  And since $\overline{\alpha}$ is an order isomorphism from $K_0(C^*(E)[V])$ to $K_0(C^*(F)[V])$ and since the diagram
\[
\xymatrix{ 
K_0( C^*(E)[V]) \ar[rr]^-{( \iota_{V}^{V\setminus U})_* } \ar[d]_{\overline{\alpha}} &  & K_0( C^*(E)[V\setminus V]) \ar[d]^{\overline{\alpha}} \\
K_0( C^*(F)[V]) \ar[rr]^-{( \iota_{V}^{V\setminus U} )_*} &  & K_0( C^*(F)[V \setminus U])
}
\]
is commutative, we get that $\overline{\alpha}\vert_{ K_0( C^*(E)[V \setminus U])}$ is an order isomorphism.  This concludes the proof that $\overline{\alpha}$ is an isomorphism from $\mathrm{FK}_X^+(C^*(E))$ to $\mathrm{FK}_X^+(C^*(F))$.

Lastly, we prove (3).  Suppose $\alpha \colon \mathrm{XK}\delta^{+,1} ( C^*(E) ) \to \mathrm{XK}\delta^{+,1} ( C^*(F) )$ is an isomorphism.  By Claim~(2), there exists an isomorphism $\overline{\alpha} \colon \mathrm{FK}_X^+(C^*(E)) \to \mathrm{FK}_X^+(C^*(F))$ inducing $\alpha$.  So, all we need to prove is $\overline{\alpha}( [1_{C^*(E)}]) = [1_{C^*(F)}]$.  Since $\overline{\alpha} \vert_{ \mathrm{XK}(C^*(E))} = \alpha$, the unique isomorphism $\alpha_0 \colon K_0(C^*(E)) \to K_0(C^*(F))$ induced by $\alpha$ is equal to $\overline{\alpha}\vert_{ K_0(C^*(E))}$.  Since $\alpha_0( [1_{C^*(E)}]) = [1_{C^*(F)}]$, we have $\overline{\alpha}( [1_{C^*(E)}]) = [1_{C^*(F)}]$.  Hence, $\overline{\alpha}$ is an isomorphism from $\mathrm{FK}_X^{+,1}(C^*(E))$ to $\mathrm{FK}_X^{+,1}(C^*(F))$.
\end{proof}

The rest of the section is devoted to connecting $\DT(C^*(E))$ with $\mathrm{XK}\delta^+(C^*(E))$.  We start by relating order ideals of the skew product to gauge invariant  ideals of the original.

\fxnote{We should align with acyclic section later}

\begin{lemma}\label{lem-adm-pair-skew-prod}
Let $E$ be a graph ($|E^0|=\infty$ allowed).  If $(H, S)$ is an admissible pair for $E$, then $( H \times \ZZ , S \times \ZZ )$ is an admissible pair for $E \times_1 \ZZ$. 
\end{lemma}

\begin{proof}
Let $(H, S)$ be an admissible pair for $E$.  Suppose $(e, n) \in E^{1} \times \ZZ$ with $( s_E(e) , n - 1 ) =  s_{E \times_1 \ZZ} ( e, n) \in H \times \ZZ$.  Then $s_E(e) \in H$ which implies that $r_E (e) \in H$.  Therefore, $r_{E \times_1 \ZZ }( e, n) = ( r_E (e), n ) \in H \times \ZZ$.  Suppose $(v, n ) \in E^0 \times \ZZ$ is a regular vertex such that $r_{E\times_1\ZZ } ( s_{ E \times_1\ZZ}^{-1} ( v, n ) ) \subseteq H \times \ZZ$.  Since $s_{ E \times_1\ZZ}^{-1} ( v, n ) = s_E^{-1} (v) \times \{ n+1\}$, $v$ is a regular vertex in $E^0$.  Moreover, $r_E( s_E^{-1} ( v) ) \times \{ n +1 \} = r_{E\times_1\ZZ } ( s_{ E \times_1\ZZ }^{-1} ( v, n ) )$.  And since $ r_{E\times_1\ZZ } ( s_{ E \times_1\ZZ }^{-1} ( v, n ) ) \subseteq H \times \ZZ$, $r_E( s_E^{-1} (v)) \subseteq H$.  Thus, $v \in H$ as $H$ is saturated.  Consequently $(v, n) \in H \times \ZZ$.  Thus, $H \times \ZZ$ is a hereditary and saturated subset of $E^0 \times \ZZ = (E \times_1 \ZZ)^0$.  

Let $( v, n  ) \in S \times \ZZ$.  Since $s_{ E \times_1 \ZZ}^{-1} ( v, n) = s_E^{-1} ( v ) \times \{ n +1 \}$, $| s_{E\times_1\ZZ}^{-1} ( v, n) | = | s_E^{-1} (v) | = \infty$.  Since 
\begin{align*}
&s_{E\times_1\ZZ}^{-1} ( v, n ) \cap r_{E \times_1 \ZZ}^{-1} ( (E^0 \times \ZZ) \setminus ( H \times \ZZ ) ) \\
	&\quad = ( s_E^{-1} ( v) \times \{ n+1\} ) \cap ( r_{E}^{-1} ( E^0 \setminus H) \times \ZZ ) \\
	&\quad = (s_{E}^{-1} ( v ) \cap r_{E}^{-1} ( E^0 \setminus H ) ) \times \{ n +1\},
\end{align*}
$|s_{E\times_1\ZZ}^{-1} ( v, n ) \cap r_{E \times_1 \ZZ}^{-1} ( (E^0 \times \ZZ) \setminus ( H \times \ZZ ) )  | = | (s_{E}^{-1} ( v ) \cap r_{E}^{-1} ( E^0 \setminus H ) ) |$ which implies that $s_{E\times_1\ZZ}^{-1} ( v, n ) \cap r_{E \times_1 \ZZ}^{-1} ( (E^0 \times \ZZ) \setminus ( H \times \ZZ ) )$ is a nonempty finite subset of $E^0 \times \ZZ$.  Hence, $(v, n) \in B_{ H \times \ZZ }$.  We have just shown that $S \times \ZZ \subseteq B_{ H \times \ZZ }$.  Consequently, $( H \times \ZZ , S \times \ZZ )$ is an admissible pair for $E\times_1\ZZ$.
\end{proof}

Let $A$ be an AF-algebra and let $I$ be an ideal of $A$.  Since an AF-algebra has cancellation of projections, the natural map from $K_0(I)$ to $K_0(A)$ is an injection.  Moreover, with this identification, $K_0(I)$ is an order ideal of $K_0(A)$.  In fact, by Elliott (\cite[Proposition IV.5.1]{krd:ce}) the map $I \mapsto K_0(I)$ is a lattice isomorphism from the lattice of ideals of $A$ to the lattice of order ideals of $K_0(A)$.  We will use this fact throughout the proof of the next result.

\begin{theor}\label{thm:admiss-ordideal}
Let $E$ be a graph  ($|E^0|=\infty$ allowed). Then
\begin{align*}
\lambda \colon \left\{ \text{Admissible pairs in $E$ } \right\} &\to  \left\{  \begin{minipage}{.45\textwidth} \begin{center} Order ideals in $K_0 ( C^*( E \times_1 \ZZ ) )$  that are $\lt_*$-invariant.\end{center}\end{minipage}\right\} \\
\\
	(H, S) &\mapsto K_0 ( I_{ (H \times \ZZ , S \times \ZZ ) } )
\end{align*}
is a lattice isomorphism.  Hence, there is a lattice isomorphism, which we again denote by $\lambda$, from the lattice of gauge-invariant ideals of $C^*(E)$ to  
the order ideals in $K_0 ( C^*( E \times_1 \ZZ ) )$  that are $\lt_*$-invariant. 
\end{theor}

\begin{proof}
Since $I_{ ( H \times \ZZ , S \times \ZZ )  }$ is the ideal generated by $\{ p_{ ( v, n) } :  v \in H , n \in \ZZ \} \cup \{ p_{ ( v, n ) }^{ H \times \ZZ } : v \in S , n \in \ZZ \}$, where 
\[
p_{ (v, n) }^{ H \times \ZZ } = p_{(v, n)} - \sum_{ \substack{  s_{E\times_1 \ZZ}( e, \ell ) =( v, n) \\  r_{E \times_1 \ZZ}( e, \ell ) \notin H \times \ZZ } } s_{ ( e, \ell ) } s_{ ( e, \ell ) }^* =  p_{(v, n)}  - \sum_{ \substack{ s_E^{-1}(e)=    v  \\ r_E(e) \notin H } } s_{ (e, n+1)} s_{(e, n+1)}^*
\]
and since $\lt ( p_{ ( v, n) } ) = p_{ (v, n+1)}$ and $\lt ( p_{ (v, n)}^{ H \times \ZZ } ) = p_{ (v, n+1) }^{ H \times \ZZ }$, $I_{ ( H \times \ZZ , S \times \ZZ ) }$ is $\lt$-invariant.  Consequently, $K_0 ( I_{ ( H \times \ZZ , S \times \ZZ ) } )$ is $\lt_*$-invariant.

Suppose $K_0 ( I_{ (H \times \ZZ , S \times \ZZ )} ) = K_0 ( I_{ (H' \times \ZZ , S' \times \ZZ ) } )$.  Since $E \times_1 \ZZ$ is a graph with no cycles, \cite[Corollary~2.13]{ddmt:cag}, $C^*(E \times_1 \ZZ )$ is an AF-algebra.  Thus, $I_{( H \times \ZZ , S \times \ZZ ) } = I_{ ( H' \times \ZZ , S' \times \ZZ ) }$ since $K_0 ( I_{ (H \times \ZZ , S \times \ZZ )} ) = K_0 ( I_{ (H' \times \ZZ , S' \times \ZZ ) } )$.  Consequently, $H \times \ZZ = H' \times \ZZ$ and $S \times \ZZ = S' \times \ZZ$ which implies that $H = H'$ and $S = S'$.  Thus, $\lambda$ is injective.

Next, we show that $\lambda$ is surjective.  Let $T$ be an order ideal in $K_0 ( C^*( E \times_1 \ZZ ))$ that is $\lt_*$-invariant.  Since $C^*(E\times_1\ZZ)$ is an AF-algebra, there exists an ideal of $C^*(E \times_1\ZZ)$ such that $K_0(I)=T$.  Since $E \times_1\ZZ$ is a graph with no cycles, by \cite[Corollary~3.8]{bhrs:iccig}, every ideal of $C^*(E\times_1\ZZ)$ is gauge-invariant.  Hence, cf.~Section \ref{adpairs}, $I= I_{ ( H' , S' ) }$ for some admissible pair $(H', S')$ for $E \times_1 \ZZ$.  Let $H = \{ v \in E^0 :  \text{ $(v, n) \in H'$ for some $n \in \ZZ$} \}$ and $S = \{ v \in E^0 : \text{$(v, n) \in S'$ for some $n \in \ZZ$} \}$.  We claim that $H \times \ZZ = H'$, $S \times \ZZ = S'$, and $( H, S)$ is an admissible pair for $E$.

We first show that $H \times \ZZ = H'$.  It is clear from the definition of $H$ that $H' \subseteq H \times \ZZ$.  Let $(v, m) \in H \times \ZZ$.  Then there exists $n \in \ZZ$ such that $(v, n) \in H'$.  Choose $k \in \ZZ$ such that $m = n + k$.  Then 
\[
[ p_{ (v, m) } ] = \lt_*^k ( [ p_{(v, n) } ] ) \in T = K_0 ( I_{ (H',S')} ) 
\]
since $T$ is $\lt_*$-invariant and $p_{(v, n) } \in I_{ (H', S') }$.  Since $C^*( E \times_1 \ZZ )$ is an AF-algebra, we have that $p_{(v, m) } \in I_{ (H', S') }$.  Let $H_{(v, m)}$ be the smallest hereditary and saturated subset of $(E\times_1 \ZZ)^0$ that contains $(v, m)$.  Then $I_{ ( H_{ (v,m)} , \emptyset ) } \subseteq I_{ (H', S') }$ which implies that $H_{ (v, m)} \subseteq H'$.  Consequently, $(v,m) \in H'$.  We have just shown that $H \times \ZZ = H'$.

We now show that $S \times \ZZ = S'$.  It is clear from the definition of $S$ that $S ' \subseteq S \times \ZZ$.  Let $(v, m ) \in S \times \ZZ$.  Then there exists $n \in \ZZ$ such that $(v, n ) \in S'$.  Let $k \in \ZZ$ such that $m = n+k$.  Then 
\[
[ p_{(v, m)}^{H'} ] \ = \lt_*^k ( [p_{(v, n)}^{H'} ] ) \in T = K_0 ( I_{(H', S') })
\] 
since $T$ is $\lt_*$-invariant and $p_{(v,n)}^{H'} \in I_{(H',S')}$.  Since $C^*(E \times_1 \ZZ )$ is an AF-algebra, $p_{(v,m)}^{H'} \in I_{ (H',S')}$.  Since $|s_{ E \times_1 \ZZ}^{-1} ( v, m)| = |s_{E}^{-1} (v) \times \{ m + 1 \}|= |s_{E}^{-1} (v) \times \{ n + 1 \}| = |s_{ E \times_1 \ZZ}^{-1} ( v, n)|$ and 
\begin{align*}
\lefteqn{| s_{E\times_1\ZZ}^{-1} ( (v,m) ) \cap r_{E\times_1\ZZ}^{-1} ( (E^0 \times\ZZ) \setminus H') |} \\
			&\quad =  | ( s_{E}^{-1} (v) \times \{ m + 1 \} ) \cap   r_{E\times_1\ZZ}^{-1} ( (E^0 \times\ZZ) \setminus (H \times \ZZ ) ) | \\
				&\quad =  | ( s_{E}^{-1} (v) \times \{ m + 1 \} ) \cap  ( r_{E}^{-1} ( E^0 \setminus H ) \times \ZZ )  | \\
				&\quad = | ( s_E^{-1}(v) \cap  r_{E}^{-1} ( E^0 \setminus H ) ) \times \{ m+1 \} | \\
				&\quad = | ( s_E^{-1}(v) \cap  r_{E}^{-1} ( E^0 \setminus H ) ) \times \{ n+1 \} | \\
				&\quad =  | ( s_{E}^{-1} (v) \times \{ n + 1 \} ) \cap  ( r_{E}^{-1} ( E^0 \setminus H ) \times \ZZ ) | \\
				&\quad =  | ( s_{E}^{-1} (v) \times \{ n + 1 \} ) \cap   r_{E\times_1\ZZ}^{-1} ( (E^0 \times\ZZ) \setminus (H \times \ZZ ) ) | \\
				&\quad = | s_{E\times_1\ZZ}^{-1} ( (v,n) ) \cap r_{E\times_1\ZZ}^{-1} ( (E^0 \times\ZZ) \setminus H') |, 
\end{align*}
and since $(v,n) \in S'$, $| s_{ E \times_1 \ZZ}^{-1} ( v, m)  | = \infty$ and $0 < | s_{E\times_1\ZZ}^{-1} ( (v,m) ) \cap r_{E\times_1\ZZ}^{-1} ( (E^0 \times\ZZ) \setminus H') | <\infty$.  So, $(v, m) \in B_{H'}$ and $(v,m) \notin H'$.  Therefore, $( H ' , \{ (v,m) \} )$ is an admissible pair for $E \times_1 \ZZ$ and $I_{( H ' , \{ (v,m) \} )} \subseteq  I_{ (H',S')}$ since $p_{(v,m)}^{H'} \in I_{ (H',S')}$.  Hence, $( H ' , \{ (v,m) \} ) \leq (H',S')$ which implies that $(v,m) \in H' \cup S'$.  We can conclude that $(v,m) \in S'$ since $(v,m) \notin H'$.  This proves the claim that $S \times \ZZ = S'$.

Next we show that $(H, S)$ is an admissible pair for $E$.  Let $e \in E^1$ such that $s_E (e ) \in H$.  Then $s_{E \times_1 \ZZ } ( ( e, 1) )  = ( s_E ( e) , 0 ) \in H \times \ZZ = H'$.  Since $H'$ is a hereditary subset of $(E \times_1 \ZZ)^0$, $( r_E (e), 1) = r_{ E\times_1 \ZZ } ( e, 1) \in H' = H \times \ZZ$ which implies that $r_E (e) \in H$.  Consequently, $H$ is a hereditary subset of $E^0$.  Let $v \in E^0$ be a regular vertex such that $r_E ( s^{-1} ( v ) ) \subseteq H$.  Then $(v,0)$ is a regular vertex in $E \times \ZZ$ since $s_{E}^{-1} ( v) \times \{ 1\} = s_{ E \times_1 \ZZ }^{-1} ( (v, 0) )$.  Note that 
\[
r_{ E \times_1 \ZZ} ( s_{ E \times_1\ZZ}^{-1} ( v, 0) ) = r_{E \times_1\ZZ} ( s_E^{-1} ( v) \times \{1\} ) = r_{E}( s_E^{-1} ( v) ) \times \{ 1 \} \subseteq H \times \ZZ = H'.
\]
Saturation of $H'$ implies that $(v,0 ) \in H' = H \times \ZZ$ and hence, $v \in H$.  Consequently, $H$ is a hereditary and saturated subset of $E^0$.  Let $v \in S$.  Then $(v, 0 ) \in S'$.  By the computation in the previous paragraph, we have that $s_{ E \times_1 \ZZ}^{-1} ( v, 0) = s_{E}^{-1} (v) \times \{ 1 \}$ and $| s_{E\times_1\ZZ}^{-1} ( (v,0) ) \cap r_{E\times_1\ZZ}^{-1} ( (E^0 \times\ZZ) \setminus H') |  = | ( s_E^{-1}(v) \cap  r_{E}^{-1} ( E^0 \setminus H ) ) \times \{ 1 \} |$.  Therefore, $| s_E^{-1}(v) | = \infty$ and $0 < |s_E^{-1}(v) \cap  r_{E}^{-1} ( E^0 \setminus H )| <\infty$ which implies that $v \in B_H$.  Consequently, $S \subseteq B_H$ and therefore, $(H, S)$ is an admissible pair for $E$.  We have now proved the claim that $\lambda$ is surjective.  

To show $\lambda$ is a lattice isomorphism, we prove that $\lambda$ and its inverse are order preserving.   Let $(H, S )$ and $(H' ,S')$ be admissible pairs with $(H, S ) \leq ( H' , S')$.  Then $H \subseteq H'$ and $S \subseteq H' \cup S'$.  Therefore, $H \times \ZZ \subseteq H' \times \ZZ$ and $S \times \ZZ \subseteq ( ( H ' \cup S' )  \times \ZZ ) = ( H' \times \ZZ ) \cup ( S' \times \ZZ )$ which implies that $I_{ ( H \times \ZZ , S \times \ZZ ) } \subseteq I_{ ( H' \times \ZZ , S' \times \ZZ ) }$.  Consequently, $K_ 0 ( I_{ ( H \times \ZZ , S \times \ZZ ) } )  \subseteq K_0 (  I_{ ( H' \times \ZZ , S' \times \ZZ ) } )$.  Suppose $T$ and $T'$ are order ideals in $K_0 ( C^*( E \times_1 \ZZ ) )$ that are $\lt_*$-invariant and $T \subseteq T'$.  Let $(H, S)$ and $(H',S')$ be admissible pairs in $E$ such that $T = K_ 0 ( I_{ ( H \times \ZZ , S \times \ZZ ) } )$ and $T' = K_0 (  I_{ ( H' \times \ZZ , S' \times \ZZ ) } )$.  Since $T \subseteq T'$, $I_{ ( H \times \ZZ , S \times \ZZ ) } \subseteq I_{ ( H' \times \ZZ , S' \times \ZZ ) }$ (since $C^*(E\times_1\ZZ)$ is an AF-algebra).  Thus, $ ( H \times \ZZ , S \times \ZZ ) \leq   ( H' \times \ZZ , S' \times \ZZ )$.  Consequently, $H \times \ZZ \subseteq H' \times \ZZ$ and $S \times \ZZ \subseteq ( H ' \times \ZZ ) \cup ( S' \times \ZZ )$ which implies that $H \subseteq H'$ and $S \subseteq H' \cup S'$ since $( H '\cup S' ) \times \ZZ = ( H ' \times \ZZ ) \cup ( S' \times \ZZ )$.  We now can conclude that $\lambda$ is a lattice isomorphism.

The last part of the theorem is clear since $(H, S ) \mapsto I_{(H,S)}$ is a lattice isomorphism between the lattice of admissible pairs and gauge-invariant ideals in $C^*(E)$ as noted in Section \ref{adpairs}.
\end{proof}

Let $C^*(E)$ be a graph $C^*$-algebra with finitely many gauge-invariant ideals and let $X = \mathrm{Prim}_\gamma (C^*(E))$.  Using Lemma~\ref{lem-adm-pair-skew-prod}, $C^*(E \times_1\ZZ)$ becomes a $C^*$-algebra over $X$ as follows.  Let $U$ be an open subset of $X$.  Since $C^*(E)[U]$ is a gauge-invariant ideal of $C^*(E)$, cf.~Section \ref{adpairs}, there exists a unique admissible pair, $(H_U,S_U)$, such that $C^*(E)[U] = I_{ (H_, H_U)}$.  By Lemma~\ref{lem-adm-pair-skew-prod}, $(H \times \ZZ, S \times \ZZ)$ is an admissible pair in $E \times_1\ZZ$ which implies $I_{ (H \times \ZZ, S \times \ZZ ) }$ is an ideal of $C^*(E \times_1\ZZ)$.  Now, $C^*(E \times_1\ZZ)$ becomes a $C^*$-algebra over $X$ by setting
\[
C^*(E \times_1\ZZ)[U] := I_{ (H_U \times \ZZ, S_U \times \ZZ ) }
\]
for all open subsets $U$ of $X$.  Using this $X$-structure for $C^*(E\times_1\ZZ)$, we now show how to use the skew product graph $E \times_1 \ZZ$ to represent the obstruction class $\delta(C^*(E))$.

\begin{theor}\label{thm:dual-skew}
Let $E$ be a graph ($|E^0|=\infty$ allowed), and let $X=\mathrm{Prim}_\gamma(C^*(E))$.  The isomorphism $\phi \colon C^*( E \times_1 \ZZ ) \to C^*(E) \rtimes_{\gamma^E} \TT$ given by $\phi ( p_{ (v,n) } ) = f_n \otimes p_v$ and $\phi ( s_{( e, n) } ) = f_n \otimes s_e$ is an $X$-equivariant isomorphism with the property that $\hat{\gamma} \circ \phi = \phi \circ \lt$, where $\hat{\gamma} = (\widehat{\gamma^E})_1$ is the canonical generator for the dual $\ZZ$-action on $C^*(E) \rtimes_{\gamma^E} \mathbb{T}$.    

Consequently, $\phi$ induces a commutative diagram
\[
\scalebox{.75}{
\xymatrix{
0 \ar[r] & \mathrm{XK}_1 ( C^*(E) )  \ar[r] \ar@{=}[d] & \mathrm{XK}_0 ( C^*( E \times_1 \ZZ ) ) \ar[d]_{ [ \phi ] } \ar[rr]^-{\mathrm{id} - \lt^{-1}_* } & &  \mathrm{XK}_0 ( C^*( E \times_1 \ZZ ) )  \ar[d]_{ [ \phi ] } \ar[r] & \mathrm{XK}_0 ( C^*(E) ) \ar@{=}[d] \ar[r] & 0 \\
0  \ar[r] & \mathrm{XK}_1 ( C^*(E) )  \ar[r]  & \mathrm{XK}_0 ( C^*(E) \rtimes_{\gamma^E} \mathbb{T} )  \ar[rr]^-{\mathrm{id} - [\widehat{\gamma^E}]^{-1} } &  &  \mathrm{XK}_0 ( C^*(E) \rtimes_{\gamma^E} \mathbb{T} )  \ar[r] & \mathrm{XK}_0 ( C^*(E) )  \ar[r] & 0}
}
\]
where the bottom exact sequence is the dual Pimsner-Voiculescu exact sequence. 
\end{theor}

\begin{proof}
Let $U$ be an open subset of $X$.  Let $(H_U, S_U)$ be the unique admissible pair such that $C^*(E)[U] = I_{(H_U, S_U)}$.  Then $C^*(E\times_1\ZZ) [ U ] = I_{ (H_U \times \ZZ, S_U \times \ZZ)}$ and $(C^*(E) \rtimes_{\gamma^E} \mathbb{T})[U] =  I_{ (H_U, S_U) } \rtimes_{\gamma^E} \mathbb{T}$.  To show that the isomorphism $\phi$ is an $X$-equivariant isomorphism we must prove that $\phi (  I_{ (H_U \times \ZZ, S_U \times \ZZ)  } ) =  I_{ (H_U, S_U) } \rtimes_{\gamma^E} \mathbb{T}$.  First note that $I_{(H_U \times \ZZ,  S_U \times \ZZ)}$ is generated by 
$$
\{ p_{(v,m)} : (v,m) \in H_U \times \ZZ \} \cup \left\{ p_{(w, n)} - \sum_{ \substack{  s_{E\times_1 \ZZ}( e, \ell ) =( w, n) \\  r_{E \times_1 \ZZ}( e, \ell ) \notin H \times \ZZ } } s_{ ( e, \ell ) } s_{ ( e, \ell ) }^* : (w, n) \in S_U \times \ZZ \right\}
$$   
and $I_{ (H_U, S_U) } \rtimes_{\gamma^E} \mathbb{T}$ is generated by $\{ f_n \otimes a : a \in I_{ (H_U , S_U)} , n \in \ZZ \}$.  Let $(v, m) \in H_U \times \ZZ$ and $(w, n) \in S_U \times \ZZ$.  Then 
\[
\phi ( p_{(v,m)} ) = f_m \otimes p_v \in I_{ (H_U, S_U)} \rtimes_{\gamma^E} \mathbb{T}
\]
since $v \in H_U$.   Since for all $e \in E^1$ and for all $\zeta \in \TT$,
\begin{align*}
(f_{n+1} \otimes s_{e})(f_{n+1} \otimes s_{e})^* (\zeta) &= \int_\TT ( f_{n+1} \otimes s_e)(z) \gamma_z^E ( ( f_{n+1} \otimes s_e)^*( z^{-1} \zeta ) ) dz \\
			&= \int_\TT z^{n+1} s_e \gamma_z^E ( \gamma_{z^{-1}\zeta}^E ( (f_{n+1} \otimes s_e )( z\zeta^{-1})^* ) dz \\
				&= \int_\TT z^{n+1} s_e \gamma_\zeta^E (  z^{-(n+1)} \zeta^{n+1} s_e^* ) dz \\
				&= \int_\TT  \zeta^{n} s_e s_e^* dz \\
				&= ( f_{n} \otimes s_{e} s_e^* )(\zeta),
\end{align*}
we have
\begin{align*}
\phi( p_{(w,n)}^{H_U \times \ZZ} ) &= \phi( p_{(w, n)}) - \sum_{ \substack{  s_{E\times_1 \ZZ}( e, \ell ) =( w, n) \\  r_{E \times_1 \ZZ}( e, \ell ) \notin H \times \ZZ } } \phi(s_{ ( e, \ell ) } ) \phi(s_{ ( e, \ell ) }^*) \\
&=  \phi( p_{(w, n)}) - \sum_{ \substack{ s_E^{-1}(e)=    w  \\ r_E(e) \notin H } }\phi(s_{ ( e, n+1 ) } ) \phi(s_{ ( e, n+1 ) }^*) \\
&= f_n \otimes p_w  - \sum_{ \substack{ s_E^{-1}(e)=    w \\ r_E(e) \notin H } } (f_{n+1} \otimes s_e)(f_{n+1} \otimes s_e)^* \\
&= f_n \otimes p_w - \sum_{ \substack{ s_E^{-1}(e)=    w \\ r_E(e) \notin H } } (f_{n} \otimes s_es_e^*) \\
&= f_n \otimes \left( p_w - \sum_{ \substack{ s_E^{-1}(e)=    w  \\ r_E(e) \notin H } } s_e s_e^* \right).
\end{align*}
Consequently, $\phi( p_{(w,n)}^{H_U \times \ZZ} )$ is an element of $I_{ (H_U, S_U)} \rtimes_{\gamma^E} \mathbb{T}$ since $w \in S_U$. Thus, implies that $\phi ( I_{ (H_U ,S_U) } \subseteq I_{ (H_U, S_U)} \rtimes_{\gamma^E} \mathbb{T} =(C^*(E) \rtimes_{\gamma^E} \mathbb{T})[U]$.  Since $I_{(H_U, S_U)}$ is generated by $\{ p_v : v \in H_U \} \cup \{ p_w^{H_U} : w \in S_U \} $, $I_{ (H_U, S_U) } \rtimes_{\gamma^E} \mathbb{T}$ is generated by $\{ f_n \otimes p_v, f_n \otimes p_w^{H_U} : v \in H_U, w \in S_U \}$.  Hence, by the above computation, we see that 
$$
I_{ (H_U, S_U) } \rtimes_{\gamma^E} \mathbb{T} \subseteq \phi ( I_{(H_U \times \ZZ,  S_U \times \ZZ)} ).
$$
Hence, $\phi (  I_{ (H_U \times \ZZ, S_U \times \ZZ)  } ) =  I_{ (H_U, S_U) } \rtimes_{\gamma^E} \mathbb{T}$.  Therefore, $\varphi$ is an $X$-equivariant isomorphism.  The fact that $\hat{\gamma} \circ \phi = \phi \circ \lt$ follows from \cite[Lemma~3.1]{irws:ckigm}.  The last statement of the theorem follows from the fact that $\varphi$ is an $X$-equivariant isomorphism such that $\hat{\gamma} \circ \phi = \phi \circ \lt$.  
\end{proof}

Theorem~\ref{thm:dual-skew} implies that the obstruction class $\delta(C^*(E))$ given by \eqref{eq-obstruct-class} may be replaced by the class of the exact sequence 
\[
\scalebox{.8}{
\xymatrix{
0 \ar[r] & \mathrm{XK}_1 ( C^*(E) )  \ar[r] & \mathrm{XK}_0 ( C^*( E \times_1 \ZZ ) )  \ar[rr]^-{\mathrm{id} - \lt^{-1}_* } &  &  \mathrm{XK}_0 ( C^*( E \times_1 \ZZ ) )   \ar[r] & \mathrm{XK}_0 ( C^*(E) )\ar[r] & 0}
}
\]
in $\mathrm{Ext}^2( \mathrm{XK}_*(C^*(E)), \mathrm{XK}_{*+1}(C^*(E)))$.  This observation together with the discussion in \cite[Section~5.3]{rbrm:mgmcek}, we get the following result.

\begin{theor}\label{thm:skew-obstruct}
Let $E$ and $F$ be graphs ($|E^0|+|F^0|=\infty$ allowed) such that $X =  \mathrm{Prim}_\gamma ( C^*(E)) \cong \mathrm{Prim}_\gamma ( C^*(F))$ and $X$ is finite.  An isomorphism from $\mathrm{XK}\delta ( C^*(E) )$ to $\mathrm{XK}\delta (C^*(F))$ lifts to an invertible element in $\mathrm{KK}_X ( C^*(E), C^*(F))$.  Moreover the obstruction classes, $\delta(C^*(E))$ and $\delta(C^*(F))$, are represented by the exact sequences
\[
\scalebox{.8}{
\xymatrix{
0 \ar[r] & \mathrm{XK}_1 ( C^*(E) )  \ar[r]  & \mathrm{XK}_0 ( C^*( E \times_1 \ZZ ) ) \ar[rr]^-{\mathrm{id} - \lt_*^{-1} } &  & \mathrm{XK}_0 ( C^*( E \times_1 \ZZ ) ) \ar[r] & \mathrm{XK}_0 ( C^*(E) ) \ar[r] & 0 
}
}
\]
and
\[
\scalebox{.8}{
\xymatrix{
0 \ar[r] & \mathrm{XK}_1 ( C^*(F) )  \ar[r]  & \mathrm{XK}_0 ( C^*( F \times_1 \ZZ ) ) \ar[rr]^-{\mathrm{id} - \lt_*^{-1} } &   & \mathrm{XK}_0 ( C^*( F \times_1 \ZZ ) ) \ar[r] & \mathrm{XK}_0 ( C^*(F) ) \ar[r] & 0 
}
}
\]
\end{theor}

To connect $\DT(C^*(E))$ and $\mathrm{XK}\delta^+(C^*(E))$, we will also need to show that the surjective homomorphism from $\mathrm{XK}_0( C^*(E \times_1 \ZZ))$ to $\mathrm{XK}_0(C^*(E))$ sends the positive cone onto the positive cone.  This may be well-known but we were not able to find a reference to this result, thus we provide its proof here.

\begin{lemma}\label{lem:PVsq-positive}
Let $E$  be a graph ($|E^0|=\infty$ allowed) and set $X =  \mathrm{Prim}_\gamma ( C^*(E))$ with $X$ finite.  Then for all gauge-invariant ideals $I$ of $C^*(E)$, the homomorphism $\alpha_I : K_0 ( I \rtimes_{\gamma^E} \mathbb{T} ) \to K_0 (I)$ in the dual Pimsner-Voiculescu exact sequence takes $K_0 ( I \rtimes_{\gamma^E} \mathbb{T} )_+$ onto $K_0 (I)_+$.  Consequently, the induced homomorphism from $K_0 ( C^*( E \times_1 \ZZ ) [ U ] )$ to $K_0 ( C^*(E)[U])$ takes $K_0 ( C^*( E \times_1 \ZZ ) [ U ] )_+$ onto $K_0 (C^*(E)[U])_+$ for all open subsets $U$ of $X$ and sends $[ p_{ (v, n) } ]$ to $[p_v]$.
\end{lemma}

\begin{proof}
Let $I$ be a gauge-invariant ideal in $C^*(E)$.  Recall that $\alpha_I$ is the homomorphism induced by the $*$-homomorphism
\[
\xymatrix{ I \rtimes_{\gamma^E} \mathbb{T} \ar[r]^-\iota & ( I \rtimes_{\gamma^E} \mathbb{T}) \rtimes_{ \widehat{\gamma^E} } \ZZ \ar[r]^-{\Psi} & I \otimes \mathbb{K}( L^2 ( \mathbb{T} ) ) }.
\]
The isomorphism $\Psi$ is the isomorphism of the Takai Duality Theorem described in \cite[Section~7.1]{dpw:cpc}.  It will be important to know $\Psi$ so we describe the isomorphism here.  Let $\rho \colon \TT \to U( L^2(\TT))$ be the right-regular representation which induces an action of $\TT$ on $\Kk(L^2(\TT))$ via $\rho(\zeta)T\rho(\zeta^{-1})$ for all $\zeta \in \TT$.  Then there is an isomorphism $\Phi \colon (I \rtimes_{\gamma^E} \mathbb{T} ) \rtimes_{ \widehat{\gamma^E}} \mathbb{Z} \to C ( \mathbb{T} , I ) \rtimes_{ \rho \otimes \mathrm{id} } \mathbb{T}$ mapping $F \in C_c ( \ZZ \times \mathbb{T} , I )$ to the element in $C ( \mathbb{T} , C ( \mathbb{T}, I))$ given by 
\[
\Phi (F)(s,r) = \sum_{ m \in \ZZ } (\gamma_{r}^E)^{-1} ( F( m, s ) ) \overline{ s^{-1} r }^m
\]
with the identification that the dual group of $\mathbb{T}$ is $\ZZ$.  Next, \cite[Lemma~7.5]{dpw:cpc} gives an isomorphism $\phi \colon C( \mathbb{T} ) \rtimes_{\rho} \mathbb{T}  \to \mathbb{K} ( L^2 ( \mathbb{T} ))$ mapping $f \in C ( \mathbb{T} \times \mathbb{T} )$ to the element in $\mathbb{K} ( L^2( \mathbb{T} ))$ given by 
\[
\phi ( f)(h)(z) = \int_\mathbb{T} f( w, z) h( w^{-1} z) dw.
\]
Then $\Psi = (\phi \otimes \mathrm{id}) \circ \Phi$.

Since $K_0 (I)_+$ is generated by gauge-invariant projections in $I$, it is enough to show that for every gauge-invariant projection $p$ in $I$, there exists a projection $q$ in $C^*(E) \rtimes_{\gamma^E} \mathbb{T}$ such that $[ \Phi (q) ] = [p]$ in $K_0(I)$.  Let $p$ be a gauge-invariant projection in $I$.  Note that $q_n= f_n \otimes p \in C ( \mathbb{T} , I ) \subseteq I \rtimes_{\gamma^E} \mathbb{T}$ is a projection since
\begin{align*}
( f_n \otimes p )^2 (\zeta ) &= \int_\mathbb{T} ( f_n \otimes p )(z) \gamma_z^E ( ( f_n \otimes p)(z^{-1} \zeta) )dz = \int_\mathbb{T} z^n p \gamma_z^E ( z^{-n} \zeta^n p ) dz \\
				&= \int_\mathbb{T}  \zeta^n p dz = \zeta^n p = ( f_n \otimes p )(\zeta) \quad  \text{and} \\
(f_n \otimes p)^*(\zeta) &= \gamma_\zeta^E ( (f_n \otimes p)(\zeta^{-1})^* ) = \gamma_\zeta^E ( ( \zeta^{-n} p)^*) =\gamma_\zeta^E ( \zeta^n p)= \zeta^n p = (f_n \otimes p)(\zeta)
\end{align*}
for all $\zeta \in \mathbb{T}$.  Define $F_n \in C_c ( \ZZ \times \mathbb{T} )$ by $F_n ( m , z ) = \delta_{0, m} z^n$, where $\delta_{0,m}$ is the Kronecker delta function.  Note that 
\[
(F_n \otimes p)(m, z ) = \delta_{0,m} z^n p = \iota(q_n)(m,z).
\]
Hence, $F_n \otimes p = \iota (q_n)$ in $(I \rtimes_{\gamma^E} \mathbb{T} ) \rtimes_{ \widehat{\gamma^E}} \mathbb{Z}$.  Therefore, $\Phi ( \iota ( q_n ) )$ is the element in $C ( \mathbb{T} , C ( \mathbb{T} , I)) \subseteq C ( \mathbb{T} , I ) \rtimes_{ \rho \otimes \mathrm{id} } \mathbb{T}$ given by 
\begin{align*}
\Phi ( \iota ( q_n ) ) (s,r) &=  \Phi (F_n \otimes p )(s,r) = \sum_{ m \in \ZZ } (\gamma_{r}^E)^{-1} ( (F_n\otimes p )( m, s ) )\overline{ s^{-1} r }^m \\
				&= (\gamma_r^E)^{-1} ( s^n p  ) =  s^n p = (G_n \otimes p)(s,r )
\end{align*}
where $G_n$ is the element of $C ( \mathbb{T} \times \mathbb{T} )$ that sends $(s,r)$ to $s^n$.  Note that 
\[
\phi (G_n )(f_m)(r) = \int_\mathbb{T} s^n  s^{-m} r^m ds = \delta_{n, m} f_n (r)
\]  
which implies that $\phi( G_n) = e_{n,n}$, the projection onto the subspace generated by $f_n$.  Hence, 
\[
\Psi ( \iota ( q_{n} ) ) = p \otimes e_{n,n}
\]
which implies that $[ \Psi ( \iota (q_n) ) ] = [ p ] \in K_0 (I)$.

For the last part of the theorem, observe that the induced homomorphism from $K_0 ( C^*( E \times_1 \ZZ ) [ U ] )$ to $K_0 ( C^*(E)[U])$ is given by $\alpha_{ C^*(E)[U] } \circ [\phi]$.   Therefore, the homomorphism sends $K_0 ( C^*( E \times_1 \ZZ ) [ U ] )_+$ onto $K_0 ( C^*(E)[U])_+$ and 
\[
\alpha_{ C^*(E)[U] } \circ [\phi] ( p_{(v,n) } ) = \alpha_{ C^*(E)[U] }  ( [ f_n \otimes p_v ] ) = [p_v].\qedhere
\]
\end{proof}

We are now ready to prove our main result in this section.

\begin{theor}\label{thm:graded-kthy-XKdelta}
Let $E$ and $F$ be graphs ($|E^0|+|F^0|=\infty$ allowed).
\begin{enumerate}[(1)]
\item If 
$\DT(E)\simeq \DT(F)$, 
then $X =  \mathrm{Prim}_\gamma ( C^*(E)) \cong \mathrm{Prim}_\gamma ( C^*(F))$. If further $X$ is finite, we have that $\mathrm{XK}\delta^+ ( C^*(E)) \cong \mathrm{XK} \delta^+ ( C^*(F))$.
\end{enumerate}
When  $|E^0|+|F^0|<\infty$, the finiteness condition on $X$ is automatic, and  we further have 
\begin{enumerate}[(1)]\addtocounter{enumi}{1}
\item  If $\DQp(E)\simeq \DQp(F)$,  then $\mathrm{XK}\delta^{+,1} ( C^*(E)) \cong \mathrm{XK} \delta^{+,1} (C^*(F))$.
\end{enumerate}
\end{theor}

\begin{proof}
Suppose $\alpha$ is an isomorphism from $\DT(E)$ to $\DT(F)$.  Let $\lambda_E$ and $\lambda_F$ be the lattice isomorphism provided by Theorem~\ref{thm:admiss-ordideal} for $E$ and $F$ respectively.  Then $\beta = \lambda_F^{-1} \circ \alpha \circ \lambda_E$ is a lattice isomorphism between the gauge-invariant ideals of $C^*(E)$ and the gauge-invariant ideals of $C^*(F)$, where the lattice isomorphism that sends $I_{(H, S)}$ to $\lambda_F^{-1} ( \alpha(K_0( I_{ ( H \times \ZZ, S \times \ZZ) } ) ))$.  Consequently, the lattice isomorphism $\beta$ induces a homeomorphism $\overline{\beta}$ from $\mathrm{Prim}_{\gamma}( C^*(E))$ to $\mathrm{Prim}_{\gamma}( C^*(F))$, i.e., $\beta ( C^*(E)[U] ) = C^*(F)[ \overline{\beta}(U)]$.  Consequently, $C^*(F)$ becomes a $C^*$-algebra over $X = \mathrm{Prim}_\gamma ( C^*(E))$ by setting 
\[
C^*(F)[U] := C^*(F)[\overline{\beta}(U)] = \lambda_F^{-1} ( \alpha(K_0( I_{ ( H \times \ZZ, S \times \ZZ) } ) ))
\]   

Let $U$ be an open subset $X$, let $(H_U, S_U)$ be the unique admissible pair for $E$ such that $I_{ (H_U,S_U)} = C^*(E)[U]$.  Recall that $C^*(E\times_1\ZZ) [ U ] = I_{ (H_U \times \ZZ , S_U \times \ZZ) }$.  Let $(\overline{H}_U, \overline{S}_U)$ be the unique admissible pair for $F$ such that $I_{(\overline{H}_U, \overline{S}_U)} = C^*(F)[U]$, so $C^*(F \times_1\ZZ)[U] = I_{ ( \overline{H}_U \times \ZZ, \overline{S}_U \times \ZZ) }$.  Since $C^*(F)[U] = \lambda_F^{-1} ( \alpha(K_0( I_{ ( H \times \ZZ, S \times \ZZ) } ) ))$, 
\[
K_0( I_{(\overline{H}_U \times \ZZ , \overline{S}_U \times \ZZ) } ) =\lambda_F (I_{(\overline{H}_U, \overline{S}_U)} ) = \alpha(K_0( I_{ ( H \times \ZZ, S \times \ZZ) } ))
\]
 Consequently, $\alpha$ is an order isomorphism from $K_0(C^*(E\times_1\ZZ) [ U ] ) = K_0( I_{ (H_U \times \ZZ , S_U \times \ZZ) })$ to $K_0( C^*(F \times_1\ZZ)[U] )= K_0(I_{ ( \overline{H}_U \times \ZZ, \overline{S}_U \times \ZZ) })$.  And since $\lt_* \circ \alpha = \alpha \circ \lt_*$, for each open subset $U$ of $X$, $\alpha$ induces an order isomorphism from $K_0( C^*(E \times_1\ZZ)[U])$ to $K_0( C^*(F \times_1 \ZZ)[U])$ (which we denote by $\alpha_U$) such that the diagram
\[
\xymatrix{
 K_0 ( C^*( E \times_1 \ZZ )[U] ) \ar[r]^-{\mathrm{id} - \lt_*^{-1} } \ar[d]_{\alpha_U} &  K_0 ( C^*( E \times_1 \ZZ )[U] )\ar[d]^{\alpha_U} \\
K_0 ( C^*( F \times_1 \ZZ )[U] ) \ar[r]^-{\mathrm{id} - \lt_*^{-1} } &   K_0 ( C^*( F \times_1 \ZZ )[U] ) 
}
\]
is commutative.  Consequently, for each open subset $U$ of $X$, there exists a homomorphism $\theta_U \colon K_*(C^*(E)[U]) \to K_*(C^*(F))$ such that the diagram 
\[
\scalebox{.8}{
\xymatrix{
0 \ar[r] & K_1 ( C^*(E)[U] )  \ar[r]  \ar[d]^{\theta_U} & K_0 ( C^*( E \times_1 \ZZ )[U] ) \ar[rr]^-{\mathrm{id} - \lt_*^{-1} } \ar[d]^{\alpha_U} & &   K_0 ( C^*( E \times_1 \ZZ )[U] ) \ar[r] \ar[d]^{\alpha} & K_0 ( C^*(E)[U] ) \ar[r] \ar[d]^{\theta_U}  & 0 \\
0 \ar[r] &K_1 ( C^*(F)[U] )  \ar[r]  & K_0 ( C^*( F \times_1 \ZZ )[U] ) \ar[rr]^-{\mathrm{id} - \lt_*^{-1} } &   & K_0 ( C^*( F \times_1 \ZZ )[U] ) \ar[r] & K_0 ( C^*(F)[U] ) \ar[r] & 0 
}
}
\]
is commutative as the rows are exact sequences by \cite[Corollary~5.14]{rbrm:mgmcek} and Theorem~\ref{thm:dual-skew}.  A diagram chase shows that $\theta_U$ is an isomorphism.  By Lemma~\ref{lem:PVsq-positive}, $\theta_U$ is an order isomorphism.  A computation shows that $(\theta_U)$ intertwines the homomorphisms $K_*( \iota_U^V)$ for all open subsets $U$ and $V$ of $X$ such that $U \subseteq V$.  Then $\beta := ( \theta_{U_x})_{x \in X}$ is an isomorphism from $\mathrm{XK}^+(C^*(E))$ to $\mathrm{XK}^+(C^*(F))$
such that the diagram 
\[
\scalebox{.8}{
\xymatrix{
0 \ar[r] & \mathrm{XK}_1 ( C^*(E) )  \ar[r]  \ar[d]^{\beta} & \mathrm{XK}_0 ( C^*( E \times_1 \ZZ ) ) \ar[rr]^-{\mathrm{id} - \lt_*^{-1} } \ar[d]^{\alpha} &  &  \mathrm{XK}_0 ( C^*( E \times_1 \ZZ ) ) \ar[r] \ar[d]^{\alpha} & \mathrm{XK}_0 ( C^*(E) ) \ar[r] \ar[d]^{\beta}  & 0 \\
0 \ar[r] & \mathrm{XK}_1 ( C^*(F) )  \ar[r]  & \mathrm{XK}_0 ( C^*( F \times_1 \ZZ ) ) \ar[rr]^-{\mathrm{id} - \lt_*^{-1} } & &   \mathrm{XK}_0 ( C^*( F \times_1 \ZZ ) ) \ar[r] & \mathrm{XK}_0 ( C^*(F) ) \ar[r] & 0 
}
}
\]
is commutative.  By Theorem~\ref{thm:skew-obstruct}, we get that $\beta$ is an isomorphism from $\mathrm{XK}\delta^+(C^*(E))$ to $\mathrm{XK}\delta^+(C^*(F))$.  This concludes the proof of (1). 

We now prove (2).  Assume $\alpha$ is an isomorphism from $\DQp(E)$ to $\DQp(F)$.  Let $\{ \theta_U : U \text{ open subsets of } X \}$ and $\beta$  be the isomorphisms constructed in the proof of (1).  By construction, the diagram
\[
\scalebox{.75}{
\xymatrix{
\displaystyle{\bigoplus_{x \in X }} K_0 ( C^*(E \times_1\ZZ)[U_x]) \ar@{->>}[dddd]_{\abknotation{(\iota_{U_x}^X)_*}} \ar@{->>}[rrr] \ar[rd]^-{\oplus \alpha_{U_x} }& & & \displaystyle{\bigoplus_{x \in X }} K_0 ( C^*(E)[U_x])    \ar@{->>}[dddd]^-{\abknotation{(\iota_{U_x}^X)_*}} \ar[ld]_-{\oplus \theta_{U_x} }  \\
 &  \displaystyle{\bigoplus_{x \in X }} K_0 ( C^*(F \times_1\ZZ)[U_x])  \ar[r]  \ar@{->>}[dd]_-{\abknotation{(\iota_{U_x}^X)_*}}& \displaystyle{\bigoplus_{x \in X }} K_0 ( C^*(F)[U_x]) \ar@{->>}[dd]^-{\abknotation{(\iota_{U_x}^X)_*}}  \\
 \\
& K_0 ( C^*(F \times_1 \ZZ))    \ar@{->>}[r]    &   K_0(C^*(F)) \\
K_0 ( C^*(E \times_1 \ZZ))    \ar@{->>}[rrr]  \ar[ru]_-{\alpha}  &  & &  K_0(C^*(E))  \ar[lu]^-{\theta_X}
}
}
\]
is commutative.  Commutativity of the right square and the fact that the isomorphism $\beta \colon \mathrm{XK}^+(C^*(E)) \to \mathrm{XK}^+(C^*(F))$ is defined as $\beta=(\theta_{U_x})_{x \in X}$ implies the unique isomorphism from $K_0(C^*(E))$ to $K_0(C^*(F))$ that is induced by $\beta$ is $\theta_X$.  Let $\pi_E$ be the homomorphism from $K_0(C^*(E \times_1\ZZ))$ to $K_0(C^*(E))$ and let $\pi_F$ be the homomorphism from $K_0(C^*(F \times_1\ZZ))$ to $K_0(C^*(F))$ in the above diagram.  By Lemma~\ref{lem:PVsq-positive},  
\[
\pi_E ([p_0^E]) = \sum_{ v \in E^0 }  \pi_E ( [p_{(v,0)} ] ) = \sum_{v \in E^0 } [ p_v ] = [1_{C^*(E)} ].
\]
A similar computation shows that $\pi_F ( [p_0^F])= [1_{C^*(F)} ]$.  Commutativity of the bottom square implies 
\begin{align*}
\theta_X([1_{C^*(E)} ]) &= \theta_X \circ \pi_E ([p_0^E])  = \pi_F \circ \alpha ( [p_0^E]) \\
			&= \pi_F ([p_0^F]) = [1_{C^*(F)}].
\end{align*}
This implies that $\beta$ is an isomorphism from $\mathrm{XK}\delta^{+,1} (C^*(E))$ to $\mathrm{XK}\delta^{+,1}(C^*(F))$.
\end{proof}

\section{Computing dimension data}

As we shall touch upon again in Section \ref{twosided}, computing and comparing dimension quadruples for regular graphs is an important and well-studied method in symbolic dynamics. In this section, we show how to compute the data in our dimension quadruples $\DQ(-)$ and $\DQp(-)$ by amending the classical results. We  follow the general approach in \cite[Section~4.4]{rh:dlpa} to detail results that originate in \cite{wk:dftmc}, but generalize to non-regular graphs along the way.   Note that we choose to work with right matrix actions.   


We first recall the eventual range and eventual kernel of an integral square matrix.  As detailed e.g. in  \cite[Remark~7.4.4 (2) and (3)]{dlbm:isdc}, the eventual range $\mathcal{R}_A = \bigcap_{k=1}^\infty \QQ^n A^k = \QQ^n A^n$, and the eventual kernel $\mathcal{K}_A = \bigcup_{ k = 1}^\infty \ker( A^k ) = \ker(A^n)$, of any $n\times n$ matrix satisfies
\[
\QQ^n = \mathcal{R}_A \overset{\mathbf{\cdot}}{+} \mathcal{K}_A
\]
in the sense that every element can be uniquely decomposed. Note also that both  $\mathcal{R}_A$ and  $\mathcal{K}_A$ are $A$-invariant subspaces of $\QQ^n$.   Moreover, $A$ is invertible on $\mathcal{R}_A$.  We write $\pK{\vv}$ for the first component and $\piK{\vv}$ for the second component of any $\vv\in\QQ^n$.  It is easy to check that $\pi_{\mathcal{R}}$ and $\pi_\mathcal{K}$ are homomorphisms.

\begin{defin}\label{kriegerpic}
Let $A$ be an $n \times n$ matrix with entries in $\NN_0$.  We define the triple $(\Delta_A, \Delta_A^+ , \delta_A)$ by\index{DeltaA@$\Delta_A$}\index{DeltaA+@$\Delta_A^+$}\index{deltaA@$\delta_A$}
\begin{align*}
\Delta_A &:= \left\{ \vv \in    \mathcal{R}_A  \ | \  \vv A^\ell \in \ZZ^n, \text{ for some } \ell \in \NN_0 \right\}\\
\Delta_A^+ &:= \left\{ \vv \in  \mathcal{R}_A    \ | \ \vv A^\ell \in \NN_0^n, \text{ for some } \ell \in \NN_0 \right\}.
\end{align*}
and $\delta_A \colon \Delta_A \to \Delta_A$ is the automorphism $\delta_A( \vv) = \vv A$.  
\end{defin}

Note that 
$$
(\Delta_A, \Delta_A^+ , \delta_A) \cong (\Delta_B, \Delta_B^+ , \delta_B) 
$$
if and only if 
$$
(\Delta_A, \Delta_A^+ , \delta_A^{-1}) \cong (\Delta_B, \Delta_B^+ , \delta_B^{-1}).
$$
So, as an invariant of shift equivalence for matrices, it makes no difference whether we use $\delta_A$ or $\delta_A^{-1}$ in the definition of the triple above.

\begin{lemma}\label{lem-regularDQ}
Let $A$ be an $n \times n$ matrix with entries in $\NN_0$.   
\begin{enumerate}[(i)]
\item For all $\vv \in \QQ^n$ and for all $m \in \NN_0$, $\pK{\vv A^m} = \pK{\vv}A^m$ and $\vv A^m = \pK{\vv}A^m$ when $m \geq n$.

\item For all $\vv \in \ZZ^n$, $\pK{\vv} \in \Delta_A$ and for all $\vv \in \NN_0^n$, $\pK{\vv} \in \Delta_A^+$.
\end{enumerate}
\end{lemma}

\begin{proof}
We first prove Item (i).  Let $\vv \in \QQ^n$ and let $m \in \NN_0$.  Note that
\[
\vv A^m = \pK{\vv}A^m + \piK{\vv}A^m,
\]
and $\pK{\vv}A^m  \in \mathcal{R}_A$ and $\piK{\vv}A^m \in \mathcal{K}_A$.  Uniqueness of the decomposition gives $\pK{\vv A^m} = \pK{\vv}A^m$.  Since $\piK{\vv} A^m = \mathbf{0}$ for $m \geq n$, $\vv A^m = \pK{\vv}A^m + \piK{\vv}A^m =\pK{\vv}A^m$ for $m \geq n$.

Item (ii) follows from $\pK{\vv}A^n = \vv A^n \in \ZZ^n$ if $\vv \in \ZZ^n$ and $\pK{\vv}A^n = \vv A^n \in \NN_0^n$ if $\vv \in \NN_0^n$.
\end{proof}

Let $E$ be a graph.  Using the partition $E^0 = E^0_\regX \sqcup E^0_\singX$, write 
$$
{\mathsf A}_E = \left[\begin{matrix} \adjRR_E & \adjRS_E \\ \adjSR_E &\adjSS_E  \end{matrix}\right].
$$  
We write $\nor=|E^0_\regX|$ and $\nos=|E^0_\singX|$.\index{n@$\nos$}\index{n@$\nor$}\index{AE@$\adjRR_E$}\index{AE@$\adjSR_E$}\index{AE@$\adjRS_E$}\index{AE@$\adjSS_E$}  We always think of elements of $\ZZ^{\nor}$ and $\ZZ^{\nos}$ as rows and mostly denote them $\vv$ and $\ww$, respectively. We write $\uu_\regX$ and $\uu_\singX$ for the vectors having $1$ in all entries.  
 
 We shall work inside the group of two-sided sequences of integer vectors $\tsX\ZZ^n$ with an emphasis on the subgroup
 \[
 \osX\ZZ^n=\{(\ww_k)\in\tsX\ZZ^n\mid \exists k\forall \ell<k:\ww_\ell=\oo\}
 \]
 of sequences that are eventually zero to the left. It is convenient to represent the elements of $\osX\ZZ^n$ by
 \[
 \www=\oXe{k}{ \ww_0, \ww_1, \dots }=\oXe{k}{ \ww_i : i=0,1,\dots}
 \]
indicating the two-sided sequence with $\www_\ell=\ww_{\ell-k}$ for $\ell\geq k$ and $\www_\ell=\oo$ otherwise. One may of course arrange that $\ww_0\not=\oo$ if $\www\not=\ooo$ (with $\ooo$ the zero of $\osX$), but we will not always do so, and hence it can not be assumed. We note that $\nsX\ZZ^n\subseteq \osX\ZZ^n$ and will represent the elements
 \[
 \oXe{k}{\vv_0,\dots,\vv_m}.
 \]
 The key to understanding the dimension quadruples in the presence of singular vertices is the following map.

 \begin{defin}\label{etaomegadef}
 Let $E$ be a graph and define $\eta:\Delta_{\adjRR_E}\to \osX\ZZ^{\nos}$\index{eta@$\eta$} by 
 \[
 \eta(\vv):=\oXe{\ell+1}{ \vv (\adjRR_E)^{\ell+i } \adjRS_E : i=0,1,\dots}
 \]
 with $\ell$ the smallest nonnegative integer so that $\vv (\adjRR_E)^\ell \in\ZZ^{\nor}$.   Let $\nDT_E$ be the subset of $\Delta_{\adjRR_E} \times \osX\ZZ^{\nos}$ given by
 \[
 \nDT_E = \{(\vv,\www)\in\Delta_{\adjRR_E}\times \osX\ZZ^{\nos}\mid \eta(\vv)-\www\in\nsX\ZZ^{\nos}\},
 \]
 with the convention that if $E$ is a regular graph, then $\nDT_E = \Delta_{\mathsf{A}_E}$.\index{omegaE@$\nDT_E$} 
 \end{defin}
 Note that $\eta$ is not a group homomorphism, but that it induces $\overline\eta:\Delta_{\adjRR_E}\to \osX\ZZ^\nos/\nsX\ZZ^\nos$ which is.

 \begin{lemma}\label{lem-dimgrp-inv}
Let $E$ be a graph and let $\vv \in \Delta_{\adjRR_E}$.  Suppose $\ell$ is the smallest nonnegative integer such that $\vv (\adjRR_E)^\ell \in \ZZ^\nor$ and suppose $\ell'$ is the smallest nonnegative integer such that $\vv(\adjRR_E)^{-1} (\adjRR_E)^{\ell'} \in \ZZ^\nor$.  Then
\[
\eta( \vv (\adjRR_E)^{-1}) = \begin{cases} \rt(\eta(\vv)) &\text{ if } \ell' \geq 1 \\ \oXe{1}{\vv(\adjRR_E)^{i-1} \adjRS_E : i=0,1,\dots } &\text{ if } \ell' = 0 \end{cases}.
\]
and 
\[
\eta( \vv \adjRR_E) = \begin{cases} \rt^{-1} (\eta(\vv)) &\text{ if } \ell \geq 1 \\ \oXe{1}{\vv(\adjRR_E)^{i+1} \adjRS_E : i=0,1,\dots } &\text{ if } \ell = 0 \end{cases}
\]

Moreover, $(\delta_{\adjRR_E}^{-1} \times \rt) ( \nDT_E) = \nDT_E$.
\end{lemma}

\begin{proof}
 Note $\vv(\adjRR_E)^{-1} (\adjRR_{E} )^{\ell+1} = \vv (\adjRR_E)^{\ell} \in \ZZ^\nor$.  Thus, $\ell' \leq \ell+1$.  Since 
$ \vv(\adjRR_{E} )^{\ell'-1} = \vv(\adjRR_E)^{-1} (\adjRR_{E} )^{\ell'} \in \ZZ^{\nor}$, we have $\ell'-1 \geq \ell$ if $\ell' \geq 1$ or $\ell' =0$. 

Suppose $\ell' \geq 1$.  Then $\ell' = \ell+1$.  Thus, 
\begin{align*}
\eta(\vv (\adjRR_E)^{-1} ) &= \oXe{\ell+2}{ \vv (\adjRR_E)^{-1} (\adjRR_E)^{\ell+1+i } \adjRS_E : i=0,1,\dots} \\
			&= \rt( \eta( \vv )).
\end{align*}
Suppose $\ell' = 0$.  Then $\vv(\adjRR_E)^{-1} \in \ZZ^{\nor}$ which implies $\vv \in \ZZ^\nor$.  Hence, $\ell = 0$.  Then 
\begin{align*}
\eta(\vv (\adjRR_E)^{-1} )  &= \oXe{1}{ \vv (\adjRR_E)^{-1} (\adjRR_E)^{i } \adjRS_E : i=0,1,\dots} \\
					&= \oXe{1}{\vv(\adjRR_E)^{i-1} \adjRS_E : i=0,1,\dots }. 
\end{align*}

Let $\ell''$ be the smallest positive integer such that $\vv \adjRR_E (\adjRR_E)^{\ell''} \in \ZZ^\nor$.  Suppose $\ell = 0$.  Then $\ell'' = 0$.  So, 
\begin{align*}
\eta(\vv \adjRR_E )  &= \oXe{1}{ \vv (\adjRR_E) (\adjRR_E)^{i } \adjRS_E : i=0,1,\dots} \\
					&= \oXe{1}{\vv(\adjRR_E)^{i+1} \adjRS_E : i=0,1,\dots }. 
\end{align*}
Suppose $\ell \geq 1$.  Note that $\vv \adjRR_E (\adjRR_E)^{\ell-1} = \vv (\adjRR_E)^\ell \in \ZZ^\nor$ which implies $\ell-1 \geq \ell''$.  Since $\vv (\adjRR_E)^{\ell''+1} = \vv \adjRR_E (\adjRR_E)^{\ell''}  \in \ZZ^\nor$, $\ell''+1 \geq \ell$.  So, $\ell'' = \ell-1$ which implies
\begin{align*}
\eta(\vv \adjRR_E) &= \oXe{\ell}{ \vv \adjRR_E (\adjRR_E)^{\ell-1+i}  \adjRS_E : i=0,1, \dots} \\
			&= \rt^{-1}( \eta(\vv)).
\end{align*}

We now show that $(\delta_{\adjRR_E}^{-1} \times \rt) ( \nDT_E) = \nDT_E$.  Let $(\vv, \www) \in \nDT_E$.  Suppose $\ell' \geq 1$.  Then 
$\eta( \vv (\adjRR_E)^{-1}) - \rt(\www) = \rt(\eta(\vv) - \www) \in \nsX\ZZ^\nos$.  Hence, $(\delta_{\adjRR_E}^{-1} \times \rt) (\vv, \www) \in \nDT_E$.  Suppose $\ell' = 0$.  For $k \geq 1$, $\eta(\vv (\adjRR_E)^{-1} )_k  = \vv(\adjRR_E)^{k-2}\adjRS_E$ and for $k \geq 2$, $\eta(\vv (\adjRR_E)^{-1} )_k  = \eta(\vv)_{k-1}$.  As, $\rt(\www)_k = \www_{k-1}$, we have that $\eta(\vv (\adjRR_E)^{-1} ) - \rt(\www)  \in \nsX\ZZ^{\nos}$.  So, $(\delta_{\adjRR_E}^{-1} \times \rt) ( \vv, \www) \in \nDT_E$.

For the other inclusion, we show $(\delta_{\adjRR_E}^{-1} \times \rt)^{-1} ( \vv, \www)  \in \nDT_E$.   Suppose $\ell \geq 1$.  Then 
$\eta( \vv \adjRR_E) - \rt^{-1}(\www) = \rt^{-1} (\eta(\vv) - \www ) \in \nsX\ZZ^\nos$.  Suppose $\ell =0$.  For $k \geq 1$, $\eta(\vv \adjRR_E )_k = \vv (\adjRR_E)^{k} \adjRS_E = \eta(\vv)_{k+1}$.  Since $\rt^{-1}(\www)_k = \www_{k+1}$, we have $\eta(\vv \adjRR_E )_k  - \rt^{-1}(\www) \in \nsX\ZZ^\nos$.  Hence, $(\delta_{\adjRR_E}^{-1} \times \rt)^{-1} ( \vv, \www) \in \nDT_E$.
  
Thus, concluding the desired result that $( \delta_{\adjRR_E} \times \rt^{-1} ) (\nDT_E) = \nDT_E$.
\end{proof}

 \begin{lemma}\label{lem-pull-back}
 Let $E$ be a graph, define $\rho \colon \ZZ^{\nor} \to \osX\ZZ^{\nos}$ by 
 \[
 \rho( \vv ) = \oXe{1}{ \vv \adjRS_E, \vv \adjRR_E \adjRS_E , \vv (\adjRR_E)^2 \adjRS_E , \ldots }.
 \]
 Then $\rho$ is a group homomorphism such that for all $\vv \in \ZZ^{\nor}$
 \[
 \rho(\vv)=  \oXe{1}{\vv\adjRS_E}  + \rt(\rho(\vv \adjRR_E ) )
 \]
 and 
 \[
 \rho(\vv) - \oXe{m+1}{  \pK{\vv} (\adjRR_E)^m \adjRS_E ,  \pK{\vv} (\adjRR_E)^{m+1} \adjRS_E , \dots}  \in \nsX\ZZ^{\nos}
 \]
for any nonnegative integer $m$ with $\pK{\vv}(\adjRR_E)^m \in \ZZ^\nor$.  In particular, for all $\vv \in \ZZ^{\nor}$, 
\[
\rho(\vv) - \eta( \pK{\vv}) \in \nsX\ZZ^{\nos}.
\]
 \end{lemma}
 
 \begin{proof}
 A computation shows $\rho(\vv)=  \oXe{1}{\vv\adjRS_E}  + \rt(\rho(\vv \adjRR_E ) )$.  For $k \geq m+1$, the $k$th coordinate of 
 \[
 \oXe{m+1}{  \pK{\vv} (\adjRR_E)^m \adjRS_E ,  \pK{\vv} (\adjRR_E)^{m+1} \adjRS_E , \dots}
 \]  
is $\pK{\vv} (\adjRR_E)^{k-1} \adjRS_E$.  Since $\rho(\vv)_k = \vv (\adjRR_E)^{k-1} \adjRS_E$, the lemma now follows from this observation and the fact that $\pK{\vv} (\adjRR_E)^k = \vv (\adjRR_E)^k$ for all $k \geq \nor$ (by Lemma~\ref{lem-regularDQ}).
 \end{proof}

For a graph $E$, $\ee_v$ denotes the element of $\ZZ^{E^0}$ with $1$ in the $v$th coordinate and $0$ in all other coordinates.  An important property of $K_0(C^*(E \times_1\ZZ))$ that we will use throughout this section is that for all $v \in E^0_\regX$, 
\begin{equation}\label{eq-ck-rel-kthy}
[ p_{(v,\ell) } ] = \sum_{ w \in E^0_\regX} \adjRR_E (v,w) [ p_{(w, \ell+1)} ] + \sum_{ w \in E^0_\singX} \adjRS_E(v,w) [ p_{(w, \ell+1)} ]
\end{equation}
by using the Cuntz-Krieger Relation (CK3).

 \begin{propo}\label{premaster}
 Let $E$ be a graph.  The map $\phi:K_0( C^*(E \times_1 \ZZ))\to \Delta_{\adjRR_E}\times \osX\ZZ^{\nos}$ defined by
  \[
 \phi ( [p_{(v, k) } ] ) = \begin{cases} (\delta_A^{-1} \times \rt)^k  (\oo, \oXe{0}{\ee_v} ) &\text{ if } v\in E^0_\singX \\ (\delta_A^{-1} \times \rt)^k \left( \pK{\ee_v} , \rho( \ee_v) \right) &\text{ if } v \in E^0_\regX\end{cases}.
 \]
 induces an isomorphism
 \[
\phi:K_0( C^*(E \times_1 \ZZ))\to \nDT_E
\]
with the property 
\[
\phi\circ \rt_*= ( \delta_{\adjRR_E}^{-1}  \times \mathrm{rt})  \circ \phi
\]
and
\[
(\vv, \oXe{m+1}{ \vv (\adjRR_E)^m \adjRS_E, \vv (\adjRR_E)^{m+1}\adjRS_E, \dots} ) = \sum_{w \in E^0_\regX} (\vv (\adjRR_E)^m)_w  \phi( [ p_{(w,m)} ])
\] 
where $\vv \in \Delta_{\adjRR_E}$ and $m$ is a nonnegative integer with $\vv (\adjRR_E)^m \in \ZZ^{\nor}$.  In particular, $(\vv, \eta(\vv))$ is in the image of $\phi$ for all $\vv \in \Delta_{\adjRR_E}$.
\end{propo}
 
 \begin{proof}
We first show that $\phi$ induces a homomorphism from $K_0(C^*(E \times_1 \ZZ))$ to $\Delta_A\times \osX\ZZ^{\nos}$.  By \cite[Theorem~3.1]{ddmt:ckegc}, it is enough to show for all $v \in E^0_\regX$ and for all $k$,
\begin{equation}\label{eq-ck-rel-kthy2}
x_{(v,k)} = \sum_{ s(e)=v } x_{ (r(e), k+1)},
\end{equation}
where
$$
x_{(v,k)}= \begin{cases} (\delta_A^{-1} \times \rt)^k  (\oo, \oXe{0}{\ee_v} ) &\text{ if } v\in E^0_\singX \\ (\delta_A^{-1} \times \rt)^k \left( \pK{\ee_v} , \rho( \ee_v) \right) &\text{ if } v \in E^0_\regX\end{cases}.
$$
Note that $x_{(v,k)} = (\delta_{A}^{-1} \times \rt)^{k} ( x_{(v,0)} )$ for all $(v,k) \in E^0 \times \ZZ$.  Hence, to prove \eqref{eq-ck-rel-kthy2}, it is enough to prove \eqref{eq-ck-rel-kthy2} for $k =0$.

Let $v \in E^0_\regX$.  Then 
\begin{align*}
&\sum_{ s(e)=v } x_{ (r(e), 1)} \\
&= \sum_{ w \in E^0} \Asf_E(v,w) x_{ w, 1} = \sum_{ w \in E^0_\regX } \adjRR_E (v,w) x_{ w, 1} + \sum_{ w \in E^0_\singX } \adjRS_E(v,w) x_{w, 1} \\ 
&=  \sum_{ w \in E^0_\regX } \adjRR_E(v,w) ( \delta_{\adjRR_E}^{-1} \otimes \rt) ( \pK{\mathbf{e}_w}, \rho(\ee_w) ) + \sum_{ w \in E^0_\singX } \adjRS_E(v,w) ( \delta_{\adjRR_E}^{-1} \otimes \rt) (\oo, \oXe{0}{\ee_w} )  \\
&=( \delta_{\adjRR_E}^{-1} \otimes \rt)  ( \pK{ \ee_v\adjRR_E} , \rho( \ee_v \adjRR_E) )+ (\oo, \oXe{1}{\ee_v \adjRS} )  \\
&=( \delta_{\adjRR_E}^{-1} \otimes \rt)  ( \pK{ \ee_v}\adjRR_E , \rho( \ee_v \adjRR_E) )+ (\oo, \oXe{1}{\ee_v \adjRS} ) \\
&= x_{(v,0)},
\end{align*}
where the last two equalities follow from Lemma~\ref{lem-regularDQ} and Lemma~\ref{lem-pull-back}.  Thus, $\phi:K_0( C^*(E \times_1 \ZZ))\to \Delta_{\adjRR_E}\times \osX\ZZ^{\nos}$ is a homomorphism.  

Note that $\phi \circ \rt_* ( [p_{(v,k)} ] ) = \phi( [ p_{(v,k+1)} ] ) = (\delta_{\adjRR_E}^{-1} \otimes \rt)^{k+1}  (x_{(v,0)}) = (\delta_A^{-1} \otimes \rt) x_{(v,k)} =  (\delta_{\adjRR_E}^{-1} \otimes \rt) \circ \phi( [ p_{(v,k)}] )$.  Since $K_0(C^*(E\times_1\ZZ))$ is generated by $[p_{(v,k)}]$ for all $(v,k)$, $\phi\circ \rt_*= ( \delta_{\adjRR_E}^{-1}  \times \mathrm{rt}) \circ \phi$.

We now show $\phi$ is an injection.  Let $a \in K_0(C^*(E \times_1\ZZ))$ with $\phi(a) = (\oo, \ooo)$.  Repeated use of \eqref{eq-ck-rel-kthy}, we may assume $a = \sum_{v \in E_\regX^0 } m_{v} [ p_{(v, N)}] +  \sum_{v \in E_\singX^0 } \sum_{k = -N}^N m_{(v,k)} [ p_{(v, k)}]$, where $N \geq \nor$.  Set $\mathbf{w} = \sum_{v \in E_{\regX}^0} m_v \mathbf{e}_v$ and $\mathbf{w}_k =  \sum_{v \in E^0_\singX} m_{(v,k)} \mathbf{e}_{v}$.  Then 
\[
(\oo, \ooo)=\phi(a) = (\pK{\ww}(\adjRR_E)^{-N} , \oXe{-N}{\ww_{-N}, \ldots, \ww_N, \ww \adjRS_E , \ww  \adjRR_E \adjRS_E, \dots}). 
\]
Consequently,$\ww_{k} = \oo$ for all $k$, and $\pK{\ww} = \oo$ and $\ww (\adjRR_E)^i \adjRS_E = \oo$ for all $i$.  By Lemma~\ref{lem-regularDQ}, $\ww (\adjRR_E)^m = \pK{\ww} (\adjRR_E)^m = \oo$ for all $m \geq \nor$.  Using \eqref{eq-ck-rel-kthy} in the third equality,
\begin{align*}
a &= \sum_{v \in E_\regX^0 } m_{v} [ p_{(v, N)}] +  \sum_{v \in E_\singX^0 } \sum_{k = -N}^N m_{(v,k)} [ p_{(v, k)}]=  \sum_{v \in E_\regX^0 } m_{v} [ p_{(v, N)}] \\
	&= \sum_{ w \in E_\regX^0 } (\ww (\adjRR_E)^{N+1})_w [ p_{(w,N+1)}] + \sum_{ w \in E_\singX^0 } (\ww (\adjRR_E)^{N+1} \adjRS_E)_w [ p_{(w,N +1)} ] \\
	&= 0.
\end{align*}
Hence, $\phi$ is an injection.

We now prove that 
\[
(\vv, \oXe{m+1}{ \vv \adjRS_E, \vv \adjRR_E \adjRS_E, \dots} ) = \sum_{w \in E^0_\regX} (\vv (\adjRR_E)^m)_w  \phi( [ p_{(w,m)} ])
\] 
where $m$ is a nonnegative integer with $\vv (\adjRR_E)^m \in \ZZ^{\nor}$.   Indeed,
\begin{align*}
&\sum_{w \in E^0_\regX}  (\vv (\adjRR_E)^m)_w ( \pK{\ee_w} (\adjRR_E)^{-m} , \oXe{m+1}{ \ee_w (\adjRR_E)^i  \adjRS_E : i =0,1,\dots}) \\
&=\left( \pK{\vv (\adjRR_E)^m} (\adjRR_E)^{-m} ,   \oXe{m+1}{ \ee_v (\adjRR_E)^{i+m}  \adjRS_E : i =0,1,\dots} \right) \\
	&= (\pK{\vv} , \oXe{m+1}{ \vv (\adjRR_E)^m \adjRS_E, \vv (\adjRR_E)^{m+1}\adjRS_E, \dots} ) \qquad \text{by Lemma~\ref{lem-regularDQ}}\\
	&= (\vv, \oXe{m+1}{ \vv (\adjRR_E)^m \adjRS_E, \vv (\adjRR_E)^{m+1}\adjRS_E, \dots}).
\end{align*}

Lastly, we show $\phi$ is an isomorphism onto $\nDT_E$.  By Lemma~\ref{lem-pull-back}, the image of $\phi$ is a subset of $\nDT_E$.  Let $(\vv, \www) \in \nDT_E$.  Recall that there exists $a \in K_0(C^*(E \times_1\ZZ))$ such that $\phi(a) = (\vv, \eta(\vv))$.  Since $\www - \eta(\vv) \in \nsX\ZZ^{\nos}$, $\www = \eta(\vv) + \www'$ with $\www' \in  \nsX\ZZ^{\nos}$.  Note that $b  = a + \sum_{(w,k) \in E^1_\singX \times \ZZ } \www'_{w,k} [ p_{(w,k)}] \in K_0(C^*(E \times_1\ZZ))$, where $\www'_{w,k}$ is the $w$th coordinate of $\www'_k$.  Moreover,
\[
\phi(b) = (\vv, \eta(\vv)) + ( \oo , \www') = (\vv, \www).
\]
Hence, showing that $\nDT_E$ is a subset of the image of $\phi$.  We now conclude that $\phi \colon K_0(C^*(E \times_1\ZZ)) \to \nDT_E$ is an isomorphism.
 \end{proof}
 
 \begin{theor}\label{regularDQ} Let $E$ be a regular graph.  Then 
 $$
\DT(E)\cong ( \Delta_{\Asf_E}, \Delta_{\Asf_E}^+, \delta_{\Asf_E}^{-1})
$$
and
\begin{eqnarray*}
\DQ(E) &\cong&  ( \Delta_{\Asf_E}, \Delta_{\Asf_E}^+, \delta_{\Asf_E}^{-1},\Delta_{\Asf_E}^+) \\
\DQp(E)&\cong&(\Delta_{\Asf_E}, \Delta_{\Asf_E}^+, \delta_{\Asf_E}^{-1},\pK{\uu}).
\end{eqnarray*}
\end{theor}

\begin{proof}
Since $E$ is a regular graph,  $\nDT_E=\Delta_{\mathsf{A}_E}$.  By Proposition~\ref{premaster}, the map $\phi \colon K_0(C^*(E \times_1 \ZZ)) \to \Delta_{\Asf_E}$ with 
\[
\phi( [ p_{(v,k)} ] ) = \pK{\ee_v}\Asf_E^{-k}
\]
 is an isomorphism such that $\phi \circ \lt_* = \delta_{\Asf_E}^{-1} \circ \phi$ and $\phi( [p_0^E]) = \pK{\uu}$.  By Proposition~\ref{premaster} the inverse of $\phi$ is given by 
 $\phi^{-1} ( \vv ) = \sum_{w \in E^0 } (\vv \Asf_E^m)_w [ p_{(w,m)}]$, where $m$ is a nonnegative integer with $\vv \Asf_E^{m} \in \ZZ^{E^0}$.   As $K_0(C^*(E \times_1 \ZZ))_+$ is generated by $[ p_{(v,k)}]$'s, we see that $\phi$ and $\phi^{-1}$ are positive maps.  Hence, $\phi$ is an order isomorphism.
 
 Since $\pK{\uu}$ is clearly an order unit, the order ideal in this case is all of $\Delta_{\mathsf{A}_E}$ and thus carries no information.
 \end{proof}

Away from the regular case, Proposition~\ref{premaster} still characterizes $K_0(C^*(E\times_1\ZZ))$ as a group, even equivariantly, using only the $\adjRR_E$ and $\adjRS_E$ submatrices and the number $\nos$ of singular vertices. Since we know from Proposition \ref{Kpartialinva} that the dimension group contains enough information to reconstruct $E$ up to \xyz{000}-invariance, it is clear that the submatrices $\adjSR_E$ and $\adjSS_E$ must play a role too, and indeed they enter through the order of this group.
Because $\adjSR_E$ and $\adjSS_E$ may contain infinite entries, we pass to matrices $C\leq \adjSR_E$ and  $D\leq \adjSS_E$ to describe this order, where the smaller matrices are dominated  entry-wise and only have finite entries.

To define the positive cone of $\nDT_E$ that makes $\phi \colon K_0(C^*(E \times_1\ZZ))\to \nDT_E$ an order isomorphism, we define $\eta^+ \colon \Delta_{\adjRR_E}^+ \to \osX \NN_0^{\nos}$ by\index{etaplus@$\eta^+$}
$$
\eta^+( \vv ) = \oXe{\ell+1}{ \vv \adjRS_E , \vv \adjRR_E \adjRS_E, \dots}.
$$
where $\ell$ is the smallest nonnegative integer such that $\vv (\adjRR_E)^\ell \in \NN_0^\nor$.  By Proposition~\ref{premaster}, $(\vv, \eta^+(\vv)) \in \nDT_E$ for all $\vv \in \Delta_{\adjRR_E}^+$.  Arguing as in Lemma~\ref{lem-dimgrp-inv} we get the following.  

\begin{lemma}\label{lem-dimgrp-inv-plus}
Let $E$ be a graph, let $\ell$ be the smallest nonnegative integer such that $\vv (\adjRR_E)^\ell \in \NN_0^\nor$, and let $\ell'$ be the smallest nonnegative integer such that and $\vv(\adjRR_E)^{-1} (\adjRR_E)^{\ell'} \in \NN_0^\nor$.  Then
\[
\eta^+( \vv (\adjRR_E)^{-1}) = \begin{cases} \rt(\eta^+(\vv)) &\text{ if } \ell' \geq 1 \\ \oXe{1}{\vv(\adjRR_E)^{i-1} \adjRS_E : i=0,1,\dots } &\text{ if } \ell' = 0 \end{cases}
\]
and 
\[
\eta^+( \vv \adjRR_E) = \begin{cases} \rt^{-1} (\eta^+(\vv)) &\text{ if } \ell \geq 1 \\ \oXe{1}{\vv(\adjRR_E)^{i+1} \adjRS_E : i=0,1,\dots } &\text{ if } \ell = 0 \end{cases}
\]
\end{lemma}

We now describe some elements of the image of the order ideal generated by $p_E^0$ under the isomorphism $\phi$.

\begin{lemma}\label{lem-scale}
Let $E$ be a graph.  All elements of the form
\[
\left(\vv,\eta^+(\vv)\right), 
\]
for $\vv\in \Delta_{\adjRR_E}^+$ and all elements of the form
\begin{equation}\label{singgen}
\left(\pK{-\ee_w C} (\adjRR_E)^{-1},\oXe{0}{\ee_w,-\ee_wD, - \mathbf{e}_w C\adjRS_E,  - \mathbf{e}_w C \adjRR_E \adjRS_E,\dots} \right)
\end{equation}
for $w\in E^0_\singX$,  $C\leq \adjSR_E$ and  $D\leq \adjSS_E$, lie in $\phi(I[p_E^0])$.
\end{lemma}

\begin{proof}
To prove that $\left(\vv,\eta^+(\vv)\right) \in \phi(I[p_E^0])$ for $\vv\in \Delta_{A}^+$, if there exists a path in $E$ of length $\ell$ from $v$ to $w$, then there exists a path $\mu$ in $E \times_1 \ZZ$ from $(v,0)$ and $(w, \ell)$ which implies $[ p_{(w,\ell)}  ] = [s_\mu^* s_\mu]  = [s_\mu s_\mu^*] \in   I[p_E^0]$.   Let $\vv \in \Delta_A^+$ and let $\ell$ be the smallest nonnegative integer with $\vv (\adjRR_E)^\ell \in \NN_0^\nor$.  By Proposition~\ref{premaster}, 
\[ \phi^{-1} ( \vv , \eta^+(\vv)) = \sum_{w \in E^0_\regX} (\vv (\adjRR_E)^\ell)_w [ p_{(w, \ell)} ] = \sum_{w \in E_\regX^0} \sum_{z \in E_\regX^0} \vv_z (\adjRR_E)^\ell (z,w) [ p_{(w, \ell)} ] .\]
Note that the only nonzero terms on the right-hand side of the above equation are $z , w \in E_\regX^0$ for which there exists a path of length $\ell$ from $z$ to $w$.  Thus, $( \vv , \eta^+(\vv)) \in \phi(I[p_E^0])$.

To prove the elements in \eqref{singgen} are elements of $\phi(I[p_E^0])$, observe that 
$$
0\leq a=[p_{ (w,0) }] - \sum_{ v \in E^0_\singX }D(w, v) [ p_{(v,1)}] - \sum_{ v \in E^0_\regX } C(w, v) [ p_{(v,1)}]
 \leq [p_{ (w, 0) }]$$
 as 
 $$
 [p_{(w,0)}] = a +  \sum_{ v \in E^0_\singX } D(w, v) [ p_{(v,1)}] + \sum_{ v \in E^0_\regX } C(w, v) [ p_{(v,1)}] 
 $$
 and 
 \[
 a = \left[ p_{(w,0)} - \sum_{e \in X } s_{ (e,1)} s_{(e,1)}^* \right]
 \]
 with $s(X) = \{w\}$, $| \{ e \in X :  r(e) =v \} | = C(w, v)$ for $v \in E^0_\regX$, and $| \{ e \in X :  r(e) =v \} | = D(w, v)$ for $v \in E^0_\singX$.  Thus, $\phi(a) \in \phi(I[p_E^0])$ and 
 \begin{align*}
 \phi(a) &= ( \oo, \oXe{0}{\ee_w} ) - \sum_{ v \in E^0_\singX } D(w, v) ( \oo, \oXe{1}{\ee_v})  - \sum_{v \in E^0_\regX} C(w, v)  (\delta_{\adjRR_E}^{-1} \times \rt) ( \pK{\ee_v} , \rho(\ee_v) )\\
 &= ( \oo, \oXe{0}{\ee_w} ) - ( \oo, \oXe{1}{\ee_w D} ) - (\delta_{\adjRR_E}^{-1} \times \rt)  ( \pK{\ee_w C } , \rho( \ee_w C))\\
		&=  \left(\pK{-\ee_w C} (\adjRR_E)^{-1},\oXe{0}{\ee_w,-\ee_w D, - \mathbf{e}_w C \adjRS_E,  - \mathbf{e}_w C \adjRR_E  \adjRS_E,\dots} \right).\qedhere
 \end{align*}
\end{proof}

We denote the set of elements in \eqref{singgen} by $\singgen$.\index{Gc@$\singgen$}
Note that when $v$ is a sink, $\ee_v \adjSR_E$ and $\ee_v \adjSS_E$ must both vanish, and the single contribution to $\singgen$ for $v$ is  $(\oo,\oXe{0}{\ee_v})$. When $v$ is an infinite emitter, at least one of the relevant rows of $\adjSR_E$ or $\adjSS_E$ has an infinite entry, and consequently there will be infinitely many contributions.

\begin{theor}\label{DQmaster}
Let $E$ be a graph.  Under the isomorphism 
$
\phi:K_0( C^*(E \times_1 \ZZ))\to\nDT_E$
in Proposition \ref{premaster}, 
\begin{enumerate}[(i)]
\item $\rt_*$ is taken to $\delta_{\adjRR_E}^{-1}\times \rt$,
\item $K_0( C^*(E \times_1 \ZZ))_+$ is taken to $\nDT_E^+$, the cone generated by 
\[
\{(\delta_{\adjRR_E}^{-1}\times \rt)^n \left(\vv,\eta^+(\vv)\right)\mid n \in \NN_0, \vv\in \Delta_{\adjRR_E}^+\}\cup \{(\delta_{\adjRR_E}^{-1}\times \rt)^n(\singgen)\mid n\in\ZZ\}, 
\]
\item $[p_E^0]$ is taken to 
\[
\left(\pK{\uu_\regX },\oXe{0}{\uu_\singX,\uu_\regX \adjRS_E, \uu_\regX \adjRR_E \adjRS_E, \uu_\regX (\adjRR_E)^2 \adjRS_E,\dots}\right)
\]
\end{enumerate}
\end{theor}

\begin{proof}
Item~(i) follows from Proposition~\ref{premaster}.  For Item~(ii), we first show $(\delta_{\adjRR_E}^{-1} \times \rt)(\nDT_E^+) = \nDT_E^+$.  It is clear that $(\delta_{\adjRR_E}^{-1} \times \rt) (\delta_{\adjRR_E}^{-1}\times \rt)^n(\singgen)$ and $(\delta_{\adjRR_E}^{-1} \times \rt)^{-1} (\delta_{\adjRR_E}^{-1}\times \rt)^n(\singgen)$ are subsets of $\nDT_E^+$ for all $n$.   From the definition of $\nDT_E^+$, $(\delta_{\adjRR_E}^{-1} \times \rt) (\delta_{\adjRR_E}^{-1}\times \rt)^n(\vv, \eta^+(\vv)) \in \nDT_E^+$ for all $\vv \in \Delta_{\adjRR_E}^+$ and for all $n \in \NN_0$, and $(\delta_{\adjRR_E}^{-1} \times \rt)^{-1} (\delta_{\adjRR_E}^{-1}\times \rt)^n(\vv, \eta^+(\vv)) \in \nDT_E^+$ for all $\vv \in \Delta_{\adjRR_E}^+$ and for all $n \in \NN$.  Thus, we are left to show that $(\delta_{\adjRR_E}^{-1} \times \rt)^{-1}( \vv, \eta^+(\vv)) \in \nDT_E^+$ for all $\vv \in \Delta_{\adjRR_E}^+$.  Let $\ell$ be the smallest nonnegative integer such that $\vv (\adjRR_E)^\ell \in \NN_0^{\nor}$By Lemma~\ref{lem-dimgrp-inv-plus},
\begin{align*}
( \vv \adjRR_E, \rt^{-1} (\eta^+( \vv))) &= \begin{cases} ( \vv \adjRR_E, \eta^+( \vv \adjRR_E)) , &\text{ if } \ell \geq 1 \\  ( \vv \adjRR_E, \eta^+( \vv \adjRR_E)) + (\oo, \oXe{0}{\vv \adjRS_E}  )&\text{ if } \ell =0\end{cases}
\end{align*}
which are elements of $\nDT_E^+$.  Hence, completing the proof of the claim $(\delta_{\adjRR_E}^{-1} \times \rt)(\nDT_E^+) = \nDT_E^+$.

For Item~(ii), let $(v, k) \in E^0\times \ZZ$.  Since $(\delta_{\adjRR_E}^{-1} \times \rt)(\nDT_E^+) = \nDT_E^+$, to show that $\phi( [ p_{(v,k)} ] ) \in \nDT_E^+$, it is enough to show the statement is true for $k =0$.  If $v \in E^0_\singX$, then $\phi( [ p_{(v,k)} ] ) = (\oo, \oXe{k}{\ee_v})  \in \nDT_E^+$.  Suppose $v \in E_\regX^0$.  By repeated use of \eqref{eq-ck-rel-kthy}, we have 
\begin{align*}
[ p_{(v,0)}] &= \sum_{ w \in E^0} (\adjRR_E)^\ell( v, w) [ p_{(w, \ell)} ] + \sum_{w \in E^0} \sum_{ k = 1}^\ell ((\adjRR_E)^{k-1} \adjRS_E)(v,w) [ p_{(w,k)} ] \\
	&= \sum_{ w \in E^0} (\ee_v (\adjRR_E)^\ell)_w [ p_{(w, \ell)} ] + \sum_{w \in E^0} \sum_{ k = 1}^\ell ((\adjRR_E)^{k-1} \adjRS_E)(v,w) [ p_{(w,k)} ].
\end{align*}
where $\ell$ is the smallest nonnegative integer for which $\pK{\ee_v} (\adjRR_E)^\ell \in \NN_0^\nor$.  By Proposition~\ref{premaster}, 
$$
\phi( [ p_{(v,0)} ] ) = ( \pK{\ee_v }, \eta^+( \ee_v ) ) +  \sum_{w \in E^0} \sum_{ k = 1}^\ell ((\adjRR_E)^{k-1} \adjRS_E(v,w)) \oXe{k}{\ee_w}.
$$
So, $\phi( [ p_{(v,0)} ] )  \in \nDT_E^+$ which completes the proof that $\phi ( K_0( C^*(E \times_1\ZZ))_+) \subseteq \nDT_E^+$.

We now show $\nDT_E^+ \subseteq \phi ( K_0( C^*(E \times_1\ZZ))_+)$.  By Lemma~\ref{lem-scale}, $\{(\delta_{\adjRR_E}^{-1}\times \rt)^n(\singgen)\mid n\in\ZZ\}$ is a subset of $\phi ( K_0( C^*(E \times_1\ZZ))_+)$.  Let $\vv \in \Delta_{\adjRR_E}^+$ and let $\ell$ be the smallest nonnegative integer such that $\vv (\adjRR_E)^{\ell} \in \NN_0^\nor$.  By Proposition~\ref{premaster},
$$
(\delta_{\adjRR_E}^{-1} \times \rt)^n (\vv, \eta^+(\vv)) = \phi\left( \sum_{w \in E^0_\regX} (\vv (\adjRR_E)^\ell)_w   [ p_{(w,\ell+n)} ] \right) \in \phi ( K_0( C^*(E \times_1\ZZ))_+)
$$
as $(\vv (\adjRR_E)^\ell)_w  \in \NN_0^\nor$.  

Lastly, we prove Item~(iii),
\begin{align*}
\phi( [p_E^0] ) &= \sum_{w \in E^0_\regX} \phi([ p_{(w,0)} ]) + \sum_{w \in E^0_\singX} \phi([ p_{(w,0)} ]) \\
			&= \sum_{w \in E^0_\regX}  ( \pK{\ee_w} , \rho(\ee_w)) + \sum_{w \in E^0_\singX} \oXe{0}{\ee_w} \\
				&=\left(\pK{\uu_\regX },\oXe{1}{\uu_\regX \adjRS_E, \uu_\regX \adjRR_E \adjRS_E, \uu_\regX (\adjRR_E)^2 \adjRS_E,\dots}\right) + (\oo, \oXe{0}{\uu_\singX})\\
			&=\left(\pK{\uu_\regX },\oXe{0}{\uu_\singX,\uu_\regX \adjRS_E, \uu_\regX \adjRR_E \adjRS_E, \uu_\regX (\adjRR_E)^2 \adjRS_E,\dots}\right). \qedhere
\end{align*}
\end{proof}

Note that we are only able to describe the positive cone by a set of generators, and in general, we know of no better description of $I([p^0_E])$ than just the elements in the positive cone dominated by a multiple of $[p^0_E]$. In all of the special cases we will study, much more satisfactory descriptions are possible.

We will give several examples in the ensuing sections of how to employ this result. In Propositions \ref{cyclecomplete},
\ref{cuntzcomplete} and in Lemmas\ref{RRkstable}, we give examples of how to compute the dimension quadruples in the regular case. In Proposition \ref{finitecomplete}, Lemma \ref{RSkstable} and Proposition \ref{RSisoexact}, we work with graphs with finitely many edges, but singular due to a sink. In Propositions \ref{descaforder}, \ref{descafunit}, Lemma  \ref{SRkstable}, and Proposition \ref{SRisoexact} the graphs are infinite and we use the full force of Theorem \ref{DQmaster}.

\part{The generation conjectures}\label{genpart}

\chapter{Foundational tools and examples}

In this chapter, we will analyze key classes of graphs, using moves to understand when they are mutually  $\xyzrel{xyz}$-isomorphic. This is straightforward in many cases, but some require deep insights from symbolic dynamics. We collect information along the way with the ultimate purpose of showing that all our isomorphism notions differ. We will prove this at the end of this part, when all the necessary tools and examples are in place.

\section{Three basic classes}


In this section we introduce three  classes of graphs and completely analyze their \xyz{xyz}-isomorphism relations using moves.

\begin{examp}[The graph $\GF(n_1, \dots, n_k)$]\label{graph-F}

For any $k\geq 0$ and any $n_1,\dots,n_k>0$, we consider the graph $\GF(n_1,\dots,n_k)$
\begin{equation}\label{GFgraph}\index{F@$\GF(\nn)$}
\vcenter{\xymatrix{\bullet\ar[d]^{n_1-1}&\bullet\ar[d]^{n_2-1}&&\bullet\ar[d]^{n_{k-1}-1}&\bullet\ar[d]^{n_k-1}\\
\circ&\bullet\ar[l]&\cdots\ar[l]&\bullet\ar[l]&\bullet\ar[l]&\bullet\ar[l]}}
\end{equation}
to be interpreted as whenever some $n_i=1$, the corresponding source is removed. The notation has been chosen so that $k$ is the length of the longest path in the graph, and the $n_i$ indicate the number of paths of length $i$ ending at the sink. The graph $\GF()$ in the case $k=0$ is simply a sink which is also a source.  
\end{examp}

\begin{examp}[The graph $\GC( n_1, \dots, n_k)$] \label{graph-E}
For any $k\geq 1$ and any $n_1,\dots,n_k\geq 0$, we consider $\GC(n_1,\dots,n_k)$ given as\index{E@$\GC(\nn)$}
\[
\xymatrix{&\bullet\ar@/^/[rr]&&\bullet\ar@/^/[dr]&\\
\bullet\ar@/^/[ur]&&\bullet\ar[rr]_{n_2}\ar[ur]_{n_1}\ar[ll]_-{n_{k-1}}\ar[ul]_{n_k}&&\bullet\ar@/^/[dl]\\
&\ar@/^/[ul]&\dots&&}
\]
that is, $\GC(n_1, \ldots, n_k)$ is the graph consisting of a single cycle of length $k$ and a source for which the source emits finitely many edges to each vertex of the cycle.  Note that this time $n_i= 0$ is allowed, but if $\sum_{i=1}^kn_i=0$, then the source is removed.
\end{examp}

For notational convenience, for $k \geq 1$, we denote the ordered list $n_1, n_2, \ldots, n_k$ by $\nn$.  So, $\GF(n_1,\dots,n_k)$ and $\GC(n_1,\dots,n_k)$ becomes $\GF(\nn)$ and $\GC(\nn)$ respectively.

\begin{examp}[The graph $\GO(c,n)$] \label{graph-G}
For $c\geq 2$ and $n\geq 0$, $\GO(c,n)$ will be the graph\index{G@$\GO(\nn)$}
\[
\xymatrix{\bullet\ar@(ld,lu)[]^-{c}&\bullet\ar[l]^-{n}}
\]
(i.e. with adjacency matrix
$\left[\begin{smallmatrix}
0&n\\0&c\end{smallmatrix}\right]$), again with no source when $n=0$.  
\end{examp}

We first attend to the $C^*$-algebras given by the graphs $\GF(\nn)$.  These graphs give finite dimensional $C^*$-algebras.   In fact, $C^*(\GF(\nn))\cong M_d(\CC)$ with $d=1+\sum_{i=1}^k n_i$ by \cite[Corollary~2.3]{akdpir:ckadg}. 

\begin{propo}\label{finitecomplete} 
Within the class of graphs $\{\GF(\nn)\}$, $\xyzrel{xy0}=\xyzrel{xy1}$ for all $\xyz{x},\xyz{y}\in\{\xyz{0},\xyz{1}\}$, and 
\begin{eqnarray*}
\xyzrel{00z}&=&\langle\OOO,\SSS\rangle\\
\xyzrel{01z}&=&\langle\OOO,\IIIm\rangle\ =\ \langle\KKKm\rangle\\
\xyzrel{10z}&=&\langle\OOO,\RRRp\rangle\\
\xyzrel{11z}&=&\langle\rangle\ =\ \langle\KKKp\rangle.
\end{eqnarray*}
Complete invariants for the eight types of isomorphisms are given by the quantities
\begin{center}\begin{tabular}{|c|c|c|c|c|c|c|c|}\hline\small\xyz{000}&\small\xyz{001}&\small\xyz{010}&\small\xyz{011}&\small\xyz{100}&\small\xyz{101}&\small\xyz{110}&\small\xyz{111}\\\hline
\multicolumn{2}{|c}{$\emptyset$}&\multicolumn{2}{|c}{$k$}&\multicolumn{2}{|c}{$\displaystyle\sum_{i=1}^k n_i$}&\multicolumn{2}{|c|}{$\nn$}\\\hline
\end{tabular}
\end{center}
for each such graph  $\GF(\nn)=\GF(n_1,\dots,n_k)$.
\end{propo}

\begin{proof}
We first note that
\[
(\GF(\nn),\GF())\in\langle \OOO,\SSS\rangle\subseteq \xyzrel{001}
\]
by first outsplitting all sources fully and then removing them one at a time from furthest away from the sink with \SSS\ moves. This shows all claims for $\xyz{00z}$. For $\xyz{10z}$, we similarly note that
\[
\left(\GF(\nn),\GF\left(\sum_{i=1}^kn_i\right)\right)\in\langle \OOO,\RRRp\rangle\subseteq \xyzrel{101}
\]
by first performing an \RRRp\ move at all regular vertices that are not sources,  and then collecting the sources into one by an \OOO\ move. This reduces the study of \xyz{101}-equivalence to deciding  when $(\GF(m),\GF(n))\in\xyzrel{101}$, and since the class of the unit in $K_0(C^*(\GF(n)))$ under the identification $K_0(C^*(\GF(n)))\cong \ZZ$ is exactly $n+1$,  this requires that $m=n$ which is guaranteed already by \xyz{100}-equivalence. The case $\GF()$ is not covered by this analysis, but since the unit in  $K_0(C^*(\GF()))$ under the identification $K_0(C^*(\GF()))\cong \ZZ$ is $1$ we see that this fits as well.

For the remaining claims, we compute the dimension quadruples $\DQ(\GF(\nn))$ and $\DQp(\GF(\nn))$ using Theorem \ref{DQmaster}. 
Computing from the graph where all sources are combined into one by \OOO\ moves (the so-called antenna form, cf. the discussion preceding Corollary \ref{reducetoGF}), the adjacency matrix is subdivided 
\[
\left[\begin{array}{c|c}\adjRR_E&\adjRS_E\\\hline \adjSR_E&\adjSS_E\end{array}\right]=
\left[\begin{array}{ccccc|c}
0&n_k&n_{k-1}-1&\cdots&n_{2}-1&n_{1}-1\\
&&1&&&\\
&&&\ddots&&\\
&&&&1&\\
&&&&&1\\\hline
&&&&&0
\end{array}\right]
\]
Since $\adjRR_E$ is nilpotent, $\Delta_{\adjRR_E}=0$, so the dimension group is $\nsX\ZZ$, which is ordered canonically because $\singgen=\{\eee_0\}$ where $\eee_0=\oXe 01$. We see that
\[
[p_E^0]=\oXe{0}{1,n_1,\dots,n_k}
\]
because $\uu_\regX (\adjRR_E)^i$ vanishes for all $i\geq k$ and is $\begin{bmatrix}0&\cdots&0&n_k&\cdots& n_{i+1}\end{bmatrix}$ when $i<k$. Consequently
\[
I([p_E^0])=\{\www\mid w_i\not=0\Longrightarrow w_i>0\text{ and } 0\leq i\leq k\}
\]
 Note that the elements 
 \[
 \eee_i:=\oXe{i}{1}
 \]
 are exactly the   set of minimally positive elements, and thus any self-isomorphism $h$ on the dimension triple must have $h(\ee_0)=\ee_s$ for some $s$, which implies that $h=\lt^s$. To preserve either of the dimension quadruples, we must have $s=0$ and consequently $h$ must be the identity. This shows that $(\GF(\nn),\GF(\mm))\in \xyzrel{110}$ only when $\nn=\mm$, and all claims about \xyz{11z}-equivalence follow immediately. 

For \xyz{01z}-equivalence, we note from the computation above that $\GF(\nn)$ and $\GF(\mm)$ with $\mm=(m_1,\dots,m_\ell)$ can only be \xyz{010}-equivalent when $k=\ell$. In the other direction, it is easy to obtain any $\GF(\nn)$ from $$\GF(\overbrace{1,\dots,1}^k)$$ by first applying \IIIm\ moves with $n_i-1$ empty sets at  each of the $k$ regular vertices, and then collecting the resulting sources with reverse \OOO\ moves. When we define the ad hoc relation that $\GF(\nn)\equiv \GF(\mm)$ precisely when $k=\ell$, these  observations show
\[
\xyzrel{011}\subseteq \xyzrel{010}\subseteq\langle\KKKm\rangle\subseteq\  \equiv\ \subseteq \langle\OOO,\IIIm\rangle\subseteq \xyzrel{011}
\]
in combination with Proposition \ref{KKpremembers} and
 Theorems \ref{thm:outsplitting} and \ref{thm:insplitting2}.
 \end{proof}

The computation of the dimension quadruples above can be  easily visualized using the skew product graph, $\GF(\nn)\times_1\ZZ$, in this case. Illustrated with $k=3$ it becomes
\[
\xymatrix{
&\bullet\ar[dr]&\color{red}{\bullet}\ar@[red][dr]&\color{red}{\bullet}\ar@[red][dr]&\bullet\ar[dr]&\bullet\ar[dr]&\bullet\\
&\bullet\ar[dr]&\color{red}{\bullet}\ar@[red][dr]&\color{red}{\bullet}\ar@[red][dr]&\color{red}{\bullet}\ar@[red][dr]&\bullet\ar[dr]&\bullet\\
\cdots&\circ&\color{red}{\circ}&\color{red}{\circ}&\color{red}{\circ}&\color{red}{\circ}&\circ&\cdots&\\
&\bullet\ar_(0.35){n_1-1}[ur]\ar_(0.75){n_2-1}[uur]\ar^(0.9){n_3}[uuur]&\color{red}{\bullet}\ar@[red]_(0.35){n_1-1}[ur]\ar@[red]_(0.75){n_2-1}[uur]\ar@[red]^(0.9){n_3}[uuur]&\bullet\ar_(0.35){n_1-1}[ur]\ar_(0.75){n_2-1}[uur]\ar^(0.9){n_3}[uuur]&\bullet\ar_(0.35){n_1-1}[ur]\ar_(0.75){n_2-1}[uur]\ar^(0.9){n_3}[uuur]&\bullet\ar_(0.35){n_1-1}[ur]\ar_(0.75){n_2-1}[uur]\ar^(0.9){n_3}[uuur]&\bullet}
\]
With $w$ denoting the sink of $\GF(\nn)$, the elements $\ee_i$ with a single non-zero entry being $1$ at index $i$ are exactly given by $[p_{(w,i)}]$ in this picture. We note that computing  the $K_0$-classes of the vertex projections at index 0 (indicated in red above) comes out to counting the number of paths starting at level 0 and ending at sinks.

We emphasize that the reliance of our \KKKm\ move on the dimension quadruple $\DQ(-)$ is necessary to distinguish $\GF(2)$ from $\GF(1,1)$, as the dimension triples agree for these two graphs. We have also obtained:

\begin{propo}\label{OIIniIOO} $\xyzrel{011}\not\subseteq \xyzrel{100}$, $\langle\SSS\rangle \not\subseteq \xyzrel{100}$, and $\langle\IIIm\rangle \not\subseteq \xyzrel{100}$.
\end{propo}
\begin{proof} Proposition \ref{finitecomplete} shows that $(\GF(1),\GF(2))\in \xyzrel{011}\backslash \xyzrel{100}$, and it is easy to obtain $\GF(2)$ from $\GF(1)$ by either an \IIIm\ or an \SSS\ move.
\end{proof}

\begin{propo}\label{IOIniOIO} $\xyzrel{101}\not\subseteq \xyzrel{010}$, $\langle\SSS\rangle \not\subseteq \xyzrel{010}$ and $\langle \RRRp\rangle \not\subseteq \xyzrel{010}$.
\end{propo}
\begin{proof} Proposition \ref{finitecomplete} shows that $(\GF(1,1),\GF(2))\in \xyzrel{101}\backslash \xyzrel{010}$, and it is easy to obtain  $\GF(2)$ from $\GF(1,1)$ by an \RRRp\ move. We also have that $(\GF(),\GF(1))\in \langle \SSS\rangle\backslash \xyzrel{010}$.
\end{proof}

The third question of this type we must leave open in this paper; we do not know if $\xyzrel{110}$ is contained in $\xyzrel{001}$. We in fact conjecture it to be true in general, cf.~Conjecture \ref{tradein} below.

We now turn to the $\GC(\nn)$ graphs.  As we will see in the proof of Proposition \ref{cyclecomplete}, $\GC(\nn)$ defines a $C^*$-algebra that is isomorphic to $M_d(C(\TT))$, where $d = k + \sum_{ i = 1}^k n_i$.


\begin{propo}\label{cyclecomplete} 
Within the class of graphs $\{\GC(\nn)\}$, $\xyzrel{xy0}=\xyzrel{xy1}$ for all $\xyz{x},\xyz{y}\in\{\xyz{0},\xyz{1}\}$, and 
\begin{eqnarray*}
\xyzrel{00z}&=&\langle\OOO,\RRRp,\SSS\rangle\\
\xyzrel{01z}&=&\langle\OOO,\IIIm\rangle\ =\ \langle\KKKm\rangle\\
\xyzrel{10z}&=&\langle\OOO,\RRRp\rangle\\
\xyzrel{11z}&=&\langle\rangle\ =\ \langle\KKKp\rangle.
\end{eqnarray*}
Complete invariants for the eight types of isomorphisms are given by the quantities
\begin{center}\begin{tabular}{|c|c|c|c|c|c|c|c|}\hline\small\xyz{000}&\small\xyz{001}&\small\xyz{010}&\small\xyz{011}&\small\makebox[8mm][c]{\xyz{100}}&\small\xyz{101}&\small\makebox[18mm][c]{\xyz{110}}&\small\xyz{111}\\\hline
\multicolumn{2}{|c}{$\emptyset$}&\multicolumn{2}{|c}{$k$}&\multicolumn{2}{|c}{$\displaystyle k+\sum_{i=1}^k n_i$}&\multicolumn{2}{|c|}{$\{\sigma_k^j(\nn)\mid j\in\{1,\dots,k\}\} $}\\\hline
\end{tabular}
\end{center}
for each such graph  $\GC(\nn)=\GC(n_1,\dots,n_k)$, where $\sigma_k$ denotes the cyclic shift on $k$ elements to the right.\index{sk@$\sigma_k$}
\end{propo}

  \begin{proof}
  We first note that
  \[
  \left(\GC(\nn),\GC\left(\sum_{i=1}^kn_i+(k-1)\right)\right)\in \langle \OOO,\RRRp\rangle\subseteq \xyzrel{101}
  \]
  by  first using \RRRp\ $k-1$ times to reduce the cycle length to one, and collecting all sources by \OOO. The \RRRp\ moves  produce the $k-1$ additional sources indicated.  This reduces the study of \xyz{101}-equivalence to deciding  when $(\GC(m),\GC(n))\in\xyzrel{101}$.  We claim that $\GC(n) \cong M_{n+1}(C(\TT))$ which implies that $C^*(\GC(\nn)) \cong M_d(C(\TT))$, where $d = k + \sum_{i=1}^k n_i$.  Let $F_n$ be the graph
  \[
  \xymatrix{\bullet\ar@(ld,lu)[]&\bullet\ar[l]&\bullet\ar[l]&\cdots\ar[l]&\bullet\ar[l]
  }
  \] 
    obtained by attaching a strand of length $n$ to a single vertex supporting a loop.  Using the \RRRp\ move to reduce the length of the strand in $F_n$, we see that $C^*(\GC(n)) \cong C^*(F)$.  By \cite[Proposition~9.3]{gamt:imega}, $M_{n+1}(C(\TT))$ is isomorphic to $C^*(F_n)$.  Thus, $C^*(\GC(n)) \cong M_{m+1}(C(\TT))$.  We may now conclude that the class of the unit in $K_0(C^*(\GC(n)))$ under the identification $K_0(C^*(\GC(n))) \cong \ZZ$ is exactly $n+1$.  Therefore, $(\GC(m),\GC(n))\in\xyzrel{101}$ if and only if $m=n$ which is also guaranteed by $\xyz{100}$-equivalence. Continuing on with \SSS\ moves, we obtain
 \[
(\GC(\nn),\GC(0))\in \langle \OOO,\RRRp,\SSS\rangle\subseteq \xyzrel{001}
  \]
  which shows all claims in the \xyzrel{00z} cases as well.
  
The graph is regular with adjacency matrix
\[
A=\left[\begin{array}{ccccc}
0&n_k&n_{k-1}&\cdots&n_{1}\\
&&1&&\\
&&&\ddots&\\
&&&&1\\
&1&&&
\end{array}\right],
\]
so we can employ Theorem \ref{regularDQ}. The eventual kernel and range agrees with the kernel and range of $A$ itself, and we choose
\[
\begin{bmatrix}1&-n_k&\cdots&-n_1\end{bmatrix}
\]
as a basis for the kernel, and the standard basis vectors $\ee_2,\dots,\ee_{k+1}$ as a basis for the range.  $A^{-1}$ acts as a cyclic shift on the eventual range, and $\Delta_A=\ZZ^k$ ordered canonically. To compute $\pK{\uu}$ we write
\[
\begin{bmatrix}1&&\cdots&1\end{bmatrix}=\begin{bmatrix}0&n_k+1&\cdots&n_1+1\end{bmatrix}+\begin{bmatrix}1&-n_k&\cdots&-n_1\end{bmatrix}
\]

 Consequently, $(\GC(\nn),\GC(\mm))\in \langle\KKKp\rangle$ only when $\mm=\sigma_k^j(\nn)$ for some $j$ and $(\GC(\nn),\GC(\mm))\in \langle\KKKm\rangle$ precisely when the cycles given have the same length.  The argument is completed in a similar way as in the proof of Proposition \ref{finitecomplete}.
 \end{proof}

 We now move on to the graphs $\GO(c,n)$.  To answer the desired questions for these graphs, we start with a number of lemmas.
 
  \begin{lemma}\label{GOmoves} Let $c \geq 2$ and $n \geq 0$. Then
\begin{enumerate}[(1)]
\item $(\GO(c,n),\GO(c,0))\in\langle \OOO,\IIIm\rangle$
\item $(\GO(c,n),\GO(c,c(n+c-1))) \in\langle \OOO,\IIIp\rangle$
\end{enumerate}
\end{lemma}
  \begin{proof}
  We obtain the first claim by
  \[
\xymatrix@R=2mm@C=3mm{
&&&&&&&&&&&&&\bullet\ar@/_/[dddl]\ar@/^/[dddr]&\\
\bullet\ar[dd]_-{n}&&&&&\bullet\ar@/_/[ddl]_-{n}\ar@/^/[ddr]^-{n}&&&&&&&&\bullet\ar@/_/[ddl]\ar@/^/[ddr]\\
&\ar@{~>}^{\OOO}[rr]&&&&&&\ar@{~>}^{\OOO}[rr]&&&&&&\vdots&&&&&\\
\bullet\ar@(ld,rd)[]_-{c}&&&&\bullet\ar@(dl,ul)[]\ar@/^/[rr]&&\bullet\ar@(dr,ur)[]_-{c-1}\ar@/^/[ll]^-{c-1}&&&&&&\bullet\ar@(dl,ul)[]\ar@/^/[rr]&&\bullet\ar@(dr,ur)[]_-{c-1}\ar@/^/[ll]^-{c-1}&&&&&\\
&&&&&&&&&&&&&&\\
&&&&&&&&&&&&&&\\
&&&&&&&&&&&&&\ar@{~>}[uu]_-{\IIIm}\\\
&&&&&&&\bullet\ar@(ld,rd)[]_-{c}&\ar@{~>}[rr]_{\OOO}&&&&\bullet\ar@(dl,ul)[]\ar@/^/[rr]&&\bullet\ar@(dr,ur)[]_-{c-1}\ar@/^/[ll]^-{c-1}\
}
\]
with the \IIIm\ move applied at the vertex with only one loop, and $n$ empty sets in the partition.

For the next claim, we note first that the \OOO\ move splitting all loops into $c$ singletons gives a graph with $c$ instances of vertices receiving $n$ edges from a source, supporting a loop and receiving uniquely from all other vertices of this form. For $c=3$ this looks like 
 \[
 \xymatrix{&&\\\bullet\ar[r]^-{n}&\bullet\ar@(ur,ul)[]\ar@/^/[dr]\ar@/^/[rr]&&\bullet\ar@(ur,ul)[]\ar@/^/[ll]\ar@/^/[dl]&\bullet\ar_-{n}[l]\\
& \bullet\ar[r]_-{n}&\bullet\ar@(dr,dl)[]\ar@/^/[ur]\ar@/^/[ul]&\\&&&}
 \]
  where we have split the sources for legibility of the graph. Since all these $c$ vertices have exactly the same future, we may perform an \IIIp\ move to create $c-1$ sources each emitting to a select vertex among the $c$, and collect all the edges from sources there, as
 \[
 \xymatrix{&&\bullet\ar[dl]_-{n}&\\\bullet\ar[r]^-{n}&\bullet\ar@(ur,ul)[]\ar@(u,r)[]\ar@(u,l)[]&&\bullet\ar[ll]\ar@<-1mm>[ll]\ar@<1mm>[ll]&\\
& \bullet\ar[u]_-{n}&\bullet\ar[ul]\ar@<-1mm>[ul]\ar@<1mm>[ul]&\\&&&}
 \]
 for $c=3$. The total number of edges from sources is $c(c-1)$ from the other vertices arising from the outsplit, and $cn$ obtained as copies of the original $n$. This establishes (2).
 
 \end{proof}

We are now ready to analyze the $\GO(c,n)$ graphs.  As we will see in the proof of Proposition \ref{cuntzcomplete}, these graphs define $C^*$-algebras that are related to the Cuntz algebras  \cite{jc:scgi}.

 \begin{propo}\label{cuntzcomplete} 
Within the class of graphs $\{\GO(c,n)\}$, $\xyzrel{xy0}=\xyzrel{xy1}$ and $\xyzrel{00z}=\xyzrel{01z}$ for all $\xyz{x},\xyz{y},\xyz{z}\in\{\xyz{0},\xyz{1}\}$, and 
\begin{eqnarray*}
\xyzrel{0yz}&=&\langle\OOO,\SSS\rangle\ =\ \langle \KKKm\rangle\ = \ \langle\OOO,\IIIm\rangle\\
\xyzrel{10z}&=&\langle\OOO,\IIIp,\RRRp\rangle\\
\xyzrel{11z}&=&\langle\OOO,\IIIp\rangle\ =\ \langle\KKKp\rangle.
\end{eqnarray*}
Complete invariants for the eight types of isomorphisms are given by the quantities
\begin{center}\begin{tabular}{|c|c|c|c|c|c|c|c|}\hline\small\xyz{000}&\small\xyz{001}&\small\xyz{010}&\small\xyz{011}&\small\makebox[12mm][c]{\xyz{100}}&\small\xyz{101}&\small\makebox[18mm][c]{\xyz{110}}&\small\xyz{111}\\\hline
\multicolumn{4}{|c}{$c$}&\multicolumn{2}{|c}{$(c,(n+1)\ZZ_{c-1}^\times)$}&\multicolumn{2}{|c|}{$(c,\{(1+\tfrac nc)c^k\mid k\in\ZZ\})$}\\\hline
\end{tabular}
\end{center}
for each such graph  $\GO(c,n)$.
\end{propo}

We prove the proposition in two separate installments, dealing with the \xyz{0yz} and \xyz{11z} claims here, and the \xyz{10z} claims just after the proof of Theorem \ref{PSMMCEOR} below.\\

 \begin{proofoffirstpart}{Proposition \ref{cuntzcomplete}}
 We first show that $C^*(\GO(c,n))\cong M_{n+1}(\mathcal O_c)$. To see this, we use the $\OOO$ move to conclude that $C^*(\GO(c, n)) \cong C^*(E)$, where $E_{c,n}$ (depicted with $c=3$) 
 \[
 \xymatrix{
 \bullet\ar[dr]&\cdots&\bullet\ar[dl]\\
 &\bullet\ar@(l,d)[]\ar@(lu,ld)[]\ar@(d,r)[] \\&}
 \]
  is the graph obtained from $\GO(c,0)$ by attaching $n$ sources to the single vertex.  By \cite[Proposition~9.3]{gamt:imega}, $M_{n+1}(\mathcal O_c)$ is isomorphic to $C^*(F_{c,n})$, where $F_{c,n}$ 
  \[
  \xymatrix{
   \bullet\ar@(l,d)[]\ar@(lu,ld)[]\ar@(d,r)[] &\bullet\ar[l]&\cdots\ar[l]&\bullet\ar[l]\\&}
  \]
  is the graph obtained from $\GO(c,0)$ by attaching a head of length $n$ to the single vertex.  Using the \RRRp\ move to reduce the length of the strand of $F_{c,n}$, we see that $C^*(E_{c,n}) \cong C^*(F_{c,n})$, concluding the proof that $C^*(\GO(c, n)) \cong M_{n+1}(\mathcal O_c)$.

To establish the claims for \xyz{0yz}-equivalence, we consider
 \[
\langle\OOO,\IIIm\rangle\subseteq \xyzrel{011}\subseteq \langle\KKKm\rangle\subseteq\xyzrel{000}\subseteq\ \equiv\ \subseteq\langle\OOO,\IIIm\rangle
\]
with $\equiv$ defined by the ad hoc relation that $\GO(c,n)\equiv \GO(d,m)$ precisely when $c=d$. The three first inclusions hold in general by Theorems \ref{thm:outsplitting} and \ref{thm:insplitting2} and by Propositions  \ref{KKpremembers} and \ref{Kpartialinva}. Since $K_0(C^*(\GO(c,n))) \cong \ZZ_{c-1}$ we get that $\xyzrel{000}$ is contained in the ad hoc relation. The last inclusion follows from Lemma \ref{GOmoves}(1). We also get
\[
\equiv\ \subseteq\langle\OOO,\SSS\rangle\subseteq \xyzrel{000}
\]
by Theorems \ref{thm:outsplitting} and \ref{thm:source}, which establishes all the non-unital claims.

For the \xyzrel{11z} claims, we similarly consider
 \[
\langle\OOO,\IIIp\rangle\subseteq \xyzrel{111}\subseteq \xyzrel{110}\subseteq \langle\KKKp\rangle\subseteq\ \equiv\ \subseteq\langle\OOO,\IIIp\rangle
\]
where this time $\equiv$ is given by letting $\GO(c,n)\equiv \GO(d,m)$ precisely when $c=d$ and
\[
\{(1+\tfrac nc)c^k\mid k\in\ZZ\}=\{(1+\tfrac mc)c^k\mid k\in\ZZ\},
\]
which is equivalent to the statement
\[
\frac{c+n}{c+m}=c^k
\]
for some $k\in\ZZ$. Again the first three inclusions are general facts from Theorems \ref{thm:outsplitting}, \ref{thm:insplittingplus}, and \ref{Kpartialinva}.  To see that \KKKp\ is contained in the ad hoc relation, note that the graph is regular,
so we can employ Theorem \ref{regularDQ}. The eventual kernel and range agrees with the kernel and range of $A$ itself, and we choose
\[
\begin{bmatrix}c&-n\end{bmatrix}, \begin{bmatrix}0&1\end{bmatrix}
\]
as a basis for the kernel and the range, respectively.  $A$ acts as multiplication by $c$ on the eventual range, and $\Delta_A=\ZZ[\frac1c]$ ordered canonically and with $\delta_A=\times\frac1c$. To compute $\pK{\uu}$ we write
\[
\begin{bmatrix}1&1\end{bmatrix}=(1+\tfrac nc)\begin{bmatrix}0&1\end{bmatrix}+\tfrac 1c\begin{bmatrix}c&-n\end{bmatrix}
\]
As necessitated by our analysis of the \xyz{0yz}\ cases, $\DQ(\GO(c,n))$ does not depend on $n$, but $\DQp(\GO(c,n))$ contains the class $1+\tfrac nc$ which does. The only automorphisms of $\DT(\GO(c,n))$ are the actions $x\mapsto c^kx$, so $(\GO(c,n),\GO(d,m))\in\langle\KKKp\rangle$ shows that $(c,n)\equiv (d,m)$ as stipulated.

To show that $(\GO(c,n),\GO(d,m))\in\langle\OOO,\IIIp\rangle$ when $(c,n)\equiv (d,m)$, it is enough to show that for all $k \in \ZZ$, for all nonnegative integers $m, n$, if  $(c+m)/(c+n)=c^k$, then $(\GO(c,n),\GO(c,m))\in\langle\OOO,\IIIp\rangle$.  First note that  when $(c+m)/(c+n)=c$, we have that $m=c(c+n)-c=c(c+n-1)$, and the claim follows by Lemma \ref{GOmoves}(2).  Let $k \geq 1$.  Assume that for all nonnegative integers $m,n$, if $(c+m)/(c+n)=c^k$, then $(\GO(c,n),\GO(c,m))\in\langle\OOO,\IIIp\rangle$.  Assume $(c+m)/(c+n)=c^{k+1}$.  Then 
 \[
\frac{c+m}{ c + c(c+n-1) } = \frac{c+m}{c^2+cn} = c^k
 \]
 which implies $(\GO(c,m),\GO(c, c(c+n-1)))$ is an element of $\langle\OOO,\IIIp\rangle$.  By Lemma \ref{GOmoves}(2), $(\GO(c,n),\GO(c, c(c+n-1)))$ is an element of $\langle\OOO,\IIIp\rangle$.  Hence, $(\GO(c,n),\GO(c,m))$ is an element of $\langle\OOO,\IIIp\rangle$.  Consequently, for all $k \geq 1$ and for all nonnegative integers $m, n$, if $(c+m)/(c+n)=c^k$, then $(\GO(c,n),\GO(c,m))\in\langle\OOO,\IIIp\rangle$.  If $(c+m)/(c+n)=c^{-k}$, then $(c+n)/(c+m)=c^k$, then the previous case can be applied to get $(\GO(c,n),\GO(c,m))\in\langle\OOO,\IIIp\rangle$.
\end{proofoffirstpart}

 \section{$\SL$-equivalence and augmented standard forms}\label{GLandfriends}
 
 In preparation for the completion of our analysis of the graphs $\GO(c,n)$, and for many other applications to come, we now repeat results and notation from 
 \cite{segrerapws:ccuggs}  and \cite{seaseer:gciugc} which provide useful variations of the fundamental notions of standard form and $\GL$-equivalence as summarized already in Section \ref{classummary}. 
 
We specialize the notion of $\GL$-equivalence (resp. $\GLp$-equivalence)  to say that two $\GL$-equivalent  and thus necessarily matched graphs are $\SL$-equivalent  (resp. $\SLp$-equivalent) when the matrices $U,V$ in \eqref{GLdef}
may be chosen such that all diagonal blocks have determinant  $1$. And we generalize the notion of canonical form to allow a source by replacing (vii) by\index{SL-equivalence@$\SL$-equivalence}\index{SLp-equivalence@$\SLp$-equivalence}
 \begin{enumerate}[(i')]\addtocounter{enumi}{6}
 \item Every regular vertex except at most one supports a loop, and if one regular vertex supports no loop, it must be a source.
 \end{enumerate}
 We say that a graph is in \emph{augmented canonical form}\index{augmented!canonical form} when it satisfies (i)-(vi) from Section \ref{classummary} along with (vii'), and say that a matched pair of graphs in augmented canonical form is in \emph{augmented standard form}.\index{augmented!standard form}

 We first note that (augmented) standard forms may be obtained, in an algorithmic way, for all matched graphs using \xyz{x01}-invariant moves. Combined with Proposition \ref{getmatched}, this shows that we can always reduce to these cases when checking for \xyz{x0z}-invariance:  
 
 \begin{lemma}\mbox{}\label{getstd}
 \begin{enumerate}[(i)]
 \item For any matched pair of graphs $(E,F)$ there is an algorithm that produces a pair of graphs $(E',F')$ in standard form so that 
 \[
 (E,E'),(F,F')\in \langle \OOO,\IIIm,\RRRp,\SSS\rangle \subseteq \xyzrel{001}
 \]
 \item  For any matched pair of graphs $(E,F)$ there is an algorithm that produces a pair of graphs $(E',F')$ in augmented standard form so that 
 \[
 (E,E'),(F,F')\in \langle \OOO,\IIIp,\RRRp\rangle \subseteq \xyzrel{101}
 \] 
\end{enumerate}
 \end{lemma}
 \begin{proof} Statement (ii) is in \cite{seaseer:gciugc}, and removing the source with \OOO\ and \SSS\ moves to obtain (i) from this is straightforward.
 \end{proof}
 
 This result shows that we may reduce to the case of pairs in standard form at little expense. It is very easy to give examples of graphs, for instance $\GF(1)$, which cannot be placed in canonical form using $\xyz{x10}$-invariant moves, and it is similarly impossible to arrange for canonical forms using $\xyz{101}$-invariant moves, as the example $\GC((0,0))$ demonstrates. Before moving on, we state a reformulation of key results from  \cite{segrerapws:gcgcfg}, \cite{segrerapws:ccuggs}, and \cite{seaseer:gciugc} in this direction.
 
For compact reference, we use the notation $\standard$\index{std@$\standard$} for the set of pairs $(E,F)$ that are in standard form, and $\standardp$\index{std+@$\standardp$} for the set of pairs  in augmented standard form. Note that this is not a reflexive relation.

 \begin{theor}\label{masterGLSL}\mbox{}
\begin{enumerate}[(i)]
\item $\GL\subseteq  \langle \OOO,\IIIm,\RRRp,\SSS,\CCCp,\PPPp\rangle\subseteq \xyzrel{000}$, and 
\[
\standard\cap \xyzrel{000}=\standard\cap \langle \OOO,\IIIm,\RRRp,\SSS,\CCCp,\PPPp\rangle=\standard\cap\GL
\]
\item $\SL\subseteq \langle \OOO,\IIIm,\RRRp,\SSS\rangle\subseteq \xyzrel{001}$.
\item $\GLp\subseteq \langle \OOO,\IIIp,\RRRp,\CCCp,\PPPp\rangle\subseteq \xyzrel{100}$, and 
\[
\standardp\cap \xyzrel{100}=\standardp\cap  \langle \OOO,\IIIp,\RRRp,\CCCp,\PPPp\rangle=\standardp\cap\GLp
\]
\item $\SLp\subseteq  \langle \OOO,\IIIp,\RRRp\rangle\subseteq \xyzrel{101}$.
\end{enumerate}
Within the class of finite graphs, we further have
\begin{enumerate}[(i')]\addtocounter{enumi}{1}
\item
$
\standard\cap \langle \OOO,\IIIm,\RRRp,\SSS\rangle=\standard\cap\SL
$
\addtocounter{enumi}{1}
\item
$
\standardp\cap \langle \OOO,\IIIp,\RRRp\rangle=\standardp\cap\SLp
$
\end{enumerate}

\end{theor}

 \section{Classical symbolic dynamics}\label{twosided}
 
 In this section we summarize results rooted in  symbolic dynamics that are foundational for our work, and present them in a form which serves our purposes. The monograph \cite{dlbm:isdc} comes highly recommended as a resource for this topic, and we will provide precise references for the several notions we will, in the interest of brevity, not define explicitly here. 
 
 When a graph $E$ is finite, the two-sided shift space
 \[
 \XX E=\{(e_n)\in(E_1)^\ZZ\mid r(e_n)=s(e_{n+1})\}
 \]
 becomes a \emph{shift of finite type},\index{shift of finite type} thoroughly studied in symbolic dynamics as a key example of a compact space with a homeomorphism, which is given by the shift map\index{shift map}
 \[
 \sigma_E((e_n)_{n\in\ZZ})=(e_{n+1})_{n\in\ZZ}
 \]
as laid out in \cite[Definition 2.2.5 and Proposition 2.2.6]{dlbm:isdc}.  We note that any edge not contained in an infinite path does not contribute to $\XX E$, and indeed one often restricts attention in symbolic dynamics to graphs that are \emph{essential},\index{essential!graph} containing neither sinks nor sources. We use the term \emph{essential part}\index{essential!part} to denote the subgraph obtained by deleting all edges and vertices that are not on infinite paths. This can be obtained by deleting all sinks and sources successively, until no such vertices are left.
 
 We will need the notions of primitive and irreducible graphs:
 
 \begin{defin}[{\cite[Definitions 2.2.13 and 4.5.7, Theorem 4.5.8]{dlbm:isdc}}] \index{primitive graph}\index{irreducible graph} \label{dfn-irreducible}
 An essential graph $E$ is \emph{primitive} if there is an $N>0$ so that there is a path of length $N$ between any two (not necessarily distinct) vertices in $E^0$. It is  \emph{irreducible} when there is a path of positive length between any pair of (not necessarily distinct) vertices.
 \end{defin}
 
 Graphs of the form $\GC(\underline{0})$ are always irreducible, but only primitive when there is a single vertex. The difference may be described in terms of periods as in \cite[Definition 4.5.2]{dlbm:isdc}.

 We note that the \OOO\ move preserves essentiality, and indeed it was invented by Williams as a way to change the underlying essential graph $E$ into another graph $E_O$ in such a way that the shift spaces, $(\XX E,\sigma_E)$ and $(\XX {E_O},\sigma_{E_O})$, are related in a controlled manner. Our \IIIm,\IIIp, and \RRRp\ moves are not consistent with essentiality, so according to tradition in symbolic dynamics we will adjust them to moves \III\ and \RRR\ as follows. We also introduce \CCC,\PPP, and \KKK\ moves in line with our remaining moves; these are not standard in symbolic dynamics.
 
 \begin{defin}
 For a pair $(E,F)$ of regular and essential graphs, we say that 
 \begin{itemize}
 \item $F$ is obtained from $E$ by an \III\ move if  $F$ is obtained from $E$ by an \IIIm\ moves with no empty sets in the partition;
 \item $F$ is obtained from $E$ by an \RRR, \CCC\ or \PPP\ move, respectively, if  $F$ is obtained from $E$ by first performing an \RRRp, \CCCp, or \PPPp\ move, respectively, and then passing to the essential part;
  \item $F$ is obtained from $E$ by a \KKK\ move if  $\DT(E)\simeq\DT(F)$.
 \end{itemize}
  \end{defin}
 
 It would be possible, but somewhat circuitous, to define the \III\ move in parallel with the \RRR, \CCC\ and \PPP\ moves -- allowing empty sets in the partitions, and then deleting the resulting sources. Defining it as a special case of \IIIm\ has the advantage of leading directly to the first observation below.
 
 \begin{lemma}\label{essinv} For regular and essential graphs, we have
 \begin{eqnarray*}
 \langle\III\rangle&\subseteq&\langle\IIIm\rangle\ \subseteq\ \xyzrel{011}\\
 \langle\RRR\rangle&\subseteq&\langle\RRRp,\OOO,\SSS\rangle\ \subseteq\ \xyzrel{001}\\ 
 \langle\CCC\rangle&\subseteq&\langle\CCCp,\OOO,\SSS\rangle\ \subseteq\ \xyzrel{000}\\ 
 \langle\PPP\rangle&\subseteq&\langle\PPPp,\OOO,\SSS\rangle\ \subseteq\xyzrel{000}\\ 
 \langle\KKK\rangle&\subseteq&\xyzrel{000}
 \end{eqnarray*}
  \end{lemma}  
 
 \begin{proof}
 For the $\RRR,\CCC,$ and $\PPP$ moves we note that since there are no sinks, passing to the essential part can be obtained by $\OOO$ and $\SSS$ moves. They are both $\xyz{001}$-invariant, passing to the invariance as noted. The claim for \KKK\ follows from Proposition \ref{Kpartialinva} by noting that for regular graphs $E$ and $F$, $\DQ(E) \simeq \DQ(F)$ if and only if $\DT(E)\simeq\DT(F)$ as a consequence of Theorem \ref{regularDQ}.
 \end{proof}
 
 The following result is a superposition of a classical result by Williams (\cite{rfw:csft}), showing the equivalence of (i)--(iii), with the more recent characterization of \xyz{011}-equivalence by Carlsen and Rout (\cite{tmcjr:dgigc}) which complement the invariance observations that $\langle \OOO,\III\rangle\subseteq \xyzrel{011}$ by showing in fact that these relations agree. Conjugacy\index{conjugacy} between two shift spaces simply comes out to allowing a shift-commuting homeomorphism, and strong shift equivalence\index{strong shift equivalence} is a matrix relation introduced by Williams as fully explained in \cite[Section 7.2]{dlbm:isdc}.
 
 \begin{theor}[\cite{rfw:csft},\cite{tmcjr:dgigc}]\label{WCR}
  For a pair $(E,F)$ of regular and essential graphs, the following are equivalent
  \begin{enumerate}[(i)]
\item $(E,F)\in\langle \OOO,\III\rangle$;  
  \item $\XX E$ and $\XX F$ are conjugate;
  \item $\mathsf{A}_E$ and $\mathsf{A}_F$ are strong shift equivalent;
  \item $(E,F)\in\xyzrel{011}$.
    \end{enumerate}
  \end{theor}

  It can be very difficult to decide when two graphs/shifts of finite type/matrices satisfy these equivalent conditions, and no procedure is known for doing so. There are many invariants known, however, which may sometimes be used to show that two objects do not agree in this sense. We will need to work with two:
  
  \begin{defin}\label{someinvs}\index{spectrum away from zero}\index{Bowen-Franks invariant}\index{spx@$\spaz(E)$}\index{BF@$\BF(E)$}
  When $E$ is a regular and essential graph, we define the \emph{spectrum away from zero}, $\spaz(E)$, as the set of nonzero eigenvalues of ${\mathsf A}_E$, counted with multiplicity. We define the \emph{Bowen-Franks invariant}, $\BF(E)$, as the pair
 \[
 (\cok(\idmatrix-\Asf_{E}),\det(\idmatrix-\Asf_{E}))= (\cok \Bsf_{E},\det(-\Bsf_{E}))
  \]
  \end{defin}

The next key result also superposes a classical notion from Williams with a more recent $C^*$-algebraic result. \emph{Shift equivalence}\index{shift equivalence} is a relation on matrices defined by Williams as a candidate for a more computable invariant for conjugacy (see \cite[Definition 7.3.1]{dlbm:isdc}).  By work of Krieger, (i) and (ii) are known to be equivalent in general. The backward implication from (iii) to (i), which we have in fact proved as a special case of invariance (Proposition \ref{KKpremembers}), was also known to Krieger. 

The forward implication (i) to (iii) is open in general, but was proved by Bratteli and Kishimoto for the subclass of primitive shifts of finite types. Recent work by Szab\'o and the first named author allows the generalization to irreducible shifts by a reduction to the primitive case.

   \begin{theor}[\cite{obak:tsacsa},\cite{segs}]\label{KBKES}
  For a pair $(E,F)$ of regular and essential graphs, the following are equivalent
  \begin{enumerate}[(i)]
\item $(E,F)\in\langle \KKK\rangle$;
  \item $\mathsf{A}_E$ and $\mathsf{A}_F$ are shift equivalent;
  \end{enumerate}
  and are implied by 
  \begin{enumerate}[(i)]\addtocounter{enumi}{2}
  \item $(E,F)\in\xyzrel{010}$.
    \end{enumerate}
    When the graphs are irreducible, all statements are equivalent.
  \end{theor}
  
It is a deep result by Kim and Roush (\cite{khkfwr:dse}) that the equivalent conditions (i)--(ii) are decidable by terminating procedures. The invariants in Definition \ref{someinvs} remain invariant for this coarser relation.

The notion of \emph{flow equivalence}\index{flow equivalence} was introduced as a means of studying geodesic flows (in umbilical form already in \cite{mmgah:sd}). Using the so-called \emph{suspension flows}, it defines a coarse equivalence relation on all shift spaces. By 
a fundamental result by Parry and Sullivan, this relation is precisely $\langle\xyzrel{011},\RRR\rangle$, so we will not provide details of it here. Matsumoto and Matui  showed that in the irreducible case, this notion translates exactly to \xyz{001}-equivalence of the $C^*$-algebras. Work of the first named author with Carlsen, Ortega and Restorff generalized this to regular and essential graphs, so we have:

   \begin{theor}[\cite{bpds:tifos}, \cite{kmhm:coetmscka},\cite{ceor:coe}, \cite{mbdh:pbeim}]\label{PSMMCEOR}
  For a pair $(E,F)$ of regular and essential graphs, the following are equivalent
  \begin{enumerate}[(i)]
\item $(E,F)\in\langle \OOO,\III,\RRR\rangle$;  
  \item $\XX E$ and $\XX F$ are flow equivalent;
  \item $(E,F)\in\xyzrel{001}$.
    \end{enumerate}
and when $(E,F)$ are matched, they  are  implied by 
    \begin{enumerate}[(i)]\addtocounter{enumi}{3}
  \item $(E,F)\in\SL$.
  \end{enumerate}
When $(E,F)$ is in canonical form, all statements are equivalent.
  \end{theor}
  
  Franks proved that the Bowen-Franks invariant is in fact complete for flow equivalence for irreducible systems that have infinitely many elements. The general case is much more complicated, and was solved in \cite{mbdh:pbeim}. The notions of canonical form and \SL-equivalence are as given in our Sections \ref{classummary} and \ref{GLandfriends}, but may of course be simplified by the assumption of regularity (rendering (ii) and (iv) redundant) in this case. Our notions are generalizations of notions introduced by Boyle and Huang, and since the results of \cite{mbdh:pbeim} precede our notions by nearly two decades,  this order of presentation is an anachronism chosen again for brevity.
  
We record easy examples that show that the \CCC\ and \PPP\ moves have no nice properties in terms of symbolic dynamics, and hence should be thought of as belonging to the realm of operator algebras proper.
  
    \begin{propo}\label{psex}
  $\xyzrel{000}\not\subseteq \xyzrel{001}\cup\xyzrel{010}$,   $\langle\CCC\rangle\not\subseteq \xyzrel{001}\cup\xyzrel{010}$, and $\langle\PPP\rangle\not\subseteq \xyzrel{001}\cup\xyzrel{010}$.
  \end{propo}
  \begin{proof}
  We have
  \[\xymatrix{
E_1:&\bullet\ar@(l,d)[]\ar@(dl,dr)[]&\ar@{~>}[r]^-{\CCC}&&\bullet\ar@(l,d)[]\ar@(dl,dr)[]\ar@/^/[r]&\bullet \ar@/^/[r]\ar@(dl,dr)[]\ar@/^/[l]&\bullet\ar@/^/[l]\ar@(dl,dr)[]&:E_2\\&&&}
\]
with $E_1=\GO(2,0)$. It is seen directly that
\[
\BF(E_1)=(0,-1)\not=(0,1)=\BF(E_2)
\]
and we also have
\[
\spaz(E_1)=\{2\}\not=\spaz(E_2)
\]
where the set on the right hand side has three elements, namely the roots (all different) of $t^3 - 4t^2 + 3t + 1$. It follows that \CCC\ is neither \xyz{001}- nor \xyz{010}-invariant. By Lemma \ref{essinv}, we also have   $\xyzrel{000}\not\subseteq \xyzrel{001}\cup\xyzrel{010}$.

We argue similarly for \PPP. A minimal example is
  \[\xymatrix{&&&\\&\bullet\ar@(ur,ul)[]\ar[d]&&&\bullet\ar@(ur,ul)[]\ar[d]\ar@<-0.7mm>@/^/[drr]\ar@<0.7mm>@/^/[drr]\\
F_1:&\bullet\ar@(l,d)[]\ar@(dl,dr)[]&\ar@{~>}[r]^-{\PPP}&&\bullet\ar@(l,d)[]\ar@(dl,dr)[]\ar@/^/[r]&\bullet \ar@/^/[r]\ar@(dl,dr)[]\ar@/^/[l]&\bullet\ar@/^/[l]\ar@(dl,dr)[]&:F_2\\&&&}
\]
We have $\spaz(F_i)=\{1\}\cup \spaz(E_i)$ showing that $(F_1,F_2)\not\subseteq\xyzrel{010}$ as above, but the Bowen-Franks groups agree for $F_1$ and $F_2$. Using \SL-invariance instead easily establishes that $(F_1,F_2)\not\subseteq\xyzrel{001}$. 
  \end{proof}
  
  The last example of this section is anything but easy.
  
    \begin{examp}\label{kimroushex} (The Kim-Roush example) 
Kim and Roush provide in \cite{khkfwr:wcfis} the first example of a pair of shift equivalent primitive shifts of finite type which fail to be strong shift equivalent:
\[
\mathsf{A}:=\begin{bmatrix}0&0&1&1&3&0&0\\1&0&0&0&3&0&0\\0&1&0&0&3&0&0\\0&0&1&0&3&0&0\\0&0&0&0&0&0&1\\1&1&1&1&10&0&0\\1&1&1&1&0&1&0\end{bmatrix}\qquad
\mathsf{B}:=\begin{bmatrix}
0&0&1&1&3&0&0\\1&0&0&0&0&0&0\\0&1&0&0&0&0&0\\0&0&1&0&0&0&0\\0&0&0&0&0&0&1\\4&5&6&3&10&0&0\\4&5&6&3&0&1&0
\end{bmatrix}.
\]
\end{examp}
  
  \begin{propo}\label{kimroush}
  $\xyzrel{100}\cap\xyzrel{010}\cap\xyzrel{001}\not\subseteq \xyzrel{011}$
  \end{propo}
  \begin{proof}
  Paired with Theorems 
\ref{WCR} and \ref{KBKES}, Example   \ref{kimroushex} shows directly that $\xyzrel{010}\not\subseteq \xyzrel{011}$. Referring to the graphs with adjacency matrices $\mathsf{A}$ and $\mathsf{B}$  by $E$ and $F$ respectively, we compute
 \[
 (K_0(C^*(E)),[1]) \cong  (K_0(C^*(F)),[1]) \cong (\ZZ_{99},66)
 \]
 and
 \[
\BF(E)\cong \BF(F)\cong (\ZZ_{99},-1)
\]
 which shows that also $(E,F)\in \xyzrel{100}\cap\xyzrel{001}$. 
   \end{proof}
  
The result above shows that one may not always combine one gauge-preserving $*$-isomorphism and one diagonal-preserving $*$-isomorphism into a $*$-isomorphism preserving both. But in fact,
using the observation by Mike Boyle proved in the recent preprint \cite{mb:seifesft} that shift equivalence implies flow equivalence, we observe that a gauge-preserving $*$-isomorphism may always be replaced by a diagonal-preserving one.

\begin{propo}\label{pre-mike}
Among regular and essential graphs, $\xyzrel{010}\subseteq\langle\KKKm\rangle\subseteq \xyzrel{001}$.
\end{propo}
\begin{proof}
If $E,F$ are regular and  essential graphs with $(E,F)\in\xyzrel{010}$, then as discussed just before Theorem \ref{KBKES}, we get that the associated shifts of finite type are shift equivalent, and by \cite{mb:seifesft} we conclude that they are also flow equivalent, which shows $(E,F)\in\xyzrel{010}$ by Theorem \ref{PSMMCEOR}.
\end{proof}

\section{Non-invertible symbolic dynamics}\label{onesided}

 Understanding  \xyz{1yz}-equivalence in a dynamical setting requires work 
 with the one-sided version
  \[
 \XXp E=\{(e_n)\in(E_1)^{\NN_0}\mid r(e_n)=s(e_{n+1})\}
 \]
of shifts of finite type.  Here, edges can contribute precisely when they are contained in paths that are one-sided infinite, so the relevant condition to impose on $E$ is regularity, to avoid sinks.   The one-sided shift space $\XXp E$ is equipped with the shift map $\sigma_E \colon \XXp E \to \XXp E$
$$
\sigma_E (( e_n)_{n \in \NN_0 } ) = ( e_{n+1} )_{n\in \NN_0}.
$$
Such systems are  considerably less studied among dynamicists due to the fact that $\sigma_E$ is no longer invertible, but still support a substantial literature, which has been revisited and augmented due to operator algebraic questions.
We refer the reader to \cite{kab:insdc} for a full introduction, including the observation that the most fundamental notion of sameness amongst one-sided shifts of finite type, \emph{one-sided conjugacy}, is finer even than our \xyz{111}-equivalence and hence is not in the scope of this paper. Instead, we recall some definitions by Matsumoto (see \cite{km:oetmscka} and \cite{km:coefemscacka}).

\begin{defin}\index{continuous orbit equivalence}\index{eventual conjugacy}
Let $E$ and $F$ be regular graphs.  The one-sided shift spaces $\XXp E$ and $\XXp F$ are said to be \emph{one-sided orbit equivalent} when there exist a homeomorphism $h:\XXp E\to \XXp F$ and maps $k,\ell: \XXp E\to \NN_0$ and $k',\ell':\XXp F\to \NN_0$ so that
\begin{gather*}
\sigma_F^{k(x)}(h(x))=\sigma_F^{\ell(x)}(h(\sigma_E(x))\\ \sigma_E^{k'(y)}(h^{-1}(y))=\sigma_E^{\ell'(y)}(h^{-1}(\sigma_F(y))
\end{gather*}
for all $x\in \XXp E$ and $y\in \XXp F$. 

We say that $\XXp E$ and $\XXp F$ are \emph{continuous orbit equivalent} when all maps $k,\ell,k',\ell'$ are chosen to be continuous. We say that they are \emph{eventual conjugate} when 
furthermore we may take $k = \ell+1$ and $k' = \ell'+1$.
\end{defin}

Carlsen and Rout characterized eventual conjugacy as \xyz{111}-equivalence in \cite{tmcjr:dgigc}, and Brix proved that this notion is implemented by the \OOO\ and \IIIp\ moves.

   \begin{theor}[\cite{tmcjr:dgigc},\cite{kab:bssebiec}]\label{B}
  For a pair $(E,F)$ of regular graphs, the following are equivalent
  \begin{enumerate}[(i)]
\item $(E,F)\in\langle \OOO,\IIIp\rangle$;
   \item $\XXp E$ and $\XXp F$ are eventually conjugate;
    \item $(E,F)\in\xyzrel{111}$.
    \end{enumerate}
  \end{theor}

The following result will be proved in forthcoming work by Brix and the second author.

\begin{propo}(\cite{kaber:use})\label{pre-mike-unital}
Among regular graphs, $\xyzrel{110}\subseteq\langle\KKKp\rangle\subseteq \xyzrel{101}$.
\end{propo}

There is no reason known to us why the remaining results that we summarize in this section should not hold much more  generally, but at  this time they have only been shown as indicated. 

Completing a program initiated by Matsumoto (and executed in the case of an irreducible essential part), the authors characterized continuous orbit equivalence as \xyz{101}-equivalence with Arklint in \cite{seaseer:dcdpgc}. This was also obtained concurrently and independently by Carlsen and Winger in 
\cite{tmcmlw:oegigg}. It is a key question, which we must leave open here, whether or not \xyzrel{101} is generated by moves in general, but we may answer it positively in the irreducible case (see Theorem~\ref{MCEORhere}).  This result is new in the form given here. We will discuss potential generalizations further in Section \ref{regcase}.

   \begin{theor}\label{MCEORhere}
  For a pair $(E,F)$ of regular graphs with irreducible essential parts, the following are equivalent
  \begin{enumerate}[(i)]
\item $(E,F)\in\langle \OOO,\IIIp,\RRRp\rangle$;
   \item $\XXp E$ and $\XXp F$ are continuously orbit equivalent;
  \item $(E,F)\in\xyzrel{100}\cap \xyzrel{001}$;
  \item $(E,F)\in\xyzrel{101}$.
    \end{enumerate}
and  when $(E,F)$ are matched, they are implied by either of
    \begin{enumerate}[(i)]\addtocounter{enumi}{4}
  \item $(E,F)\in \SL\cap \GLp$; 
  \item $(E,F)\in \SLp$. 
    \end{enumerate}
    When $(E,F)$ is in augmented standard form, all statements are equivalent.
  \end{theor}\fxnote{Proof can now be shortened}
  \begin{proof}
By  \cite{seaseer:dcdpgc}, (ii)$\Longleftrightarrow $(iv).  Therefore, it is enough to prove the rest of the statements are equivalent to (iv).  We first claim that we may assume $C^*(E) \cong C^*(F)$.  By Theorems \ref{thm:outsplitting}, \ref{thm:insplittingplus}, and \ref{thm:reduction-plus}, $\langle \OOO,\IIIp,\RRRp\rangle \subseteq \xyzrel{101}$.  Therefore, (i) implies $C^*(E) \cong C^*(F)$.  By \cite[Corollary~3.6]{segrerapws:ccuggs}, (v) or (vi) implies that $C^*(E) \cong C^*(F)$.  It is clear that (iii) or (iv) implies $C^*(E) \cong C^*(F)$.  Thus, we may assume $C^*(E) \cong C^*(F)$ since each the statement of the theorem implies the graph $C^*$-algebras are isomorphic.

We now show that we may reduce to the case that $(E, F)$ is in augmented standard form.  Since $C^*(E) \cong C^*(F)$, by \cite[Lemma~5.2]{seaseer:gciugc}, there are graphs $E'$ and $F'$ such that $(E', F')$ are in augmented standard form for which $(E,E')$ and $(F,F')$ are elements of $\langle \OOO,\IIIp,\RRRp\rangle$.  Since $\langle \OOO,\IIIp,\RRRp\rangle \subseteq \xyzrel{101}$, we now have $(E,F)\in\langle \OOO,\IIIp,\RRRp\rangle$ if and only if $(E',F')\in\langle \OOO,\IIIp,\RRRp\rangle$, $(E,F)\in\xyzrel{100}\cap \xyzrel{001}$ if and only if $(E',F')\in\xyzrel{100}\cap \xyzrel{001}$, and  $(E,F)\in\xyzrel{101}$ if and only if  $(E',F')\in\xyzrel{101}$.  Thus, we may assume $(E, F)$ is in augmented standard form.

We now show that all statements are equivalent under the assumption that $(E, F)$ is in augmented standard form.  We have already noted that  (ii)$\Longleftrightarrow$ (iv) and since $\langle \OOO,\IIIp,\RRRp\rangle \subseteq \xyzrel{101}$, (i) implies (iv).  And it is clear (iv) implies (iii).  Now (iii) implies (v) by \cite[Corollary~6.3]{ceor:coe}, \cite[Theorem~3.1]{mb:fesftpf}, and \cite[Theorem~14.6]{segrerapws:ccuggs}.  By \cite[Theorem~5.4]{seaseer:gciugc}, (vi) implies (i).  Hence, we are left to show (v) implies (vi).

The asserted $\mathsf{GL}^+$-equivalence shows that \begin{equation}U\BB_EV=\BB_F\label{intertwine}\end{equation} with \begin{equation}\label{units}U\DD_E-\DD_F\in \operatorname{im}\BB_F\end{equation}  with $\det U,\det V\in\{\pm 1\}$.
We aim to find $U'$,$V'$ with  $\det U'=\det V'=1$ intertwining $\BB_E$ and $\BB_F$ so that \eqref{units} remains true, since this entails that the matrices are $\operatorname{SL}^+$-equivalent.  

Note that $\BB_E= \begin{bmatrix} 0 \end{bmatrix}$ if and only if $\BB_F=\begin{bmatrix} 0 \end{bmatrix}$, and if $\BB_E  = \begin{bmatrix} 0 \end{bmatrix}$, then $D_E = D_F$ since $\mathrm{im} \BB_F = 0$.  So, if $\BB_E= \begin{bmatrix} 0 \end{bmatrix}$, then $A_E = A_F$.  Assume $\BB_E$ is not equal to $\begin{bmatrix} 0 \end{bmatrix}$.  Then $\BB_E$ and $\BB_F$ have size at least $3$ since $(E, F)$ is in augmented standard form.  By the Smith normal form, we can find $U_0,V_0$ in $\mathrm{GL}(n, \ZZ)$ such that 
\[
U_0\BB_EV_0=\begin{pmatrix}d_1&&&&\\&d_2&&&\\&&\ddots&&\\&&&& d_n\end{pmatrix},
\]
where all $d_i$'s are nonnegative and arranged so that $d_i\mid d_{i+1}$.  It follows from the definition of augmented standard form that $d_1=d_2=1$, and we can use this to simultaneously change the sign on both of the matrices $U$ and $V$ at our discretion.  Indeed, if we let $U_1$ and $V_1$ be the appropriately sized matrices with the block $\left(\begin{smallmatrix}0&1\\1&0\end{smallmatrix}\right)$ in the top left diagonal block, and then ones down the diagonal, we have $\det U_1=\det V_1=-1$. We have 
\[
U_1U_0\BB_E V_0V_1=U_0\BB_E V_0
\]
so
\[
UU_0^{-1}U_1U_0\BB_E V_0V_1V_0^{-1}V=U\BB_E V=\BB_F
\]
and we retain that 
\[
UU_0^{-1}U_1U_0\DD_E-\DD_F\in  \operatorname{im}\BB_F
\]
because
\[
U_1U_0\DD_E-U_0\DD_E\in  \operatorname{im}U_0\BB_E=\operatorname{im}U_0U^{-1}\BB_F.
\]
Thus we may replace $U,V$ by $UU_0^{-1}U_1U_0,VV_0^{-1}V_1V_0$ to change both determinants without changing anything else.  We can also use this procedure to assume $\det(U)=1$ as if $\det(U)=-1$, we may replace $U$ and $V$ with $UU_0^{-1}U_1U_0,VV_0^{-1}V_1V_0$ and note $\det(UU_0^{-1}U_1U_0)=1$.

Now assume $d_n>0$. In this case the assumption of SL-equivalence entails that
\[\det \BB_E=\det \BB_F\not=0
\]
(this is the only time we use that assumption). 
 We conclude that $\det U=\det V$, and thus we can arrange as in the previous paragraph that $\det U=\det V=1$.  When  $d_n=0$ and $\det V=-1$ we replace $V$ by $V_0V_2V_0^{-1}V$ where
\[
V_2=\begin{pmatrix}1&&&\\&\ddots&&\\&&1&\\&&&-1\end{pmatrix}
\]
changes the sign of the determinant without changing anything else.  Thus, we have proved that $A_E$ and $A_F$ are $\mathrm{SL}^+$-equivalent.  So, (v) implies (vi) which allows us to conclude that all six statements are equivalent.  
  \end{proof}
  
  We are now ready to complete the analysis of the graphs $\GO(c,n)$.\\
  
  \begin{proofofsecondpart}{Proposition \ref{cuntzcomplete}}
  For the final claims,
we argue via
\[
\langle \OOO,\IIIp,\RRRp\rangle\subseteq \xyzrel{101}\subseteq \xyzrel{100}\subseteq\ \equiv\ \subseteq \langle \OOO,\IIIp,\RRRp\rangle
\]
where our final ad hoc relation is defined by saying that $\GO(c,n)\equiv \GO(d,m)$ precisely when $c=d$ and $n+1,m+1$ --- considered as elements of $\ZZ_{c-1}$ ---  differ by a multiple of an invertible element.
This follows from \xyz{100}-equivalence since $m+1$ and $n+1$, respectively, define the class of the units in the $K_0$-group.

To show the rightmost inclusion, we prepare to appeal to Theorem \ref{MCEORhere}. To simplify notation, let $q=c-1$. We first outsplit to  
\[
\xymatrix{&\bullet\ar[dl]_-{n}\ar[dr]^-{n}&\\
\bullet\ar@(d,l)[]\ar@/^/[rr]^q&&\bullet\ar@(d,r)[]_{q}\ar@/^/[ll]
}
\]
(and similar with $m$) to get the 
two graphs represented by the pairs
\[
\left(\begin{pmatrix}
m+1\\m+1\end{pmatrix},\begin{pmatrix}
0&1\\q&q-1\end{pmatrix}\right), 
\left(\begin{pmatrix}
n+1\\n+1\end{pmatrix},\begin{pmatrix}
0&1\\q&q-1\end{pmatrix}\right), 
\]
The matrix $B$ is of course \SL-equivalent to itself, so to establish (i) of Theorem \ref{MCEORhere} we just need to find a \GLp-equivalence. For this, we note that any invertible element of $\ZZ/q$ is given by multiplication by $p\in\NN$ with $(p,q)=1$. We fix $\alpha,\beta\in \ZZ$ with 
\[
\alpha p+\beta q=1
\]
and consider
\[
U=\begin{pmatrix}\alpha&-\beta\\q&p\end{pmatrix}\qquad 
V=\begin{pmatrix}p&1\\-\beta q&\alpha\end{pmatrix}
\]
\end{proofofsecondpart}

   \begin{theor}[\cite{obak:tsacsa},\cite{segs}]\label{BKEShere}
  For a pair $(E,F)$ of regular and irreducible  graphs, the following are equivalent
  \begin{enumerate}[(i)]
\item $(E,F)\in\langle \KKKp\rangle$;
  \item $(E,F)\in\xyzrel{110}$.
    \end{enumerate}
  \end{theor}

\chapter{Sources and transitional vertices}
\section{Simplified forms}\label{standardforms}

Already in \cite{jc:cctmc2} it was noted that vertices not supporting paths back to themselves could lead to complications, but were easily removable up to what we would call \xyz{000}-equivalence. Surprisingly, we will see that this to a rather large extent remains true if one works with finer notions of equivalence, and in fact all the way to \xyzrel{111}-equivalence in the classical case that was considered by Cuntz. This has important consequences for our work, as it will often allow the reduction to the case of graphs that have already been studied.

\begin{table}[h]
\begin{tabular}{|m{1em}m{1em}|c|c|c|c|}\hline
&& (1)&(2)&(3)&(4) \\\hline
\rotatebox{90}{Original}&\rotatebox{90}{graph} & $\xymatrix@C=3mm@R=5mm{&\textcolor{red}{\bullet}\ar[dr]\ar[dl]&&\textcolor{red}{\circ} \ar@{=>}[dl]\ar[dr]\\\bullet\ar@(dr,dl)[]&&\bullet\ar@(dr,dl)[]&&\bullet\ar@(dr,dl)[]}$&
$\xymatrix@C=3mm@R=5mm{&\bullet\ar[rd]\ar[dl]&\\\bullet\ar[dr]&&\bullet\ar[dl]\\&\textcolor{red}{\bullet}\ar[d]\\&\bullet\ar@(dr,dl)[]}$&
$\xymatrix@C=3mm@R=5mm{  & & \bullet \ar[d]\\ & & \textcolor{red}{\bullet} \ar[d]  \\ \bullet \ar@/_/[rr]  \ar@(dl,dr)[] &  & \bullet  \ar@/_/[ll] \ar@(dl,dr)[] }$&
$\xymatrix@C=3mm@R=5mm{&&\bullet\ar[d]\\\bullet\ar@(ul,ur)[]\ar[dr]&&\bullet\ar[dl]\\&\textcolor{red}{\bullet}\ar[d]\\&\circ}$ \\
&& &&&\\ \hline
\rotatebox{90}{\xyz{111} simplified}&\rotatebox{90}{form \ref{III-simplified}} & $\xymatrix@C=3mm@R=5mm{\bullet \ar[d]&\bullet \ar[dr]&&\circ \ar@{=>}[dl] & \bullet \ar[d]\\ \bullet\ar@(dr,dl)[]&&\bullet\ar@(dr,dl)[]&&\bullet\ar@(dr,dl)[]}$ &  $\xymatrix@C=3mm@R=5mm{ \bullet \ar@<-3mm>[d]  \ar@<-1mm>[d] \ar@<1mm>[d] \ar@<3mm>[d] \\ \bullet  \ar@(dr,dl)[] }$ & $\xymatrix@C=3mm@R=5mm{ & \bullet \ar[d] & & \bullet \ar[d] \\
\bullet \ar@(dr,dl)[]   \ar[r] & \bullet \ar@/_/[rr]  &  & \bullet  \ar@/_/[ll] \ar@(dl,dr)[]  \ar@/^1.5pc/[lll] }$
 & $\xymatrix@C=3mm@R=5mm{&&\bullet\ar[d]\\\bullet\ar@(ul,ur)[]\ar[rr]&&\bullet\ar[d]\\ & \bullet \ar[r] &\bullet \ar[d]\\ & &\circ}$ \\ 
&& & & & \\ \hline
\rotatebox{90}{\xyz{011} simplified}&\rotatebox{90}{form \ref{OII-simplified}}& $\xymatrix@C=3mm@R=5mm{& &\circ \ar@{=>}[d] & &\\ \bullet\ar@(dr,dl)[]&&\bullet\ar@(dr,dl)[]&&\bullet\ar@(dl,dr)[]}$ & $\xymatrix@C=3mm@R=5mm{\\ \bullet\ar@(dl,dr)[]}$  &$\xymatrix@C=3mm@R=5mm{&&\\
\bullet \ar@(dr,dl)[]   \ar[r] & \bullet \ar@/_/[rr]  &  & \bullet  \ar@/_/[ll] \ar@(dl,dr)[]  \ar@/^1.5pc/[lll] }$  & $\xymatrix@C=3mm@R=5mm{\\ \bullet\ar@(ul,ur)[]\ar[rr]&&\bullet\ar[d]\\ &  &\bullet \ar[d]\\ & &\circ}$  \\ 
&& & & & \\ \hline
\end{tabular}
\caption{$\xyzrel{111}$ and $\xyzrel{011}$ simplified forms}
\label{111-011-simplified-form}
\end{table}

\section{Controlling transitional vertices}

We say a vertex is \emph{transitional}\index{transitional vertex} if it is not a source, and supports no path back to itself. We distinguish between regular and singular transitional vertices, and note that any sink which is not a source is in the latter class.  

\begin{propo}\label{III-simplified}
\emph{(\xyz{111} simplified form):} Any graph may be altered to satisfy
\begin{enumerate}[(1)] 
\item Any regular transitional vertex emits exactly one edge, and any singular transitional vertex never emits with finite multiplicity to any vertex;
\item Any transitional vertex receives at most from one regular transitional vertex;
\item Any regular transitional vertex emits to another transitional vertex; 
\item If any transitional vertex receives from a regular vertex which is neither transitional nor a source, then it receives from no regular transitional vertex at all.
\end{enumerate}
by applying moves \OOO\ and \IIIp, and their inverses.
\end{propo}

Table~\ref{111-011-simplified-form} illustrates the various kinds of configurations that are removed in the proposition, with the offending vertex indicated in red.  It is important to note that a graph can have several different $\xyzrel{111}$ simplified forms.  For example, 
$$
\xymatrix@C=3mm@R=5mm{ & & \bullet \ar[d] & & \bullet \ar[d] \\
\bullet \ar@(dr,dl)[]   \ar[r]  & \bullet \ar[r] & \bullet \ar@/_/[rr]  &  & \bullet  \ar@/_/[ll]  \ar@(dl,dr)[]  \ar@/^1.5pc/[lll] \ar@/^2pc/[llll]\\& }
$$
 is another $\xyzrel{111}$ simplified form for Graph (3) in Table~\ref{111-011-simplified-form}.
\begin{proof}
The proof is constructive, and is organized in several steps. 

\step{1} Condition (1) is easily obtained by appropriately outsplitting any transitional vertex not satisfying the condition.  This is done in the proof of \cite[Lemma~3.17(iv)]{segrerapws:gcgcfg}.  A non-source transition state in \cite{segrerapws:gcgcfg} is precisely our definition of a regular transitional vertex.  The idea is to first outsplit any infinite emitter using the partition that separates the infinite parallel edges with the finite parallel edges.  We then outsplit any regular transitional vertices using the partition with exactly one edge in each set in the partition starting with those regular transitional vertices whose shortest path to a singular transitional vertex or a non-transitional vertex is one. 

\step{2} It follows from (1) that any regular transitional vertex lies on a \emph{strand}\index{strand} of such vertices, having a unique outgoing path which ends in a unique vertex which is not regular transitional. We call this vertex the \emph{anchor}\index{anchor} of the strand, and of all the regular transitional vertices on the strand. For any anchor $v$, we fix a regular strand of maximal length, and note that by applying \IIIp\ repeatedly, we may transform all other regular transitional vertices into sources emitting to the chosen strand. This accomplishes (2).

\step{3} We in fact arranged in the previous step that any anchor, transitional or not, receives from at most one strand of regular transitional vertices. Our next goal is to remove all strands anchored at a vertex with a path back to itself. We note that such vertices lie in components that we denote $C_1,\dots,C_m$ under the assumption that there is only a path from $C_i$ to $C_j$ when $i\leq j$. We will work from the top, and assume that $C_1,\dots,C_{\ell-1}$ has already been cleared of incoming strands when removing strands at $C_\ell$.

\substep{3}{a}
We assume first that there is a vertex $v_0$ in $C_\ell$ which does not receive from a strand. If some other vertex $w_0$ does receive from a strand, we choose any path
\[
\xymatrix{
v_0\ar[r]&\cdots\ar[r]&v\ar[r]^-{e}&w\ar[r]&\cdots\ar[r]&w_0}
\]
and single out the vertices $v$ and $w$ with the properties that $v$ is the last vertex in the path not receiving from a strand ($v=v_0$ would be  possible). Outsplitting $s^{-1}(v)$ with a partition into two sets, one of which is $\{e\}$, we obtain a vertex $v^1$ which emits uniquely to $w$, without increasing the number of vertices in strands anchored at $C_\ell$.  If $s^{-1}(v)= \{e\}$, then we do not perform any outsplitting.  An \IIIp\ move allows us to move the past of the last transitional vertex in the strand anchored at $w$ to $v^1$, after which this vertex becomes a source, and we have now reduced the number of regular transitional vertices anchored at $C_\ell$ by one. Note that $w$ now is a vertex not receiving from a strand in $C_\ell$.

It remains to consider the cases where all vertices in $C_\ell$ receive from a strand. 

\substep{3}{b}
Assume that there are at least two vertices in $C_\ell$, and they all receive from a strand. Fix any edge $e$ in $C_\ell$ which is not a loop, and outsplit with this edge constituting a set of the partition and the remainder of $s^{-1}(s(e))$ in the other (if $s^{-1}(s(e)) = \{e\}$, we do not perform any outsplitting).  Since $s(e)$ receives from a strand, the last vertex of that strand now emits to two vertices, and we must redo steps \mystep{1}-\mystep{2} to reestablish the previous conditions. This will increase the number of vertices in strands anchored at $C_\ell$. But now we can clear $r(e)$ of incoming strands using an \IIIp \ move via $e$, obtaining a component with exactly one vertex not receiving from a strand, and the argument is completed by \mysubstep{3}{a}.  

\substep{3}{c}
Assume that there is only one vertex in $C_\ell$, but two or more edges (possibly infinitely many). Outsplitting with two sets in the partition, we obtain two vertices in the component, but will increase the number of regular vertices in the strands ending there, as in the previous step. Still, we may now proceed as in that step.

\substep{3}{d} It remains to consider the case where $C_\ell$ consists of one vertex $v$ and one loop $e$. If nothing else emits from $v$, it is straightforward to use \IIIp\ to clear $v$ of any strand ending there, since the future of $v$ is the same as the future of the last transitional vertex in the strand. If $v$ emits anything else, it must necessarily emit outside $C_\ell$, and we outsplit with $\{e\}$ a set of the partition to obtain the situation
\[
\xymatrix@R=1mm{&&&v^1\ar@(ur,ul)[]_-{e^1}\ar[dd]_-{e^2}&\\\cdots\ar[r]&s_2\ar[r]&s_1\ar[ur]\ar[dr]&&\\&&&v^2\ar[ur]\ar[r]\ar[dr]&\\&&&&}
\]
The last transitional vertex $s_1$ of the strand ending at $C_\ell$ now has the same future as the new vertex constituting $C_\ell$, so by an \IIIp\ move we get
\[
\xymatrix@R=1mm{&&&\\&&&v^1\ar@(ur,ul)[]\ar[dd]&&\\\cdots\ar[r]&s_2\ar@/^/[urr]&s_1\ar[ur]\ar[dr]&&\\&&&v^2\ar[ur]\ar[r]\ar[dr]&\\&&&&}
\]
and reduce the number of transitional vertices anchored at $C_\ell$ by one by making $s_1$ a regular source. Note, however, that the  vertex $v^2$ resulting from the outsplitting is transitional by construction. Thus we need to perform steps \mystep{1}--\mystep{2} to reestablish the conditions, which will increase the number of transitional vertices anchored at components $C_{\ell+1},\dots,C_m$. But by our assumption that no component $C_1,\dots,C_{\ell-1}$ receives from $C_\ell$ we have not increased the number of transitional vertices ending at the these components. (This is in fact the only step where the chosen ordering of components is important).

\step{4} It remains to clear all vertices in a strand anchored at a singular transitional vertex, except for the very last one, from any finite set of incoming edges from a non-transitional regular vertex.
Our method for doing so is related to \mysubstep{3}{d}. 

Assume that the length of the given strand is $k$ and that the vertex $v$ at level $m$, $k>m$, receives (necessarily finitely) from some regular $w$ in a component $C_\ell$. It is possible that $v$ is the singular vertex, in which case we set $k=0$. With $n$ the number of edges from $w$ to $v$, we partition $s^{-1}(w)$ into $n+1$ sets with the aforementioned edges placed as singletons, and perform an outsplitting. This will result in $n$ new transitional vertices, each receiving from $C_\ell$, and one vertex $w'$ fully within $C_\ell$. But all the new transitional vertices have the same future as the vertex at level $k+1$ of the strand, so by an \IIIp\ move we obtain that $v$ receives nothing from $w'$. This may be repeated until all such vertices are cleared.
\end{proof}

\begin{remar}
When the graph is regular, there can be no singular transitional vertex to receive from another transitional vertex as in Graph~(3) in Table~\ref{111-011-simplified-form}, and hence the associated graph in simplified form is completely free of transitional vertices. \end{remar}

We emphasize that the result above does not attempt any regularization of the regular sources in a graph, even though it is easy to collect and redistribute regular sources by \OOO\ moves. We will follow \cite{seaseer:gciugc} in saying that a graph has regular sources in \emph{antenna form}\index{antenna form} when there is at most one regular source, and in \emph{shadow form}\index{shadow form} when any regular source emits to exactly one vertex.

\begin{corol}\label{reducetoGF}
The following are equivalent
\begin{enumerate}[(i)]
\item $E$ is \xyz{000}-equivalent to the graph $\circ$;
\item $C^*(E)\simeq M_n(\CC)$ for some $n$;
\item $C^*(E)$ is gauge simple and finite-dimensional;
\item $E$ is \xyz{111}-equivalent to a graph of the form $\GF(\underline n)$.
\end{enumerate}
\end{corol}
\begin{proof} 
Since $C^*(\circ)\simeq \CC$, (i)$\Longrightarrow$(ii) is clear, and obviously (ii)$\Longrightarrow$(iii). For (iii)$\Longrightarrow$(iv) we note that $E$ must be acyclic with exactly one sink, and apply Proposition \ref{III-simplified} to obtain the stipulated form after arranging all sources in shadow form. Proposition \ref{finitecomplete} completes the claim.
\end{proof}

\begin{corol}\label{reducetoGC}
The following are equivalent\\
\begin{enumerate}[(i)]
\item $E$ is \xyz{000}-equivalent to the graph $\xymatrix{\bullet\ar@(r,u)[]}$;
\item $C^*(E)\simeq M_m(C(\TT))$ for some $m$;
\item $C^*(E)$ is gauge simple and not simple;
\item $E$ is \xyz{111}-equivalent to a graph of the form $\GC(\underline{m})$
\end{enumerate}
\end{corol}
\begin{proof} 
(i)$\Longrightarrow$ (ii)$\Longrightarrow$ (iii) is clear as above. For (iii)$\Longrightarrow$ (iv) we note that $E$ must be regular with all non-transitional vertices lying on the same cycle, emitting uniquely. We can  apply Proposition \ref{III-simplified} to obtain the stipulated form after collecting  sources in antenna form. Proposition \ref{cyclecomplete} completes the claim.
\end{proof}

\section{Controlling sources}

Whereas the removal of transitional vertices usually requires many changes to the remaining part of the graph, it is often possible to remove sources up to \xyz{011}-equivalence without changing anything else. To establish this, we define an auxiliary move  $\SSSs$ by the addition of a strand of length $n$ to a vertex already receiving a path of length $n$. \index{Ss@$\SSSs$}

\begin{theor}\label{Sstar}
$\SSSs \subseteq \langle\OOO,\IIIm\rangle$.
\end{theor}
\begin{proof}
For the purposes of the proof, we define $\SSSsn$ as the class of moves which attach strands of length $n$ to a vertex which is already the range of a path of length $n$, and  \SSSsnm\ for the union of all moves  ${\mbox{\texttt{\textup{(S=m)}}}}$ with $m$ ranging in $\{0,\dots,n-1\}$. We will prove that
\[
\SSSsn \subseteq \langle \OOO,\IIIm,\SSSsnm \rangle
\]
To see this, assume that $v\in E^0$ is already the range of a path of length $n$, the last edge of which is $e\in E^1$. Suppose a strand of length $n$ going through the vertices $s_1,\dots, s_n$ is added to anchor at $v$. We aim to transform it into a strand of shorter length anchored at $s(e)=w$. 
This proves the claim because $\SSSs$ is the union of all \SSSsn, and because ${\mbox{\texttt{\textup{(S<1)}}}}={\mbox{\texttt{\textup{(S=0)}}}}$ is trivial.

Passing to  $\langle \OOO,\IIIm,\SSSsnm \rangle$ is straightforward if 
 $e$ is the only
edge emitting from $w$, as then $w$
and the last vertex $s_n$ in the added strand
have the same future, so that the past of
$s_n$ may be shifted to $w$, and the final edge
of the strand can be deleted, using one $\IIIm$
move. We note that $w$ already has a path
of length $n-1$ ending at it, thus allowing 
the  $\SSSsnm$ move as well.

If $w$ emits other edges, we distinguish
the cases when $w=v$ ($e$ is then a loop)
and where $w\not= v$.
In the former case, we outsplit $w$ with
$\{e\}$ one set of the partition, to obtain the
setting 
\[
\xymatrix@R=1mm{
&v^1\ar@/^/@(ur,ul)[]_-{e^1}\ar[dd]_-{e^2}&\\
&&s_n\ar[ul]\ar[dl]&s_{n-1}\ar[l]&\cdots\ar[l]&s_1\ar[l]\\
&v^2\ar[l]\ar[ul]\ar[dl]\\&}
\]
We note that $v^1$ and $s_n$ have the same future, so an \IIIm\ move  yields
\[
\xymatrix@R=1mm{
&v^1\ar@/^/@(ur,ul)[]_-{e^1}\ar[dd]_-{e^2}&\\
&&&s_{n-1}\ar[ull]&\cdots\ar[l]&s_1\ar[l]\\
&v^2\ar[l]\ar[ul]\ar[dl]\\&&}
\]
This is exactly the result of outsplitting with $\{e\}$ a set of the partition, and then applying  an \SSSsnm\ move. Note that $v^1$ is the source of a path of any length.

When $w\not=v$, we outsplit in the same way to get 
\[
\xymatrix@R=1mm{
&w^1\ar[rd]^-{e^1}&\\
&&v&s_n\ar[l]&s_{n-1}\ar[l]&\cdots\ar[l]&s_1\ar[l]\\
&w^2\ar[l]\ar[ul]\ar[dl]\\
&}
\]
Again $w^1$ and $s_n$ have the same future, and $w^1$ supports a path of length $n-1$.
\end{proof}

\begin{corol}\label{Sstarreg} Within the class of regular graphs,
\begin{enumerate}[(i)]
\item Any graph may be replaced by its essential part by moves in $\langle \OOO,\IIIm\rangle$;
\item $\SSS \subseteq \langle\OOO,\IIIm\rangle$.
\end{enumerate}
\end{corol}
\begin{proof}
For the first claim, note that any vertex outside the essential part must be a source or transitional. Splitting each transitional vertex fully puts the non-essential part of the graph into the form of a number of disjoint strands that anchor at vertices from the essential part of the graph, and consequently may be removed by \SSSs\ moves in reverse. The claim then follows  by Theorem \ref{Sstar}.

For the second claim, note that if an \SSS\ move in reverse is applied to a vertex which is not a source, then it is an \SSSs\ move and Theorem \ref{Sstar} applies again. If it is applied to a source, then only the non-essential part of the graph is altered, and (i) may be invoked.
\end{proof}

\begin{propo}\label{OII-simplified}
\emph{(\xyz{011} simplified form):} Any graph may be altered to satisfy (1)--(3) of Proposition \ref{III-simplified} as well as
\begin{enumerate}[(1')]\addtocounter{enumi}{3} 
\item Any regular source emits uniquely to a transitional vertex,
\item If any transitional vertex receives from a regular vertex which is not transitional, then it receives from no regular transitional vertex at all;
\item If any transitional vertex receives  from a regular source, then it receives from no other regular vertex;

\end{enumerate}
by applying moves \OOO\ and \IIIm, and their inverses.
\end{propo}
\begin{proof}
We place the graph in the \xyz{111} simplified form of Proposition \ref{III-simplified}, and completely outsplit all regular sources. Any source emitting to a vertex receiving from elsewhere may then be removed by an \SSSs\ move in reverse, and consequently all remaining sources emit to transitional vertices that receive nothing else. This shows (4') and (5'), and allows the strengthening of (6) from Proposition \ref{III-simplified} to (6') here.
\end{proof}

In Table~\ref{111-011-simplified-form}, we have examples of graphs and their $\xyzrel{011}$ simplified forms.  It is important to note that a graph can have several different $\xyzrel{011}$ simplified forms. For instance, the graph 
$$\xymatrix@C=3mm@R=5mm{  \\ \bullet \ar@/_/[rr]  \ar@(ur,ul)[] &  & \bullet  \ar@/_/[ll] \ar@(ul,ur)[] }$$
is a much simpler $\xyzrel{011}$ simplified form for Graph~(3) in Figure~\ref{111-011-simplified-form} than the one found by combining the two given algorithms.

\section{Exact counterexamples}

Because of Corollary \ref{Sstarreg}, we may now easily upgrade Lemma \ref{essinv} as follows.

 \begin{lemma}\label{essinviiRPC} For regular and essential graphs, we have
 \begin{eqnarray*}
 \langle\RRR\rangle&\subseteq&\langle\RRRp,\OOO,\IIIm\rangle\ \subseteq\ \langle \RRRp,\xyzrel{011}\rangle\\ 
 \langle\CCC\rangle&\subseteq&\langle\CCCp,\OOO,\IIIm\rangle\ \subseteq\ \langle\CCCp,\xyzrel{011}\rangle\\ 
 \langle\PPP\rangle&\subseteq&\langle\PPPp,\OOO,\IIIm\rangle\ \subseteq \langle\PPPp,\xyzrel{011}\rangle\\ 
 \end{eqnarray*}
  \end{lemma}  
  
  With this, we also get:
  
      \begin{propo}\label{psexplus}
  $\xyzrel{100}\not\subseteq \xyzrel{001}\cup\xyzrel{010}$,   $\langle\CCCp\rangle\not\subseteq \xyzrel{001}\cup\xyzrel{010}$, and $\langle\PPPp\rangle\not\subseteq \xyzrel{001}\cup\xyzrel{010}$.
  \end{propo}
  \begin{proof}
By Lemma \ref{essinviiRPC}, the pairs of examples given in Proposition \ref{psex} are in fact in  $\langle\CCCp,\xyzrel{011}\rangle$ and $\langle\PPP,\xyzrel{011}\rangle$, respectively. We then argue by contradiction.
  \end{proof}
  
In order to show a similar lack of invariance of the \KKKp\ move, our last order of business of this chapter is to establish $\KKK\subseteq \langle\KKKp,\OOO,\IIIm\rangle$ to some extent. As discussed below, we do not know how to do so in the general regular and essential case, so we work in the 
 irreducible case.
  
\begin{lemma}\label{lem:I-Kthy}
Let $E$ be a graph and let $w$ be a regular vertex.   Denote the graph obtained by insplitting $w$ using the partition 
$$r^{-1}(w)=r^{-1}(w) \sqcup \left( \bigsqcup_{j=1}^n \emptyset\right)$$
by $F$.  Then there exists an isomorphism $\alpha \colon \DT(E)\to\DT(F)$ with $\alpha ( [ p_{(v, k) }] )= [ p_{(v^1,  k)} ]$ for all $v\in E^0$.  Moreover, $\alpha ( [ p_0^E] + n [p_{(w,0)}] )= [ p_0^F]$.  
\end{lemma}

\begin{proof}
Note that $F^0 = \{ v^1 : v \in E^0 , v \neq w \} \sqcup \{ w^1, w^2, \ldots, w^{n+1} \}$ and $F^1 = \{ e^1 : e \in E^1, s_E(e) \neq w \} \sqcup \{ f^i : 1 \leq i \leq n+1, s_E(f) = w \}$ such that $s_F( e^i ) = s_E(e)^i$ and $r_F( e^i) = r_E(e)^1$.  Therefore, $F$ is $E$ with $n$ additional sources.  Thus, one can check by the Cuntz-Krieger relations that there exists a $*$-homomorphism 
$h \colon C^*(E \times_1 \ZZ ) \to C^*(F \times_1 \ZZ )$ such that $h( p_{(v,k)} ) = p_{(v^1, k)}$ and $h( s_{(e,k)} ) = s_{ (e^1, k) }$ and its image is $PC^*(F \times_1 \ZZ)P$, where $P = \sum_{(v,k) \in E^0 \times \ZZ } p_{( v^1, k)}$.

Set $\alpha = h_*$.  To show that $\alpha$ is an order isomorphism, it is enough to prove that $PC^*(F \times_1 \ZZ)P$ is full in $C^*(F \times \ZZ)$ by Brown (\cite{lgb:sihsc}).  Fix $2 \leq i \leq n+1$.  Set $V = \sum_{ f \in s^{-1}(w) } s_{(f^i, k-1)}s_{( f^1, k-1) }^*$ in the multiplier algebra of $C^*(F \times_1 \ZZ)$.  Then 
\begin{align*}
VV^* &= \sum_{ f, g \in s^{-1}(w) }  s_{(f^i, k-1)}s_{( f^1, k-1) }^*  s_{(g^1, k-1)}s_{( g^i, k-1) }^* \\
	&= \sum_{ f \in s^{-1} (w) }  s_{(f^i, k-1)} s_{(f^i,k-1)}^* = p_{( w^i, k) }
\end{align*}
and 
\begin{align*}
V^*V &= \sum_{ f, g \in s^{-1}(w) } s_{(f^1, k-1)}s_{( f^i, k-1) }^*  s_{(g^i, k-1)}s_{( g^1, k-1) }^* \\
	&= \sum_{ f \in s^{-1} (w) }  s_{(f^1, k-1)} s_{(f^1,k-1)}^* = p_{( w^1, k) }.
\end{align*}
Hence, $p_{( w^i, k) }$ is in the ideal generated by $PC^*(F \times_1 \ZZ)P$.  Thus, all vertex projections in $C^*(F\times_1 \ZZ)$ are in the ideal generated by $PC^*(F \times_1 \ZZ)P$.  So, $PC^*(F \times_1 \ZZ)P$ is full in $C^*(F \times_1 \ZZ )$.   It is clear from the definition of $\alpha$ that $\alpha \circ \lt_* = \lt_* \circ \alpha$ and that $\alpha ( [ p_{(v, k)} ] )= [ p_{(v^1,  k)} ]$ for all $v$. 

Note that we also proved that in $K_0( C^*(F \times_1 \ZZ ))$, $[p_{( w^i, k) }] = [ p_{( w^1, k) } ]$.  Consequently, 
\begin{align*}
\alpha ( [ p_0^E] + n [p_{(w,0)}] ) &= \sum_{ v \in E^0 } \alpha( [p_{(v,0)} ] ) + n [ p_{(w^1,0)} ] \\
	&=  \sum_{ v \in E^0 } [p_{(v^1,0)} ]  + \sum_{ i=2}^{n+1} [  p_{( w^i, 0) } ] \\
	&= [ p_0^F ].\qedhere
\end{align*}
\end{proof}

\begin{theor}\label{ImvsIp}
Let $E$ and $F$ be regular, essential and irreducible graphs. Then $\KKK\subseteq \langle\KKKp,\OOO,\IIIm\rangle$.
\end{theor}
\begin{proof}
Suppose $\DT(E)\simeq \DT(F)$. We fix an isomorphism $\alpha:\DT(E)\to\DT(F)$ with the aim of producing $F'$ with $(F,F')\in \langle\KKKp,\OOO,\IIIm\rangle$ and $\beta: \DT(E)\to\DT(F')$ so that $\beta([p^E_0])=[p^{F'}_0]$.

Set $A =\Asf_F^t$.  Since $F$ is an irreducible graph,   by \cite[Proposition~4.5.6]{dlbm:isdc}, $F^0 = \bigsqcup_{i=0}^{p-1} D_i$ such that $r(s^{-1}(D_i)) = D_{i+1}$ for $0 \leq i \leq p-1$, $r(s^{-1}(D_{p-1}) = D_0$, and if $\Asf_F$ is written as block matrices according to the decomposition $F^0 = \bigsqcup_{i=0}^{p-1} D_i$, then 

$$(A^t)^p=\Asf_F^p = \begin{bmatrix} A_0 & 0 & 0 & \cdots & 0 \\ 0 & A_1 & 0 & \cdots & 0 \\  0 & 0 & A_2 & \cdots & 0 \\ \vdots & \vdots & \vdots & \ddots & \vdots  \\ 0 & 0 & 0 & \vdots & A_{p-1} \end{bmatrix} $$
such that each $A_i $ is a primitive matrix.  By \cite[Theorem 4.5.8]{dlbm:isdc}, there exists an integer $N$ such that $A_i^N > 0$ for all $i$.  Consequently, $F^0 \times_1 \mathbb{Z} = \bigsqcup_{i=0}^{p-1}  H_i$ where 

$$H_i = \bigcup_{ j = 0}^{p-1} D_{ [j+i] } \times (p\ZZ+j)$$
with $[j+i] \in \ZZ_{p} = \{ 0 , 1, 2, \ldots, p-1\}$.  A computation shows that each $H_i$ is a hereditary and saturated subset of $F^0 \times \ZZ$.

Since the vertex projections of $C^*(F\times_1\ZZ)$ generate the positive cone of $K_0 ( C^*(F\times_1\ZZ))$ and since $[p_{(w, k)}^F ] = \sum_{ v \in F^0 } A( w, v) [ p_{(v, k+1)}^F ]$, there are $m_v \geq 0$, $v \in F^0$ and $k \geq 0$ such that $\alpha( [p_0^E]) = \sum_{ v \in F^0 } m_v [ p_{(v, k)} ]$.  By replacing $\alpha$ with $\lt_*^{k} \circ \alpha$, we may assume that $\alpha( [p_0^E]) = \sum_{ v \in F^0 } m_v [ p_{(v, 0)} ]$.  Note that the order ideal generated by $[p_0^E]$ is $K_0(C^*(E\times_1\ZZ))_+$  since $p_0^E = \sum_{ v \in E } p_{(v,0)}$ is full in $C^*(E\times_1\ZZ)$.  Using the fact that $\alpha$ is an order isomorphism, the order ideal generated by $\alpha( [p_0^E])$ is the positive cone of $K_0 ( C^*(F\times_1\ZZ))$.  Thus the above paragraph implies $\{  v \in F^0 : m_v > 0 \} \cap D_i \neq \emptyset$ for all $i$.  By applying the Cuntz-Krieger relation (CK3), i.e.,  $[p_{(w, k)}^F ] = \sum_{ v \in F^0 } A( w, v) [ p_{(v, k+1)}^F ]$ and using the fact that $A_i^N > 0$ for all $i$, we may write $\alpha( [p_0^E]) = \sum_{ v \in F^0 } n_v [ p_{(v, \ell)} ]$ for some $\ell \geq 0$ and $n_v > 0$.  By replacing $\alpha$ with $\lt_*^{\ell } \circ \alpha$, we may assume that $\alpha ( [ p_0^E ] ) =  \sum_{ v \in F^0 } n_v [ p_{(v,0)}^F ]$ with $n_v \geq 1$.  Therefore,
$$\alpha( [p_0^E ] ) = \sum_{ v \in F^0 } n_v [ p_{(v,0)}^F ] = \sum_{ v \in F^0 } [ p_{(v,0)}^F ] +  \sum_{ v \in F^0 } (n_v - 1) [ p_{(v,0)}^F ].$$

Insplitting each vertex $v$ in $F^0$ with $n_v \geq 2$ using the partition $r^{-1}(v) \sqcup \left( \bigsqcup_{ i = 1}^{n_v-1} \emptyset \right)$ and using Lemma~\ref{lem:I-Kthy}, we get a graph $F'$ such that there exists an order isomorphism $\gamma \colon K_0( C^*(F'\times_1 \ZZ)) \to K_0( C^*(F\times_1\ZZ))$ that commutes with $\lt_*$ and $\gamma( [ p_0^{F'} ] ) = \sum_{ v \in F^0 } [ p_{(v,0)}^F ] +  \sum_{ v \in F^0 } (n_v - 1) [ p_{(v,0)}^F ] = \sum_{ v \in F^0 } n_v [ p_{(v,0)}^F ]$.  

Set $\beta=  \gamma^{-1} \circ \alpha$.  Then  $\beta \colon K_0( C^*(E\times_1\ZZ)) \to K_0( C^*(F'\times_1 \ZZ))$ is an order isomorphism commuting with $\lt_*$ and $\beta( [p_0^{E}]) = [ p_0^{F'} ]$.
\end{proof}

\begin{corol} \label{kimroushplus}$\xyzrel{110}\not\subseteq \xyzrel{011}$, and $\langle\KKKp\rangle\not\subseteq \xyzrel{011}$
\end{corol}
\begin{proof}
By Lemma \ref{ImvsIp}, the pairs of examples given in Proposition \ref{kimroush} are in fact in  $\langle\KKKp,\xyzrel{011}\rangle$. We then argue by contradiction.
\end{proof}

We  exhibit a concrete pair of graphs in $\xyzrel{110}\backslash \xyzrel{011}$:

\begin{examp}
We recall the example in Example \ref{kimroushex} and  consult \cite[Section~7]{khkfwr:wcfis} for details of the fact that the matrices $\mathsf{A}$ and $\mathsf{B}$ given there are shift equivalent. 
For this, we set
\begin{align*}
\mathsf{R}' &=\begin{bmatrix} 2 & 2 & 2 & 1 & 3 & 0 & 0 \\ 1& 2 & 2 & 1 & 3 & 0 & 0 \\ 1& 1 & 2 & 1 & 3 & 0 & 0 \\ 1& 1 & 1 & 1 & 3 & 0 & 0 \\ 0& 0 & 0 & 0 & 0 & 0 & 1 \\ 4 & 5 & 6 & 3 & 10 & 0 & 0 \\  4 & 5 & 6 & 3 & 0 & 1 & 0   \end{bmatrix} &
\mathsf{S}' &=\begin{bmatrix} -1 & 0 & 1 & 1 & 0 & 0 & 0 \\ 1& -1 & 0 & 0 & 0 & 0 & 0 \\ 0& 1 & -1 & 0 & 0 & 0 & 0 \\ 0& 0 & 1 & -1 & 0 & 0 & 0 \\ 0& 0 & 0 & 0 & 1 & 0 & 0 \\ 0& 0 & 0 & 0 & 0 & 1 & 0 \\ 0& 0 & 0 & 0 & 0 & 0 & 1   \end{bmatrix}.
\end{align*}
The matrices $\mathsf{A}$ and $\mathsf{B}$ are primitive matrices and a computation shows that 
\begin{align*}
\mathsf{A} = \mathsf{R}'\mathsf{S}' \quad \text{and} \quad \mathsf{S}'\mathsf{R}' = \mathsf{B}, 
\end{align*}
so in particular, $\mathsf{A}\mathsf{R}'=\mathsf{R}'\mathsf{B}$ and $\mathsf{B}\mathsf{S}'=\mathsf{S}'\mathsf{A}$.  Therefore, 
\begin{align*}
(\mathsf{R}'\mathsf{B}^6) (\mathsf{S}'\mathsf{A}^6) &= \mathsf{A}^{13} & (\mathsf{S}'\mathsf{A}^6)(\mathsf{R}'\mathsf{B}^6) &= \mathsf{B}^{13} \\
\mathsf{A} (\mathsf{R}'\mathsf{B}^6) & = (\mathsf{R}'\mathsf{B}^6) \mathsf{B} & \mathsf{B} (\mathsf{S}'\mathsf{A}^6) &= (\mathsf{S}' \mathsf{A}^6) \mathsf{A}.  
\end{align*}
Moreover, the entries of $\mathsf{R}'\mathsf{B}^6$ and $\mathsf{S}'\mathsf{A}^6$ are non-negative integers.  So, $(\mathsf{R},\mathsf{S}):= ( \mathsf{R}'\mathsf{B}^6, \mathsf{S}'\mathsf{A}^6)$ is a shift equivalence with lag $13$ between $\mathsf{A}$ and $\mathsf{B}$.  

Note that $\det(\mathsf{A})=\det(\mathsf{B})=-1$ and since $\mathsf{A}$ and $\mathsf{B}$ are primitive matrices, $\QQ^{7} = \mathcal{R}_{\mathsf{A}}=\Delta_\mathsf{A}$ and $\QQ^{7} = \mathcal{R}_\mathsf{B} = \Delta_\mathsf{B}$, and $\pi_\mathcal{R} \colon \QQ^{13} \to \mathcal{R}_\mathsf{A}$ and $\pi_\mathcal{R} \colon \QQ^{13} \to \mathcal{R}_\mathsf{B}$ are just the identity maps.  Moreover, $\det(\mathsf{R})=1$.  

 We now set $\mathbf{v} =  \mathbf{1} \mathsf{R}\mathsf{B}- \mathbf{1}\mathsf{B}$ and compute
 \[
\mathbf{v} =\begin{bmatrix} 40911& 50209& 60197& 31748& 61044& 3999&17694\end{bmatrix}
\]
We set $\mathsf{B}' = \left[\begin{smallmatrix} 0 & \mathbf{v} \\ \mathbf{0} & \mathsf{B} \end{smallmatrix}\right]$. Set $\mathsf{R}_1 = \begin{bmatrix} \mathbf{0} & \mathsf{R} \end{bmatrix}$ and $\mathsf{S}_1 = \left[\begin{smallmatrix} \mathbf{v} \mathsf{B}^{12} \mathsf{R}^{-1}\\ \mathsf{S} \end{smallmatrix}\right]$.  We now claim that $(\mathsf{R}_1, \mathsf{S}_1)$ is a shift equivalence between $A$ and $\mathsf{B}'$ such that the induce order preserving isomorphism $\widehat{\mathsf{R}_1} \colon \Delta_\mathsf{A} \to \Delta_{\mathsf{B}'}$ sends $\mathbf{1}$ to $\pi_\mathcal{R}(\mathbf{1})$.  Since the entries of $\mathsf{R}$ are nonnegative integers, the entries of $\mathsf{R}_1$ are nonnegative integers.  A computation shows that the entries of $\mathsf{S}_1$ are nonnegative integers.  The matrix relations for a shift equivalence are satisfied.  Indeed,
\begin{align*}
\mathsf{R}_1 \mathsf{S}_1 &= \begin{bmatrix} \mathbf{0} & \mathsf{R} \end{bmatrix} \begin{bmatrix} \mathbf{v} \mathsf{B}^{12} \mathsf{R}^{-1}\\ \mathsf{S} \end{bmatrix} = \mathsf{R}\mathsf{S} = \mathsf{A}^{13} \\
\mathsf{S}_1 \mathsf{R}_1 &= \begin{bmatrix} \mathbf{v} \mathsf{B}^{12} \mathsf{R}^{-1}\\ \mathsf{S} \end{bmatrix}\begin{bmatrix} \mathbf{0} & \mathsf{R} \end{bmatrix}  = \begin{bmatrix} 0 & \mathbf{v} \mathsf{B}^{12} \\ 0 & \mathsf{S}\mathsf{R}\end{bmatrix} = (\mathsf{B}')^{13} \\
\mathsf{A} \mathsf{R}_1 &= \mathsf{A} \begin{bmatrix} \mathbf{0} & \mathsf{R} \end{bmatrix}  = \begin{bmatrix} \mathbf{0} & \mathsf{A}\mathsf{R} \end{bmatrix}  = \begin{bmatrix} \mathbf{0} & \mathsf{R} \mathsf{B} \end{bmatrix}  = \mathsf{R}_1 \mathsf{B}' \\
\mathsf{B}' \mathsf{S}_1 &= \begin{bmatrix} 0 & \mathbf{v} \\ \mathbf{0} & \mathsf{B} \end{bmatrix}  \begin{bmatrix} \mathbf{v} \mathsf{B}^{12} \mathsf{R}^{-1}\\ \mathsf{S} \end{bmatrix} = \begin{bmatrix} \mathbf{v} \mathsf{S} \\ \mathsf{B} \mathsf{S} \end{bmatrix} = \begin{bmatrix} \mathbf{v} \mathsf{B}^{13} \mathsf{R}^{-1} \\ \mathsf{B} \mathsf{S} \end{bmatrix} \\
&=\begin{bmatrix} \mathbf{v} \mathsf{B}^{12} \mathsf{R}^{-1} \mathsf{A}\\ \mathsf{S} \mathsf{A} \end{bmatrix}=\begin{bmatrix} \mathbf{v} \mathsf{B}^{12} \mathsf{R}^{-1}\\ \mathsf{S} \end{bmatrix}\mathsf{A}=  \mathsf{S}_1 \mathsf{A}
\end{align*}
Thus, showing $(\mathsf{R}_1, \mathsf{S}_1)$ is a shift equivalence between $\mathsf{A}$ and $\mathsf{B}'$.  

Finally, we show that $\mathbf{1}\mathsf{R}_1 = \pi_\mathcal{R} (\mathbf{1})$.  Note that
$$
\begin{bmatrix} 0 & \mathbf{v}\mathsf{B}^{-1} + \mathbf{1}\end{bmatrix}  = \begin{bmatrix} 0 & (\mathbf{1} \mathsf{R}\mathsf{B}- \mathbf{1}\mathsf{B})\mathsf{B}^{-1} + \mathbf{1} \end{bmatrix} = \begin{bmatrix} 0 & \mathbf{1} \mathsf{R} \end{bmatrix} = \mathbf{1}\mathsf{R}_1.
$$
Since 
\begin{align*}
\mathbf{1} (\mathsf{B}')^8 &= \begin{bmatrix} 1 & \mathbf{1} \end{bmatrix} \begin{bmatrix} 0 & \mathbf{v} \mathsf{B}^{7} \\ \mathbf{0} & \mathsf{B}^8 \end{bmatrix} = \begin{bmatrix} 0 & \mathbf{v}\mathsf{B}^7 + \mathbf{1} \mathsf{B}^8 \end{bmatrix} = \begin{bmatrix} 0 & (\mathbf{v} + \mathbf{1} \mathsf{B})\mathsf{B}^7\end{bmatrix} \\
&= \begin{bmatrix} 0 & (\mathbf{v} + \mathbf{1} \mathsf{B}) \mathsf{B}^{-1} \end{bmatrix} \begin{bmatrix} 0 & \mathbf{v} \mathsf{B}^{7} \\ \mathbf{0} & \mathsf{B}^8 \end{bmatrix}  =\begin{bmatrix} 0 & (\mathbf{v} + \mathbf{1} \mathsf{B}) \mathsf{B}^{-1} \end{bmatrix}   (\mathsf{B}')^8 \\
&= \mathbf{1} \mathsf{R}_1(\mathsf{B}')^8
\end{align*}  
and since $\mathsf{1} \mathsf{R}_1 \in \Delta_{\mathsf{B}'} \subseteq \mathcal{R}_{\mathsf{B}'}$,  uniqueness of the decomposition gives $\mathbf{1}\mathsf{R}_1 = \pi_\mathcal{R} (\mathbf{1})$.

We associate graphs $E,F$ to $\mathsf A$ and $\mathsf B$ as in Propostion \ref{kimroush}, and similarly associate $F'$ to  $\mathsf{B}'$.  By \cite{khkfwr:wcfis}, $(E, F) \notin \xyzrel{011}$.  By \cite[Corollary~4.1]{obak:tsacsa}, we have $(E, F') \in \xyzrel{110}$.   Since $F'$ is built from $F$, by just adding sources, by Theorem~\ref{Sstar}, $(F, F')\in \xyzrel{011}$.  Therefore, $(E, F') \in \xyzrel{110} \setminus \xyzrel{011}$.
\end{examp}

\begin{openq}\label{kplusq}
Is 
\[
\langle \KKKp,\IIIm,\OOO\rangle =\langle \KKKm\rangle
\]
true in general?
\end{openq}

We know of no counterexamples, but proving it even for graphs defining simple $C^*$-algebras is currently outside our reach.


\chapter{General conclusion}\label{gencon}

  \section{Distinguishing \xyz{xyz}-equivalences}

We are now ready to prove that all eight \xyz{xyz}-relations differ.  Since $\xyzrel{1yz} \subseteq \xyzrel{0yz}$, $\xyzrel{x1z} \subseteq \xyzrel{x0z}$, and $\xyzrel{xy1} \subseteq \xyzrel{xy0}$, we have $\xyzrel{xyz} \subseteq  \xyzrel{x'y'z'}$ when $\xyz{xyz}\geq \xyz{x'y'z;}$ with ordering given by $\xyz{x_1y_1z_1}\leq \xyz{x_2y_2z_2}$ if and only if $\xyz{x_1} \leq \xyz{x_2}$, $\xyz{y_1} \leq \xyz{y_2}$, and $\xyz{z_1} \leq \xyz{z_2}$.  In the previous sections, we have collected a number of results showing that whenever \xyz{xyz} and \xyz{x'y'z'} are not related, neither are the relations: 

\begin{propo}\label{notincl}
If $\xyzrel{x_1y_1z_1}\subseteq \xyzrel{x_2y_2z_2}$, then one of
\begin{enumerate}
\item $\xyz{x_1y_1z_1}\geq \xyz{x_2y_2z_2}$
\item $\xyz{y_1}=\xyz{1}$, $\xyz{z_1}=\xyz{0}$, $\xyz{x_2y_2z_2}=\xyz{001}$
\item $\xyz{x_1y_1z_1}=\xyz{110}$, $\xyz{y_2}=\xyz{0}$, $\xyz{z_2}=\xyz{1}$
\end{enumerate}
hold.
\end{propo}
\begin{proof}
Consider the entries in row $\xyz{x_1y_1z_1}$ and column $\xyz{x_2y_2z_2}$ of Table \ref{xyzrelations}.  A blank entry means $\xyzrel{x_1y_1z_1}\subseteq \xyzrel{x_2y_2z_2}$.  A reference to a result in the entry means that the result gives counter examples to the inclusion $\xyzrel{x_1y_1z_1}\subseteq \xyzrel{x_2y_2z_2}$.  Lastly, a question mark means that it is unknown whether or not $\xyzrel{x_1y_1z_1}\subseteq \xyzrel{x_2y_2z_2}$.   Thus, the proposition follows from the information given in the table.
\end{proof}

\begin{corol}\label{xyzdiffer}
$\xyzrel{x_1y_1z_1}=\xyzrel{x_2y_2z_2}$ only occurs when $\xyz{x_1y_1z_1}=\xyz{x_2y_2z_2}$
\end{corol}
\begin{proof}
The only options for $\xyzrel{x_1y_1z_1}\subseteq \xyzrel{x_2y_2z_2}$ and $\xyz{x_1y_1z_1}\not=\xyz{x_2y_2z_2}$ left open by Proposition \ref{notincl} will have $\xyz{y_1}=\xyz{1}$ and $\xyz{y_2}=\xyz{0}$. Thus   $\xyzrel{x_2y_2z_2}\subseteq \xyzrel{x_1y_1z_1}$ is impossible.
\end{proof}

\begin{table}
 \begin{center}
\begin{NiceTabular}{|c||c|c|c|c|c|c|c|c|}\hline
&\xyz{000}&\xyz{001}&\xyz{010}&\xyz{011}&\xyz{100}&\xyz{101}&\xyz{110}&\xyz{111}\\\hline\hline
\xyz{000}&\nsp&\ref{psex}&\multicolumn{2}{c|}{\multirow{2}{*}{\ref{IOIniOIO}}}&\multicolumn{4}{c|}{\multirow{4}{*}{\ref{OIIniIOO}}}\\\cline{1-3}
\xyz{001}&\nsp&\nsp&\multicolumn{2}{c|}{}&\multicolumn{4}{c|}{}\\\cline{1-5}
\xyz{010}&\nsp&\open&\nsp&\ref{kimroush}&\multicolumn{4}{c|}{}\\\cline{1-5}
\xyz{011}&\nsp&\nsp&\nsp&\nsp&\multicolumn{4}{c|}{}\\\hline
\xyz{100}&\nsp&\ref{psexplus}&\multicolumn{2}{c|}{\multirow{2}{*}{\ref{IOIniOIO}}}&\nsp&\ref{psexplus}&\multicolumn{2}{c|}{\multirow{2}{*}{\ref{IOIniOIO}}}\\\cline{1-3}\cline{6-7}
\xyz{101}&\nsp&\nsp&\multicolumn{2}{c|}{}&\nsp&\nsp&\multicolumn{2}{c|}{}\\\hline
\xyz{110}&\nsp&\open&\nsp&\ref{kimroushplus}&\nsp&\open&\nsp&\ref{kimroushplus}\\\hline
\xyz{111}&\nsp&\nsp&\nsp&\nsp&\nsp&\nsp&\nsp&\nsp\\\hline
\end{NiceTabular}
\end{center}\caption{Relations among \xyz{xyz}-invariance}\label{xyzrelations}
\end{table}

\begin{conje}\label{tradein}
$\xyzrel{010}\subseteq \xyzrel{001}$ and $\xyzrel{110}\subseteq \xyzrel{101}$ for general graphs with finitely many vertices.
\end{conje}
\begin{discu}
We know this to be true in the regular case by Propositions \ref{pre-mike} and \ref{pre-mike-unital}, and in many other cases as we will detail in the next section. We can prove $\xyzrel{010}\subseteq \xyzrel{001}$ for any graph defining a gauge simple $C^*$-algebra (Theorem \ref{OIOtoOOI} below), but we have not been able to establish  $\xyzrel{110}\subseteq \xyzrel{101}$ in that case.

It is worth noting that we know of no direct way of trading in  an \xyz{x10}-equivalence
for  an \xyz{x01}-equivalence; all the evidence for the conjecture goes via classification and/or moves. On might ask the same question for general (non-unital) graph $C^*$-algebras; here we presently have no insights at all.
 \end{discu}
 
\section{The generation conjectures}


In previous sections, we have provided extensive (but admittedly somewhat sporadic) evidence that  
each \xyzrel{xyz} is generated as an equivalence relation by those moves that leave the graphs invariant in the relevant sense, using the unproved convention that \KKKp\ is a \xyz{110}-move and 
\KKKm\ is a \xyz{010}-move. We now take the leap and conjecture as follows:

\begin{conje}\label{master}
\addtocounter{equation}{-1}
\begin{eqnarray}
\xyzrel{000}&=&\langle \OOO,\IIIm,\RRRp,\SSS,\CCCp,\PPPp,\KKKp\rangle\\
\xyzrel{001}&=&\langle \OOO,\IIIm,\RRRp,\SSS\rangle\\
\xyzrel{010}&=&\langle \OOO,\IIIm,\KKKp\rangle\\
\xyzrel{011}&=&\langle \OOO,\IIIm\rangle\\
\xyzrel{100}&=&\langle \OOO,\IIIp,\RRRp,\CCCp,\PPPp,\KKKp\rangle\\
\xyzrel{101}&=&\langle \OOO,\IIIp,\RRRp\rangle\\
\xyzrel{110}&=&\langle \OOO,\IIIp,\KKKp\rangle\\
\xyzrel{111}&=&\langle \OOO,\IIIp\rangle
\end{eqnarray}
\end{conje}

We refer to the statements (\ref{gencon}.0)--(\ref{gencon}.7) as a whole as the \emph{generation conjecture}.\index{generation conjecture}\ Two of these statements are theorems. 

\begin{theor}[Cf. \cite{segrerapws:ccuggs}]\label{OOO-done}\fxnote{Must be adjusted}
\begin{eqnarray*}
\xyzrel{000}&=&\langle \OOO,\IIIm,\RRRp,\SSS,\CCCp,\PPPp\rangle\\
&=&\langle \OOO,\IIIm,\RRRp,\SSS,\CCCp,\PPPp,\KKKp\rangle
\end{eqnarray*}
and define a decidable relation.
\end{theor}
\begin{proof}
By \cite[Theorem~3.1]{segrerapws:ccuggs}, $\xyzrel{000}$ is generated by the moves on the list
\[
\OOO,\III,\RRR,\SSS,\CCC,\PPP,
\]
with the moves \III,\RRR,\CCC,\PPP\ defined as in Section \ref{twosided}, but for general graphs (removing sources added by the ``{\mbox{\texttt{\textup{+}}}}'' variations of the moves). Arguing as in Lemma \ref{essinv}, we see  that the moves not on both lists may be generated by the lists considered in the present paper.

We note that the \KKKp\ move is redundant in this case, but by Theorem \ref{Kpartialinva} we do know it is invariant. Decidability follows from the \GL-picture, cf.~Theorem \ref{masterGLSL}.
\end{proof}


In what may be considered a companion paper to the present one, joint with Arklint, we establish the corresponding claim for exact $*$-isomorphism:

\begin{theor}[Cf. \cite{seaseer:gciugc}]\label{IOO-done}
\begin{eqnarray*}
\xyzrel{100}&=&\langle \OOO,\IIIp,\RRRp,\CCCp,\PPPp\rangle\\
&=&\langle \OOO,\IIIp,\RRRp,\CCCp,\PPPp,\KKKp\rangle
\end{eqnarray*}
and define a decidable relation.
\end{theor}
\begin{proof}
We appeal to \cite[Corollary 5.6]{seaseer:gciugc}. This shows that the \KKKp\ move is redundant in this case, but by Theorem \ref{Kpartialinva} we do know it is invariant. Decidability follows from the \GLp-picture, cf.~Theorem \ref{masterGLSL}.
\end{proof}

\section{Challenges}

In Part \ref{casepart} of this work we will provide casewise evidence for the generation conjectures which we find compelling. However, each such casewise result presented also indicates that we do not know how to prove the corresponding component of the generation conjecture without added hypotheses. In this final section of Part \ref{genpart}, we summarize the main obstacles we see for obtaining complete results. Having solved the \xyz{x00} case completely, we discuss the cases \xyz{x01}, \xyz{x10} and \xyz{x11} cases separately.

\subsection{Generating  \xyz{x01}}

The generation conjecture's subcases (\ref{gencon}.1) and (\ref{gencon}.5) is best understood by contrasting (i) and (iii) of Theorem \ref{masterGLSL} to (ii) and (iv) of that same result. Indeed, (\ref{gencon}.1) and (\ref{gencon}.5)  would follow from a positive answer to

\begin{openq}\label{SLconj}
Do
\begin{gather*}
\xyzrel{001}\cap \standard \subseteq \SL\\
\xyzrel{101}\cap \standardp \subseteq \SLp
\end{gather*}
hold?
\end{openq}

We will show in Part \ref{casepart} that $\xyzrel{001}\cap \standard\subseteq \SLrel$ when the graphs are regular or define gauge simple $C^*$-algebras, but the general case remains open. The methods allowing a resolution in the regular case go back to \cite{kmhm:coetmscka} and requires an understanding of groupoid cohomology which eludes us in the presence of infinite emitters. The methods allowing a resolution in the gauge simple case go back to \cite{apws:gcsga} and seem to fail outside this case. We present a minimal example of a pair $(E,F)\in \xyzrel{000}\backslash \langle\OOO,\IIIm,\RRRp,\SSS\rangle$ where we are not able to determine if $(E,F)\in \xyzrel{001}$. Our conjecture requires this to fail.

If we knew that Question \ref{SLconj} had a positive answer, it would follow immediately that
\[
\standard\cap \langle \OOO,\IIIm,\RRRp,\SSS\rangle\subseteq \standard\cap\SL
\]
and
\[
\standardp\cap \langle \OOO,\IIIp,\RRRp\rangle\subseteq \standard\cap\SLp
\]
in parallel with (i) and (iii) of Theorem \ref{masterGLSL}. This would also entail decidability of the two relations as above.
But we do not, so in order to establish pairs of examples that can not be related by sequences of moves, we cannot work with matrices in standard form, but must consider matrices that are of arbitrary size and show that there is no \SL- or \SLp-relation there. To do so, we let  $\idmatrix_k$ be the $k\times k$ identity matrix, and write $\zmatrix$ for zero matrices of arbitrary size.

\begin{lemma}\label{gunnarprob}
 Consider the $(m+1+1+n+1)\times (m+1+n)$ block matrix
\[
B(x) =
\left( \begin{array}{cc|c}
\idmatrix_{m} & \zmatrix  & \zmatrix \\
\zmatrix & 3 &  \zmatrix \\
\zmatrix & 0  &  \zmatrix\\ \hline
\zmatrix & \zmatrix & \idmatrix_n   \\
\zmatrix & x & \zmatrix
\end{array}
\right).
\]
For $m ,n \geq 2$, $B(x_1)$ and $B(x_2)$ are $\SL$-equivalent via
\[
U=\left(\begin{array}{cc}
U_{1} & \zmatrix \\ 
U_{2} & U_{3}
\end{array}\right) \quad \text{and} \quad 
V=\left(\begin{array}{cc}
V_{1} & \zmatrix \\ 
V_{2} & V_{3}
\end{array}\right)
\]
 if and only if $x_1=\pm x_2$ modulo $3$. When $x_1$ and $x_2$ are units modulo $3$, the $(m+1,m+1)$ entry of $V_1$ is equivalent to $x_1/x_2 \; \bmod 3$.
\end{lemma}

\begin{proof}
Assume there are \SL-matrices for which $U B(x_1) = B(x_2) V$.  So, 
\begin{align*}
UB(x_1) &= \left(\begin{array}{c|c}
U_1 \left( \begin{smallmatrix} \idmatrix_n & \zmatrix \\ \zmatrix & 3 \\ \zmatrix & 0 \end{smallmatrix} \right) & \\ \hline
U_2 \left( \begin{smallmatrix} \idmatrix_n & \zmatrix \\ \zmatrix& 3 \\ \zmatrix & 0 \end{smallmatrix} \right)  + U_3 \left( \begin{smallmatrix} \zmatrix & \zmatrix \\ \zmatrix  & x_1 \end{smallmatrix} \right)  & U_3 \left( \begin{smallmatrix} \idmatrix_n  \\ \zmatrix  \end{smallmatrix} \right)  \end{array} \right) \quad \text{and} \\
B(x_2) V &= \left(\begin{array}{c|c}
\left( \begin{smallmatrix} \idmatrix_n & \zmatrix \\ \zmatrix & 3 \\ \zmatrix & 0 \end{smallmatrix} \right) V_1 & \\ \hline
\left( \begin{smallmatrix} \zmatrix & \zmatrix \\ \zmatrix  & x_2 \end{smallmatrix} \right) V_1 
+ \left( \begin{smallmatrix} \idmatrix_n  \\ \zmatrix \end{smallmatrix} \right) V_2  & \left( \begin{smallmatrix} \idmatrix_n  \\ \zmatrix \end{smallmatrix} \right) V_3
\end{array}
\right).
\end{align*}
Since the last row of $\left(\begin{smallmatrix} \idmatrix_n \\ \zmatrix \end{smallmatrix}\right) V_3$ is the zero row and since the last row of $U_3 \left(\begin{smallmatrix} \idmatrix_n \\ \zmatrix \end{smallmatrix}\right)$ is the last row of $U_3$ except for the last column, we see that $U_3 = \left(\begin{smallmatrix} U_3' & \mathbf{v} \\ \zmatrix & a \end{smallmatrix}\right)$.  Since $1=\det(U_3) = a$, we have $U_3 = \left(\begin{smallmatrix} U_3' & \mathbf{v} \\   \zmatrix & 1 \end{smallmatrix}\right)$.  Since $U_1 \left( \begin{smallmatrix} \idmatrix_n & \zmatrix \\ \zmatrix & 3 \\ \zmatrix & 0 \end{smallmatrix} \right) = \left( \begin{smallmatrix} \idmatrix_n & \zmatrix \\ \zmatrix & 3 \\ \zmatrix & 0 \end{smallmatrix} \right) V_1$, entries of the last column of $V_1$ must be divisible by three except possibly the $(m+1, m+1)$th entry.  Since $\det(V_1)=1$, it must be the case that the $(m+1,m+1)$th entry of $V_1$ is not divisible by 3, and hence invertible modulo 3.  Comparing the $(n+1, m+1)$th entries of $U_2 \left( \begin{smallmatrix} \idmatrix_n & \zmatrix \\ \zmatrix & 3 \\ \zmatrix & 0 \end{smallmatrix} \right)  + U_3 \left( \begin{smallmatrix} \zmatrix & \zmatrix \\ \zmatrix  & x_1 \end{smallmatrix} \right)$ and $\left( \begin{smallmatrix} \zmatrix & \zmatrix \\ \zmatrix  & x_2 \end{smallmatrix} \right) V_1 
+ \left( \begin{smallmatrix} \idmatrix_n  \\ \zmatrix \end{smallmatrix} \right) V_2$, we see that 
\begin{equation}\label{keyeqhere}
3u + x_1 = x_2 v,
\end{equation}
where $v$ is the $(m+1,m+1)$th entry of $V_1$.  Since $v$ is invertible module 3, \eqref{keyeqhere} implies that $x_1 = \pm x_2$ modulo 3.  Moreover, if $x_1, x_2$ are units modulo 3, \eqref{keyeqhere} implies $v \equiv x_1/x_2 \; \bmod 3$.

Conversely, assume that $x_1=\pm x_2$ modulo $3$.  Thus, there exists $u \in \ZZ$ such that $3u + x_1 = x_2 v$, where $v = \pm 1$.  Set 
$$
U=\left( \begin{array}{c|c}
\left( \begin{smallmatrix}
\idmatrix_{m-1} & \zmatrix & \zmatrix & \zmatrix \\
\zmatrix & v & 0  & 0 \\
\zmatrix & 0 & v & 0 \\
\zmatrix & 0 & 0 & 1
\end{smallmatrix}\right)
 \\ \hline
\left(\begin{smallmatrix}
\zmatrix & \zmatrix & \zmatrix & \zmatrix \\
\zmatrix & 0 & 0  & 0 \\
\zmatrix & 0 & 0 & 0 \\
\zmatrix & 0 & u & 0
\end{smallmatrix}\right)
&
\idmatrix_{n+1}
\end{array} 
\right) \quad \text{and} \quad 
V= 
\left( 
\begin{array}{c|c}
\left(
\begin{smallmatrix}
\idmatrix_{m-1} & \zmatrix & \zmatrix  \\
\zmatrix & v & 0  \\
\zmatrix & 0 & v  \\
\end{smallmatrix}\right)
& \\ \hline
	& \idmatrix_{n+1}
\end{array}
\right).
$$
Since $v = \pm 1$, $U$ and $V$ are \SL\ matrices of the desired form.  A computation shows that 
$$
U B(x_1) = B(x_2) V,
$$
concluding the proof.
\end{proof}

\begin{theor}\label{gp}
With
\[
E:\qquad \xymatrix{\bullet\ar[d]\ar[dr]\ar@(u,l)[]\ar@(ul,dl)[]\ar@(l,d)[]\ar@(ur,ul)[]\ar@/^/@<-1mm>[r]\ar@/^/@<1mm>[r]\ar@/^/[r]&\circ\ar@{=>}@/^/[l]\ar@{=>}@(dr,ur)[]\\
\circ\ar@{=>}@(dr,ur)[]&\circ\ar@{=>}@(dr,ur)[]}\qquad\qquad F:\qquad
\xymatrix{\bullet\ar@<0mm>[d]\ar@<1mm>[d]\ar[dr]\ar@(u,l)[]\ar@(ul,dl)[]\ar@(l,d)[]\ar@(ur,ul)[]\ar@/^/@<-1mm>[r]\ar@/^/@<1mm>[r]\ar@/^/[r]&\circ\ar@{=>}@/^/[l]\ar@{=>}@(dr,ur)[]\\
\circ\ar@{=>}@(dr,ur)[]&\circ\ar@{=>}@(dr,ur)[]}
\]
we have that $(E,F)\in \xyzrel{000}\backslash \langle \OOO,\IIIm,\RRRp,\SSS\rangle$. 
\end{theor}
\begin{proof}
For $k, m, n \geq 2$, set 
$$
\widetilde{B}_{k,m, n} (x, y)  =
\left( \begin{array}{cc|c|c}
\idmatrix_{k} & \zmatrix  & & \\
\zmatrix & 3 & &  \\
\zmatrix & 0  & & \\ \hline
\zmatrix & \zmatrix & \idmatrix_m &  \\
\zmatrix & x & \zmatrix &  \\ \hline
\zmatrix & \zmatrix & & \idmatrix_n   \\
\zmatrix & y & & \zmatrix
\end{array}
\right).
$$
It is straightforward to expand the two graphs with \OOO\ moves so that there are four vertices in each component, and then to use row and column additions to obtain two graphs $E'$ and $F'$ so that the pair $(E',F')$ is in standard form with \SL-equivalences from $\mathsf{B}^\bullet _{E'}$ and $\mathsf{B}^\bullet _{F'}$ to $\widetilde{B}_{2,3,3}(1,1)$ and $\widetilde{B}_{2,3,3}(1,-1)$ respectively.  A diagonal choice shows that $\widetilde{B}_{2,3,3}(1,1)$ and $\widetilde{B}_{2,3,3}(1,-1)$ are \GL-equivalent.  And hence $(E', F') \in \xyzrel{000}$ by \cite[Theorem~9.11]{segrerapws:ccuggs}.  Thus, $(E, F) \in \xyzrel{000}$.

We now claim that $(E, F) \notin \mathsf{SL}$.  Assume to the contrary that $(E, F) \in \mathsf{SL}$.  Thus, there are graphs $E'$ and $F'$ such that $(E, E')$ and $(F,F')$ are elements of $\langle \OOO,\IIIm,\RRRp,\SSS\rangle$, $(E', F')$ is in standard form, and there is an \SL-equivalence from $\mathsf{B}^\bullet _{E'}$ to $\mathsf{B}^\bullet _{F'}$.  Since there are \SL-equivalences from $\mathsf{B}^\bullet _{E'}$ to $\mathsf{B}^\bullet _{F'}$ to $\widetilde{B}_{k,m,n}(1,1)$ and $\widetilde{B}_{k,m,n}(1,-1)$ for some $k, m ,n \geq 2$, it must be the case that $\widetilde{B}_{k,m,n}(1,1)$ and $\widetilde{B}_{k,m,n}(1,-1)$ are \SL-equivalent via matrices 
$$
U = 
\left( 
\begin{array}{c|c|c}
U_{11} & &  \\ \hline 
U_{21} & U_{22} &  \\ \hline
U_{31} &  & U_{33} 
\end{array}
\right)
\quad \text{and} \quad 
V = 
\left( 
\begin{array}{c|c|c}
V_{11} & &  \\ \hline 
V_{21} & V_{22} &  \\ \hline
V_{31} &  & V_{33}
\end{array}
\right)
$$
We can now pass to $2\times 2$ block submatrices and apply Lemma \ref{gunnarprob} twice to show that the $(k+1,k+1)$th entry of $V_{11}$ must be equivalent to $1$ and $-1$ modulo $3$ which is a contradiction.  Therefore, $(E, F) \notin \mathsf{SL}$ which implies $(E, F)$ is not an element of $\langle \OOO,\IIIm,\RRRp,\SSS\rangle$.
\end{proof}

\begin{remar}
The example above grew out of stimulating conversations with Gunnar Restorff with the aim of producing a similar example showing that $\GL\not=\SL$ for a $C^*$-algebra with exactly one non-trivial ideal. It remains unknown, and is an interesting problem, if such examples exist.
\end{remar} 

Theorem \ref{gp} shows that the generation conjecture fails if indeed $(E,F)\in \xyzrel{001}$. The key point of the example is that since all components contain infinite emitters, there are no determinants available to distinguish the two graphs. The problem may be translated by \cite[Corollary 3.6]{tmcerasmt:rgcds} to stable isomorphism or (weak) Kakutani equivalence of the associated graph groupoids.

To analyze the \xyz{101} setting, we have found in many cases (such as Theorem \ref{MCEORhere}) that it equals $\xyzrel{100}\cap\xyzrel{001}$, and show generation this way. However, examples that we provide below show that in fact
\[
\xyzrel{101}=\xyzrel{100}\cap\xyzrel{001}
\]
cannot be true if the generation conjecture holds, both in the general simple case and in the general regular case. It would seem more promising to try to establish generation by finding a way to use \RRRp\ moves to pass from continuous orbit equivalence to eventual conjugacy. It is possible to extend these notions beyond the regular case using Webster's infinite path space \cite{sbgw:psdg}, but one must then also require that the orbit equivalence preserves periodic points, cf.~\cite{seaseer:dcdpgc} or \cite{tmcmlw:oegigg}.

\subsection{Generating  \xyz{x10}}

As discussed in Section \ref{hazratdisc}, our generation conjectures (\ref{gencon}.2) and (\ref{gencon}.6) in the \xyz{x10} case are essentially the same as the Hazrat conjectures, subject already to a serious investigation extending over more than a decade. 

\begin{openq} Do
\begin{eqnarray*}
\xyzrel{010}&=&\langle \OOO,\IIIm,\KKKp\rangle\\
\xyzrel{110}&\supseteq&\langle \KKKp\rangle\\
\end{eqnarray*}
hold?
\end{openq}

Note that here the generation question is solved in the \xyz{110} case, and the invariance is open. The open question in the generation direction for \xyz{010} is of our own making, since we have elected to not list \KKKm\ on our proposed generation collection of moves. We expect that $\langle \KKKm\rangle=\langle \OOO,\IIIm,\KKKp\rangle$ holds true in general, so that the \KKKm\ move is redundant, but only know this in some cases.

Our understanding of the other inclusions are hampered by a lack of examples in $\xyzrel{x10}\backslash \xyzrel{x11}$, where we essentially only have the one provided by Kim and Roush in \cite{khkfwr:wcfis}. The earlier non-irreducible example 
found in \cite{khkfwr:wcfrs} to lie in $\langle\KKKm\rangle \backslash \xyzrel{011}$ is an interesting test case that we do not know how to handle:

\begin{openq}
Let $E$ and $F$ be the graphs given by
\[
\begin{bmatrix}
0&0&1&1&0&0&0&0\\
1&0&0&0&0&0&0&0\\
0&1&0&0&0&0&0&0\\
0&0&1&0&0&0&0&0\\
1&0&0&0&0&0&1&1\\
0&1&0&0&1&0&0&0\\
0&0&1&0&0&1&0&0\\
0&0&0&1&0&0&1&0\\\end{bmatrix}\qquad
\begin{bmatrix}
0&0&1&1&0&0&0&0\\
1&0&0&0&0&0&0&0\\
0&1&0&0&0&0&0&0\\
0&0&1&0&0&0&0&0\\
5&1&5&5&0&0&1&1\\
5&5&1&0&1&0&0&0\\
0&5&5&1&0&1&0&0\\
1&0&5&4&0&0&1&0\\\end{bmatrix},
\]
respectively. Is $(E,F)\in \xyzrel{010}$?
\end{openq}

The authors' substantial, but not yet successful, efforts to solve this problem is detailed in \cite{tmcadse:selcka}, \cite{bbader:ehcgc}, \cite{bbader:serlc}.

We do not yet have much insight into what happens in the presence of infinite emitters, even for graphs defining simple $C^*$-algebras. It is also essential to know the following:

\begin{openq}
Are $\langle \KKKp\rangle$ and $\langle \KKKm\rangle$ decidable relations?
\end{openq}

This is known classically for the \KKKm\ case (see \cite{khkfwr:dse}), but this result is hard and not obviously portable.

\subsection{Generating  \xyz{x11}}

For the  (\ref{gencon}.3) and (\ref{gencon}.7) cases, we again have full control over invariance and only ask:

\begin{openq} Do
\addtocounter{equation}{-1}
\begin{eqnarray*}
\xyzrel{011}&\subseteq&\langle \OOO,\IIIm\rangle\\
\xyzrel{111}&\subseteq&\langle \OOO,\IIIp\rangle
\end{eqnarray*}
hold in general?\end{openq}

Although we can make no firm claims in this direction, we are generally optimistic that the approach that lead to the full resolution of these questions in the regular case could generalize, arguing via graded groupoids as in \cite{tmcjr:dgigc}. Again, there are general reinterpretations known of two-sided conjugacy and of eventual conjugacy using infinite path spaces, encoding exactly these equivalence relations.

Allowing sinks seems less daunting to us than allowing infinite emitters.  This was already seen for the $\xyzrel{000}$ relation:  In \cite{segrerapws:gcgcfg}, we solved the generation problem for finite graphs by reducing the problem to the regular graphs.  The general case was completed in \cite{segrerapws:ccuggs} and its proof required us to generalize results from symbolic dynamics to accommodate for infinite emitters.

\part{Casewise studies}\label{casepart}
\newcommand{\elsewhere}{\cellcolor{lgray}{}}
\newcommand{\elsewhered}{\cellcolor{lgray}{+}}
\newcommand{\here}{\cellcolor{orange}{}}
\newcommand{\hered}{\cellcolor{orange}{+}}
\newcommand{\sopen}{\cellcolor{black}{}}
\newcommand{\sopenbut}{\cellcolor{black}{\textcolor{white}{!}}}
\newcommand{\statusbar}[8]{\begin{center}\begin{tabular}{|c|c|c|c|c|c|c|c|}\hline\small\xyz{000}&\small\xyz{001}&\small\xyz{010}&\small\xyz{011}&\small\xyz{100}&\small\xyz{101}&\small\xyz{110}&\small\xyz{111}\\\hline#1&#2&#3&#4&#5&#6&#7&#8\\\hline\end{tabular}\end{center}}

In this part, we discuss results and open questions that address only subclasses of graphs. Depending on the nature of our results, we will present some in decreasing generality and others in increasing. For a quick overview of the status quo in each subcase, we start each subsection with a status bar, where the color coding is to be understood as 
\begin{center}
\begin{tabular}{|c|p{10cm}|}\hline
\sopen&Conjecture \ref{master} remains open for \xyzrel{xyz} even when restricted to this subcase.\\\hline
\sopenbut&Conjecture \ref{master} remains open for \xyzrel{xyz} even when restricted to this subcase, but we present partial results.\\\hline\here&We prove Conjecture \ref{master} for \xyzrel{xyz} in this section.\\\hline
\hered&We prove Conjecture \ref{master} for \xyzrel{xyz} in this section, and also show that \xyzrel{xyz} is decidable.\\\hline
\elsewhere&We have already proved Conjecture \ref{master} for \xyzrel{xyz} in higher generality.\\\hline
\elsewhered&We have already proved Conjecture \ref{master} for \xyzrel{xyz} in higher generality, showing (either here or there) that \xyzrel{xyz} is decidable\\\hline
\end{tabular}
\end{center}
The status for all graphs going in to this part will be

\statusbar{\elsewhered}{\sopenbut}{\sopenbut}{\sopenbut}{\elsewhered}{\sopenbut}{\sopenbut}{\sopenbut}

\chapter{Graphs defining gauge simple $C^*$-algebras}

In this chapter, we exclusively study gauge simple $C^*$-algebras. We note that since the \PPPp\ move is not applicable in this case, we have
\begin{eqnarray*}
\xyzrel{000}&=&\langle \OOO,\IIIp,\RRRp,\SSS,\CCCp\rangle\\
\xyzrel{100}&=&\langle \OOO,\IIIp,\RRRp,\CCCp\rangle
\end{eqnarray*}

\section{Otherwise general graphs}

\statusbar{\elsewhere}{\hered}{\sopen}{\sopen}{\elsewhere}{\sopenbut}{\sopen}{\sopen}

It was an early surprise, contained in the work of S\o{}rensen \cite{apws:gcsga}, that  in the gauge simple case, any classical Cuntz splice can be undone by more basic moves in the presence of infinite emitters. We will start by reproving and generalizing this result.

\begin{lemma}\label{GLisSL}
In the class of graphs defining gauge simple $C^*$-algebras with infinite $K_0$-groups,
\[
\standard\cap \GL=\standard\cap \SL.
\]
\end{lemma}
\begin{proof}
We fix $(E,F)$ a pair in standard form, assume that they define gauge simple $C^*$-algebras, and that 
\begin{equation}U\BB^\bullet_EV=\BB^\bullet_F\label{intertwine}\end{equation}  with $\det U,\det V\in\{\pm 1\}$.
We aim to find $U'$,$V'$ with  $\det U'=\det V'=1$ intertwining $\BB_E^\bullet$ and $\BB_F^\bullet$. 

By a straightforward variation of Smith normal form, we can find $U_0,V_0$ with $\det U_0=\det V_0=1$ so that 
\begin{equation}\label{smithbetter}
U_0\BB^\bullet _EV_0=\begin{pmatrix}d_1&&&&\\&d_2&&&\\&&\ddots&&\\&&&&\pm d_n\\\\\\\end{pmatrix}
\end{equation}
with $t\in\NN_0$ zero rows at the bottom. All $d_i$ are nonnegative and arranged to that $d_i\mid d_{i+1}$, but a sign may be necessary as indicated. 
Our assumption on the $K_0$-groups implies that either $t>0$ or $d_n=0$, so in any case the last row of $U_0\BB^\bullet _EV_0$ is identically zero.

It follows from the definition of standard form that $d_1=d_2=1$, and just as in the proof of Theorem \ref{MCEORhere} we can use this to simultaneously change the sign on both of the determinants $U$ and $V$ at our discretion. Thus we may assume that $\det V=1$. When $\det U=-1$ we replace $U$ by $UU_0U_2U_0^{-1}$ where
\[
U_2=\begin{pmatrix}1&&&\\&\ddots&&\\&&1&\\&&&-1\end{pmatrix}
\]
changes the sign of the determinant without changing anything else. 
\end{proof}

\begin{propo}\label{simpleandinfK}
In the class of graphs defining gauge simple $C^*$-algebras with infinite $K_0$-groups,
\[
\langle \CCCp\rangle\subseteq \langle \OOO,\IIIm,\RRRp,\SSS\rangle
\]
and consequently
\[
\xyzrel{000}=\xyzrel{001}=\langle \OOO,\IIIm,\RRRp,\SSS\rangle
\]
in this class.
\end{propo}
\begin{proof}
Assume that $F$ is obtained from $E$ by a \CCCp\ move. We know that at least one vertex in $E$ emits more than one edge, so we can outsplit $E$ twice to obtain $\widetilde{E}$ so that $(\widetilde{E},F)$ is a matched pair. Applying to Lemma \ref{getstd}(i) we obtain a pair $(E,F)$ in standard form so that $(E,E'),(F'F')\in\langle \OOO,\IIIp\rangle$, and we conclude that $(E',F')\in\GL$.  By Lemma \ref{GLisSL} combined with Theorem \ref{masterGLSL}(ii) we get the desired conclusion.
\end{proof}

\begin{corol}\label{OOI-simple}
Within the class of  graphs defining gauge simple $C^*$-algebras, 
$$\xyzrel{001}=\langle \OOO,\IIIm,\RRRp,\SSS\rangle$$
\end{corol}
\begin{proof}
When the graph is regular, we may use Corollary \ref{Sstarreg} to reduce to the essential subgraph, and then prove the claim by Theorem \ref{PSMMCEOR} (in fact, the original result from \cite{kmhm:coetmscka} applies).


When the graph has a singular vertex, we note that $K_0$ must be infinite, and appeal to Proposition \ref{simpleandinfK}.
\end{proof}


\begin{propo}\label{OIOtoOOI}
Within the class of  graphs defining gauge simple $C^*$-algebras, 
$$\xyzrel{010}\subseteq \xyzrel{001}$$
\end{propo}
\begin{proof}
When the graphs are not regular, we have that $K_0$ is infinite and infer
\[
\xyzrel{010}\subseteq \xyzrel{000}=\xyzrel{001}
\]
from Proposition \ref{simpleandinfK}. When they are regular, we apply Proposition \ref{pre-mike}.
\end{proof}

We end this section by discussing the \xyz{101} case.

\begin{propo}\label{notbadcase}
Within the class of  graphs $E$ defining gauge simple $C^*$-algebras for which\footnote{In an earlier version of this paper, we falsely claimed to have solved the full case}
\[
\operatorname{rank} K_0(E)\not =\operatorname{rank} K_1(E)+1
\]
we have
\[
\standardp\cap \GLp\cap \SL= \standardp\cap \SLp
\]
and consequently
\[
\xyzrel{101}=\xyzrel{100}\cap\xyzrel{001}.
\]
\end{propo}
\begin{proof}
We have already proved the result in the essential case in Theorem \ref{MCEORhere}, and by Corollary \ref{Sstarreg},  it follows in the general regular case, corresponding to the case 
\[
\operatorname{rank} K_0(E) =\operatorname{rank} K_1(E)
\]
Thus, the case 
\[
\operatorname{rank} K_0(E) >\operatorname{rank} K_1(E)+1
\]
remains. To analyze this, we pass to the diagonal matrix \eqref{smithbetter} as in the proof of Lemma \ref{GLisSL}, noting that we have $t>1$. Again we may assume that $\det V=1$.

 We note that there are at least $2$ zero rows in $U_0\BB_E V_0$ and consider the two last entries $\alpha=d'_{n+t-1}$ and $\beta=d'_{n+t}$ in $U_0\DD_E$. If one of these is zero, we may change the sign of $\det U$ by multiplying from the left by a diagonal matrix with one $-1$ entry and the rest $1$ just as in the previous paragraph. Because of the zero entries, nothing else is changed. If neither is zero, let $d=\gcd(|\alpha|,|\beta|)$ and find $x,y\in \ZZ$ so that $\alpha x-\beta y=d$. The matrix
\[
Z=\frac1d\begin{pmatrix}\alpha x+\beta y&-2\alpha y\\2\beta x&-\alpha x-\beta y\end{pmatrix}
\]
is integral, has determinant $-1$, and satisfies $Z\left(\begin{smallmatrix}\alpha \\\beta\end{smallmatrix}\right)=\left(\begin{smallmatrix}\alpha \\\beta\end{smallmatrix}\right)$. Hence we may change the sign of $U$ by placing $Z$ in the lower right diagonal block of a matrix with ones in the remaining diagonal.
\end{proof}

We do not know what happens in the remaining case, but can pinpoint the difficulties more precisely as follows. 

\begin{lemma}\label{simpleversion}
Consider the $(m+1)\times m$ and $(m+1)\times 1$ matrices
\[
B=\left(\begin{array}{cc}\idmatrix_m\\\zmatrix\end{array}\right)
\qquad
 D(x)=\left(\begin{array}{c}\zmatrix\\x\end{array}\right).
\]
$(D(x_1),B)$ and $(D(x_2),B)$ are \GLp-equivalent if and only if $|x_1|=|x_2|$. They  are only \SLp-equivalent when $x_1=x_2$.
\end{lemma}
\begin{proof}
When  implementing matrices $U,V$ are given, we write $U$ as an $(m+1)\times (m+1)$ block matrix
\[
U=\begin{pmatrix}
U_1&U_2\\
U_3&u
\end{pmatrix}\qquad
\]
with $u\in\ZZ$, and note (like in the proof of Lemma \ref{gunnarprob}) that
\[
\begin{pmatrix}
U_1\\U_3\end{pmatrix}=UB=BV=\begin{pmatrix}V\\\zmatrix\end{pmatrix}
\]
so that $U_1=V$ and $U_3=\zmatrix$. Then we conclude
 that 
\[
\det U=\det\left(\begin{array}{cc}
V&*\\
0&u\end{array}\right)=u\det V,
\]
showing that $u=\pm 1$ for any \GL-equivalence and that $u=1$ for any \SL-equivalence.

Because of the form of $B$, $UD(x_1)-D(x_2)$ vanishes in $\cok B$ precisely when the last entries of the two vectors agree, and this they do when $ux_1=x_2$, showing the claims.
\end{proof}

\begin{theor}\label{exsimple}
With
\[
E:\qquad \xymatrix@R=5mm{&\\\bullet\ar@(ul,dl)[]\ar@(u,l)[]\ar@/^/@<-0.5mm>[r]\ar@/^/@<0.5mm>[r]&\circ\ar@{=>}@/^/[l]\ar@{=>}@(dr,ur)[]}\qquad\qquad F:\qquad
\xymatrix@R=5mm{&\bullet\ar[d]\\\bullet\ar@(ul,dl)[]\ar@(u,l)[]\ar@/^/@<-0.5mm>[r]\ar@/^/@<0.5mm>[r]&\circ\ar@{=>}@/^/[l]\ar@{=>}@(dr,ur)[]\\&\bullet\ar[u]}
\]
we have that $(E,F)\in (\xyzrel{100}\cap \langle \SSS\rangle )\backslash \langle \OOO,\IIIp,\RRRp\rangle$.
\end{theor}
\begin{proof}
We argue from Lemma \ref{simpleversion} just like in Theorem \ref{gp}, but appeal here to \cite{seaseer:gciugc} to get that the pairs are in \xyzrel{100}, but not in $ \langle \OOO,\IIIp,\RRRp\rangle$. Obviously, their differences may be obtained by \SSS\ moves.
\end{proof}

\begin{openq} 
Is $(E,F)\in \xyzrel{101}$?
\end{openq}
\begin{discu}
Our example shows that 
\begin{equation}\label{GLSL}
\GLp\cap \SL\subsetneq \SLp
\end{equation}
and also that the generation conjecture fails if indeed $(E,F)\in \xyzrel{101}$. We  know that equality holds in \eqref{GLSL} for graphs defining simple $C^*$-algebras except when there is exactly one infinite emitter, and the pair $(E,F)$ demonstrates the necessity of avoiding that case.

We find it striking that we cannot decide whether or not $\xyzrel{100}\cap \langle \SSS\rangle\subseteq \xyzrel{101}$. We know -- and it is easy to see -- that any \SSS\ move preserves the diagonal after stabilization, but it will of course not usually preserve the graph algebra exactly. The question is if the diagonal must follow along in those instances where an \SSS\ move does not change the $C^*$-algebra.
\end{discu}

\section{Further specialized graphs}

We study more cases where gauge simplicity is  relevant at the end of ensuing sections.

\chapter{Finite  graphs}

Since finite graphs of course have no infinite emitters, they will be regular precisely when they have no sinks. We return to the regular case first.

\section{Regular graphs}\label{regcase}

\statusbar{\elsewhered}{\hered}{\sopen}{\here}{\elsewhered}{\sopen}{\sopen}{\here}

\ifthenelse{\boolean{casewise}}{
We have already presented several generation results in the regular case in Section \ref{twosided} and \ref{onesided} above. The optimal form concerning \xyz{001}-, \xyz{011}- and \xyz{111}-equivalence is

\begin{theor}\label{OOI-cr}
In  the class of regular graphs,
\begin{eqnarray*}
\xyzrel{001}&=&\langle \OOO,\IIIm,\RRRp,\SSS\rangle\\
\xyzrel{011}&=&\langle \OOO,\IIIm\rangle\\
\xyzrel{111}&=&\langle \OOO,\IIIp\rangle\\
\xyzrel{010}&\subseteq&\langle\KKKm\rangle\ \subseteq\ \xyzrel{001}\\
\xyzrel{110}&\subseteq&\langle\KKKp\rangle\ \subseteq\ \xyzrel{101}
\end{eqnarray*}
\end{theor}
\begin{proof}
The last statement is contained in Theorem \ref{B}. We noted the three first statements  in Theorems \ref{pre-mike}, \ref{PSMMCEOR} and \ref{WCR}, but only in the essential case. Using Proposition \ref{Sstarreg}, they now follow in general.
\end{proof}

Since the ensuing discussion takes place in the classical SFT setting, we can perform the analysis in matrices of fixed size (cf. the discussion after Open Question \ref{SLconj}).

\begin{lemma}\label{ckversion}
Let
\[
B=\left(\begin{array}{ccc|ccc}
1&0&0&&&\\
0&1&0&&&\\
0&0&0&&&\\\hline
0&0&0&1&0&0\\
0&0&0&0&1&0\\
0&0&3&0&0&0\\
\end{array}\right) D(x)=\left(\begin{array}{c}0\\0\\3\\\hline 0\\0\\x\end{array}\right)
\]
$(D(x_1),B)$ and $(D(x_2),B)$ are \GLp-equivalent if and only if $x_1\equiv \pm x_2 \mod 3$. They  are  \SLp-equivalent if and only if $x_1\equiv x_2 \mod 3$.
\end{lemma}
\begin{proof}
When 
\[
U=\left(\begin{array}{ccc|ccc}\scriptsize
u_{11}&u_{12}&u_{13}&&&\\
u_{21}&u_{22}&u_{23}&&&\\
u_{31}&u_{32}&u_{33}&&&\\\hline
u_{41}&u_{42}&u_{43}&u_{44}&u_{45}&u_{46}\\
u_{51}&u_{52}&u_{53}&u_{54}&u_{55}&u_{56}\\
u_{61}&u_{62}&u_{63}&u_{64}&u_{65}&u_{66}\\
\end{array}\right)
\qquad
V=\left(\begin{array}{ccc|ccc}\scriptsize
v_{11}&v_{12}&v_{13}&&&\\
v_{21}&v_{22}&v_{23}&&&\\
v_{31}&v_{32}&v_{33}&&&\\\hline
v_{41}&v_{42}&v_{43}&v_{44}&v_{45}&v_{46}\\
v_{51}&v_{52}&v_{53}&v_{54}&v_{55}&v_{56}\\
v_{61}&v_{62}&v_{63}&v_{64}&v_{65}&v_{66}\\
\end{array}\right)
\]
implement the equivalence, we get
\[
\left(\begin{array}{ccc|ccc}\scriptsize
u_{11}&u_{12}&0&&&\\
u_{21}&u_{22}&0&&&\\
u_{31}&u_{32}&0&&&\\\hline
u_{41}&u_{42}&3u_{46}&u_{44}&u_{45}&0\\
u_{51}&u_{52}&3u_{56}&u_{54}&u_{55}&0\\
u_{61}&u_{62}&3u_{66}&u_{64}&u_{65}&0\\
\end{array}\right)
=\left(\begin{array}{ccc|ccc}\scriptsize
v_{11}&v_{12}&v_{13}&&&\\
v_{21}&v_{22}&v_{23}&&&\\
0&0&0&&&\\\hline
v_{41}&v_{42}&v_{43}&v_{44}&v_{45}&v_{46}\\
v_{51}&v_{52}&v_{53}&v_{54}&v_{55}&v_{56}\\
3v_{31}&3v_{32}&3v_{33}&0&0&0
\end{array}\right),
\]
providing a plethora of identities between the entries of $U$ and $V$. Using the notation $w_{ij}$ at the indices where
\[
w_{ij}=u_{ij}=v_{ij},
\]
we collect all pertinent information in
\[
U=\left(\begin{array}{ccc|ccc}\scriptsize
w_{11}&w_{12}&u_{13}&&&\\
w_{21}&w_{22}&u_{23}&&&\\
0&0&u_{33}&&&\\\hline
u_{41}&u_{42}&u_{43}&w_{44}&w_{45}&u_{46}\\
u_{51}&u_{52}&u_{53}&w_{54}&w_{55}&u_{56}\\
3v_{31}&3v_{32}&u_{63}&0&0&v_{33}\\
\end{array}\right)
\]
which gives
\[
UD(x)=\left(\begin{array}{c}*\\ {*}\\3u_{33}\\\hline *\\ {*}\\3 u_{63}+x v_{33}\end{array}\right)
\]
and consequently that $UD(x)-D(y)$ vanishes in $\cok B$ precisely when
\[
u_{33}=1 \wedge (x v_{33}\equiv y\mod 3)
\]
As usual, we see that $v_{33}=\pm 1$, and the claim concerning \GLp-equivalence is proved as soon as one realizes that both choices can be implemented with diagonal matrices $U,V$.

To show that \SLp-equivalence is more restrictive, we just note
\[
1=\frac{\det U_{1,1}}{\det V_{1,1}}=\frac{u_{33}\det\left(\begin{smallmatrix}w_{11}&w_{12}\\
w_{21}&w_{22}\end{smallmatrix}\right)}{v_{33}\det\left(\begin{smallmatrix}w_{11}&w_{12}\\
w_{21}&w_{22}\end{smallmatrix}\right)}=\frac{u_{33}}{v_{33}}.
\]
\end{proof}

\begin{theor}\label{exck}
With
\[
E:\qquad\qquad \xymatrix{&\bullet\ar[d]\ar@<-1mm>[d]\ar@<1mm>[d]\\\bullet\ar@(ul,dl)[]\ar@(u,l)[]\ar@/^/[r]&\bullet\ar@(ur,dr)[]\ar@(u,r)[]\ar@/^/[l]\ar[d]\ar@<-1mm>[d]\ar@<1mm>[d]\\
\bullet\ar@(ul,dl)[]\ar@(u,l)[]\ar@/^/[r]&\bullet\ar@(ur,dr)[]\ar@(u,r)[]\ar@/^/[l]\\&\bullet\ar[u]}\qquad
F:\qquad\qquad \xymatrix{&\bullet\ar[d]\ar@<-1mm>[d]\ar@<1mm>[d]\\\bullet\ar@(ul,dl)[]\ar@(u,l)[]\ar@/^/[r]&\bullet\ar@(ur,dr)[]\ar@(u,r)[]\ar@/^/[l]\ar[d]\ar@<-1mm>[d]\ar@<1mm>[d]\\
\bullet\ar@(ul,dl)[]\ar@(u,l)[]\ar@/^/[r]&\bullet\ar@(ur,dr)[]\ar@(u,r)[]\ar@/^/[l]\\\bullet\ar[ur]&\bullet\ar[u]}
\]
we have that $(E,F)\in (\xyzrel{100}\cap \langle \SSS\rangle )\backslash \langle \OOO,\IIIp,\RRRp\rangle$.
\end{theor}
\begin{proof}
We argue from Lemma  \ref{ckversion} just like in Theorem \ref{gp}, but appeal here to \cite{seaseer:gciugc} to get that the pairs are in \xyzrel{100}, but not in $ \langle \OOO,\IIIp,\RRRp\rangle$. Obviously, $E$ may be obtained from $F$ by an \SSS\ move.
\end{proof}}

\section{Finite graphs defining gauge simple $C^*$-algebras}\label{finsimcase}

\statusbar{\elsewhered}{\elsewhered}{\hered}{\elsewhere}{\elsewhered}{\hered}{\sopenbut}{\elsewhere}

If the graph is not regular, it is of the form studied in Proposition \ref{finitecomplete}, cf. Corollary  \ref{reducetoGF}. Thus we only need to look at the regular case.

\begin{theor}\label{IOI-cr}
In  the class of finite graphs defining gauge simple $C^*$-algebras,
\begin{eqnarray*}
\xyzrel{010}&=&\langle \KKKp,\OOO,\IIIm\rangle\\
\xyzrel{101}&=&\langle \OOO,\IIIm,\RRRp\rangle\\
\end{eqnarray*}
\end{theor}
\begin{proof}
The last statement was noted in Theorem  \ref{KBKES}. We noted the first statement  in Theorems \ref{WCR} and \ref{ImvsIp}, but only in the essential case. Using Proposition \ref{Sstarreg}, it now follows in general.
\end{proof}

\chapter{Singular graphs}
\ifthenelse{\boolean{casewise}}{This class is not well studied and has the unfortunate property
that when  any of our moves is applied to a graph within the class, the resulting graph is outside it. Still, it
provides interesting examples, as we shall see.}{}

\section{Amplified graphs}

\statusbar{\elsewhered}{\hered}{\hered}{\hered}{\elsewhered}{\hered}{\hered}{\hered}

\ifthenelse{\boolean{casewise}}{We say that a graph is \emph{amplified}\index{amplified graphs} when the number of edges between two vertices is always either $0$ or $\infty$.
To study this case, the authors introduced with  S\o{}rensen in \cite{seerapws:agc}
an ad hoc move  \TTT\ adding infinitely many edges from $v$ to $w$ whenever there is a path from $v$ to $w$ starting with an edge which is parallel to infinitely many edges.\index{TTT@$\TTT$} Visually, a \TTT\ move goes from
\[
\xymatrix{
v\ar@{=>}[r]&u_1\ar[r]&\cdots\ar[r]&u_n\ar[r]&w}
\]
to
\[
\xymatrix{&&\\
v\ar@{=>}@/^2em/[rrrr]\ar@{=>}[r]&u_1\ar[r]&\cdots\ar[r]&u_n\ar[r]&w}
\]
We do not require the vertices $v,w,u_k$ to be mutually different.

\begin{lemma}\label{Tasmove}
 $\TTT\subseteq \langle\OOO,\RRRp\rangle$.
 \end{lemma}
\begin{proof}
We may assume that $n=1$ since we can use that case repeatedly to obtain both
\[
\xymatrix{&&&\\
v\ar@{=>}@/^3em/[rrrrr]\ar@{=>}@/^2em/[rrrr]\ar@{=>}@/^1em/[rr]\ar@{=>}[r]&u_1\ar[r]&u_2\ar[r]&\cdots\ar[r]&u_n\ar[r]&w}
\]
and
\[
\xymatrix{&&&\\
v\ar@{=>}[r]\ar@{=>}@/^2em/[rrrr]\ar@{=>}@/^1em/[rr]&u_1\ar[r]&u_2\ar[r]&\cdots\ar[r]&u_n\ar[r]&w}
\]

If $u_1$ emits uniquely, it is regular, and an \RRRp\ move takes us to
\[
\xymatrix{&&\\
v\ar@{=>}@/^2em/[rr]&u_1\ar[r]&w}
\]
But we can also go to this graph with an \RRRp\ move on
\[
\xymatrix{&&\\
v\ar@{=>}@/^2em/[rr]\ar@{=>}@/^1em/[r]&u_1\ar[r]&w}
\]

If $u_1$ does not emit uniquely, we outsplit to create a vertex which does, perform the moves above, and out-amalgamate back.
\end{proof}

\begin{theor}\label{classifyamplified}
In the class of amplified graphs,
\[
\xyzrel{000}=\xyzrel{001}=\xyzrel{100}=\xyzrel{101}=\langle\OOO,\RRRp\rangle
\]
and
\[
\xyzrel{010}=\xyzrel{011}=\xyzrel{110}=\xyzrel{011}=\langle\rangle
\]
with decidable relations.
\end{theor}
\begin{proof}
We proved in \cite{seerapws:agc}
\[
\xyzrel{000}=\xyzrel{100}=\langle\TTT\rangle
\]
so we get from Lemma \ref{Tasmove} that 
\[
\xyzrel{000}\subseteq \langle\TTT\rangle\subseteq \langle\OOO,\RRRp\rangle\subseteq \xyzrel{101}\subseteq\xyzrel{000}.
\]
One may test of equivalence within $\langle\TTT\rangle$ by performing \TTT\ moves until no further such moves are possible, and then check for graph isomorphism using a graph where all infinite collections of edges are replaced by a single one.

The authors proved with Sims in  \cite{seeras:agciir} that $\xyzrel{010}=\langle\rangle$, showing the second claim. Decidability is proven as above.
\end{proof}

The identity $\xyzrel{000}=\xyzrel{100}$ goes back to \cite{seerapws:agc}, and  in fact $\xyzrel{000}=\xyzrel{101}$ was observed already in \cite{nbtmcmfw:gaoe}.}{}

\section{Singular graphs defining simple $C^*$-algebras}

\statusbar{\elsewhered}{\hered}{\sopen}{\sopen}{\elsewhered}{\hered}{\sopen}{\sopen}

\ifthenelse{\boolean{casewise}}{For more general singular graphs, the relations are no longer overlapping so dramatically. For instance, it is easy to see that with
\[
E: \xymatrix{\circ\ar@{=>}[r]&\circ\ar@{=>}[r]&\circ}\qquad\qquad F:\xymatrix{\circ\ar@{=>}[r]\ar@/^1em/[rr]&\circ\ar@{=>}[r]&\circ}
\]
we have $(E,F)\in \xyzrel{011}\backslash \xyzrel{100}$ so that $\xyzrel{001}\not=\xyzrel{100}$. 
But in the simple case, we retain the identity amongst the \xyz{x0z} relations:

\begin{theor}\label{singxOz}
In the class of singular graphs defining gauge simple $C^*$-algebras,
\[
\xyzrel{000}=\xyzrel{001}=\xyzrel{100}=\xyzrel{101}=\langle\OOO,\RRRp\rangle.
\]
The number
\[
\begin{cases}0&E^1=\emptyset\\
|E^0|&E^1\not =\emptyset\end{cases}
\]
is an (obviously decidable) invariant for all of these relations.
\end{theor}
\begin{proof}
We have seen in Lemma \ref{Tasmove} that $\TTT\subseteq \langle\OOO,\RRRp\rangle$, so we may apply this move with no loss of generality for our purposes. 

Assume that $(E,F)\in\xyzrel{000}$ with both graphs singular. If there is a sink in $E$, $E$ has no edges and just one vertex, and the same is true for $F$. Thus all vertices are infinite emitters in both graphs, and we can apply \TTT\ moves until the graphs are complete amplified graphs, and then apply Theorem \ref{classifyamplified}.
\end{proof}

The various relations \xyz{x1z} do not agree in general in this case. We pursue this question in forthcoming work with Sims.

}{}


\chapter{Type I $C^*$-algebras}

\section{Monocyclic graphs}

\statusbar{\elsewhered}{\hered}{\sopen}{\sopen}{\elsewhered}{\hered}{\sopen}{\sopen}

We say that a graph $E$ is \emph{monocyclic} if any vertex $v$ supports at most one path back to itself which avoids $v$ except at the endpoints. In the unital case, this corresponds exactly to $C^*(E)$ being a type I $C^*$-algebra. We easily get:

\begin{propo}\label{monoxOz} Within the class of  monocyclic graphs
\[
\xyzrel{000}=\xyzrel{001}=\langle \OOO,\IIIm,\RRRp,\SSS\rangle
\]
and
\[
\xyzrel{100}=\xyzrel{101}=\langle \OOO,\IIIp,\RRRp\rangle
\]
with all relations being decidable.
\end{propo}
\begin{proof}
This follows directly from Theorems \ref{OOO-done} and \ref{IOO-done} since in the monocyclic case, 
the moves  \CCCp\ and \PPPp\ are never applicable.
\end{proof}

It is worth noting that we know of no direct way of promoting an \xyz{x00}-equivalence to an  \xyz{x01}-equivalence in this case. Similarly, our evidence for the following conjecture is purely casewise as we will detail below.

\begin{conje}\label{typeIconj}
Within the class of  monocyclic graphs
\[
\xyzrel{x10}=\xyzrel{x11}
\] 
\end{conje}

We will prove this for all acyclic graphs, and whenever there is only one non-trivial gauge-invariant ideal in the $C^*$-algebras. In all cases, we do so by showing that  $\langle\KKKm\rangle\subseteq \langle\OOO,\IIIm\rangle$ and $\langle\KKKp\rangle\subseteq \langle\OOO,\IIIp\rangle$, establishing also the generation conjectures.

\section{Acyclic graphs}

\statusbar{\elsewhered}{\elsewhered}{\hered}{\hered}{\elsewhered}{\elsewhered}{\hered}{\hered}

We study first the class of acyclic graphs, exactly corresponding to the AF graph $C^*$-algebras. Our approach involves the following simplified models for such graphs.

\begin{defin}\label{dfn:w-decorated-graphs}
A \emph{\WDG} is a sextuple\index{weighted, decorated graph}
\[
( \Gamma^0,\Gamma^1,r,s,D,\underline{k})
\]
where
\begin{enumerate}[(i)]
\item
 $(\Gamma^0,\Gamma^1,r,s)$ is a finite acyclic graph which is simple, i.e. satisfies for all $v, w \in \Gamma^0$ that
\[ |s^{-1}(v) \cap r^{-1}(w)| \in \{ 0 , 1 \} ;\]
\item $D:\Gamma^1\to {\mathcal P}(\NN_0)\backslash\{\emptyset\}$ is a map which to each edge associates a finite, nonempty \emph{delay set}  of nonnegative integers;
\item $\underline{k}:\Gamma^0\to\bigcup_{\ell=0}^\infty \NN^\ell$ is a map which to each vertex associates a \emph{weight tuple} (possibly empty) of positive integers;
\end{enumerate}
so that at every  $e \in \Gamma^1$, 
\[
\max D(e) \leq |\underline{k}(r(e))|.
\]
where $|(k_1,\dots,k_\ell)|=\ell$.
\end{defin}

Suppressing notation selectively, we usually denote a \WDG\ just by $\Gamma[\underline{k}]$.

\begin{defin}\label{dfn:decorated-graphs}
A \emph{\DG}\index{decorated graph} is a sextuple, 
\[
( \Gamma^0,\Gamma^1,r,s,D,n)
\]
where (i),(ii) holds as above, and
\begin{enumerate}[(i')]\addtocounter{enumi}{2}
\item $n:\Gamma^0\to\NN_0$ is a map which to each vertex associates a \emph{strand length}\index{strand length}
\end{enumerate}
so that at every  $e \in \Gamma^1$, 
\[
\max D(e) \leq n(r(e)). 
\]
\end{defin}

We usually denote a \DG\ just by $\Gamma$. When a \WDG\ $\Gamma[\underline{k}]$ is given, we always associate to it the \DG\ $\Gamma$ given with
\begin{equation}\label{kchoice}
n(v)=|\underline{k}(v)|.
\end{equation}
When a \DG\ $\Gamma$ is given, we are free to equip it as a \WDG\ by choosing any $\kk$ with $\kk(v)(i)>0$ throughout, satisfying \eqref{kchoice}. A special role is played by   the \WDG\ where
\[
\underline{k}(v)=(\overbrace{1,\dots,1}^{n(v)})=\underline{1}.
\]
for all $v$. Suppressing notation again,  we denote this \WDG\ by $\Gamma[\underline{1}]$, and note that $\Gamma$ is the \DG\ associated to $\Gamma[\underline{1}]$ in the sense just defined.

\begin{defin}
Given a \DG\ $\Gamma$, we define a directed graph $G_{\Gamma}$ as follows:  the set of vertices is 
\[
G_{\Gamma}^0 := \{ v^i : 0 \leq i \leq n(v) \}
\]
and the set of edges is 
\begin{align*}
G_{\Gamma}^0 &:= \{ e^i_m : e \in \Gamma^0, i \in D(e), m \in \mathbb{N} \} \\
	&\qquad \sqcup \{ f^j_v : v \in \Gamma^0 ,  1 \leq j \leq n(v) \} \end{align*}
with source and range maps given by 
\begin{align*}
s_{ \Gamma } (e^i_m)&= s_\Gamma(e)^0 & s_{ \Gamma } (f^j_v) &= v^j \\
r_{ \Gamma } (e^i_m)&= r_\Gamma(e)^i & r_{ \Gamma } (f^j_v) &= v^{j-1} 
\end{align*}

\end{defin}

\begin{defin}
Given a \WDG\ $\Gamma[\underline k]$, we define a directed graph $G_{\Gamma[\underline{k}]}$ as follows. When $\underline{k}= \underline 1$,  $G_{\Gamma[\underline{k}]}=G_\Gamma$. In all other cases, the set of vertices is 
\[
G_{\Gamma[\underline{k}]}^0 := \{ v^i : 0 \leq i \leq |\underline{k}(v)| \} \sqcup \{ s\}
\]
and the set of edges is 
\begin{align*}
G_{\Gamma[\underline{k}]}^0 &:= \{ e^i_m : e \in \Gamma^0, i \in D(e), m \in \mathbb{N} \} \\
	&\qquad \sqcup \{ f^j_v : v \in \Gamma^0 , 1 \leq j \leq |\underline{k}(v)| \} \\
	&\qquad \sqcup \{ g^{j,\ell}_v : v \in \Gamma^0 , j\leq |\kk(v)|, 1\leq \ell\leq \kk(v,j)-1 \}
\end{align*}
with source and range maps given by 
\begin{align*}
s_{ \Gamma[\underline{k}] } (e^i_m)&= s_{ \Gamma[\underline{k}] }(e)^0 & s_{ \Gamma[\underline{k}] } (f^j_v) &= v^i & s_{ \Gamma[\underline{k}] } (g^{k,\ell}_v) &= s \\
r_{ \Gamma[\underline{k}] } (e^i_m)&= r_{ \Gamma[\underline{k}] }(e)^i & r_{ \Gamma[\underline{k}] } (f^j_v) &= v^{i-1} & r_{ \Gamma[\underline{k}] } (g^{k,\ell}_v) &= v_{j-1}. 
\end{align*}
\end{defin}

The difference between these two types of graphs for $\underline k\not=\underline 1$ is just a regular source $s$ defined according to the entries of $\kk(v)$. Note, however, that there will also be a regular  source at the end of any strand where $|\kk(r(e))|>0$, so the graph is only on  antenna form when $\Gamma$ is the graph with one vertex $w$ and no edges,  and in this case, $G_{\Gamma[\kk]}=\GF(\kk(w))$ from Example \ref{graph-F}.  We will often pass to shadow form in illustrations for visual legibility.

\begin{theor}\mbox{}
\begin{enumerate}[(i)]
\item  If $\Gamma[\underline{k}]\simeq \Lambda [\underline{\ell}]$ then $G_{\Gamma[\underline{k}]}\simeq G_{\Lambda[\underline{\ell}]}$.
\item For any acyclic graph $E$, there exists a \WDG\ $\Gamma[\underline k]$ so that $(E,G_{\Gamma[\underline k]})\in\langle \OOO,\IIIp\rangle$.
\item For any \WDG\ $\Gamma[\kk]$, $(G_{\Gamma[\kk]},G_\Gamma)\in \langle \OOO,\IIIm\rangle$.
\end{enumerate}
\end{theor}
\begin{proof}
The first claim is obviously true.

For (ii), fix such a graph $E$.
By Proposition \ref{III-simplified} we may pass to the  simplified form $F$, where each singular vertex $v$  comes anchored with a strand of length $n_v\geq 0$, and where each singular vertex emits with infinite multiplicity only. There will also be a number of regular sources that emit into the strands; we assign one at the end of each nontrivial strand. If there are any left, we collect them to emit from a source $s$.

We construct a graph $\Gamma$ by having a vertex for each singular vertex in $F$ and an edge from the vertex corresponding to the singular vertex $v$ to the edge corresponding to the singular vertex $w$ precisely when $v$ emits into the strand at $w$. We decorate this edge $e$ by $D(e)$, the set of indices of vertices in the strand that receive directly from $v$. Finally, we let $\underline{k}(v)$ be a vector with $n_v$ entries defined as one more than the number of edges received from $s$. Clearly $G_{\Gamma[\kk]}=F$, proving (ii).

For (iii), we note that each vertex emitted from $s$ lands at a vertex which receives from another vertex in the strand, namely the one furthest away from the anchor point. Hence, Theorem \ref{Sstar} applies.
\end{proof}

We now study to what extent \DGs\ can be manipulated by \OOO\ and \IIIp\ moves,  and start with an example. Leaving the numbers $x_0,y_0,y_1,y_2\in\NN_0$ unspecified, we consider the decorated graphs
\[
\xymatrix{*+[o][F-]{0}\ar[d]_{\{1\}}\\*+[o][F-]{1}\ar[d]_{\{0,2\}}\\*+[o][F-]{3}}
\qquad
\xymatrix{u\ar[d]_-e\\v\ar[d]_f\\w}
\qquad
\xymatrix{*+[F]{\scriptstyle ()}\ar[d]_{\{1\}}\\
*+[F]{\scriptstyle(1+x_0)}\ar[d]_{\{0,2\}}\\
*+[F]{\scriptstyle(1+y_2,1+y_1,1+y_0)}
}
\]
where the middle graph is included to introduce names for the vertices and edges, the leftmost is a \DG\ and the rightmost a \WDG. 
Up to \OOO\ moves, $G_{\Gamma[\kk]}$ is 
\[
\xymatrix@R=10mm{&&&\circ\ar@{=>}[dl]\\
&&\bullet\ar[r]&\circ\ar@{=>}[d]\ar@{=>}[dll]&\bullet\ar[l]_-{x_0}\\
\bullet\ar[r]&\bullet\ar[r]&\bullet\ar[r]&\circ&\\
&\bullet \ar[u]_-{y_2}&\bullet \ar[u]_-{y_1}&\bullet \ar[u]_-{y_0}&}
\]
with the convention that when any $x_i$ or $y_j$ vanishes, there is also no vertex emitting the edges.

Pick an edge amongst the infinitely many corresponding to $2\in D(f)$ and outsplit with this edge in a set of the partition, and all others in the other. We get
\[
\xymatrix@R=10mm{&&&\circ\ar@{=>}[dl]\\
\bullet\ar[r]^-{x_0}&\bullet\ar[d]&\star\ar[r]\ar[l]&\circ\ar@{=>}[d]\ar@{=>}[dll]&\bullet\ar[l]_-{x_0}\\
\bullet\ar[r]&\bullet\ar[r]&\bullet\ar[r]&\circ\\
&\bullet \ar[u]_-{y_2}&\bullet \ar[u]_-{y_1}&\bullet \ar[u]_-{y_0}
}
\]
and outsplit at $\star$  to get
\[
\xymatrix@R=10mm{&\bullet\ar[d]&&\circ\ar@{=>}[dl]\ar@{=>}[ll]\\
\bullet\ar[r]^-{x_0}&\bullet\ar[d]&\bullet\ar[r]&\circ\ar@{=>}[d]\ar@{=>}[dll]&\bullet\ar[l]_-{x_0}\\
\bullet\ar[r]&\bullet\ar[r]&\bullet\ar[r]&\circ\\
&\bullet \ar[u]_-{y_2}&\bullet \ar[u]_-{y_1}&\bullet \ar[u]_-{y_0}}
\]
Using \IIIp\ moves as in the proof of  Proposition \ref{III-simplified}, we ``zip up'' to get
\[
\xymatrix@R=10mm{&&&&\circ\ar@{=>}[dl]\ar@{=>}@/_2em/[ddllll]\\
&&&\bullet\ar[r]&\circ\ar@{=>}[dll]\ar@{=>}[d]&\bullet\ar[l]_-{x_0}\\
\bullet\ar[r]&\bullet\ar[r]&\bullet\ar[r]&\bullet\ar[r]&\circ\\
&\bullet\ar[u]_-{x_0}&\bullet \ar[u]_-{y_2+1}&\bullet \ar[u]_-{y_1}&\bullet \ar[u]_-{y_0}}
\]
which gives
\[
\xymatrix{*+[o][F-]{0}\ar[d]^-{\{1\}}\ar@/_5em/[dd]_-{\{4\}}\\*+[o][F-]{1}\ar[d]^-{\{0,2\}}\\*+[o][F-]{4}}\qquad
\xymatrix{*+[F]{\scriptstyle ()}\ar[d]^-{\{1\}}\ar@/_5em/[dd]_-{\{4\}}\\
*+[F]{\scriptstyle(1+x_0)}\ar[d]^-{\{0,2\}}\\
*+[F]{\scriptstyle(1+x_0,2+y_2,1+y_1,1+y_0)}
}
\]
Doing the same construction at $0\in D(f)$ would take us to
\[
\xymatrix{*+[o][F-]{0}\ar[d]^-{\{1\}}\ar@/_5em/[dd]_-{\{2\}}\\*+[o][F-]{1}\ar[d]^-{\{0,2\}}\\*+[o][F-]{3}}\qquad
\xymatrix{*+[F]{\scriptstyle ()}\ar[d]^{\{1\}}\ar@/_5em/[dd]^-{\{2\}}\\
*+[F]{\scriptstyle(1+x_0)}\ar[d]_{\{0,2\}}\\
*+[F]{\scriptstyle(1+y_2,2+x_0+y_1,1+y_0)}
}
\]
and at $1\in D(e)$ we would get
\[
\xymatrix{*+[o][F-]{0}\ar[d]^-{\{1\}}\\*+[o][F-]{2}\ar[d]^-{\{0,2\}}\\*+[o][F-]{3}}\qquad
\xymatrix{*+[F]{\scriptstyle ()}\ar[d]^{\{1\}}\\
*+[F]{\scriptstyle(1,1+x_0)}\ar[d]^{\{0,2\}}\\
*+[F]{\scriptstyle(1+y_2,1+y_1,1+y_0)}
}
\]
First outsplitting and then zipping up like this, we get:

\begin{lemma}\label{increaseWDG}
Let a \WDG\ $\Gamma[\kk]$ be given, and fix $i\in D(f)$ for some edge $f$ with $s(f)=v$ and $r(f)=w$. Then
\[
(G_{\Gamma[\kk]},G_{\Gamma'[\kk']})\in \langle \OOO,\IIIp\rangle
\]
where the \WDG\ $\Gamma'$ is obtained from $\Gamma$ by
\begin{enumerate}[(i)]
\item For any $e$ with $s(e)=u$ and $r(e)=v$, if there is no edge from $u$ to $w$ in $\Gamma$, a new such edge $g$ is added;
\item $D'(g)=1+i+D(e)$ if $g$ was newly added in (i), and
\[
D'(g)=D(g)\cup (1+i+D(e))
\]
if not;
\item 
\begin{equation}\label{taueq}
\kk'(w)=\kk(w)+\tau^i(1)+\tau^{i+1}\kk(v).
\end{equation}
\end{enumerate}
and copying the remaining parts of $\Gamma'$, $D'$ and $\kk'$ from $\Gamma$, $D$  and $\kk$.
\end{lemma}

In (iii), $\tau:\bigcup_{k=0}^\infty\NN^k\to\bigcup_{k=0}^\infty\NN^k$ is the map which fixes $()\in\NN^0$ and extends any other vector by a zero to the right. On the right hand side of \eqref{taueq}, we extend the shorter vectors with zeros to the left before performing the sum.

Translated to decorated graphs, the previous result shows that up to \OOO\ and \IIIm, we can add edges to $\Gamma$ and increase the $n(-)$ and $D(-)$ entries in a specific way. We now associate a decorated graph directly from the original graph which we will prove bounds the entries that can be obtained this way.

\newcommand{\DGmax}[1][E]{\Gamma^{\max}_{#1}}

\begin{defin}
For any acyclic graph $E$, the \DG\ $\DGmax$ is defined by letting $\Gamma^0=E^0_\singX$ and defining $\Gamma^1$ by placing an edge $(w,v)$ exactly when there is a path in $E$ from $w$ to $v$. We decorate $\Gamma$ by
\begin{eqnarray*}
n(w)&=&\max \{i\mid \text{There is a path in }E\text{ of length }i\text{ ending at } w\}\\
D(w,v)&=& \{i\mid \text{There is a path in }E\text{ of length }i\text{ from } w\text{ to }v\}
\end{eqnarray*}
\end{defin}

Note that $n(w)$ and $D(e)$ are always finite because of the acyclicity of $E$.
We will see that this \DG\ is maximal amongst all representing $E$ using the ordering that $\Gamma\leq \Lambda$ when there is a bijection $\pi:\Gamma^0\to\Lambda ^0$ so that
\begin{enumerate}[(i)]
\item If $(v,w)\in \Gamma^1$, then  $(\pi(v),\pi(w))\in \Lambda^1$, and $D(v,w)\subseteq D(\pi(v),\pi(w))$
\item For all $v\in \Gamma^0$, $n(v)\leq n(\pi(v))$
\end{enumerate}

\begin{propo}\label{maxinvariant}
Let $E$ and $F$ be acyclic graphs.
\begin{enumerate}[(i)]
\item When $(E,G_{\Gamma})\in\langle\OOO,\IIIm\rangle$, then $\Gamma\leq \DGmax$.
\item $(E,G_{\DGmax})\in\langle \OOO,\IIIm\rangle$
\end{enumerate}
\end{propo}
\begin{proof}
The first claim is clear because \OOO\ or \IIIm\ moves cannot change the data used to define $\DGmax$. More precisely, the number of singular vertices match up, and if there is a path of length $i$ between two singular vertices before performing such a move, there is also a path of length $i$ between them after the move. The same is true for paths starting in an arbitrary vertex and ending in a singular one.

For (ii), we first recall that we have established already that there  is a \DG\ $\Gamma$ with $(E,G_{\Gamma})\in \langle \OOO,\IIIm\rangle$. By (i), $\Gamma\leq \DGmax$. By Lemma \ref{increaseWDG} we can keep increasing $\Gamma$ by the moves described there until the \DGs\ agree.
\end{proof}

The result above shows that $\DGmax$ is a complete invariant for the relation $\langle\OOO,\IIIm\rangle$. By now showing that in fact $\DGmax$ is invariant with respect to $\DQ(-)$ we will reach our end goal in the stable case.

\begin{propo}\label{descaforder} When $E$ is an acyclic graph,
\[
\DQ(E)=(\nsX \ZZ^\nos,\mathcal P,\rt,\mathcal I)
\]
with the positive cone $\mathcal P$ given by
\begin{gather*}
\oXe{\ell}{\textstyle \sum \beta^0_w\ee_w,\dots,\sum \beta^k_w\ee_w}\in\mathcal P
\end{gather*}
if and only if 
\begin{equation}\label{pathcrit}
\beta^i_w<0\Longrightarrow\left[\begin{array}{c}\exists j<i\exists v\in E_\singX: \beta^j_v>0\text{ and there is a}\\\text{path in }E\text{ of length }i-j\text{ from } v \text{ to } w\end{array}\right]
\end{equation}
and the order ideal $\mathcal I$ given by
\begin{gather*}
\oXe{0}{\textstyle \sum \beta^0_w\ee_w,\dots,\sum \beta^k_w\ee_w}\in\mathcal I
\end{gather*}
if and only if \eqref{pathcrit} holds along with
\[
\beta^i_w\not= 0\Longrightarrow\text{There is is a path in }E\text{  of length }i\text{ ending at }w 
\]
\end{propo}
\begin{proof}
We pass to a graph  in simplified form according to Proposition \ref{OII-simplified}, using antenna form of sources, and note when the adjacency matrix is subdivided 
\[
\left[\begin{array}{c|c}
\adjRR_E&\adjRS_E\\\hline
\adjSR_E&\adjSS_E
\end{array}\right]
\]
then all entries in $\adjSR_E$ and $\adjSS_E$ are in $\{0,\infty\}$, and except in the row corresponding to the source, all entries in $\adjRR_E$ and $\adjRS_E$ are in $\{0,1\}$. 
Because $\adjRR_E$ is nilpotent, $\Delta_{\adjRR_E}=0$ and $\Omega_E$ is simply $\osX \ZZ^\nos$ equipped with the standard shift.
We note that $\eee_w:=\oXe{0}{\ee_w}$ generates $(\nsX \ZZ^\nos,\rt)$. We say that a generator of this form is \emph{simple}.

When a given element 
\[
\oXe{\ell}{\textstyle \sum \beta^0_w\ee_w,\dots,\sum \beta^k_w\ee_w}
\]
is positive, we can write it
\[
\sum_{\kappa\in K} \rt^{n_\kappa}(g_\kappa)
\]
with $n_\kappa\in\ZZ$ and $g_\kappa\in \singgen$.  It may well happen that $n_\kappa=n_{\kappa'}$ and/or $g_\kappa=g_{\kappa'}$ even though $\kappa\not=\kappa'$.  If some $\beta^i_w$ is negative, there will be at least one $\kappa$ with
$
g_{\kappa}
$
given by $u,{C},{D}$ with $C\leq \adjSR_E$, $D\leq \adjSS_E$,
so that
\[
 \rt^{n_{\kappa}}(g_\kappa)=\oXe{n_\kappa}{\ee_{u},- \ee_u D,-\ee_uC\adjRS_E ,-\ee_u C\adjRR_E\adjRS_E,\cdots}
 \]
 contributes with a negative coefficient of $\ee_w$ at index $i$. Translating this to the graph,  we see that there is a path in exactly $i-n_{\kappa}$ steps from $u$ to $w$, so if $\beta^{n_{\kappa}}_u>0$, we are done. If not, we see that there must be another summand $ \rt^{n_{\kappa'}}(g_{\kappa'})$ with $g_{\kappa'}$ given by $u',\underline{C}',\underline{D}'$  with a negative coefficient of $\ee_u$ at index $n_{\kappa}$, so that there is a path of the required length from $u'$ to $w$ via $u$. This process terminates because the sequence $n_{\kappa},n_{\kappa'},n_{\kappa''},\dots$ is strictly decreasing, so that no $\kappa\in K$ is encountered twice.
 
 In the other direction, we assume the existence of paths and prove the existence of generators by induction. We posit 
\[
\oXe{0}{\textstyle \sum \beta^0_w\ee_w,\dots,\sum \beta^{m-1}_w\ee_w}=\sum_{\kappa\in K}\rt^{n_\kappa}(g_\kappa)
\]
and will outline how to write
\[
\oXe{0}{\textstyle \sum \beta^0_w\ee_w,\dots,\sum \beta^{m-1}_w\ee_w,\sum \beta^{m}_w\ee_w}=\sum_{\kappa\in K\backslash K'}\rt^{n_\kappa}(g_\kappa)+\sum_{\kappa\in K'}\rt^{n_\kappa}(g'_\kappa)+\sum_{\kappa\in K''}\rt^{n_\kappa}(g_\kappa)
\]
where the generators in $K$ are kept unchanged, the generators in $K'$ are adjusted, and the generators in $K''$ are newly added. 

It is straightforward to adjust for $\beta^m_w>0$ by simple generators indexed by $K''$. When $\beta^m_w<0$, we know by assumption that there is a $u$ supporting a path of length $i$ to $w$ and having $\beta^{m-i}_u>0$. There must hence be a $\kappa\in K$ with $g_\kappa$ given by $u,\underline{C},\underline{D}$ and $n_{\kappa}=m-i$, and if there is no other singular vertex on the path from $u$ to $w$, me may transfer $\kappa$ to $K'$ and adjust one entry of $\underline{C}$ or $\underline{D}$ upwards to get the coefficient of $\ee_w$ to be $\beta^m_w$ at index $m$. In the former case, this may also alter other entries downwards; this is undone easily by adding more simple generators to $K''$. 

If there is exactly one singular vertex $u'$ on the given path from $u$ to $w$, encountered exactly $i'$ steps before the $w$, we adjust as above on a generator ensuring positivity of $\beta^{m-i}_w$ so that the entry $\beta^{m-i'}_{u'}$ is decreased by one (we do not know its sign). This leaves room to add a new generator $\kappa\in K''$ with $n_{\kappa}=m-i'$ so that an entry of $\beta^m_w$ is obtained as the coefficient of $\ee_w$ at index $m$. This argument can be generalized to any number of singular vertices on the given path.

To compute $\mathcal I$, we assume that $\beta^i_w$ defines a positive element so that $\beta^i_w\not=0$ only when $i\leq n(w)$, with $n(w)$ the length of the longest path ending at $w$.  For the purposes of the proof, we also let  $m(w)$ denote the length of the longest path in $E$ ending at $w$, but traversing no other singular vertices. It follows from Theorem \ref{DQmaster} that
\[
[p^0_E]=\oXe{0}{\uu_\singX,\textstyle \sum \epsilon^0_w\ee_w,\dots,\sum \epsilon^{m}_w\ee_w}
\]
where $m=\max\{ m_w\mid w\in E^0_\singX\}$ and  $\epsilon^i_w>0$ exactly when $i\leq m(w)$. Letting 
\[
n=\max \beta^k_w
\]
we now quite easily get that 
\[
0\leq\oXe{0}{\textstyle \sum \beta^0_w\ee_w,\dots,\sum \beta^k_w\ee_w}\leq n[p^0_E].
\]
Indeed, every coefficient $\gamma^i_w$ in
\[
\oXe{0}{\textstyle \sum \gamma^0_w\ee_w,\dots,\sum \gamma^k_w\ee_w}= n[p^0_E]-\oXe{0}{\textstyle \sum \beta^0_w\ee_w,\dots,\sum \beta^k_w\ee_w}
\]
will be positive when $i\leq m(w)$, so when  $\gamma^i_w<0$ we have $\gamma^i_w=-\beta^i_w$ and consequently $m(w)<i\leq n(w)$. This means that there is a path in $E$ of length $i$ ending at $w$, but only one that visits another singular vertex $u$, say $i'$ steps before arriving at $w$, so that no other singular vertex is encountered before $u$. We get that $i-i'\leq m(u)$ and consequently $\gamma^{i-i'}_u>0$ as needed. The argument in the other direction is similar.
\end{proof}

Before we start the next proof, we note that any isomorphism $\zeta:\DQ(E)\to \DQ(F)$ is given by a shift-commuting automorphism of $\nsX \ZZ^{\nos}$ that we also denote $\zeta$, and that $\zeta$ is completely determined by the images of $\eee_w$. Consequently, any such automorphism can be described by a finite number of $\nos\times \nos$-matrices
\[
Z_{-m},\dots,Z_0,\dots Z_k,
\]
taken to be acting on the right. By our description of the order ideal in $\DQ(E)$ and $\DQ(F)$, we further see that all matrices with negative indices must vanish, and we will write $\zeta\sim [Z_0,\dots Z_k]$ to describe this situation. When another such automorphism 
\[
\psi\sim [\id,\Psi_1,\dots, \Psi_{\nos}]
\]
is given, we get
\begin{equation}\label{mulZPsi}
\zeta\circ\psi\sim[\id,\Psi_1+Z_1,\Psi_2+\Psi_1Z_1+Z_2,,\Psi_3+\Psi_2Z_1+\Psi_1Z_2+Z_3,\dots]
\end{equation}
(recall that we act from the right on dimension groups).
It is also straightforward to show that such a $\zeta$ will be invertible if and only if $Z_0$  is invertible, and the sequence of matrices
\[
Z_1,Z_1^2+Z_2,Z_1^3+Z_1Z_2+Z_2Z_1+Z_3,\dots,\widetilde{Z}_m,\dots
\]
 with
\[
\widetilde{Z}_m:=\sum_{i_1+\dots+ i_r=m}Z_{i_1}\cdots Z_{i_r}
\]
eventually vanishes. In this case,
\begin{equation}\label{invertZ}
\zeta^{-1}\sim[Z_0^{-1},Z_0^{-1}Z_1,-Z_0^{-1}(Z_1^2+Z_2),\dots,-Z_0^{-1}\widetilde{Z}_m,\dots]
\end{equation}

\begin{theor}\label{AFstable}
If $\zeta:\DQ(E)\to \DQ(F)$ is an isomorphism, then (after permutation of the vertices if necessary) $\Lambda^{\max}_E= \Lambda^{\max}_F=:\Lambda^{\max}$, and $\zeta\sim [\id, H_1,\dots, H_k]$ where the matrices $Z_1,\dots, Z_k$ have the property that whenever $\zeta^i_{wv}\not=0$, then there is an edge $e$ from $v$ to $w$ in $\Lambda^{\max}$ for which $j\in D(e)$.
\end{theor}
\begin{proof}
Since the elements $\zeta(\ee_w)$ must generate the order ideal, the first entry must be of the form $\ee_{\pi(w)}$ with $\pi:E_\singX^0\to E_\singX^0$ some permutation. We assume $\pi$ is the identity. Note  that then $\zeta^{-1}$ is induced by the matrices
\begin{equation}\label{inversezeta}
\id,-Z_1^2-Z_2,-Z_1^3-Z_1Z_2-Z_2Z_1-Z_3,\dots,-\widetilde{Z}_m,\dots
\end{equation}
It follows from the assumption that $\zeta$ is invertible that the sequence is eventually zero, but \emph{a priori} we do not know at which step this happens.
 
For the purposes of the proof, we also talk about $D_E(w,v)$ and $D_F(w,v)$ as the sets of possible lengths of paths from $w$ to $v$ in the two graphs even when there is no edge connecting $w$ and $v$ in the decorated graphs. Since there will be an edge from $w$ to $v$ in the maximal decorated graphs precisely when these sets are non-empty, this collection of sets determines the underlying graphs, as well as the decorations of the edges. We return to the decorations of the vertices at the end of the proof.

Our first goal is now to prove
\begin{equation}\label{ih}
\xymatrix{
j\in D_E(w,v)\ar@{<=>}[rr]&&j\in D_F(w,v)\\
&\zeta^j_{vw}\not=0\ar@{=>}[ul]\ar@{=>}[ur]
}
\end{equation}
where here and below, $\zeta^j_{vw}$ is the $vw$ entry of $Z_j$.
Formally, this must be proven in tandem by induction over $j$, but since the inductive component of the argument is straightforward and notationally demanding, we will just do the first steps.

Let first $j=1$ and assume that $j\in D_E(w,v)$. By Lemma \ref{descaforder} we have that $\oXe{0}{\ee_w,-n\ee_v}\geq 0$ for all $n\in \NN$, so we conclude that
\[
\zeta( \oXe{0}{\ee_w,-n\ee_v})=\oXe{0}{\textstyle \ee_w,\sum\zeta^1_{uw}\ee_u-n\ee_v,\cdots}\geq 0
\]
The coefficient $\zeta^1_{vw}-n$ of $\ee_v$ is eventually negative, so we conclude  by Lemma \ref{descaforder} that there is a path from $w$ to $v$ in one step in $F$. Thus, $j=1\in D_F(w,v)$. We argue similarly in the other direction.

Now assume that $\zeta^1_{wv}\not=0$. If $\zeta^1_{wv}<0$, we conclude from the positivity of
\[
\zeta( \oXe{0}{\ee_w})=\oXe{0}{\textstyle \ee_w,\sum\zeta^1_{uw}\ee_u,\cdots}
\]
that $1\in D_F(w,v)$, which we now know also implies  $1\in D_E(w,v)$. If $\zeta^j_{wv}>0$, we instead invoke the positivity of 
\[
\zeta^{-1}( \oXe{0}{\ee_w})=\oXe{0}{\ee_w,-\sum\zeta^1_{uw}\ee_u,\cdots}.
\]

Now consider $j=2$. We assume $2\in D_E(w,v)$ and note as above that 
\[
\zeta( \oXe{0}{\ee_w,\oo,-n\ee_v})=\oXe{0}{\textstyle \ee_w,\sum\zeta^1_{uw}\ee_u,\sum\zeta^2_{uw}\ee_u-n\ee_v,\cdots}\geq 0
\]
so the coefficient of $\ee_v$ at index 2 is eventually negative, and we conclude that in $F$ there is a path to $v$ which is either in two steps starting at $w$, or in one step starting at $u$ with $\zeta^1_{uw}>0$. In the first case, we get  $2\in D_F(w,v)$ directly, in the other, we note that by the case $j=1$ we know that there is a path in one step from $w$ to $u$, which we can concatenate with the given path from $u$ to $v$. In the other direction, the sign changes to show $\zeta^1_{uw}<0$, which allows the same conclusion.

Now assume that $\zeta^2_{vw}\not=0$. If $\zeta^2_{vw}<0$, we conclude from the positivity of
\[
\zeta( \oXe{0}{\ee_w})=\oXe{0}{\textstyle \ee_w,\sum\zeta^1_{uw}\ee_u,\sum\zeta^2_{uw}\ee_u,\cdots}
\]
that there is either a path from $w$ to $v$ in two steps, or a path in one step from $u$ to $v$, where $\zeta^1_{uw}>0$. By the case $j=1$, the latter is only possible when there is a path in one step from $w$ to $u$, and in either case, we conclude 
that $2\in D_F(w,v)$. If $\zeta^2_{wv}>0$, we instead invoke the positivity of 
\[
\zeta^{-1}( \oXe{0}{\ee_w})=\oXe{0}{\textstyle \ee_w,-\sum\zeta^1_{uw}\ee_u,-\sum\zeta^2_{uw}\ee_u-\sum\sum \zeta^1_{uu'}\zeta^1_{u'w}\ee_u,\cdots}
\]
If there is a path in two steps from $w$ to $v$ in $E$ via the singular vertex $u'$, we are done. If not, the double sum does not contribute to the coefficient of $\ee_v$ at index 2, and we conclude that it is negative. As above, we conclude that $2\in D_E(w,v)= D_F(w,v)$. 

We can continue the proof of \eqref{ih} this way, and this proves that $Z_k=\widetilde{Z}_k=0$ when there are no paths of length $k$ between singular vertices, which happens no later than at $k=\nos$.
\end{proof}

\begin{theor}
The following are equivalent for acyclic graphs $E,F$:
\begin{enumerate}[(i)]
\item $(E,F)\in\langle \OOO,\IIIm\rangle$
\item $(E,F)\in\langle \KKKm\rangle$
\item $\DGmax\simeq \DGmax[F]$
\end{enumerate}
\end{theor}

\begin{proof}
That (i) implies (ii) is a general fact following from
\[
\langle \OOO,\IIIm\rangle\subseteq \xyzrel{011}\subseteq\xyzrel{010}\subseteq\langle \KKKm\rangle,
\]
and we proved the equivalence of (i) and (iii) in Proposition \ref{maxinvariant}. The last implication is provided by Theorem \ref{AFstable}.
\end{proof}

In preparation for our results on exact isomorphism, we extract several key observations from our work this far.

First, we note that when $E,F$ are acyclic and $(E,F)\in\langle \KKKp\rangle$ or $(E,F)\in\langle \OOO,\IIIp\rangle$, then their associated \DGs\ have the same maximal graph, and hence we may assume within both equivalence relations that in fact $E=\Gamma[\kk]$ and $F=\Gamma[\myll]$, with $\Gamma$ maximal, in order to study these relations. In particular, this reduces the study of $\langle \KKKp\rangle$ to the study of the orbits of $[p_{G_{\Gamma[\kk]}}^0]$ under automorphisms of $\DQ(G_{\Gamma})$. For this purpose, we compute:

\begin{lemma}\label{descafunit}
$\DQp(G_{\Gamma[\kk]}^0)$ is given by $\DT(G_\Gamma)$ as in Lemma \ref{descaforder} along with the element
\[
[p_{G_{\Gamma[\kk]}}^0]=\oXe{0}{\uu_{\singX}}+\sum_{w\in E^0_\singX}\sum_{i=1}^{|\kk(w)|} \oXe{i+1}{k(w)(i)\ee_w}
\]
\end{lemma}

For brevity, we denote this element $\uuu+\rt(\kkk)$ with $\uuu$ and $\kkk$ defined in the obvious way.

We now further recall that we have already found a complete description of $\operatorname{Aut}(\DQ(G_\Gamma))$ as being given as 
\[
\zeta\sim [\id,Z_1,\dots, Z_{\nos}]
\]
satisfying the geometric property that an entry $\zeta^i_{vw}$ can only be nonzero when there is a path of length $i$ from $w$ to $v$ in the underlying acyclic graph, which, by maximality, is the same as requiring that $i\in D(e)$ with $e$ the edge from $w$ to $v$. Importantly, any composite (cf. \eqref{mulZPsi}) or inverse (cf. \eqref{invertZ}) of maps satisfying the geometric condition again satisfies the geometric condition, so the composition is guaranteed to end at index $\nos$, and the inverse of such a map always exists.

Finally, we note that we already know how to implement some of these  automorphisms by moves in $\langle \OOO,\IIIp\rangle$. Indeed, Lemma \ref{increaseWDG} with $i\in D(w,v)$ applied to a maximal \DG\ will not affect the graph, nor the $D(-)$ decorations, but will change the $\kk$ data consistent with the automorphism 
\[
[\id,0,\dots,0,E^i_{wv},0,\dots,0]
\]
where $E^i_{wv}$ has a one in the $wv$ entry, and zero everywhere else. Recall that knowing that $i\in D(v,w)$ exactly corresponds to our geometric condition on this tuple.

\begin{theor}\label{AFexact}
The following are equivalent for two weights $\kk,\myll$  of a maximal \DG\ $\Gamma$:
\begin{enumerate}[(i)]
\item $(G_{\Gamma[\kk]},G_{\Gamma[\ell]})\in\langle \OOO,\IIIp\rangle$
\item $(G_{\Gamma[\kk]},G_{\Gamma[\ell]})\in\langle \KKKp\rangle$
\item There exists $\zeta\in \operatorname{Aut}(\DQ(G_\Gamma))$ so that $\zeta(\uuu+\rt(\kkk))=\uuu+\rt(\mylll)$,
\end{enumerate}
\end{theor}
\begin{proof}
The forward implications follow as above, so we assume (iii) and aim for (i). We have
\[
\xymatrix{\DQp(\Gamma[\kk])\ar[rrr]^{[\id,Z_1,\dots,Z_n]}&&&\DQp(\Gamma[\myll])}
\]
and start by noting that we can find matrices $X_2,\dots X_{n}$ and $\kk'$ so that 
\[
\xymatrix{\DQp(\Gamma[\kk])\ar[rrr]^{[\id,Z_1^+,X_2,\dots,X_n]}&&&\DQp(\Gamma[\kk'])}
\]
and so that $(G_{\Gamma[\kk]},G_{\Gamma[\kk']})\in\langle \OOO,\IIIp\rangle$, where $Z=Z^+-Z^-$ is the canonical decomposition of $Z$ as a difference of matrices with nonnegative entries only.

This follows by the observation above that we can implement any automorphism $[\id,E^1_{wv}]$ satisfying the geometric condition by applying  Lemma \ref{increaseWDG}. Composing maps of this form will create the $X_i$, which will always satisfy the geometric condition. One can in fact arrange $X_i=0$ by ordering the moves carefully, but this is not useful given the rest of the argument; we just write $[\id,Z_1^+,\dots]$ in a situation where the remaining matrices do not need to be specified. For the same reason, 
\[
\xymatrix{\DQp(\Gamma[\myll])\ar[rrr]^{[\id,Z_1^-,\dots]}&&&\DQp(\Gamma[\myll'])}
\]
with $(G_{\Gamma[\myll]},G_{\Gamma[\myll']})\in\langle \OOO,\IIIp\rangle$. We have
\[
[\id,Z_1^-,\dots]\circ[\id,Z_1,\dots,Z_n]\circ [\id,Z_1^+,\dots]^{-1}=[\id,Z_1+Z_1^--Z_1^+,\dots]
\]
so the square
\[
\xymatrix{\DQp(\Gamma[\kk])\ar[rrr]^{[\id,Z_1,Z_2,\dots,Z_n]}\ar[d]_-{[\id,Z_1^+,\dots]}&&&\DQp(\Gamma[\myll])\ar[d]^-{[\id,Z_1^-,\dots]}\\
\DQp(\Gamma[\kk'])\ar[rrr]_{[\id,0,Z_2',\dots,Z_{n}']}&&&\DQp(\Gamma[\myll'])
}
\]
commutes, with the vertical arrows implemented by \OOO\ and \IIIp\ moves. Arguing similarly, we obtain
\[
\xymatrix{\DQp(\Gamma[\kk'])\ar[rrr]^{[\id,0,Z_2',\dots,Z_n']}\ar[d]_-{[\id,0,(Z_2')^+,\dots]}&&&\DQp(\Gamma[\myll'])\ar[d]^-{[\id,0,(Z_2')^-,\dots]}\\
\DQp(\Gamma[\kk''])\ar[rrr]_{[\id,0,0,Z_3'',\dots,Z_n'']}&&&\DQp(\Gamma[\myll''])
}
\]
etc., and eventually  we arrive at
\[
\xymatrix{\DQp(\Gamma[\kk^{(n)}])\ar[rrr]^{[\id]}&&&\DQp(\Gamma[\myll^{(n)}])
}
\]
which implies $\kk^{(n)}=\myll^{(n)}$ via $\uuu+\rt(\kkk^{(n)})=\uuu+\rt(\mylll^{(n)})$.
\end{proof}

\begin{corol}\label{acyclicxIz}
Within the class of acyclic graphs
\[
\xyzrel{011}=\xyzrel{010}=\langle \KKKm\rangle=\langle \OOO,\IIIm\rangle
\]
and
\[
 \xyzrel{111}=\xyzrel{110}=
\langle \KKKp\rangle =\langle \OOO,\IIIp\rangle
\]
and all  these relations are decidable.
\end{corol}
The identity $\xyzrel{110}=\langle \KKKp\rangle$ follows already by \cite{rhlv:kcguai}, shown by a very different method, namely employing continuity of the invariants. The work in  \cite{rhlv:kcguai} does not address decidability, but we suspect that  the methods presented there could be used to establish several, if not all, of the identities listed above.

\begin{proof}
We just need to prove that the relations are decidable, and this is clear in the first case because isomorphism between maximal \DGs\ obviously is. In the second case, we note that whether or not there exist $Z_1,\dots,Z_n$ satisfying the geometric condition so that 
\[
[\id,Z_1,\dots,Z_n](\uuu+\rt(\kkk))=\uuu+\rt(\mylll)
\]
translates to a linear problem over $\ZZ$ which is hence decidable. Such a map is automatically invertible, so this establishes the needed fact.
\end{proof}

\section{One non-trivial gauge invariant ideal}

\statusbar{\elsewhered}{\elsewhered}{\hered}{\hered}{\elsewhered}{\elsewhered}{\hered}{\hered}

In this final section of our work we establish Conjecture \ref{typeIconj}, and hence all generation conjectures, for all type I graph $C^*$-algebras with exactly one non-trivial gauge-invariant ideal. Up to \xyz{000}-equivalence, there are exactly four graphs defining such $C^*$-algebras, namely
\begin{center}
\begin{tabular}{|c|c|c|c|}\hline
\RRc&\RSc& \SRc& \SSc\\\hline
&&&\\
$\xymatrix{\bullet\ar@(ul,ur)[]\ar[d]\\\bullet\ar@(dl,dr)[]}$&
$\xymatrix{\bullet\ar@(ul,ur)[]\ar[d]\\\circ}$&
$\xymatrix{\circ\ar@{=>}[d]\\\bullet\ar@(dl,dr)[]}$&
$\xymatrix{\circ\ar@{=>}[d]\\\circ}$\\
&&&\\\hline
\end{tabular}
\end{center}

Using the notation \RRc, \SRc, \RSc, \SSc\ to refer to these cases, we note that the \RRc\ case is exactly the class of meteor graphs studied in \cite{lgcegdgrh:wcmg}, and that the \RSc\ case  contains the standard Toeplitz algebra. The case \SRc\ is less well  studied, and since the class \SSc\ is contained in the class classified in the previous section, we already know that Conjecture \ref{typeIconj} holds for it. We will state the key results in the \SSc\ case also, but refer to the previous section for proofs.

We note from the outset that the stable part of the conjecture for \RRc\ has already been established in \cite{lgcegdgrh:wcmg}, by similar methods. We will reprove their result using our machinery in preparation for our analysis of the exact version. 

We also point out that the strict bounds on the ideal lattice in the results presented here are a consequence of the absence of a standard form such as the maximal \DGs\ that we used in the previous section. We are convinced that  no such standard form can exist in general, and that  new approaches must be developed to solve the general case.

\newcommand{\Rc}{``$\bullet$''}

We work with the following collection of model graphs.

\begin{gather}
\xymatrix{
\bullet\ar[r]\ar[d]^-{x_0}\ar[dr]^-{x_1}\ar[drrr]^-{x_{d-1}}&\bullet\ar[r]&\cdots\ar[r]&\bullet \ar `ur^l[lll]`^dr[lll]^k [lll]\\
\bullet\ar[r]&\bullet\ar[r]&\cdots\ar[r]&\bullet\ar[r]&\cdots\ar[r]&\bullet \ar@{-<} `dr_l[lllll]`_ur[lllll]_{\ell} [lllll]
}\label{RRstab}\\
\xymatrix{
\bullet\ar[r]\ar[d]_-{x_0}&\bullet\ar[r]\ar[dl]_-{x_1}&\cdots\ar[r]&\bullet\ar[dlll]^-{x_{k-1}}\ar `ur^l[lll]`^dr[lll]^k [lll]\\
\circ&&
}\label{RSstab}
\end{gather}
\begin{gather}
\xymatrix{
\bullet\ar[r]&\bullet\ar[r]^-k&\bullet\ar[r]&\cdots\ar[r]&\circ\ar@{=>}@/_/[dlll]_(0.8){x_0}\ar@{=>}[dll]_(0.6){x_1}\ar@{=>}[d]^-{x_{\ell-1}}\\
&\bullet\ar[r]&\bullet\ar[r]&\cdots\ar[r]&\bullet \ar@{-<} `dr_l[lll]`_ur[lll]_{\ell} [lll]
}\label{SRstab}\\
\xymatrix{
&\bullet\ar[r]^-k&\bullet\ar[r]&\cdots\ar[r]&\circ\ar@{=>}@/_/[dllll]_(0.8){x_{\ell-1}}\ar@{=>}[dlll]^(0.6){x_{\ell-2}}\ar@{=>}[ld]^-{x_{1}}\ar@{=>}[d]^-{x_{0}}\\
\bullet\ar[r]&\bullet\ar[r]&\cdots\ar[r]&\bullet\ar[r]&\circ
}\label{SSstab}
\end{gather}

We have here $k,\ell\geq 0$ throughout, and $k,\ell>0$ in all the \Rc\ cases, i.e.~when the numbers describe the length of a cycle. It is also required that
not every $x_i$ is zero; and in the \SRc\ and \SSc\ cases, we require that $x_i\in \{0,\infty\}$ as indicated by double arrows.  In case \RRc, $d:=\gcd(k,\ell)$.

\begin{lemma}\label{typeIoiisstd}
When $C^*(G)$ is type I and has exactly one non-trivial gauge-invariant ideal, then $G$ can be transformed into one of the graphs \eqref{RRstab}--\eqref{SSstab}
with \OOO\ and \IIIm\ moves, and their inverses.
\end{lemma}
\begin{proof}
Cases \SRc\ and \SSc\ follow directly from Theorem \ref{OII-simplified}.  In the \RSc\ case we  obtain a graph on the form
\[
\xymatrix{
\bullet\ar[r]\ar[d]_-{x_0}&\bullet\ar[r]\ar[dl]_-{x_1}&\cdots\ar[r]&\bullet\ar[dlll]^-{x_{k-1}}\ar `ur^l[lll]`^dr[lll]^k [lll]\\
\bullet\ar[r]&\cdots\ar[r]&\circ
}
\]
 by Proposition \ref{OII-simplified}. But starting from \eqref{RSstab}, we can make a complete outsplit to
 \[
\xymatrix@C=2.5mm{
\bullet\ar[rr]\ar[d]\ar[dr]&&\bullet\ar[rr]\ar[d]\ar[dr]&&\cdots\ar[r]&\bullet\ar `ur^l[lllll]`^dr[lllll]^k [lllll]\ar[d]\ar[dr]\\
\bullet\ar[d]\ar@{..}[r]^-{x_1}&\bullet\ar[dl]&\bullet\ar@{..}[r]^-{x_2}\ar[dll]&\bullet\ar[dlll]&\cdots&\bullet\ar[dlllll]\ar@{..}[r]^-{x_0}&\bullet\ar[dllllll]\\
\circ&&&
}
\]
 with $\sum x_i$ vertices in the middle layer so that $x_i$ of them receive from vertex number $i-1$ in the cycle. An \IIIm\ move takes this to 
 \[
\xymatrix{
\bullet\ar[r]\ar[d]_-{x_1}&\bullet\ar[r]\ar[dl]_-{x_2}&\cdots\ar[r]&\bullet\ar[dlll]^-{x_{0}}\ar `ur^l[lll]`^dr[lll]^k [lll]\\
\bullet\ar[r]&\circ,
}
\]
which is graph isomorphic to 
 \[
\xymatrix{
\bullet\ar[r]\ar[d]_-{x_0}&\bullet\ar[r]\ar[dl]_-{x_1}&\cdots\ar[r]&\bullet\ar[dlll]^-{x_{k-1}}\ar `ur^l[lll]`^dr[lll]^k [lll]\\
\bullet\ar[r]&\circ.
}
\]
Repeating the process in reverse shows how to arrive at \eqref{RSstab}.

 Applying  Proposition \ref{OII-simplified} in the \RRc\ case only reduces to the case where there are $k+\ell$ vertices organized in cycles as in \eqref{RRstab}, and with no transitional vertices as desired, but with no control on where the edges from the cycle above to the cycle below begin and end. We easily see, though, that when $x_{11}>0$ in the situation
\[
\xymatrix{
\cdots\ar[r]&\bullet\ar[rr]\ar[drr]_(0.3){x_{01}}\ar[d]_{x_{00}}&&\bullet\ar[r]\ar[dll]^(0.3){x_{10}}\ar[d]^{x_{11}}&\cdots\\
\cdots\ar[r]&\bullet\ar[rr]&&\bullet\ar[r]&\cdots
}
\]
(disregarding the multiplicities of edges not shown), we can  out-split to 
\[
\xymatrix{
&&\bullet\ar[ddr]&&\\
\cdots\ar[r]&\bullet\ar[ur]\ar[rr]\ar[drr]_(0.3){x_{01}}\ar[d]_{x_{00}}&&\bullet\ar[r]\ar[dll]^(0.3){x_{10}}\ar[d]^{x_{11}-1}&\cdots\\
\cdots\ar[r]&\bullet\ar[rr]&&\bullet\ar[r]&\cdots
}
\]
and use an \IIIm\ move to obtain
\[
\xymatrix{
\cdots\ar[r]&\bullet\ar[rr]\ar[drr]_(0.3){x_{01}}\ar[d]_{x_{00}+1}&&\bullet\ar[r]\ar[dll]^(0.3){x_{10}}\ar[d]^{x_{11}-1}&\cdots\\
\cdots\ar[r]&\bullet\ar[rr]&&\bullet\ar[r]&\cdots,
}
\]
essentially moving the edge one step leftwards in both cycles, while leaving everything else the same. For easy reference, we call this move \AAArr;\index{Arr@$\AAArr$} we just saw  that $\AAArr\subseteq \langle\OOO,\IIIm\rangle$.

Using \AAArr\ moves, it is straightforward to move all edges so that they emit from the same upstairs vertex, and finding $a,b$ so that $ak+b\ell=d$ we see that moving an edge around the upstairs cycle $ak$ times moves its downstairs position by $d$. Consequently, the form in \eqref{RRstab} is obtained.
\end{proof}

We denote by $\DQ^\RRi(k,\ell,\xx)$, $\DQ^\RSi(k,\xx)$, $\DQ^\SRi(k,\ell,\xx)$, and  $\DQ^\SSi(k,\ell,\xx)$ the dimension quadruples of the four types of models that we now know are generic. We will use $\sigma_j$ with $j\in\{j,k,\ell\}$ to denote the forward cyclic shift of vectors of the appropriate length. 

\begin{lemma}\label{RRkstable}
The  group of $\DQ^{\RRi}(k,\ell,\xx)$  is isomorphic to $\ZZ^{k+\ell}$ ordered so that $(\alal,\bebe)\geq 0$ when all $\alpha_i\geq 0$ and
\[
\beta_j<0\Longrightarrow\exists i:\begin{cases}\alpha_i>0\\x_{j-i+1}>0\end{cases}
\] 
The shift map becomes 
\[
\begin{bmatrix}\sigma_k&\xi\\0&\sigma_\ell
\end{bmatrix}
\]
with 
\[
\xi(\alal)=(x_0\alpha_0,\dots,x_{d-1}\alpha_{d-1},0,\dots,0),
\]
the class of the unit is $(\uu,\uu)$, and the order ideal coincides with the positive cone.
\end{lemma}
\begin{proof}
We have
\[
A=\left[\begin{array}{cccc:cccc}
&1&&&x_0&\cdots&x_{d-1}&\\
&&\ddots&&&\\
&&&1&&&&\\
1&&&&&&\\\hdashline
&&&&&1&&\\
&&&&&&\ddots&\\
&&&&&&&1\\
&&&&1&&
\end{array}\right]
\]
with $A$ invertible over $\ZZ$, so $\mathcal{R}_A=\QQ^{k+\ell}$ and $\Delta _A=\ZZ^{k+\ell}$ acted upon by $A^{-1}$ on the right. Since
\begin{equation}\label{lcmeq}
A^{\operatorname{lcm}(k,\ell)}=\begin{bmatrix}\id&\widetilde{X}\\0&\id\end{bmatrix}
\end{equation}
where each entry $\widetilde{x}_{ij}$ of  $\widetilde{X}$ is just $x_{j-i+1}$ (index reduced modulo $d$), we see that whenever $\alal\geq \oo$ and $\alpha_i>0$ with $x_{j-i+1}>0$, negative $\beta_j$ are freely allowed in positive pairs $(\alal,\bebe)$.
\end{proof}

\begin{propo}\label{RRisostable}
We have $\DQ^\RRi(k,\ell,\xx)\simeq \DQ^\RRi(k',\ell',\xx')$ precisely when $k=k'$, $\ell=\ell'$, and  $\xx'=\sigma_d^m(\xx)$ for some $m$.
\end{propo}
\begin{proof}
Passing to ideal and quotient, we see that $k'=k$ and $\ell'=\ell$. When $\phi:\DQ^\RRi(k,\ell,\xx)\to\DQ^\RRi(k,\ell,\xx')$ is an isomorphism, it must be implemented by a matrix 
\begin{equation}\label{RRformi}
\begin{bmatrix}
S_k^i&Y\\
0&S_\ell^j
\end{bmatrix}
\end{equation}
with $S_m$ the matrix inducing the forward cyclic shift of order $m$. Indeed, the matrices in the diagonal must be permutation matrices to preserve the generators of the positive cone, and hence must be shifts to commute with the given shift map implemented by $A$ above. The lower off-diagonal block is zero by positivity of $\phi$ and $\phi^{-1}$, and since the matrix intertwines the $A$ and $A'$ matrices from $\xx$ and $\xx'$, and hence by \eqref{lcmeq} we have that
\[
\begin{bmatrix}
S_k^i&Y\\
0&S_\ell^j
\end{bmatrix}
\begin{bmatrix}
\id&\widetilde{X}\\
0&\id
\end{bmatrix}
=
\begin{bmatrix}
\id&\widetilde{X'}\\
0&\id
\end{bmatrix}
\begin{bmatrix}
S_k^i&Y\\
0&S_\ell^j
\end{bmatrix}
\]
and hence
\[
S_k^i\widetilde X=\widetilde{X'}S_\ell^i
\]
which implies $\sigma_d^{j-i}(\xx)=\xx'$. 

In the other direction, when $\sigma_d^{j-i}(\xx)=\xx'$ we get that all the $d$ rays of the right hand side of
\begin{equation}\label{RRformii}
S_kY-YS_\ell=S_k^iX-X'S_\ell^j
\end{equation}
have the property that either all entries are zero, or all but two entries are zero, and one is $x_i$ and another $-x_i$. Therefore it is easy to see that there is a solution to \eqref{RRformii} for any given choice of values $y_{00},\dots,y_{0,d-1}$ which is constant along  all the rays that vanish, and attains three different values on the ones that do not. For the map induced by \eqref{RRformi} to be  an order isomorphism we need to know 
\[
x_{j}=0\Longrightarrow y_{0j}=0
\]
because the single positive entry in the left half of the first row of the adjacency matrix is located in the $01$ position. And since the corresponding ray is constant, this is also a sufficient condition for order isomorphism.
\end{proof}

\begin{lemma}\label{RSkstable}
With $\eta:\ZZ^k\to \osX\ZZ$ defined by 
\[
\eta(\begin{bmatrix}\alpha_0&\cdots&\alpha_{k-1}\end{bmatrix})=\oXe{1}{\textstyle \sum \alpha_ix_i,\sum \alpha_{i-1}x_i,\dots,\sum \alpha_{i-k+1}x_i,\dots}
\]
where the first $k$ entries are repeated periodically \emph{ad infinitum}, 
the  group of $\DQ_{\RSi}(k,\xx)$  is isomorphic to
\[
\{(\alal,\bebebe)\in\ZZ^k\times \osX\ZZ\mid \bebebe-\eta(\alal)\in\nsX\ZZ\}
\]
equipped with the canonical order and the shift map $\sigma_k\times\rt$. Under this isomorphism, the class of the unit is
\[
(\uu,\oXe{0}{\textstyle \sum x_i, \sum x_i,\dots})
\]
and the order ideal is the intersection of the  positive cone with $\ZZ^k\times \sum_0^\infty \ZZ$.
\end{lemma}
\begin{proof}
The adjacency matrix is subdivided into 
\[
\left[\begin{array}{c|c}\adjRR_E&\adjRS_E\\\hline\adjSR_E&\adjSS_E\end{array}\right]=\left[\begin{array}{cccc|c}
&1&&&x_0\\
&&\ddots&&\vdots\\
&&&1&x_{k-2}\\
1&&&&x_{k-1}\\\hline&&&&0
\end{array}\right]
\]
with $\adjRR_E$ invertible over $\ZZ$, so $\Delta_{\adjRR_E}=\ZZ^k$ and $\eta$ is as described.

Theorem \ref{DQmaster} shows all claims except the descriptions of the positive cone and the order ideal. We obviously have $\Delta_{\adjRR_E}^+=\NN_0^k$ because $\adjRR_E$ is a permutation matrix, so all generators of the positive cone of the form $(\alal,\eta^+(\alal))=(\alal,\eta(\alal))$ have exclusively nonnegative entries. Since this is also true of $\singgen=\{\oXe{0}{1}\}$, we conclude that any positive element $(\alal,\bebebe)$ must have all $\alpha_i,\beta_j\geq 0$. In the other direction, we fix such an element $(\alal,\bebebe)$ and take $m\geq 0$ so that $\beta_j=\sum \alpha_{i-j+1}x_i$ for all $j\geq km$. We then have that $(\alal,\bebebe)$ agrees with
\[
(\sigma_k\times\rt)^{km}(\alal,\eta(\alal))=\left(\alal,\oXe{km}{\textstyle \sum \alpha_ix_i,\sum \alpha_{i-1}x_i,\dots,\sum \alpha_{i-k+1}x_i,\dots}\right)
\]
from entry $km+1$ and onwards, and we can now adjust on the  finitely many remaining entries with generators of the form $(\sigma_k\times\rt)^{\ell}(\oo,\oXe{ 0}{1})$ to obtain the desired sum.
The description of the order ideal follows.
\end{proof}

\begin{propo}\label{RSisostable}
We have $\DQ^\RSi(k,\xx)\simeq \DQ^\RSi(k',\xx')$ precisely when $k=k'$, and $\xx'=\sigma_k^m(\xx)$ for some $m$.
\end{propo}
\begin{proof}
Passing to the quotient, we get $k=k'$. Now assume $\phi$ induces an isomorphism from $\DQ^\RSi(k,\xx)$ to $\DQ^\RSi(k,\xx')$. We note that
\[
g=(\ee_0,\eta(\ee_0))\qquad h=(\oo,\oXe{0}{1})
\]
generate $\DQ^\RSi(k,\xx)$ as a group with shift, and generate the positive cone as well. Here
\[
\eta(\ee_0)=\oXe{1}{x_0,x_1,\dots,x_{k-1},\dots}
\]
We set $\widetilde g=\phi(g)$ and $\widetilde h=\phi(h)$, and note right away that $\widetilde h=\oXe{0}{1}$ since it must be a minimally positive element, and must generate the order ideal. We can describe the first entry of $\widetilde g$ similarly, but here only know that
\[
\widetilde g=(\ee_{-m},\bebebe)
\]
for some $m$ and $\bebebe$.

By the periodicity of $\eta(\ee_0)$, we get
\[
g-(\sigma_k\times\rt)^{k}(g)=(\oo,\oXe{1}{x_0,\dots x_{k-1}})=\sum_{i=1}^{k-1} x_i(\sigma_k\times\rt)^{i+1}(h),
\]
so it follows that also 
\[
\widetilde g-(\sigma_k\times\rt)^{k}(\widetilde g)=(\oo,\oXe{1}{x_0,\dots x_{k-1}}),
\]
which implies that 
\[
\bebebe=\oXe{1}{x_0,\dots,x_{k-1},\dots}
\]
This must agree eventually with $\eta(\ee_{-m})=\oXe{1}{x'_m,\dots,x'_{m-1},\dots}$, proving that $x'_j=x_{j+m}$ for all $j$. It is easy to see that $\sigma_k^m\times\id$ implements an isomorphism in this case.
\end{proof}

\begin{lemma}\label{SRkstable}
The  group of $\DQ^\SRi(k,\ell,\xx)$  is isomorphic to $\ZZ^\ell \times\nsX\ZZ$ ordered so that $(\alal,\bebebe)\geq 0$ when all $\beta_j\geq 0$ and 
\[
\alpha_i<0\Longrightarrow \exists j:\begin{cases}\beta_j>0\\x_{1+i-j}=\infty\end{cases}
\]
The shift map becomes $\sigma_\ell\times \rt$, the class of the unit is
\[
\left(\uu,\oXe{0}{1,\overbrace{1,\cdots,1}^k}\right)
\]
and the order ideal is the intersection of the  positive cone with $\ZZ^\ell\times \sum_0^k \ZZ$.
\end{lemma}

\begin{proof}
The adjacency matrix is divided into $\left[\begin{smallmatrix}A&B\\C&D\end{smallmatrix}\right]$ as
\[
\left[\begin{matrix}\adjRR_E&\adjRS_E\\\adjSR_E&\adjSS_E\end{matrix}\right]=\left[\begin{array}{cccc:cccc|c}
&1&&&&&&&\\
&&\ddots&&&&&&\\
&&&1&&&&&\\
&&&&&&&&1\\\hdashline
&&&&&1&&&\\
&&&&&&\ddots&&\\
&&&&&&&1&\\
&&&&1&&&&\\\hline
&&&&x_0&x_1&\cdots&x_{\ell-1}&0
\end{array}\right]
\]
where we subdivide $\adjRR_E$ further as indicated by the dashed lines with the first $k\times k$ subblock nilpotent and the second $\ell\times \ell$ subblock invertible. As before, we get that $\Delta_{\adjRR_E}=\ZZ^\ell$ ordered canonically, and $\singgen$ consists of all elements of the form
\[
\left(-\pi_{\mathcal R}({C})(\adjRR_E)^{-1},\oXe 01\right)=\left((-c_1,\dots,-c_{k-1},-c_0),\oXe 01 \right)
\]
where $c_i=0$ whenever $x_i=0$ but otherwise can be any nonnegative number. This shows that $\singgen$ is exactly the elements on the form
\[
(\alal,\oXe{0}{1})
\]
for which all $\alpha_i\leq 0$ and $\alpha_i=0$ whenever $x_{i+1}=0$, and we can determine the order as indicated, arguing as before.
\end{proof}

\begin{propo}\label{SRisostable}
We have $\DQ^\SRi(k,\ell,\xx)\simeq \DQ^\SRi(k',\ell',\xx')$ precisely when $k=k'$, $\ell=\ell'$, and $\xx'=\sigma_\ell^m(\xx)$ for some $m$.
\end{propo}
\begin{proof}
We get that $k=k'$ and $\ell=\ell'$ from passing to the ideal and quotient.  Now assume $\phi$ induces an isomorphism from $\DQ^\SRi(k,\ell,\xx)$ to $\DQ^\SRi(k,\ell,\xx')$. We note that
\[
g=(\ee_0,0)\qquad h=(0,\oXe 01)
\]
generate the group with shift, and are elements of the order ideal  (they do not generate the positive cone). We note that
\[
(\sigma_\ell\times \rt)^\ell(g)=(\id\times\rt^\ell)(g)=g
\]
and consequently the same is true for $\widetilde g:=\phi(g)$, showing that $\widetilde g=(\ee_{-m},0)$ for some $m$. Because  $\widetilde h:=\phi(h)$ is then the only possibility for generating the second component of the order ideal, we get that $\widetilde h=(\alal,\oXe 01)$ for some $\alal$. We argue 
\begin{eqnarray*}
x_i=\infty&\Longrightarrow &\forall n: (n\ee_{i-1},\oXe 01)\geq 0\\
&\Longrightarrow &\forall n: h-n(\sigma_\ell\times \rt)^{i-1}(g)\geq 0\\
&\Longrightarrow &\forall n: \widetilde{h}-n(\sigma_\ell\times \rt)^{i-1}(\widetilde{g})\geq 0\\
&\Longrightarrow &\forall n: (\alal-n\ee_{i-m-1},\oXe 01)\geq 0\\
&\Longrightarrow &x_{i-m}'=\infty
\end{eqnarray*}
showing the necessity of $\xx'=\sigma_\ell^m(\xx)$.

In the other direction, we show that there is a map with images  $\widetilde g=(\ee_{-m},0)$  and $\widetilde h=(\alal,\oXe 01)$ on the form
\[
\begin{bmatrix}
\sigma_\ell^m&0\\
\upsilon_{\alal}&\id
\end{bmatrix}
\]
for any  $\alal$. Indeed, with $\upsilon: \nsX\ZZ\to\ZZ^\ell$ defined by
\begin{equation}\label{SRformi}
\upsilon(\bebebe)=(\textstyle \sum \beta_{i\ell},\dots,\sum\beta_{i\ell+\ell-1})
\end{equation}
we get that $\sigma_\ell\circ \upsilon=\upsilon\circ\rt$, and that $\upsilon(\oXe 01)=\ee_0$, so that $\upsilon$ induces the choice $\alal=\ee_0$. In general,
\begin{equation}\label{SRformii}
\upsilon_{\alal}=\sum_{i=0}^{\ell-1} \alpha_i\sigma_\ell^i\circ \upsilon
\end{equation}
implements the desired group isomorphism. It will be an order isomorphism precisely when $\alal$ has the property
\[
x_i=0\Longrightarrow \alpha_{i+1}=0.
\]
\end{proof}

\begin{theor}\label{oneidealstable}
We have
\[
\xyzrel{011}=\xyzrel{010}=\langle\OOO,\IIIm\rangle=\langle\KKKm\rangle
\]
among all graphs defining Type I $C^*$-algebras with exactly one non-trivial gauge-invariant ideal, and all these relations are decidable. 
\end{theor}

\begin{proof}
We have always
\[
\langle\OOO,\IIIm\rangle\subseteq \xyzrel{011}\subseteq \xyzrel{010}\subseteq \langle\KKKm\rangle
\]
so our task is to show that when $\DQ(G)\simeq \DQ(G')$, it follows that $(G,G')\in\langle\OOO,\IIIm\rangle$, where we may assume that both $G,G'$ are in the model form \eqref{RRstab}--\eqref{SSstab}. This is now obvious from Propositions \ref{RSisostable} and \ref{SRisostable} in the \RSc\ and \SRc\ cases; we have in fact shown the stronger conclusion $(G,G')\in\langle\rangle$. In the \RRc\ case, Proposition \ref{RRisostable} does not establish graph isomorphism, but we note that we can use \AAArr\ moves to shift the multiplicities as required. The conditions in Propositions \ref{RRisostable}, \ref{RSisostable}, and \ref{SRisostable} are obviously decidable.
\end{proof}

We now analyze the exact problems, using model graphs given below.

\begin{gather}
\xymatrix{
\bullet\ar[d]|(0.4)\hole_(0.2){y_0}&\bullet\ar[d]|(0.4)\hole_(0.2){y_1}&&\bullet\ar[d]|(0.4)\hole_(0.2){y_{k-1}}\\
\bullet\ar[r]\ar[d]^-{x_0}\ar[dr]^-{x_1}\ar[drrr]^-{x_{d-1}}&\bullet\ar[r]&\cdots\ar[r]&\bullet \ar `ur^l[lll]`^dr[lll] [lll]\\
\bullet\ar[r]&\bullet\ar[r]&\cdots\ar[r]&\bullet\ar[r]&\cdots\ar[r]&\bullet  \ar@{-<} `dr_l[lllll]`_ur[lllll] [lllll]\\
\bullet\ar[u]|(0.4)\hole_(0.2){z_0}&\bullet\ar[u]|(0.4)\hole_(0.2){z_1}&&\bullet\ar[u]|(0.4)\hole_(0.2){z_{d-1}}&&\bullet\ar[u]|(0.4)\hole_(0.2){z_{\ell-1}}\\
&&&&
}\label{RR}\\
\xymatrix{
\bullet\ar[d]|(0.4)\hole_(0.2){y_0}&\bullet\ar[d]|(0.4)\hole_(0.2){y_1}&&\bullet\ar[d]|(0.4)\hole_(0.2){y_{k-1}}\\
\bullet\ar[r]\ar[d]_-{x_0}&\bullet\ar[r]\ar[dl]_-{x_1}&\cdots\ar[r]&\bullet\ar[dlll]^-{x_{k-1}}\ar `ur^l[lll]`^dr[lll] [lll]\\
\bullet\ar[r]&\bullet\ar[r]&\cdots\ar[r]&\bullet\ar[r]&\circ\\
\bullet\ar[u]_-{z_{\ell-1}}&\bullet\ar[u]_-{z_{\ell-2}}&&\bullet\ar[u]_-{z_{1}}&\bullet\ar[u]_-{z_0}
}\label{RS}\end{gather}
\begin{gather}
\xymatrix{&\bullet\ar[d]_-{y_{k-1}}&\bullet\ar[d]_-{y_{k-2}}&&\bullet\ar[d]_-{y_0}\\
\bullet\ar[r]&\bullet\ar[r]&\bullet\ar[r]&\cdots\ar[r]&\circ\ar@{=>}@/_/[dlll]_(0.8){x_0}\ar@{=>}[dll]_(0.6){x_1}\ar@{=>}[d]^-{x_{\ell-1}}\\
&\bullet\ar[r]&\bullet\ar[r]&\cdots\ar[r]&\bullet \ar@{-<} `dr_l[lll]`_ur[lll] [lll]\\
&\bullet\ar[u]|(0.4)\hole_(0.2){z_0}&\bullet\ar[u]|(0.4)\hole_(0.2){z_1}&&\bullet\ar[u]|(0.4)\hole_(0.2){z_\ell}
}\label{SR}\\
\xymatrix{&\bullet\ar[d]_-{y_{k-1}}&\bullet\ar[d]_-{y_{k-2}}&&\bullet\ar[d]_-{y_0}\\
&\bullet\ar[r]&\bullet\ar[r]&\cdots\ar[r]&\circ\ar@{=>}@/_/[dllll]_(0.8){x_{\ell-1}}\ar@{=>}[dlll]^(0.6){x_{\ell-2}}\ar@{=>}[ld]^-{x_{1}}\ar@{=>}[d]^-{x_{0}}\\
\bullet\ar[r]&\bullet\ar[r]&\cdots\ar[r]&\bullet\ar[r]&\circ\\
\bullet\ar[u]_-{z_{\ell-1}}&\bullet\ar[u]_-{z_{\ell-2}}&&\bullet\ar[u]_-{z_{1}}&\bullet\ar[u]_-{z_0}
}\label{SS}
\end{gather}

As above, we denote the dimension quadruples for these model graphs $\DQp^{**}(k,\ell,\xx,\yy,\zz)$.

\begin{lemma}
When $C^*(G)$ is type I and has exactly one non-trivial gauge-invariant ideal, then $G$ can be transformed into one of the graphs \eqref{RR}--\eqref{SS}
with \OOO\ and \IIIp\ moves, and their inverses.
\end{lemma}
\begin{proof}
The standard forms in \eqref{RS}--\eqref{SS} follow directly by Theorem \ref{III-simplified}, applying shadow form for sources. For \eqref{RR}, we argue as in Lemma \ref{typeIoiisstd}, using that when one 
has $x_{11}>0$ in the situation
\[
\xymatrix{
&\bullet\ar[d]_-{y_0}&&\bullet\ar[d]_-{y_1}\\
\cdots\ar[r]&\bullet\ar[rr]\ar[drr]_(0.3){x_{01}}\ar[d]_{x_{00}}&&\bullet\ar[r]\ar[dll]^(0.3){x_{10}}\ar[d]^{x_{11}}&\cdots\\
\cdots\ar[r]&\bullet\ar[rr]&&\bullet\ar[r]&\cdots\\
&\bullet\ar[u]_-{z_0}&&\bullet\ar[u]_-{z_1}\\
}
\]
an \OOO\ and an \IIIp\ move leads to 
\[
\xymatrix{
&\bullet\ar[d]_-{y_0}&&\bullet\ar[d]_-{y_1}\\
\cdots\ar[r]&\bullet\ar[rr]\ar[drr]_(0.3){x_{01}}\ar[d]_{x_{00}+1}&&\bullet\ar[r]\ar[dll]^(0.3){x_{10}}\ar[d]^{x_{11}-1}&\cdots\\
\cdots\ar[r]&\bullet\ar[rr]&&\bullet\ar[r]&\cdots\\
&\bullet\ar[u]_-{z_0+y_1}&&\bullet\ar[u]_-{z_1+1}.\\
}
\]
Calling this move \AAArrp, we see that it is in $\langle\OOO,\IIIp\rangle$, and we can obtain the model form by moving edges backwards as in the proof of Lemma \ref{typeIoiisstd}.
\end{proof}

Note the more complicated form in \eqref{RS} compared to \eqref{RSstab}. We will see that this is necessary. 

Appealing to our analysis of the stable case, we see that we can reduce the exact isomorphism case to working with automorphisms of $\DQ^{**}(-)$. We write $\operatorname{per}(\xx)$ for the shortest period of $\xx$; the smallest number $p$ so that $p\mid d,k,\ell$ as the case may be, and so that $\xx=\sigma^p(\xx)$ for the relevant cyclic shift.

\begin{propo}\label{RRisoexact}
We have $\DQp^\RRi(k,\ell,\xx,\yy,\zz)\simeq \DQp^\RRi(k,\ell,\xx,\yy',\zz')$ precisely when $\yy'=\sigma^i(\yy)$ for some $i$, and
\[\zz'+\uu=(\yy+\uu)Y+(\zz+\uu)S^{i+a\operatorname{per}(\xx)}
\]
for some $a$, where $Y$ is a $k\times \ell$-matrix satisfying
\[
S_kY-YS_\ell=S_k^iX-XS_\ell^{i+a\operatorname{per}(\xx)}
\]
and
\[
x_{i}=0\Longrightarrow y_{0i}=0
\]
\end{propo}
\begin{proof}
The class of the unit in $\DQp^\RRi(k,\ell,\xx,\yy,\zz)$ is $(\yy+\uu,\zz+\uu)$, and we saw in the proof of Proposition \ref{RRisostable} that all  isomorphisms are of the form $$\begin{bmatrix}S_k^i&Y\\0&S_\ell^j\end{bmatrix}$$ with $Y$ vanishing on ray number $i$ when $x_{i}=0$, and with $S_kY-YS_\ell$ equal to a difference of shifts of $X$ matrices. We proved while establishing   Proposition \ref{RRisostable} that $\sigma_d^{i-j}(\xx)=\xx$, so we get that $\operatorname{per}(\xx)|j-i$, showing the claim.
\end{proof}

\begin{propo}\label{RSisoexact}
We have $\DQp^\RSi(k,\ell,\xx,\yy,\zz)\simeq \DQp^\RSi(k,\ell',\xx,\yy',\zz')$ with $\ell\geq \ell'$ precisely when $\yy'=\sigma_k^{ap+\ell'-\ell}(\yy)$ and $\zz'$ extends $\zz$ by
\[
\textstyle (\sum x_i (y_i+1),\dots,\sum x_{i+\ell-\ell'} (y_i+1))
\] 
\end{propo}
\begin{proof}
The class of the unit in $\DQp^\RSi(k,\ell,\xx,\yy,\zz)$ is 
\[
\left(\yy+\uu,\oXe{0}{\textstyle 1,z_0+1,\dots,z_{\ell-2}+1,z_{\ell-1}+\sum x_i,\sum x_i(y_i+1),\dots,\sum x_{i+k-1}(y_i+1),\dots}\right)
\]
and we saw in the proof of Proposition \ref{RSisostable} that all  isomorphisms are of the form $\sigma_k^m\times\id$, where in the automorphism case we must have  $\operatorname{per}(\xx)|m$. This shows the claim.
\end{proof}

Because $\DQp^\RSi(1,1,(1),(0),(1,0))$ it not isomorphic to any  $\DQp^\RSi(1,0,(1),\yy,\zz)$,  the graph
\[
\xymatrix{
&\bullet\ar[d]&\\
\bullet\ar@(ld,lu)[]\ar[r]&\bullet\ar[r]&\circ}
\]
is not \KKKp\ to any graph without transitional vertices, even though, by Lemma \ref{typeIoiisstd}, it is  \KKKm\ to one.

\begin{propo}\label{SRisoexact}
We have $\DQp^\SRi(k,\ell,\xx,\yy,\zz)\simeq \DQp^\SRi(k,\ell,\xx,\yy',\zz')$ precisely when $\yy'=\yy$ and 
\[
\zz'-\sigma_\ell^{a\operatorname{per}(\xx)}(\zz)\in\operatorname{span}\{\sigma_\ell^i(\widetilde \yy)\mid x_{i-1}=\infty\}
\]
with 
\[
\widetilde{\yy}=\left(\textstyle 1+\sum (y_{i\ell-1}+1),\sum (y_{i\ell}+1),\dots, \sum (y_{i\ell+\ell-2}+1)\right)
\]
\end{propo}
\begin{proof}
The class of the unit in $\DQp^\SRi(k,\ell,\xx,\yy,\zz)$ is 
\[
\left(\zz+\uu,\oXe{0}{1,y_0+1,\dots, y_{k-1}+1}\right)\
\]
and we saw in the proof of Proposition \ref{SRisostable} that all  isomorphisms are of the form
\[
\begin{bmatrix}
\sigma_\ell^m&0\\
\upsilon_{\alal}&\id
\end{bmatrix},
\] 
where $\alpha_i\not=0$ only when $x_{i-1}=\infty$, and where in the automorphism case we must have  $\operatorname{per}(\xx)|m$. Since 
\[
\widetilde{\yy}=\upsilon\left(\oXe{0}{1,y_0+1,\dots, y_{k-1}+1}\right)
\]
(cf. \eqref{SRformi}) the claim follows from reference to \eqref{SRformii}.
\end{proof}

\begin{theor}\label{oneidealexact}
We have
\[
\xyzrel{111}=\xyzrel{110}=\langle\OOO,\IIIp\rangle=\langle\KKKp\rangle
\]
among all graphs defining type I $C^*$-algebras with exactly one non-trivial gauge-invariant ideal, and all these relations are decidable.
\end{theor}
\begin{proof}
The three previous propositions describe to what extent $\ell,\ell'$ and the $\yy,\yy'$ and $\zz,\zz'$ vectors can vary and still give $\DQp^{**}(k,\ell,\xx,\yy,\zz)\simeq \DQp^{**}(k,\ell',\xx,\yy',\zz')$, with $\ell'=\ell$ except in case \RSc. We need to show how to obtain these variations with \OOO\ and \IIIp\ moves.

In case \RRc, we first show how to reduce to the case $i=a=0$. Using \AAArrp\ moves, we go from \eqref{RR} to
\[
\xymatrix{
&\bullet\ar[d]|(0.4)\hole_(0.2){y_0}&\bullet\ar[d]|(0.4)\hole_(0.2){y_1}&&\bullet\ar[d]|(0.4)\hole_(0.2){y_{k-1}}\\
&\bullet\ar[r]&\bullet\ar[r]&\cdots\ar[r]&\bullet \ar `ur^l[lll]`^dr[lll] [lll]\ar[drr]_{x_0}\ar[dllll]^-{x_1}\ar[dl]^-{x_{d-1}}\\
\bullet\ar[r]&\bullet\ar[r]&\cdots\ar[r]&\bullet\ar[r]&\bullet\ar[r]&\cdots\ar[r]&\bullet  \ar@{-<} `dr_l[llllll]`_ur[llllll] [llllll]\\
\bullet\ar[u]|(0.4)\hole_(0.2){z_0'}&\bullet\ar[u]|(0.4)\hole_(0.2){z_1'}&&\bullet\ar[u]|(0.4)\hole_(0.2){z_{d-2}'}&\bullet\ar[u]|(0.4)\hole_(0.2){z_{d-1}'}&&\bullet\ar[u]|(0.4)\hole_(0.2){z_{\ell-1}'},\\
&&&&
}
\]
changing the downstairs multiplicities from sources in a way we do not need to compute. A graph isomorphism takes this back to 
\[
\xymatrix{
\bullet\ar[d]|(0.4)\hole_(0.2){y_{k-1}}&\bullet\ar[d]|(0.4)\hole_(0.2){y_0}&&\bullet\ar[d]|(0.4)\hole_(0.2){y_{k-2}}\\
\bullet\ar[r]\ar[d]^-{x_0}\ar[dr]^-{x_1}\ar[drrr]^-{x_{d-1}}&\bullet\ar[r]&\cdots\ar[r]&\bullet \ar `ur^l[lll]`^dr[lll] [lll]\\
\bullet\ar[r]&\bullet\ar[r]&\cdots\ar[r]&\bullet\ar[r]&\cdots\ar[r]&\bullet  \ar@{-<} `dr_l[lllll]`_ur[lllll] [lllll]\\
\bullet\ar[u]|(0.4)\hole_(0.2){z_{\ell-1}'}&\bullet\ar[u]|(0.4)\hole_(0.2){z_0'}&&\bullet\ar[u]|(0.4)\hole_(0.2){z_{d-2}'}&&\bullet\ar[u]|(0.4)\hole_(0.2){z_{\ell-2}'},\\
&&&&
}
\]
implementing an automorphism of the form
\[
\begin{bmatrix}\sigma_k&\upsilon\\0&\sigma_\ell\end{bmatrix}.
\]
When $\operatorname{per}(\xx)<d$ we can similarly shift the first $\operatorname{per}(\xx)$ downwards arrows $\ell-d$ times back, and employ a graph isomorphism to implement
an automorphism of the form
\[
\begin{bmatrix}\id&\upsilon'\\0&\sigma_\ell^{\operatorname{per}(\xx)}\end{bmatrix}.
\]

After composing with such automorphisms, we obtain without loss of generality that $i=a=0$. When this is the case, the defining property for $Y$ implementing $\upsilon$ is just
\[
SY-YS=0
\]
and we see that $Y$ must be constant on the $d$ rays. With $E_i$ the $k\times\ell$-matrix with entries 1 in ray number $i$ and entries 0 elsewhere, we have consequently have
\[
Y=\sum y_{0i}E_i
\]
with $y_{0i}$ zero whenever $x_{i}=0$. When  $x_{i}>0$, we can implement the effect of $\left[\begin{smallmatrix}\id&E_i\\0&\id\end{smallmatrix}\right]$ on $(\yy+\uu,\zz+\uu)$ by taking one edge among the ones represented by $x_{i+1}$ and sending it back to its original position by $\operatorname{lcm}(k,\ell)$ \AAArrp\ moves, so in this way we see that $\left[\begin{smallmatrix}\id&Y^+\\0&\id\end{smallmatrix}\right]$ and  $\left[\begin{smallmatrix}\id&Y^-\\0&\id\end{smallmatrix}\right]$ can be implemented by \OOO\ and \IIIp\ moves, proving that this is then the case for any automorphism.

In case \RSc, we note that whenever $\ell'>\ell$, we can use \OOO\ and \IIIp\ moves to extend the shorter strand, so that we may assume without loss of generality that $\ell'=\ell$. Then the graphs must be isomorphic.

In case \SRc, we may assume that $a=0$ after implementing a graph isomorphism. When $x_{0}=\infty$, we note that after an outsplit
\begin{gather*}
\xymatrix{&\bullet\ar[d]_-{y_{k-1}}&\bullet\ar[d]_-{y_{k-2}}&&\bullet\ar[d]_-{y_{1}}&\bullet\ar[d]_-{y_0}\ar@/_1.5em/[ddlll]_(0.15){y_0}\\
\bullet\ar[r]&\bullet\ar[r]&\bullet\ar[r]&\cdots\ar[r]&\bullet\ar[r]\ar[dll]&\circ\ar@{=>}@/_/[ddlll]_(0.8){x_0}\ar@{=>}[ddll]_(0.6){x_1}\ar@{=>}[dd]^-{x_{\ell-1}}\\
&&\bullet\ar[d]&&&&\\
&&\bullet\ar[r]&\bullet\ar[r]&\cdots\ar[r]&\bullet \ar@{-<} `dr_l[lll]`_ur[lll] [lll]\\
&&\bullet\ar[u]|(0.4)\hole_(0.2){z_0}&\bullet\ar[u]|(0.4)\hole_(0.2){z_1}&&\bullet\ar[u]|(0.4)\hole_(0.2){z_\ell}
}
\end{gather*}
and zipping up, we go from $\zz$ to $\zz+\widetilde{\yy}$; and more generally, when $x_i=\infty$, we implement the action of $\left[\begin{smallmatrix}\id&0\\\upsilon_{\ee_{i}}&\id\end{smallmatrix}\right]$. This shows the claim by implementing ${\alal^+}$ on one side and  ${\alal^-}$ on the other as above.
\end{proof}

\appendix

\renewcommand{\thetable}{A.\arabic{table}}   

\chapter{Leavitt path algebras}\label{LPA}
\section{Algebraic \xyz{xyz}-equivalence}

In this section, we provide a succinct overview of some immediate consequences of our work to the problem of classifying Leavitt path algebras. We address it to readers already familiar with this topic.

\begin{defin}\label{xyzLPAdef}
With $\xyz{x}, \xyz{y}, \xyz{z} \in \{ \xyz{0}, \xyz{1}\}$ we 
say that two graphs $E$ and $F$ with finitely many vertices are  \emph{algebraically \xyz{xyz}-equivalent} when there exists a ring isomorphism $\phi \colon L_{\CC}(E) \otimes M_\infty({\CC})\to L_{\CC}(F)\otimes M_\infty({\CC})$ which additionally satisfies 
\begin{itemize}
\item $\phi ( 1_{ L_{\CC}(E)} \otimes e_{11} ) = 1_{ L_{\CC}(F)} \otimes e_{11}$ when $\xyz{x} = \xyz{1}$

\item $\phi(   L_{\CC}(F)_n \otimes  M_\infty({\CC})) = L_{\CC}(E)_n \otimes  M_\infty({\CC})$ for all $n\in\ZZ$ when $\xyz{y} = \xyz{1}$

\item $\phi ( {D}_E \otimes c_0 ) = {D}_F \otimes c_0$ when $\xyz{z} = \xyz{1}$.
\end{itemize}
\end{defin}

We use the notation $\xyzLPArel{xyz}$ to refer to the equivalence relation among graphs with finitely many vertices defined this way. We now note:

\begin{propo}\mbox{}\\
\begin{enumerate}[(i)]
\item $\xyzrel{xy1}=\xyzLPArel{xy1}$
\item $ \xyzLPArel{x00}\subseteq \xyzrel{x00}$
\item $\xyzLPArel{010}\subseteq\langle\KKKm\rangle$
\item $\xyzLPArel{110}\subseteq\langle\KKKp\rangle$
\end{enumerate}
 \end{propo}
 \begin{proof}
 The four claims in (i) are contained in Corollaries 4.2, 4.5, 4.7, 4.8 of \cite{tmcjr:dpgisa}. The two claims in (ii) follow from the classification result in \cite{segrerapws:ccuggs} as explained in \cite{segrerapws:fkga}. The claim in (iv) is contained in Proposition 5.7 of \cite{parhhlas:gsar}, and (iii) follows in turn.
 \end{proof}

 Applying (i)-(iv) only, we may now extract equalities between algebraic \xyz{xyz} classes and equivalence classes generated by moves from our previously noted results. We present these in Tables \ref{LPAgen}, \ref{LPAone}, \ref{LPAnone}, organized by the restrictions put (here and in the original $C^*$-algebraic versions) on the ideal lattices. When necessary, further restrictions are listed in the rightmost column.
 
 \begin{remar}
Carlsen and Rout study relations of the form \xyzLPArel{xy1} for any choice $R$ of a commutative integral domain in place of $\CC$, and show that this relation does not depend on the choice of $R$. Our observations --  summarized in Corollary \ref{xyzdiffer} and Table \ref{xyzrelations} -- that all $C^*$-algebraic relations are different show that the four algebraic relations \xyzLPArel{xy1} differ, but there  are no examples known establishing that $\xyzLPArel{xy0}\not=\xyzLPArel{xy1}$. It is similarly unknown if the relations 
 $\xyzLPArel{xy0}$ defined above with $\CC$ coincide with their counterparts over all other rings.
 
  It is further noted in \cite{tmcjr:dpgisa} that our $C^*$-algebraic relation \xyzrel{xy1} coincides not only with \xyzLPArel{xy1} defined for general $R$, but also with notions of isomorphism where the Leavitt path algebras are considered as  $*$-algebras. Taking $R=\ZZ$ is particularly interesting in our context, as it is known (\cite{tmc:lpaz}, \cite{rjapws:csplpaz}) that all notions with \xyz{xy0} coincides with those for \xyz{xy1}, so that our moves \CCCp, \PPPp\ fail to be \xyz{00z}-invariant and  \KKKp\ fails  to be  \xyz{01z}-invariant. But it also follows that our generation conjectures (\ref{gencon}.1), (\ref{gencon}.3), (\ref{gencon}.5), (\ref{gencon}.7) based on the remaining moves are equivalent to those for direct or graded $*$-isomorphism amongst $L_\ZZ(E)$, and consequently our partial results transfer verbatim.

  
  
  
  \end{remar}
 
 \setlength{\extrarowheight}{1.7mm}
 \begin{table}
 \begin{center}
 \begin{tabular}{|c|c|c|}\hline
 \ref{thm:outsplitting}&$\OOO\subseteq \xyzLPArel{111}$&\\\hline
 \ref{thm:insplittingplus}&$\IIIp\subseteq \xyzLPArel{111}$&\\\hline 
 \ref{thm:insplitting2}&$\IIIm\subseteq \xyzLPArel{011}$&\\\hline 
 \ref{thm:reduction-plus}&$\RRRp\subseteq \xyzLPArel{101}$&\\\hline 
 \ref{thm:source}&$\SSS\subseteq \xyzLPArel{001}$&\\\hline 
 \ref{finitecomplete}, \ref{cyclecomplete},  \ref{cuntzcomplete} &$\xyzrel{xyz}=\xyzLPArel{xyz}$&$\{\GF(\nn),\GC(\nn),\GO(c,n)\}$\\\hline
   \ref{OOI-cr}&$\xyzLPArel{001}=\langle\OOO,\IIIm,\RRRp,\SSS\rangle$&regular\\\hline
  \ref{OOI-cr}&$\xyzLPArel{011}=\langle\OOO,\IIIm\rangle$&regular\\\hline  
  \ref{OOI-cr}&$\xyzLPArel{111}=\langle\OOO,\IIIp\rangle$&regular\\\hline  
  \ref{OOI-cr}&$\xyzLPArel{010}\subseteq\langle\KKKm\rangle\subseteq\xyzLPArel{001}$&regular\\\hline
  \ref{OOI-cr}&$\xyzLPArel{110}\subseteq\langle\KKKp\rangle\subseteq\xyzLPArel{101}$&regular\\\hline
    \ref{classifyamplified}&$\xyzLPArel{000}=\xyzLPArel{001}=\xyzLPArel{100}=\xyzLPArel{101}=\langle\OOO,\RRRp\rangle$&amplified\\\hline  
  \ref{classifyamplified}&$\xyzLPArel{010}=\xyzLPArel{011}=\xyzLPArel{110}=\xyzLPArel{111}=\langle\rangle$&amplified\\\hline  
    \ref{monoxOz}&$\xyzLPArel{000}=\xyzLPArel{001}=\langle\OOO,\IIIm,\RRRp,\SSS\rangle$& monocyclic\\\hline  
  \ref{monoxOz}&$\xyzLPArel{100}=\xyzLPArel{101}=\langle\OOO,\IIIp,\RRRp\rangle$& monocyclic\\\hline 
  \ref{acyclicxIz}&$\xyzLPArel{010}=\xyzLPArel{011}=\langle\OOO,\IIIm\rangle=\langle\KKKm\rangle$&acyclic\\\hline 
  \ref{acyclicxIz}&$\xyzLPArel{110}=\xyzLPArel{111}=\langle\OOO,\IIIp\rangle=\langle\KKKp\rangle$&acyclic\\\hline 
                             \end{tabular}
 \end{center}
 \caption{Identities for graphs defining $C^*$-algebras with arbitrary ideal lattices}\label{LPAgen}
 \end{table}

 \begin{table}
  \begin{center}
 \begin{tabular}{|c|c|c|}\hline
  \ref{oneidealstable}&$\xyzLPArel{010}=\xyzLPArel{011}=\langle\OOO,\IIIm\rangle=\langle\KKKm\rangle$&monocyclic\\\hline 
  \ref{oneidealexact}&$\xyzLPArel{110}=\xyzLPArel{111}=\langle\OOO,\IIIp\rangle=\langle\KKKp\rangle$&monocyclic\\\hline   
                            \end{tabular}
 \end{center}
   \caption{Identities for graphs defining $C^*$-algebras  with exactly one non-trivial ideal}\label{LPAone}
 \end{table}
 
 \begin{table}
  \begin{center}
 \begin{tabular}{|c|c|c|}\hline
  \ref{OOI-simple}&$\xyzLPArel{001}=\langle \OOO,\IIIm,\RRRp,\SSS\rangle$&\\\hline
  \ref{OIOtoOOI}&$\xyzLPArel{010}\subseteq\langle\KKKm\rangle\subseteq\xyzLPArel{001}$&\\\hline

  \ref{IOI-cr}&$\xyzLPArel{101}=\langle\OOO,\IIIp,\RRRp\rangle$&regular\\\hline  

  \ref{singxOz}&$\xyzLPArel{000}=\xyzLPArel{001}=\xyzLPArel{100}=\xyzLPArel{101}=\langle\OOO,\RRRp\rangle$&singular\\\hline  
                             \end{tabular}
 \end{center}
  \caption{Identities for graphs defining simple $C^*$-algebras}\label{LPAnone}
 \end{table}
 
\newcommand{\etalchar}[1]{$^{#1}$}

\begin{theindex}

  \item admissible, 11
  \item $\adjRR_E$, 51
  \item $\adjRS_E$, 51
  \item $\adjSR_E$, 51
  \item $\adjSS_E$, 51
  \item $\mathsf{A}^\bullet_E$, 12
  \item $\mathsf{A}^\circ_E$, 12
  \item amplified graphs, 107
  \item anchor, 80
  \item antenna form, 82
  \item $\AAArr$, 125
  \item augmented
    \subitem canonical form, 68
    \subitem standard form, 68

  \indexspace

  \item $\mathsf{B}^\bullet_E$, 12
  \item $\BF(E)$, 71
  \item Bowen-Franks invariant, 71

  \indexspace

  \item $c_0$, 10
  \item canonical form, 13
  \item (CK1)--(CK3), 10
  \item conjugacy, 70
  \item continuous orbit equivalence, 74
  \item \CCCp, 29
  \item $C^*$-algebra over $X$, 13
  \item Cuntz-Krieger families, 10
  \item Cuntz-Krieger relations, 10

  \indexspace

  \item decorated graph, 112
  \item $\Delta_A$, 51
  \item $\delta_A$, 51
  \item $\Delta_A^+$, 51
  \item diagonal, 10
  \item dimension quadruple, 36
  \item $\DQ(-)$, 36
  \item $\DQp(-)$, 36
  \item $\DT(-)$, 36

  \indexspace

  \item $\GC(\nn)$, 61
  \item essential
    \subitem graph, 69
    \subitem part, 69
  \item $\eta$, 52
  \item $\eta^+$, 56
  \item eventual conjugacy, 74

  \indexspace

  \item $\GF(\nn)$, 61
  \item $\FKX(-)$, 13
  \item $\FKXp(-)$, 13
  \item $\FKXpu(-)$, 14
  \item flow equivalence, 71

  \indexspace

  \item $\GO(\nn)$, 61
  \item $\singgen$, 57
  \item generalized components, 12
  \item generation conjecture, 90
  \item \GL-equivalence, 12
  \item $\GLp$-equivalence, 14

  \indexspace

  \item hereditary vertex sets, 11

  \indexspace

  \item \IIIm, 20
  \item \IIIp, 21
  \item irreducible graph, 70

  \indexspace

  \item $\K$, 10
  \item $\KKKm$, 36
  \item $\KKKp$, 36

  \indexspace

  \item matched graphs, 12

  \indexspace

  \item $\nor$, 51
  \item $\nos$, 51

  \indexspace

  \item \OOO\ move, 17
  \item obstruction class, 38
  \item $\nDT_E$, 52
  \item order ideal, 36

  \indexspace

  \item $p_0^E$, 33
  \item \PPPp, 31
  \item primitive graph, 70

  \indexspace

  \item regular
    \subitem graph, 10
    \subitem vertex, 10
  \item \RRRp, 27
  \item $\lt$, 33

  \indexspace

  \item \SSS, 29
  \item saturated vertex sets, 11
  \item shadow form, 82
  \item shift equivalence, 71
  \item shift map, 69
  \item shift of finite type, 69
  \item singular
    \subitem graph, 10
    \subitem vertex, 10
  \item $\sigma_k$, 64
  \item skew product, 33
  \item $\SL$-equivalence, 68
  \item $\SLp$-equivalence, 68
  \item spectrum away from zero, 71
  \item $\spaz(E)$, 71
  \item $\SSSs$, 83
  \item standard form, 13
  \item $\standard$, 69
  \item $\standardp$, 69
  \item strand, 80
  \item strand length, 112
  \item strong shift equivalence, 70

  \indexspace

  \item transitional vertex, 79
  \item $\mbox  {\texttt  {\textup  {(T)}}}$, 107

  \indexspace

  \item weighted, decorated graph, 111

  \indexspace

  \item $\mathrm{XK}\delta(-)$, 38
  \item $\mathrm{XK}\delta^+(-)$, 39
  \item $\mathrm{XK}\delta^{+,1}(-)$, 39
  \item $\mathrm{XK}_*(-)$, 38
  \item $\xyzrel{xyz}$, 10
  \item $\xyz{xyz}$-equivalence, 10

\end{theindex}

\end{document}